\documentclass[12pt,a4paper,reqno]{amsart}
\usepackage[T1]{fontenc}
\usepackage[utf8]{inputenc}
\usepackage[english]{babel}
\numberwithin{equation}{section}
\usepackage[margin=1in]{geometry}

\usepackage{amssymb}
\usepackage{stmaryrd} 
\usepackage{mathrsfs} 

\usepackage{tikz-cd}
\usepackage{tikz}
\usetikzlibrary{positioning, shapes.geometric, arrows.meta}

\usepackage{graphicx}
\usepackage{subcaption}
\usepackage{yhmath}
\usepackage[pdftex,unicode,
colorlinks=true,
linkcolor = blue,
citecolor = blue]{hyperref}

\newtheorem{theorem}{Theorem}[section]
\newtheorem{proposition}[theorem]{Proposition}

\newtheorem{lemma}[theorem]{Lemma}
\newtheorem{deflemma}[theorem]{Definition/Lemma}
\newtheorem{defproposition}[theorem]{Definition/Proposition}
\newtheorem{corollary}[theorem]{Corollary}
\newtheorem{definition}[theorem]{Definition}
\theoremstyle{definition}
\newtheorem{example}[theorem]{Example}
\newtheorem{remark}[theorem]{Remark}

\newtheorem{thmalph}{Theorem}

\newtheorem{setting}[theorem]{Setting}

\newcommand{\g}{\mathfrak{g}}

\renewcommand{\sl}{\mathfrak{sl}}

\newcommand{\rk}{\operatorname{rank}}
\newcommand{\Hom}{\operatorname{Hom}}
\newcommand{\End}{\operatorname{End}}

\newcommand{\Aut}{\operatorname{Aut}}
\newcommand{\AAut}{\mathscr{A}\textit{ut}}
\renewcommand{\ker}{\operatorname{Ker}}
\newcommand{\im}{\operatorname{Im}}
\newcommand{\tr}{\operatorname{tr}}
\newcommand{\id}{\operatorname{id}}
\newcommand{\C}{\mathbb{C}}
\newcommand{\N}{\mathbb{N}}
\newcommand{\Z}{\mathbb{Z}}
\newcommand{\Q}{\mathbb{Q}}

\renewcommand{\P}{\mathbb{P}}

\renewcommand{\S}{\mathcal{S}}
\newcommand{\U}{\mathcal{U}}
\newcommand{\I}{\mathscr{I}}
\newcommand{\gl}{\mathfrak{gl}}
\newcommand{\M}{\mathcal{M}}

\newcommand{\CCC}{\mathscr{C}}

\newcommand{\LL}{\mathcal{L}}
\newcommand{\lb}{\llbracket}
\newcommand{\rb}{\rrbracket}
\newcommand{\fusion}[3]{{\binom{#3}{#1\;#2}}}

\def\O{\mathcal{O}}
\def\Ord{\mathsf{Ord}}
\def\Adm{\mathsf{Adm}}
\def\Mod{\mathsf{Mod}}
\def\fin{\mathsf{fin}}
\def\L{\mathsf{L}}
\def\R{\mathsf{R}}
\def\Om{\Omega}
\def\Pr{\mathrm{Pr}}
\def\la{\lambda}
\def \al{\alpha}
\def \b{\beta}

\def \h{\mathfrak{h}}

\def\vac{\mathbf{1}}
\def\spn{\mathrm{span}}

\def\bs{\backslash}
\def\om{\omega}
\def\o{\otimes}

\def\wt{\mathrm{wt}}
\def\ra{\rightarrow}
\def\op{\oplus}
\def\n{\mathfrak{n}}
\def\ad{\operatorname{ad}}
\def\scrU{\mathscr{U}}
\def\fA{\mathfrak{A}}
\def\fB{\mathfrak{B}}
\def\<{\langle}
\def\>{\rangle}
\def\ssq{\subseteq}

\def\rN{\mathrm{N}}

\def\Id{\mathrm{Id}}
\def\ds{\dots}
\def\A{\mathsf{A}}
\def\fa{\mathfrak{a}}
\def\fb{\mathfrak{b}}
\def\Res{\mathrm{Res}}

\def\si{\sigma}

\def\VV{\mathbb{V}}
\def\reg{\mathrm{reg}}

\title[Coinvariants of admissible-level affine VOAs]{Vector Bundles of Coinvariants for Admissible Affine Vertex Operator Algebras}

\author[V. Alekseev, J. Liu]{Viktor Alekseev, Jianqi Liu}
\begin{document}
    \begin{abstract}
        Simple affine vertex operator algebras $L_k(\g)$ at non-integral admissible levels $k$ are generally neither $C_2$-cofinite nor rational. Despite the absence of these standard finiteness conditions, we prove that the sheaf of coinvariants (and, dually, of conformal blocks) of ordinary modules in the category $\O_k$ forms a vector bundle over the moduli space $\overline{\M}_{0,n}$ of stable $n$-pointed genus-zero curves. The proof relies on establishing the finite-dimensionality of genus-zero coinvariants, a vanishing theorem that isolates ordinary admissible modules at the boundary, and a smoothing theorem enabled by partial strong identity elements in the mode transition algebra. Consequently, admissible affine vertex operator algebras supply a rich class of non-rational, non-$C_2$-cofinite examples in which vector-bundle behavior is preserved for a well-behaved subcategory of modules.
    \end{abstract}
	\maketitle
\section{Introduction}\label{sec:intro}

Vertex operator algebras (VOAs) provide an algebraic framework for two-dimensional conformal field theory, and their modules give rise to spaces of coinvariants and conformal blocks attached to pointed algebraic curves. When studied in families, these spaces form sheaves over the moduli stack of pointed curves. A central question is to determine when these sheaves are vector bundles. This property has strong consequences: these vector bundles possess well-defined ranks, degrees, and Chern classes that can be computed explicitly, thereby bridging the geometry of $\overline{\M}_{g,n}$ and representation theory. This framework feeds directly into cohomological field theories and modular functors.

Sheaves of coinvariants for admissible modules over a strongly rational VOA (that is, a rational, $C_2$-cofinite, and self-contragredient VOA) are known to satisfy the factorization theorem and form vector bundles over the moduli stack $\overline{\M}_{g,n}$~\cite{DGT2}.

Affine VOAs at non-integral admissible levels are prominent examples outside of the classical framework because they are neither $C_2$-cofinite nor rational. Far from being pathological, these VOAs possess highly regular structures: they are quasi-lisse~\cite{A15, AvEM22}, they have a semisimple category $\O_k$~\cite{A16}, and the subcategory $\O_{k,\Ord}$ of ordinary modules carries a braided tensor structure~\cite{creutzig2018braided} and is rigid in types $ADE$~\cite{Creutzig19}. 
Therefore, they are ideal candidates to test which geometric properties survive. As our main theorem shows, after restricting to the category $\O_{k,\Ord}$ of ordinary modules, the sheaves of coinvariants and conformal blocks form vector bundles in genus zero.

\begin{thmalph}[Theorem~\ref{thm:mainA}]\label{thm:A}
Let $\g$ be a complex simple Lie algebra, let $L_k(\g)$ be a simple affine VOA at an admissible level $k$, and let $n\geq 3$. For any $n$-tuple $M^\bullet = (M^1,\dots, M^n)$ of $L_k(\g)$-modules in the category $\O_{k,\Ord}$, the sheaf of coinvariants $\VV_{0,n}(L_k(\g),M^\bullet)$ is coherent, locally free, and globally generated over $\overline{\M}_{0,n}$. In particular, the dual sheaf of conformal blocks is a vector bundle on $\overline{\M}_{0,n}$.
\end{thmalph}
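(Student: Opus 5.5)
The plan follows the general criterion of Damiolini--Gibney--Krashen (DGK) for sheaves of coinvariants over $\overline{\M}_{g,n}$: such a sheaf is locally free once three ingredients are available. These are (i) coherence, via finite-dimensionality of coinvariants on all stable pointed curves (smooth and nodal); (ii) a factorization or sewing statement at the boundary divisors; and (iii) a smoothing statement along the formal neighborhood of the boundary, coming from strong unital elements of the mode transition algebra $\fA$. I will verify these three inputs for $V=L_k(\g)$ and modules in $\O_{k,\Ord}$, restricted to genus zero.

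\textbf{Step 1: coherence and finite-dimensionality.} First I will show that for a smooth $n$-pointed rational curve, the coinvariant space $[M^1\otimes\cdots\otimes M^n]_{\g\otimes \C[\P^1\setminus\{p_i\}]}$ is finite-dimensional.

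Take $M^i=L(\lambda_i)$ with $\lambda_i$ dominant integral admissible. Propagation of vacua reduces the coinvariant space to a quotient of $L(\lambda_1)^{0}\otimes\cdots\otimes L(\lambda_n)^0$, where $L(\lambda)^0$ is the finite-dimensional lowest-weight space $V(\lambda_i)$. This is the standard argument that $\g\otimes t^{-1}\C[t^{-1}]$ modes can be moved to other points. Its key input is that an ordinary module is generated by its top space and is $\C[t^{-1}]$-finitely generated in a controlled way; one can also use quasi-lisseness (Arakawa--Kawasetsu) to get finiteness.

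Coherence over $\overline{\M}_{0,n}$ then follows by the standard Nakayama argument over the base, using a uniform finite generating set.

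\textbf{Step 2: vanishing and factorization at the boundary.} At a nodal curve $C=C_1\cup_q C_2$, the coinvariants should factor as $\bigoplus_{W}\VV(C_1;M^{I},W)\otimes \VV(C_2;M^{J},W')$. The sum is a priori over all simple modules, but I will prove a vanishing theorem: if $W\in\O_k$ is not ordinary, or more generally not in the admissible list, then the genus-zero coinvariants with $W$ inserted vanish whenever all other insertions are ordinary.

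The route is Zhu-algebra fusion rules. Genus-zero coinvariants with three points are dual to $\Hom_{A(V)}(A(M^1)\otimes_{A(V)}W^0,\,(M^3)^{0*})$-type spaces. Arakawa's classification of $A(L_k(\g))$-modules, together with the fact that $\g$ acts locally finitely on ordinary top spaces, forces $W^0$ to be a finite-dimensional $\g$-module. Hence $W$ is ordinary. For $n>3$ I will reduce to $n=3$ inductively by degeneration. Semisimplicity of $\O_k$ (Arakawa) then gives the direct-sum decomposition with finitely many terms.

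\textbf{Step 3: smoothing.} This is the main obstacle. DGK require that $\fA$, or its degree-$d$ pieces $\fA_d$, admits strong identity elements so that sewing extends to first-order and formal smoothings. For non-rational $V$ no global strong identity exists, since $\fA_0=A(V)$ has infinitely many non-ordinary simple modules. I would instead construct \emph{partial} strong identities: central idempotents $e\in A(L_k(\g))$ cutting out the finitely many ordinary simples $V(\lambda)$, and lifts $e_d\in\fA_d$ acting as identity on the ordinary summands.

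The lifts come from semisimplicity of $\O_k$: the subcategory generated by ordinary modules is a finite semisimple block, and $\fA_d$ restricted to it is $\bigoplus_\lambda \End(L(\lambda)_d)$-like, where unit elements exist. Then the DGK argument, run only over this block (legitimate by Step 2's vanishing), shows that the sheaf over the formal neighborhood of each boundary stratum is the sewing of locally free pieces, hence locally free.

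\textbf{Conclusion.} Combining the three steps, induction on boundary strata gives local freeness everywhere on $\overline{\M}_{0,n}$. Global generation follows because $\overline{\M}_{0,n}$ has trivial-bundle generation from the constant sheaf $M^1\otimes\cdots\otimes M^n\otimes\O$ surjecting onto coinvariants, as in Fakhruddin's genus-zero argument. Dualizing gives the conformal block bundle.
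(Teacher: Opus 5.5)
Your architecture matches the paper's. Coherence and global generation come from degree-one strong generation. A vanishing theorem feeds a restricted factorization. Partial strong identities $\I'_d$, with $\fA'_d\cong\prod_{\la\in\Pr^k_\Z}\End_\C(\hat{L}_k(\la)(d))$ built from semisimplicity of $\O_k$, let the smoothing and Nakayama argument run on $\fA'$ alone.

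\textbf{The gap: what you factorize over.} It sits in Step~2 and determines what the vanishing must kill. For a non-rational $V$ no factorization of the form $\bigoplus_W$ over simple modules is available. What is available (for $C_1$-cofinite $V$, which $L_k(\g)$ is) is the factorization with the bimodule $\fA=\Phi^\L(\Phi^\R(\A))$ inserted at $Q_\pm$. Under $\A\cong\prod_{[\la]\in[\Pr^k]}\A_\la$ this becomes a finite sum over blocks of the Zhu algebra, the block $[\mu]$ contributing $[M^\bullet_+\o\Phi^\L(\A_\mu)]\o_{\A_\mu}[{}^\theta\Phi^\R(\A_\mu)\o M^\bullet_-]$. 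For $\mu\in\Pr^k\bs P$, the module $\Phi^\L(\A_\mu)$ has top space $U(\g)/J_\mu$. This is infinite-dimensional and not in the BGG category $\O$, since $\h$ does not act semisimply on it. So $\Phi^\L(\A_\mu)\notin\O_k$, it is not simple, and semisimplicity of $\O_k$ says nothing about these summands. A vanishing theorem for non-ordinary $W\in\O_k$ therefore does not remove them. You need vanishing with $W=\Phi^\L(\A_\mu)$ itself inserted, on smooth curves and also on nodal ones, because after cutting one node the pieces may still be nodal. The paper handles the nodal case by induction, cutting off a leaf component that does not carry the non-ordinary point (Lemma~\ref{lm:nodal_vanishing}).

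\textbf{The vanishing argument as written.} Your route through the three-point Zhu-bimodule formula does not hold up:
\begin{itemize}
\item that formula has hypotheses you have not checked for an insertion that is neither simple nor of finite-dimensional top;
\item ``forces $W^0$ to be finite-dimensional'' is not yet an argument;
\item reducing $n>3$ to $n=3$ by degeneration would need coherence and semicontinuity for a sheaf with an infinite-dimensional top inserted, which degree-one generation does not provide.
\end{itemize}

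\textbf{The fix.} None of that detour is needed. Your Step~1 surjection only requires each module to be generated in degree zero, not to have finite-dimensional top. So it applies to $\Phi^\L(\A_\mu)$ and gives, for every $n$ and every smooth genus-zero curve, a surjection $\A_\mu\o_{U(\g)}\bigl(L(\la_1)\o\cdots\o L(\la_n)\bigr)\twoheadrightarrow[\Phi^\L(\A_\mu)\o M^\bullet]$, with $\A_\mu$ a right module via the antipode $\theta_\g$. The left side is zero. Decompose the finite-dimensional tensor product into $L(\nu)$ with $\nu\in P_+$. The center acts on $\A_\mu\o_{U(\g)}L(\nu)$ both by $\gamma_\mu\circ\theta_\g=\gamma_{-w_0(\mu)}$ and by $\gamma_\nu$, and these differ because $-w_0(\mu)\notin P\supset W\circ\nu$. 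Alternatively, a nonzero element would produce a nonzero finite-dimensional module over the infinite-dimensional simple algebra $\A_\mu$, which is impossible. This is Theorem~\ref{thm:fusion_closedness}. With it and the nodal extension, your Step~3 and conclusion go through as in the paper.

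A minor point: global generation uses the finite-dimensional sheaf $M^1(0)\o\cdots\o M^n(0)\o\O$, not $M^1\o\cdots\o M^n\o\O$.
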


The proof replaces strong rationality with three structural pillars. First, degree-one strong generation ensures the coherence of ordinary-module coinvariants in genus zero~\cite[Corollary A]{DG}. Second, the central character decomposition of the Zhu algebra~\cite[Theorem 3.4]{AvE} and the complete reducibility of the modules in $\O_k$~\cite[Main theorem]{A16} are used to show that the factorization theorem descends to the subcategory $\O_{k,\Ord}$ of ordinary modules. Finally, we prove that the smoothing morphism is well-defined on the subcategory $\O_{k,\Ord}$ via partial strong identity elements, even though strong identity elements need not exist in the full mode transition algebra $\fA$.

\subsection{Outline of the proof}

We first fix the notation. Let $\g$ be a complex simple Lie algebra and let $L_k(\g)$ be a simple affine VOA at an admissible level $k$. The category of admissible $L_k(\g)$-modules is denoted by $\Adm(L_k(\g))$, $\O_k\subset \Adm(L_k(\g))$ is the associated BGG category $\O$ \cite{A16} and $\O_{k,\Ord}\subset \O_k$ is the subcategory of ordinary modules. We write $(C,P_\bullet,t_\bullet)$ for a curve $C$ together with marked points $P_\bullet = (P_1, \dots, P_n)$ and formal coordinates $t_\bullet = (t_1, \dots, t_n)$ at those points. Attached to $C\setminus P_\bullet$ is a chiral Lie algebra acting on the tensor product $M^1\otimes\cdots\otimes M^n$, which we also denote by $M^\bullet$, and the space of coinvariants $\VV(V,M^\bullet)_{(C,P_\bullet,t_\bullet)}$ is the cokernel of that action.

The logical structure of the proof of Theorem~\ref{thm:A} is summarized in Figure~\ref{fig:proof_columns}, organized around the three pillars announced above: \emph{coherence}, \emph{factorization}, and \emph{smoothing}.

\textbf{Coherence.} Since $L_k(\g)$ is strongly generated in degree one and its degree-one subspace is the finite-dimensional simple Lie algebra $\g$, the sheaf of coinvariants attached to modules in $\O_{k,\Ord}$ is coherent over $\overline{\M}_{0,n}$ by~\cite[Corollary A]{DG}. Coherence is used at two points of the argument: it converts constancy of the fiber dimension into local freeness (Proposition~\ref{prop:flatness}), and it allows Nakayama's lemma to lift a factorization isomorphism on the central fiber of a smoothing family to an isomorphism over the whole family (Proposition~\ref{prop:smoothing}).

\textbf{Factorization.} The rationality of $L_k(\g)$ in the category $\O_k$~\cite[Main theorem]{A16} lets us identify $\O_{k,\Ord}$ with a subcategory of modules over the Zhu algebra $\A$ (Corollary~\ref{coro:equiofcats}), reducing statements about infinite-dimensional $L_k(\g)$-modules to statements about the finite-dimensional $\g$-modules $L(\la)$, $\la \in \Pr^k \cap P$. Combined with the central character decomposition of $\A$~\cite[Theorem 3.4]{AvE}, this equivalence drives our vanishing theorem (Theorem~\ref{thm:fusion_closedness}): genus-zero coinvariants vanish whenever a non-ordinary module is inserted alongside ordinary ones, because the corresponding central characters cannot match. Iterating this vanishing statement over the nodes of a stable curve produces the restricted factorization theorem (Theorem~\ref{thm:restricted_factorization}), which shows that the usual factorization formula for coinvariants---a sum over \emph{all} simple modules of $L_k(\g)$---collapses to a sum over the ordinary weights $\Pr^k \cap P$ alone once every inserted module is ordinary.

\textbf{Smoothing.} The remaining obstruction is that the classical smoothing morphism, which relates coinvariants on a family of curves to coinvariants on its central fiber, was only constructed when the VOA admits strong identity elements in the full mode transition algebra $\fA$. We circumvent this requirement by working with the smaller subcategory $\O_{k,\Ord}\subset \O_k$ of ordinary modules. Using the equivalence of categories for $\O_{k,\Ord}$, we construct \emph{partial} strong identity elements in the mode transition algebra $\fA' \subset \fA$ (Proposition~\ref{prop:partial_strong_identity}). By Definition/Lemma~\ref{def:strong_identity}, these satisfy the strong identity equations, which is all that the criterion of~\cite[Proposition 5.1.2]{DGK2} requires, so the smoothing morphism is already well-defined (Proposition~\ref{prop:smoothing_well_defined}).

Theorem~\ref{thm:A} is then proved as follows. On the central fiber of a one-parameter smoothing of a nodal curve, the restricted factorization theorem identifies the coinvariants of the nodal curve with those of its normalization, with the ordinary bimodule $\fA'$ inserted at the two preimages of the node. The partial strong identity elements define a smoothing morphism over the family whose restriction to the central fiber is precisely this factorization isomorphism; by Proposition~\ref{prop:smoothing}, which rests on coherence, the smoothing morphism is therefore an isomorphism of sheaves over the whole family. This is the smoothing theorem (Theorem~\ref{thm:main_smoothing}). Since $\overline{\M}_{0,n}$ is stratified by the number of nodes, induction on this stratification shows that the dimension of the space of coinvariants $[M^\bullet]_{(C,P_\bullet)}$ is constant across $\overline{\M}_{0,n}$, which by Proposition~\ref{prop:flatness} is equivalent to local freeness of the sheaf of coinvariants---establishing Theorem~\ref{thm:A}.

\begin{figure}[htbp]
\centering
\begin{tikzpicture}[
    lane/.style={fill=black!3, rounded corners=8pt},
    input/.style={
        draw=cyan!60!black,
        fill=cyan!4,
        rectangle,
        rounded corners=5pt,
        minimum width=4.6cm,
        minimum height=1.3cm,
        text width=4.3cm,
        align=center,
        font=\small,
        line width=0.7pt
    },
    ours/.style={
        draw=violet!60!black,
        fill=violet!4,
        rectangle,
        rounded corners=5pt,
        minimum width=4.6cm,
        minimum height=1.3cm,
        text width=4.3cm,
        align=center,
        font=\small,
        line width=0.7pt
    },
    mainthm/.style={
        draw=orange!80!black,
        fill=yellow!15,
        rectangle,
        rounded corners=6pt,
        minimum width=4.6cm,
        minimum height=1.5cm,
        text width=4.3cm,
        align=center,
        font=\small\bfseries,
        line width=1.3pt
    },
    arrow/.style={-{Stealth[scale=1.15]}, thick, draw=black!55, rounded corners=5pt}
]

    \fill[lane] (-2.45, 2.3) rectangle (2.45, -5.0);
    \fill[lane] (2.9, 2.3) rectangle (7.8, -11.1);
    \fill[lane] (8.25, 2.3) rectangle (13.15, -5.0);

    \node[font=\footnotesize\bfseries, text=black!60, align=center] at (0, 1.7) {COHERENCE};
    \node[font=\footnotesize\bfseries, text=black!60, align=center] at (5.35, 1.7) {FACTORIZATION};
    \node[font=\footnotesize\bfseries, text=black!60, align=center] at (10.7, 1.7) {SMOOTHING};

    \node[input] (c1_1) at (0, 0.7) {Degree 1 Strong\\Generation~\cite{FZ92}};
    \node[input] (c1_2) at (0, -3.5) {Coherence on\\$\overline{\M}_{0,n}$~\cite{DG}};

    \node[input] (c2_1) at (5.35, 0.7) {Semisimplicity of\\Category $\O_k$~\cite{A16}};
    \node[ours] (c2_2) at (5.35, -1.4) {Equivalence of Categories\\ for $\O_{k}$ (Thm~\ref{thm:equiofcats})};
    \node[ours] (c2_3) at (5.35, -3.5) {Vanishing Theorem\\for $\O_{k,\Ord}$ (Thm~\ref{thm:fusion_closedness})};
    \node[ours] (c2_4) at (5.35, -5.6) {Restricted Factorization\\ in Genus Zero (Thm~\ref{thm:restricted_factorization})};
    \node[ours] (c2_5) at (5.35, -7.7) {Smoothing Theorem\\in Genus Zero (Thm~\ref{thm:main_smoothing})};
    \node[mainthm] (c2_6) at (5.35, -10.0) {THEOREM~\ref{thm:A}\\[3pt]\small $\VV_{0,n}$ \textmd{is a globally}\\ \textmd{generated vector bundle}};

    \node[input] (c3_1) at (10.7, 0.7) {Zhu Algebra\\Decomposition~\cite{AvE}};
    \node[ours] (c3_2) at (10.7, -1.4) {Existence of the\\Partial Strong Identity\\Elements (Prop~\ref{prop:partial_strong_identity})};
    \node[ours] (c3_3) at (10.7, -3.5) {Well-Defined Smoothing\\Map~\cite{DGK2}\\(Prop~\ref{prop:smoothing_well_defined})};


    \draw[arrow] (c1_1) -- (c1_2);
    \draw[arrow] (c1_1.south) |- (c2_2.west);
    \draw[arrow] (c1_2.south) |- (c2_5.west);
    \draw[arrow] (c1_2.south) |- (c2_6.west);

    \draw[arrow] (c2_1) -- (c2_2);
    \draw[arrow] (c2_2) -- (c2_3);
    \draw[arrow] (c2_3) -- (c2_4);
    \draw[arrow] (c2_4) -- (c2_5);
    \draw[arrow] (c2_5) -- (c2_6);

    \draw[arrow] (c2_2.east) -- (c3_2.west);
    \draw[arrow] (c3_1) -- (c3_2);
    \draw[arrow] (c3_1.west) -- ++(-0.4,0) |- (c2_3.east);
    \draw[arrow] (c3_2) -- (c3_3);
    \draw[arrow] (c3_3.south) |- (c2_5.east);
\end{tikzpicture}
\caption{Logical structure of the proof of Theorem~\ref{thm:A}; arrows indicate logical dependence. Cyan boxes are results established in the literature, violet boxes are original results of this paper, and the highlighted bottom box is the main theorem.}
\label{fig:proof_columns}
\end{figure}

\subsection{Consequences}

Because Theorem~\ref{thm:A} produces an honest vector bundle rather than merely a coherent sheaf, the invariants attached to $\VV_{0,n}(L_k(\g),M^\bullet)$---its rank and its first Chern class---are well-defined; in particular the rank is constant across all of $\overline{\M}_{0,n}$, including the boundary strata where the curve degenerates. In Section~\ref{sec:fusion_rules} we pursue this explicitly: the restricted factorization theorem (Theorem~\ref{thm:restricted_factorization}) expresses the rank as a sum, over labelings of a trivalent tree by ordinary admissible weights, of products of three-point fusion numbers (Theorem~\ref{thm:rank_formula}), and the partial strong identity elements make the boundary residues of the projectively flat connection computable, showing that the classical degree formula, previously available only for strongly rational VOAs, remains valid in this setting (Proposition~\ref{prop:degree_formula}). For simply-laced $\g$ we show that the resulting classes are proportional to the classical ones at the integrable level $k'=p-h^\vee$, the conformal dimensions being uniformly rescaled by $q$ (Proposition~\ref{prop:qscaling}). This gives explicit rank and degree formulas for coinvariant bundles attached to a non-rational, non-$C_2$-cofinite VOA, a setting in which such formulas were previously out of reach.

More broadly, Theorem~\ref{thm:A} shows that the vector-bundle behavior underlying cohomological field theories and modular functor constructions does not require the full strength of strong rationality: it suffices that the category of inserted modules be closed under fusion in the sense of the vanishing theorem (Theorem~\ref{thm:fusion_closedness}), namely that genus-zero coinvariants vanish as soon as a module from outside the category is inserted alongside modules from inside it. We expect the same strategy---coherence, a central-character vanishing theorem for restricted factorization, and partial strong identity elements for smoothing---to apply to other VOAs whose ordinary modules form a well-behaved subcategory; we return to this direction in the Discussion (Section~\ref{sec:discussion}).

\tableofcontents

\section{Preliminaries}\label{sec:prelim}
In this section, we recall the basic definitions and constructions for vertex operator algebras~\cite[Sections 3 and 4]{LL04}.
\subsection{Vertex operator algebras}

\begin{definition}
    A \textbf{(CFT-type) vertex operator algebra (VOA)} is a quadruple $(V,\vac,\omega,Y(\cdot, z))$, where
    \begin{itemize}
        \item $V = \bigoplus_{n\in \N} V_n$ is a non-negatively graded vector space with $\dim V_n < \infty$;
        \item $ V_0=\C\vac$, where $\vac$ is the vacuum vector;
        \item $\omega$ is an element of $V_2$ called the conformal vector;
        \item $Y(\cdot, z): V \rightarrow \End(V)\lb z, z^{-1} \rb $ is a linear map $a\mapsto Y(a,z) := \sum_{m \in \Z} a_{(m)}z^{-m-1}$, where $Y(a,z)$ is called the vertex operator associated with $a$,
    \end{itemize}
    satisfying the following axioms:
    \begin{itemize}
        \item For all $a,b\in V$, $a_{(m)}b=0$ for $m\gg0$;
        \item $Y(\vac,z)=\id_V \in \End V \lb z,z^{-1} \rb$ and $Y(a,z)\vac \in a + zV\lb z \rb$;
        \item For all $a,b \in V$, the corresponding vertex operators are local, that is, there is $N \in \N$ such that
        \begin{equation}
            (z_1-z_2)^N[Y(a,z_1), Y(b,z_2)] = 0 \in \End V \lb z_1,z_1^{-1}, z_2, z_2^{-1} \rb.
        \end{equation}
        \item For all $a,c\in V$ there is $N\in\N$ such that for any $b\in V$ we have the weak associativity of vertex operators
        \begin{equation}
            (z_1+z_2)^N Y(Y(a,z_1)b,z_2)c = (z_1+z_2)^N Y(a,z_1+z_2)Y(b,z_2)c.
        \end{equation}
        \item For $Y(\omega,z) = \sum_{m \in \Z} \omega_{(m)}z^{-m-1}=\sum_{n \in \Z} L(n)z^{-n-2}$, with $\om_{(n+1)}=L(n)$, the Virasoro algebra relations hold
        \begin{equation}
            [L(m),L(n)] = (m-n) L(m+n) + \frac{c}{12} \delta_{m+n,0} (m^3-m) \id_V,
        \end{equation}
        where $c\in \C$ is the central charge of the VOA $V$. Moreover, for all homogeneous $a\in V$,
        \begin{equation}
            \begin{split}
              L(0) a &= (\wt a)a,\\
                Y(L(-1)a,z) &= \frac{d}{dz} Y(a,z).
            \end{split}
        \end{equation}
    \end{itemize}
\end{definition}
\begin{remark}
Weak associativity together with locality (weak commutativity) is equivalent to the formal Jacobi identity~\cite[Proposition 3.4.3]{LL04}:
    \begin{equation}
        \begin{split}
            &z_0^{-1} \delta\left( \frac{z_1-z_2}{z_0}\right) Y(a,z_1)Y(b,z_2) - z_0^{-1} \delta\left( \frac{z_2-z_1}{-z_0}\right) Y(b,z_2)Y(a,z_1) \\&= z_2^{-1} \delta\left(\frac{z_1-z_0}{z_2}\right) Y(Y(a,z_0)b,z_2),
        \end{split}
    \end{equation}
    where $\delta(z):= \sum_{n\in \Z} z^n$. The formal Jacobi identity also has the equivalent component form
    \begin{equation}\label{eq:componentJacobi}
    \sum_{i \ge 0} (-1)^i \binom{l}{i} \left( a_{(m+l-i)} b_{(n+i)} - (-1)^l b_{(n+l-i)} a_{(m+i)} \right) = \sum_{i \ge 0} \binom{m}{i} (a_{(l+i)}b)_{(m+n-i)},
\end{equation}
for all $a, b \in V$ and integers $l, m, n\in\Z$.
\end{remark}
\begin{definition}[{\cite[p.2]{DLM2}}]
    A \textbf{weak $V$-module} is a $\C$-vector space $W$ together with a linear map
    \begin{equation}
        \begin{split}
            Y^W(\cdot, z): V \rightarrow \End W\lb z, z^{-1} \rb,\\
            a \mapsto Y^W(a,z):=\sum_{m \in \Z} a^W_{(m)}z^{-m-1},
        \end{split}
    \end{equation}
    such that $a^W_{(m)}w=0$ for $m\gg 0$ (for all $a\in V$, $w\in W$), $Y^W(\vac,z)=\id_W$, and the formal Jacobi identity of the preceding remark holds with $Y(a,z_1)$, $Y(b,z_2)$ and $Y(Y(a,z_0)b,z_2)$ replaced by $Y^W(a,z_1)$, $Y^W(b,z_2)$ and $Y^W(Y(a,z_0)b,z_2)$, respectively.
\end{definition}

We briefly recall the following notation and constructions associated with a VOA $V$; see~\cite{borcherds86,FZ92,FHL93,DLM,zhu1996modular} for more details.
\begin{enumerate}
\item The category $\Adm(V)$ of \textbf{admissible} $V$-modules~\cite[Definition 2.3]{DLM2} consists of weak $V$-modules $W$ that admit a grading $W = \bigoplus_{n \in \N} W(n)$ such that $a^W_{(m)} W(n) \subset W(n+\wt a - m -1)$ for every homogeneous $a\in V$. In particular, the operator $a^W_{(n)}\in \End(W)$ has degree $\deg(a_{(n)})=\wt a-n-1$; $V$ is called \textbf{rational} if $\Adm(V)$ is semisimple.
\item The category $\Ord(V)$ of ordinary $V$-modules~\cite[Definition 2.1]{DLM2} consists of weak $V$-modules $M$ that admit a direct sum decomposition $M=\bigoplus_{\la\in \C} M_\la$ such that $M_\la$ is a finite-dimensional eigenspace for $L(0)$ with eigenvalue $\la\in \C$ and, for any $\la\in \C$, $M_{\la+n}=0$ for $n\in \Z$ and $n\ll0$; in particular, any ordinary module is admissible~\cite[Remark 2.4]{DLM2}. If there is a complex number $\la_M$ such that $L(0)v = (\la_M + \deg (v)) v$ for every homogeneous $v \in M$, then $\la_M$ is called the \textbf{conformal dimension} of $M$. Moreover, any simple ordinary $V$-module has a conformal dimension~\cite[p.128]{FZ92}.
\item For $n\geq 2$, $V$ is called \textbf{$C_n$-cofinite}~\cite[p.10]{Li1999} if $C_n(V):=\operatorname{span}_\C\{a_{(-n)}b\mid a,b\in V\}$ has finite codimension in $V$. For $n=1$ the same span is all of $V$, since $\vac_{(-1)}b=b$; following~\cite{KarelLi99}, one sets instead $C_1(V):=\operatorname{span}_\C\{a_{(-1)}b,\ L(-1)a\mid a,b\in V_+\}$, where $V_+=\bigoplus_{d>0}V_d$, and $V$ is called \textbf{$C_1$-cofinite} if $C_1(V)$ has finite codimension in $V$. If $V$ is of CFT-type and strongly generated by $V_1$, then $V$ is $C_1$-cofinite: a monomial $a^1_{(-n_1)}\cdots a^r_{(-n_r)}\vac$ with $a^s\in V_1$, $n_s\geq 1$, $r\geq 1$, lies in $V_1$ if $r=1$ and $n_1=1$, and otherwise in $C_1(V)$, since $a_{(-n)}b=\frac{1}{(n-1)!}\big(L(-1)^{n-1}a\big)_{(-1)}b$ with $L(-1)^{n-1}a\in V_+$; hence $V=\C\vac+V_1+C_1(V)$.
\item The \textbf{Borcherds Lie algebra} of $V$~\cite[Section 8]{borcherds86} is given by
\[	L(V)=(V\o \C[t,t^{-1}])/\nabla (V\o \C[t,t^{-1}]),\]
	where $\nabla =L(-1)\o \Id+\Id\o (d/dt)$. Denote $a_{[n]}=a\o t^n+\im(\nabla)$. Then the Lie bracket of $L(V)$ is given by
    \[
    	[a_{[m]},b_{[n]}]=\sum_{j\geq0} \binom{m}{j} (a_{(j)}b)_{[m+n-j]},\quad a,b\in V,\ m,n\in \Z.
    \]
We write $\deg(a_{[n]})=\wt a-n-1$ for all $a_{[n]}\in L(V)$.
\item The \textbf{Zhu algebra $\A=A(V)$} of $V$~\cite[Theorem 2.1.1]{zhu1996modular} is the quotient space
$\A=V/O(V),$ where
\[	O(V)=\spn\left\{ a\circ b=\Res_{z=0}Y(a,z)b\frac{(1+z)^{\wt a}}{z^2}: a,b\in V\right\}.
		\]
Denote $[a]=a+O(V)$ in $\A$. Then $\A$ is an associative algebra with respect to the product
		\begin{equation}\label{eq:prodAV}
			[a]\ast [b]:=\Res_{z=0} [Y(a,z)b]\frac{(1+z)^{\wt a}}{z},\quad a,b\in V.
		\end{equation}
		Denote the category of left $\A$-modules by $\mathsf{Mod}(\A)$.
\end{enumerate}

\begin{remark}
It is convenient to use the notation
\[
J_n(a):=a_{[\wt a+n-1]},\quad a\in V,\ n\in \Z,
\]
since $\deg(J_n(a))=-n$. In the rest of the paper, we use the two notations $J_n(a)$ and $a_{[m]}$ for the mode operators interchangeably.  
\end{remark}

\subsection{Mode transition algebras associated with a VOA}
In this subsection, we recall the construction of the universal enveloping algebra $\scrU=\scrU(V)$ associated with a VOA $V$; see~\cite{FZ92,DGK2}. We mainly adopt the construction and notation of~\cite[Section 2]{DGK2}.
\begin{definition}
Let $V$ be a vertex algebra. The \textbf{left, right, and finite ancillary Lie algebras}---denoted $\mathcal{L}(V)^\L$, $\mathcal{L}(V)^\R$, and $\mathcal{L}(V)^f$, respectively---are defined as the quotient spaces
\begin{equation}
    \mathcal{L}(V)^{\Box} = \frac{V \otimes S}{\im \nabla}, \qquad \text{where } \nabla = \omega_{(0)} \otimes \id_S + \id_V \otimes \frac{d}{dt},
\end{equation}
and the coefficient space $S$ is chosen to be $\mathbb{C}((t))$, $\mathbb{C}((t^{-1}))$, and $\mathbb{C}[t,t^{-1}]$, for $\Box = \L,\R,f$, respectively. Denoting the equivalence class of an element $a \otimes t^n \in V \otimes S$ by $a_{[n]}$, the Lie bracket is defined as
\begin{equation}
    [a_{[n]}, b_{[m]}] := \sum_{k \ge 0} \binom{n}{k} (a_{(k)}b)_{[n+m-k]},
\end{equation}
for all $a, b \in V$ and $n, m\in\Z$. By construction, $\mathcal{L}(V)^f$ is isomorphic to the Borcherds Lie algebra $L(V)$.
\end{definition}

\begin{remark}
    There are natural inclusions $\LL(V)^\L \hookleftarrow \LL(V)^f \hookrightarrow \LL(V)^\R$. We write $U^\L,U^\R,U^f$ for the universal enveloping algebras of $\LL(V)^\L,\LL(V)^\R, \LL(V)^f$. These are \emph{not} what we call the universal enveloping algebra of $V$: the latter is the completion $\scrU(V)$ constructed below.
\end{remark}

	Write $\U:=U^f=U(L(V))$ for short. The degrees of the elements in the Borcherds Lie algebra $L(V)$ give rise to a graded algebra structure on $\U$:
	\[
	\U=\bigoplus_{d\in \Z} \U_d,\quad \U_d=\spn\left\{a^1_{[n_1]}\cdots a^r_{[n_r]}\in \U: r\geq 0,\ \sum\limits_{i=1}^{r}\left(\wt a^i-n_i-1\right)=d \right\}.
	\]
	Let $\U_{\leq -n}=\sum_{d\leq -n} \U_d$, which makes $\U$ a split-filtered associative algebra $\U=\bigcup_{n\in \Z} \U_{\leq -n}$. Define
	\begin{equation}\label{eq:nei}
		\rN^n_\L \U=\U\cdot \U_{\leq -n}=\U\cdot L(V)_{\leq -n},\quad 	\rN^n_\R \U=\U_{\geq n}\cdot \U=L(V)_{\geq n}\cdot \U,
	\end{equation}
	where $L(V)_{\leq -n}=\spn\{a_{[k]}\in L(V): \deg(a_{[k]})\leq -n\}$; see~\cite[Lemma 2.4.2]{DGK2}. The subspace $L(V)_{\geq n}$ is defined similarly. Since the identity $1\in \U_0$ is contained in $\U_{\leq 0}$ and $\U_{\geq 0}$, we have $\rN^n_\L \U=\U=\rN^n_\R \U	$ if $n\leq 0$.

	The left ideals $\{\rN^n_\L \U:n\in \Z_{\geq 0}\}$ form a system of neighborhoods of $0$ in $\U$, which gives a canonical seminorm on $\U$; see~\cite[Definition A.6.1]{DGK2}. One can restrict these seminorms to the graded parts $\U_d$ of $\U$:
	\begin{equation}\label{eq:completion}
		\rN^n_\L \U_d:=(\U\cdot \U_{\leq -n})_d=\sum_{j\leq -n} \U_{d-j}\cdot \U_j,\quad  	\rN^n_\R \U_d=(\U_{\geq n}\cdot \U)_d=\sum_{i\geq n} \U_i\cdot \U_{d-i}.
	\end{equation}
	In particular, $\rN^n_\L \U_d=\rN^{n+d}_\R \U_d$ for any $d\in \Z$. Define the completion
	\[
	\widehat{\U}_d:=\varprojlim_n\frac{\U_d}{\rN^n_\L \U_d}=\varprojlim_n\frac{\U_d}{\rN^{n+d}_\R \U_d}\quad \text{and}\quad \widehat{\U}:=\bigoplus_{d\in \Z} \widehat{\U}_d.
	\]


	Let $J\subset \widehat{\U}$ be the graded ideal generated by the component form~\eqref{eq:componentJacobi} of the Jacobi identity for the Lie algebra elements $a_{[n]}$, and let $\bar{J}\ssq \widehat{\U}$ be the closure of $J$ with respect to the seminorm defined by the image of the neighborhoods~\eqref{eq:nei} in $\widehat{\U}$. Define
	\[\scrU=\scrU(V):=\widehat{\U}/\bar{J}=\bigoplus_{d\in \Z} \scrU_d.\]
	Then $\scrU$ is a graded complete seminormed associative algebra with respect to the canonical seminorm induced by the image of neighborhoods~\eqref{eq:nei}. $\scrU=\scrU^f$ is called the \textbf{(finite) universal enveloping algebra of the VOA $V$}. The left and right neighborhoods at $0$ of $\scrU$ are given by
	\begin{equation}\label{eq:nebors}
		\rN^n_\L \scrU=\scrU\cdot \scrU_{\leq -n}\quad \text{and}\quad  	\rN^n_\R \scrU=\scrU_{\geq n}\cdot \scrU,
	\end{equation}
	with $	\rN^{n+1}_\L \scrU_0=\sum_{j \geq n+1} \scrU_j\cdot \scrU_{-j}=	\rN^{n+1}_\R \scrU_0$, for any $n\geq 0$.

	The contragredient module action map $\theta$~\cite{FHL93} induces an anti-involution of topological associative algebras, defined for homogeneous $a\in V$ by
	\begin{equation}\label{eq:involution}
		\theta: \scrU\ra \scrU^{\mathrm{op}},\quad \theta(a_{[n]})=\sum_{j\geq 0}\frac{(-1)^{\wt a}}{j!} (L(1)^ja)_{[2\wt a-n-j-2]},
	\end{equation}
	such that $\theta(\scrU_n)=\scrU_{-n}$ and $\theta(\rN^n_\L \scrU)=\rN^n_\R \scrU$, for any $n\in \N$.

	\begin{lemma}[{\cite[Section 1.4]{FZ92}, \cite[Lemma 2.5.2]{DGK2}}]\label{lm5.1}
		The Zhu algebra $\A=A(V)$ can be identified with $\scrU_0/\rN^1_\L\scrU_0$ as an associative algebra. Moreover, the anti-involution $\theta:\scrU\ra \scrU^{\mathrm{op}}$~\eqref{eq:involution} induces an anti-involution of $\A$~\cite[(2.1.9)]{zhu1996modular}
        \[
        \theta: \A\ra \A^{\mathrm{op}},\quad \theta([a])=[e^{L(1)}(-1)^{L(0)}a],
        \]
        for any $[a]=a+O(V)\in \A$.
	\end{lemma}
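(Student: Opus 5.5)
The plan is to compare the two presentations of the Zhu algebra: Zhu's original one, $\A = V/O(V)$ with the product \eqref{eq:prodAV}, and the one obtained from the degree-zero part of the enveloping algebra, $\scrU_0/\rN^1_\L\scrU_0$. We first define a linear map
\[
\phi: V\to \scrU_0/\rN^1_\L\scrU_0,\qquad a\mapsto \big[o(a)\big]=\big[a_{[\wt a-1]}\big]=\big[J_0(a)\big],
\]
extended linearly from homogeneous vectors. Our strategy is then to show three things in turn: $\phi$ is surjective, $\phi$ kills $O(V)$, and the induced map $\A\to \scrU_0/\rN^1_\L\scrU_0$ is injective and multiplicative.

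\textbf{Surjectivity.} A monomial in $\scrU_0$ has the form $a^1_{[n_1]}\cdots a^r_{[n_r]}$ with total degree $0$. We use the bracket relation to reorder the modes until all negative-degree modes stand on the right. Any monomial whose rightmost factor has negative degree lies in $\rN^1_\L\scrU_0$. After such reordering, a degree-zero monomial is congruent modulo $\rN^1_\L$ either to a product of degree-zero modes or to one that dies. Finally we use the Jacobi identity \eqref{eq:componentJacobi} in $\scrU$ to show that
\[
o(a)\,o(b)\equiv o(a\ast b)\pmod{\rN^1_\L\scrU_0}.
\]
This comes from the standard Frenkel--Zhu computation: in the expansion of $o(a)o(b)$ as a normal-ordered product, the terms with a positive-degree mode on the right vanish, leaving $\sum_j\binom{\wt a}{j}o(a_{(j-1)}b)$, which is $o(a\ast b)$. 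The same identity gives multiplicativity of $\phi$.

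\textbf{Vanishing on $O(V)$.} We must show $o(a\circ b)\in\rN^1_\L\scrU_0$. The Jacobi identity expresses $o(a\circ b)$ as a convergent sum of products $a_{[p]}b_{[q]}$ and $b_{[q]}a_{[p]}$ in which the right factor has negative degree. Each such product lies in $\rN^1_\L\scrU_0$, and the sum converges in $\scrU$ because of the completion. Hence $\phi$ descends to a surjective algebra map $\bar\phi:\A\to \scrU_0/\rN^1_\L\scrU_0$.

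\textbf{Injectivity.} This is the main obstacle. We plan to construct an inverse by producing a module. Take the generalized Verma module $M(\A)=\scrU\otimes_{\scrU_{\le 0}}\A$, or equivalently Zhu's induced admissible module $\bar M(\A)$, whose degree-zero part is $\A$. On $\bar M(\A)$ the algebra $\scrU$ acts continuously, $\rN^1_\L\scrU_0$ annihilates the degree-zero part, and $o(a)$ acts by left multiplication by $[a]$. Acting on $[\vac]$ then defines a map $\scrU_0/\rN^1_\L\scrU_0\to\A$ with $[o(a)]\mapsto[a]$, which is inverse to $\bar\phi$. Setting this up requires Zhu's theorem~\cite[Theorem 2.2.1]{zhu1996modular} that every $\A$-module lifts to an admissible module with the correct bottom degree. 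We take this result as input, following~\cite[Lemma 2.5.2]{DGK2}.

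\textbf{The anti-involution.} Since $\theta(\scrU_0)=\scrU_0$ and $\theta(\rN^1_\L\scrU)=\rN^1_\R\scrU$, and since $\rN^1_\L\scrU_0=\rN^1_\R\scrU_0$ in degree $0$, $\theta$ descends to $\A$. Evaluating \eqref{eq:involution} at $n=\wt a-1$ gives
\[
\theta(o(a))=\sum_j\frac{(-1)^{\wt a}}{j!}\,o(L(1)^ja),
\]
because $L(1)^ja$ has weight $\wt a-j$ and the mode index is $2\wt a-(\wt a-1)-j-2=\wt(L(1)^ja)-1$. The right-hand side equals $o\big(e^{L(1)}(-1)^{L(0)}a\big)$, which gives the stated formula $\theta([a])=[e^{L(1)}(-1)^{L(0)}a]$.
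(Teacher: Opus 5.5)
Your argument is correct and is the standard one behind the references that the paper cites in place of a proof. The map $a\mapsto[o(a)]$ is surjective and multiplicative by reordering modes and using the instance $l=-1$, $m=\wt a$, $n=\wt b-1$ of~\eqref{eq:componentJacobi}; it kills $O(V)$ by the instance $l=-2$, $m=\wt a$, $n=\wt b$; it is injective because Zhu's induced module $\bar M(\A)$, whose bottom degree is $\A$, supplies the left inverse $u\mapsto u\cdot[\vac]$; and evaluating~\eqref{eq:involution} at $n=\wt a-1$ gives the formula for $\theta$.

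Two small corrections. In the injectivity step only the $\bar M(\A)$ option is legitimate: forming $\scrU\otimes_{\scrU_{\le 0}}\A$ with $o(a)$ acting by $[a]$ already requires the algebra map $\scrU_0\to\A$, $o(a)\mapsto[a]$, that you are constructing. Also, in the Frenkel--Zhu computation the terms discarded modulo $\rN^1_\L\scrU_0$ are those whose rightmost mode has \emph{negative} degree, not positive degree.
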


	Note that the neighborhood $\rN^1_\L \scrU$ (resp. $\rN^1_\R \scrU$) contains all the negatively (resp. positively) graded subspaces of $\scrU$; see~\eqref{eq:completion} and~\eqref{eq:nebors}. Hence
	\begin{equation}\label{eq:gradationleftright}
		\scrU/	\rN^1_\L \scrU=\bigoplus_{n=0}^\infty(\scrU/	\rN^1_\L \scrU)_n,\quad \scrU/\rN^1_\R \scrU=\bigoplus_{m=0}^\infty (\scrU/\rN^1_\R \scrU)_{-m}.
	\end{equation}
	In particular, for any $m,n\geq 0$, we have
	\begin{align*}
		&(\scrU/	\rN^1_\L \scrU)_n=\scrU_n/	\rN^1_\L \scrU_n=\scrU_n \big/\sum_{j\leq -1} \scrU_{n-j}\scrU_j\\
		&=\widehat{\spn}\left\{a^1_{[n_1]}\cdots a^r_{[n_r]}+\rN^1_\L \scrU: \deg(a^i_{[n_i]})\geq 0\ \forall i,\ \sum_{i=1}^r \deg(a^i_{[n_i]})=n \right\},\\
		&(\scrU/	\rN^1_\R \scrU)_{-m}=\scrU_{-m}/	\rN^1_\R \scrU_{-m}=\scrU_{-m}\big/\sum_{j\geq 1} \scrU_{j}\scrU_{-m-j}\\
		&=\widehat{\spn}\left\{b^1_{[m_1]}\cdots b^s_{[m_s]}+\rN^1_\R \scrU: \deg(b^j_{[m_j]})\leq 0\ \forall j,\ \sum_{j=1}^s \deg(b^j_{[m_j]})=-m \right\},
	\end{align*}
    where $\widehat{\spn}$ means the closure of the subspace spanned by the bracketed elements.
	Then $(\scrU/	\rN^1_\L \scrU)_0=\scrU_0/\rN^1_\L \scrU_0\cong \A\cong \scrU_0/\rN^1_\R \scrU_0=(\scrU/	\rN^1_\R \scrU)_{0}$, in view of Lemma~\ref{lm5.1}.

One can similarly define the left and right universal enveloping algebras $\scrU^\L$ and $\scrU^\R$ by replacing the Borcherds Lie algebra $L(V)=\mathcal{L}(V)^f$ by $\mathcal{L}(V)^\L$ and $\mathcal{L}(V)^\R$, respectively; see~\cite[Section 2]{DGK2} for more details.

\begin{definition}[{\cite[Section 6]{DLM}, \cite[Definition 3.1.1]{DGK2}}]\label{def:Verma}
Let $V$ be a VOA, let $\A=A(V)$ be its Zhu algebra, and let $\scrU$ (resp. $\scrU^\L$, $\scrU^\R$) be its finite (resp. left, right) universal enveloping algebra.

Given a left $\A$-module $S$, the \textbf{left generalized Verma module associated with $S$} $\Phi^\L(S)$ is defined as the induced $\scrU$-module; it is an admissible $V$-module:
\begin{equation}\label{eq:PhiL}
    \Phi^\L(S) = \left( \frac{\scrU}{\rN_\L^1 \scrU} \right) \otimes_{\scrU_0} S=\bigoplus_{p=0}^\infty \Phi^\L(S)(p).
\end{equation}
Similarly, for a right $\A$-module $Z$, the \textbf{right generalized Verma module associated with $Z$} $\Phi^\R(Z)$ is defined as the graded right $\scrU$-module
\begin{equation}
    \Phi^\R(Z) = Z \otimes_{\scrU_0} \left( \frac{\scrU}{\rN_\R^1 \scrU} \right)=\bigoplus_{p=0}^\infty \Phi^\R(Z)(-p).
\end{equation}
\end{definition}
The functor $\Phi^\L(-): \Mod(\A)\ra \Adm(V)$ is the left adjoint of the ``highest-weight subspace'' functor $\Om(-):\Adm(V)\ra \Mod(\A)$, where \[\Om(W)=\{w\in W: a_{(n)}w=0\ \text{for all }a\in V,\ n\in \Z\ \text{with}\ \deg a_{(n)}<0\}.\]
In other words, there exists a natural isomorphism
\[
\Hom_{\Adm(V)}(\Phi^\L(S),W)\cong \Hom_{\A}(S,\Om(W)).
\]
\begin{remark}
    We note that a $\scrU$-module $W$ is a $V$-module if for each $w\in W$ and $a\in V$ one has $a^W_{(n)}w = 0$ for $n \gg 0$. For a left generalized Verma module, this condition is automatic since $\rN^1_\L\scrU$ contains all $a_{[n]}$ with $\deg a_{[n]} < 0$.
\end{remark}

\begin{definition}[{\cite[Definition 3.2.1]{DGK2}}]\label{def:mta}
    Let $V$ be a VOA and let $\A$ be its Zhu algebra. The \textbf{mode transition algebra} is defined as a two-sided induced module:
    \[
    \fA:=\Phi^\L(\Phi^\R(\A))=(\scrU/\rN^1_\L\scrU)\o_{\scrU_0}\A\o_{\scrU_0} (\scrU/\rN^1_\R\scrU).
    \]
  	\begin{enumerate}
			\item   The gradings~\eqref{eq:gradationleftright} give rise to a bigrading	$\fA=\bigoplus_{n,m\in \N} \fA_{n,-m}$, where
			\begin{equation}\label{eq:defAn-m}
				\fA_{n,-m}=(\scrU/	\rN^1_\L \scrU)_{n}\o _{\scrU_0} \A\o _{\scrU_0} (\scrU/\rN^1_\R \scrU)_{-m}.
			\end{equation}
			\item 	The associative product $\star$ on $\fA$ satisfies
			\[\fA_{i,-j}\star \fA_{k,-l}\ssq \delta_{j,k}\fA_{i,-l},\quad i,j,k,l\in \N.\]
			More precisely, for $\b^i\o a\o \al^{-j}\in \fA_{i,-j}$ and $\b^k\o b\o \al^{-l}\in \fA_{k,-l}$, we have
			\begin{equation}\label{eq:defofprodonmta}
				\left(\b^i\o a\o \al^{-j}\right) \star \left(\b^k\o b\o \al^{-l}\right)= \b^i\o a\cdot [\al^{-j}\b^k]_0\cdot b\o \al^{-l}\in \delta_{j,k}\fA_{i,-l},
			\end{equation}
				where $[\al^{-j}\b^k]_0$ is the image of $\al^{-j}\b^k\in \scrU_0$ under the epimorphism \[\scrU_0\twoheadrightarrow \A\cong \scrU_0/\rN^1_\L\scrU_0,\quad u\mapsto [u]_0.\] We identify $\A\o_{\scrU_0} \A\o_{\scrU_0} \A\cong \A$ via $a\o  [\al^{-j}\b^k]_0\o b\mapsto a\cdot [\al^{-j}\b^k]_0\cdot b$.
			\item 	In particular,
			$\fA_d=\fA_{d,-d}$ is an associative algebra under $\star$, called the \textbf{$d$-th mode transition algebra} associated with $V$.
			\item 	 $\fA_0=(\scrU/	\rN^1_\L \scrU)_0\o_{\scrU_0} \A\o_{\scrU_0} (\scrU/	\rN^1_\R \scrU)_{0}\cong \A$ as associative algebras, and $\fA_{i,-j}$ is an $(\fA_{i},\fA_j)$-bimodule under $\star$.
		\end{enumerate}
\end{definition}

\subsection{Sheaves of coinvariants and conformal blocks}

To investigate the deformation of coinvariants on families of nodal curves, we first recall the definition of coinvariants and conformal blocks. We adopt the notation in~\cite{DGT2,frenkel_ben_zvi_book,NT} for the definition of coinvariants and conformal blocks.

The group scheme $\Aut \O$ is defined through its functor of points. For a $\C$-algebra $R$, its $R$-points form the group
\begin{equation}
    \Aut \O(R):= \left\{z \mapsto \rho(z) = \sum_{n=1}^{\infty} a_nz^n\mid a_{n} \in R,\ a_1 \in R^\times \right\},
\end{equation}
each such $\rho$ being an $R$-algebra automorphism of $R\lb z\rb$ preserving the ideal $z R\lb z\rb$. Composition defines the group structure.

Let $\M_{g,n}$ be the moduli stack of smooth $n$-pointed curves of genus $g$, and let $\overline{\M}_{g,n}\supset \M_{g,n}$ be its compactification by stable $n$-pointed curves; it is a smooth and proper Deligne--Mumford stack, and its coarse moduli space is projective~\cite{Knudsen1983a,Knudsen1983b}. In the genus-zero case of interest to us, $\overline{\M}_{0,n}$ is a fine moduli space and a smooth projective variety~\cite[Theorem 6.1]{Knudsen1983b}. Let $\widetriangle{\M}_{g,n}$ be the moduli stack of stable $n$-pointed coordinatized curves, that is, of triples $(C, P_\bullet, t_\bullet)$ in which $P_\bullet= (P_1, \dots, P_n)$ is a tuple of points $P_i \in C$ and $t_\bullet = (t_1, \dots, t_n)$ is a tuple of formal coordinates $t_i \in \mathfrak{m}_{P_i} \setminus \mathfrak{m}^2_{P_i}$ at the $P_i$. The group $(\Aut \O)^n$ acts on $\widetriangle{\M}_{g,n}$ by
\begin{equation}
    \widetriangle{\M}_{g,n} \times \left(\Aut \O(\C)\right)^n \rightarrow \widetriangle{\M}_{g,n}, \qquad \big((C,P_\bullet,t_\bullet), \rho_\bullet\big) \mapsto \big(C,P_\bullet,(\rho_1(t_1), \dots, \rho_n(t_n))\big).
\end{equation}
This action makes $\widetriangle{\M}_{g,n}$ an $(\Aut \O)^n$-torsor over $\overline{\M}_{g,n}$, the projection being the morphism that forgets the formal coordinates
\begin{equation}
    \widetriangle{\M}_{g,n} \stackrel{(\Aut \O)^n}{\longrightarrow} \overline{\M}_{g,n}.
\end{equation}
\begin{definition}
    Let $\CCC \rightarrow \S$ be a family of curves. The \textbf{$\Aut \O$-torsor} $\AAut_\CCC \rightarrow \CCC$ is the torsor of formal coordinates along the fibers: for a $\C$-algebra $R$ and an $R$-point $x$ of $\CCC$, the fiber of $\AAut_\CCC$ over $x$ consists of the formal coordinates at $x$ in the fiber of $\CCC \rightarrow \S$ through $x$, and $\Aut \O$ acts on it simply transitively by reparametrization; see~\cite{frenkel_ben_zvi_book} and~\cite[Section 2]{DGT2}.
\end{definition}

\begin{definition}
    Let $V$ be a VOA and let $\CCC \rightarrow \S$ be a family of smooth curves. Then the \textbf{vertex algebra bundle} $V_\CCC$ is defined as the vector bundle associated with $\pi: \AAut_\CCC \rightarrow \CCC$
    \begin{equation}
        V_{\mathscr{C}} := V \times^{\Aut \O} \AAut_\mathscr{C},
    \end{equation}
    and the corresponding sheaf of sections $\mathscr{V}_\CCC$ is called the \textbf{sheaf of vertex algebras}
    \begin{equation}
        \mathscr{V}_\CCC := (V \otimes \pi_* \O_{\AAut_\CCC})^{\Aut \O}.
    \end{equation}
\end{definition}
The sheaf of vertex algebras can be constructed on singular curves as well; the construction is more involved, and we refer to~\cite[Section 2.5]{DGT2} for details. Here we only recall that, given a normalization $\eta: \widetilde{\CCC} \rightarrow \CCC$ of a curve $\CCC$ with a node at $Q$, with $Q_\pm = \eta^{-1}(Q)$, we have a short exact sequence of sheaves on $\CCC$
\begin{equation}
    0 \rightarrow \eta_* \left(\O_{\widetilde{\CCC}}(-Q_+-Q_-) \otimes \mathscr{V}_{\widetilde{\CCC}}\right) \rightarrow \eta_*\mathscr{V}_{\widetilde{\CCC}} \stackrel{\mathrm{ev}}{\longrightarrow} V_{Q_-} \oplus V_{Q_+} \rightarrow 0,
\end{equation}
where each $V_{Q_\pm}$ is a skyscraper sheaf at $Q_\pm$ with fiber $V$. Let $\pi_{\pm}$ be the natural projections of the last term onto $V_{Q_\pm}$, and let $\gamma: V_{Q_+}\ra V_{Q_-}$ be the isomorphism of fibers determined by the choice of formal coordinates at $Q_\pm$; see~\cite[Section 2.5]{DGT2}. We define the sheaf of vertex algebras as the subsheaf of $\eta_*\mathscr{V}_{\widetilde{\CCC}}$ given by
\begin{equation}
    \mathscr{V}_\CCC := \ker \left( \gamma \circ \pi_+ - \pi_- \right).
\end{equation}

\begin{definition}\label{df:chiralLie}
    The \textbf{sheaf of chiral Lie algebras} $\LL_{\mathscr{C}\setminus P_\bullet}(V)$  of a VOA $V$ over a family of stable curves $\CCC\rightarrow \mathcal{S}$ is defined as
    \begin{equation}
        \LL_{\CCC\setminus P_\bullet}(V) := H^0\left( \CCC \setminus P_\bullet, \frac{\mathscr{V}_\CCC \otimes \omega_\CCC}{\im \nabla}\right),
    \end{equation}
    where $\nabla: \mathscr{V}_\CCC \rightarrow \mathscr{V}_\CCC \otimes \omega_\CCC$ is the flat logarithmic connection defined in~\cite[Proposition 3.2]{DGT1}.
\end{definition}

Let $V$ be a vertex operator algebra, let $M^1, \dots, M^n$ be an $n$-tuple of admissible $V$-modules, and let $(\mathscr{C}, P_\bullet, t_\bullet)$ be a coordinatized family of curves over $\S = \operatorname{Spec}R$. The sheaf of chiral Lie algebras $\mathcal{L}_{\mathscr{C} \setminus P_\bullet}(V)$ acts naturally on the sheaf $M^\bullet_\S := \left(\bigotimes_{i=1}^n M^i\right) \otimes \O_\S$ as follows. For any $1 \le i \le n$, there is an inclusion of the formal punctured disk $\mathbb{D}^\times_\S \hookrightarrow \mathscr{C}$ centered at $P_i$, which induces a morphism
\[
\iota: \mathcal{L}_{\mathscr{C} \setminus P_\bullet}(V) \longrightarrow \bigoplus_{i=1}^n \mathcal{L}(V)_{\S}, \qquad \sigma \mapsto (\sigma_{P_1}, \dots, \sigma_{P_n}),
\]
where $\LL(V)_{\S}:=\LL(V)^\L\, \hat{\otimes}\, \O_{\S}$ and $\sigma_{P_i}$ is the restriction of $\sigma$ to the formal punctured disk $\mathbb{D}^\times_{\S}$ at $P_i$.
Since each component $\sigma_{P_i} \in \mathcal{L}(V)_{\S}$ admits a decomposition $\sigma_{P_i} = \sum_{j} (A_{i,j})_{[j]} \otimes f_{i,j}$ for $A_{i,j}\in V$ and $f_{i,j}\in \O_{\S}$, we can define the action of $\mathcal{L}_{\mathscr{C} \setminus P_\bullet}(V)$ on $M^\bullet_\S$ as
\begin{equation}\label{chiralLieact}
\begin{split}
    \sigma \cdot (w_1 \otimes\cdots\otimes w_n \otimes g) = \sum_{i=1}^n w_1 \otimes\cdots\otimes (\sigma_{P_i} \cdot w_i) \otimes\cdots\otimes w_n \otimes g \\
    = \sum_{i=1}^n \sum_{j} w_1 \otimes\cdots\otimes \left(A_{i,j}\right)^{M^i}_{[j]}\cdot w_i \otimes\cdots\otimes w_n \otimes f_{i,j}\cdot g
\end{split}
\end{equation}
for $w_1 \otimes\cdots\otimes w_n \otimes g \in M^\bullet_\S$.

\begin{definition}
     Let $V$ be a vertex operator algebra, let $M^1,\dots, M^n$ be an $n$-tuple of admissible $V$-modules, and let $(\mathscr{C}, P_\bullet, t_\bullet)$ be a coordinatized family of curves over $\S$. Denote $M^\bullet := \bigotimes_{i=1}^n M^i$ and $M^\bullet_\S:=M^\bullet \otimes \O_\S$.

     The \textbf{sheaf of coinvariants} $\VV(V,M^\bullet)_{(\CCC,P_\bullet,t_\bullet)}$ (sometimes denoted by $[M^\bullet]_{(\mathscr{C},P_\bullet,t_\bullet)}$) is defined as the cokernel of the action map $a$
\begin{equation}
    \mathcal{L}_{\mathscr{C} \setminus P_\bullet}(V) \otimes_{\O_{\mathcal{S}}} M^\bullet_\S \stackrel{a}{\longrightarrow} M^\bullet_\S \longrightarrow \VV(V,M^\bullet)_{(\mathscr{C},P_\bullet,t_\bullet)} \longrightarrow 0.
\end{equation}
    The \textbf{sheaf of conformal blocks} is the dual of the sheaf of coinvariants.
\end{definition}
\begin{remark}
    When $\mathcal{S} = \operatorname{Spec} \mathbb{C}$, this definition recovers the classical space of coinvariants $[M^\bullet]_{(C, P_\bullet, t_\bullet)}$ associated with a single coordinatized stable curve $C$.
\end{remark}
\begin{defproposition}[{\cite[Section 5.2]{DGT1}}]
    The sheaf of coinvariants, defined above over an affine base $\S$, extends to a sheaf $\widetriangle{\VV}_{g,n}(V,M^\bullet)$ over $\widetriangle{\M}_{g,n}$.
\end{defproposition}
\begin{defproposition}[{\cite[Section 8.7]{DGT2}}]\label{prop:isocoinvfiber}
    Let $V$ be a VOA and let $M^1,\dots,M^n$ be simple admissible modules with rational conformal dimensions. Then the sheaf $\widetriangle{\VV}_{g,n}(V,M^\bullet)$ over $\widetriangle{\M}_{g,n}$ descends to a sheaf $\VV_{g,n}(V,M^\bullet)$ over $\overline{\M}_{g,n}$.
\end{defproposition}
\begin{remark}\label{remark:coordinatefree}
    In particular, the last proposition implies that for any automorphism $\varphi: C\xrightarrow{\simeq} C$ of the smooth curve $C$ such that $\varphi(P_i)=Q_i$ for $1\leq i\leq n$, and for any choice of local coordinates $s_\bullet$ around $Q_\bullet$, there is a vector space isomorphism
\[
[M^\bullet]_{(C,P_\bullet,t_\bullet)}\cong [M^\bullet]_{(C,Q_\bullet,s_\bullet)}.
\]
\end{remark}

\subsection{Affine VOA $L_k(\g)$ at an admissible level}\label{subsec:affineVOA}
In this subsection, we recall the facts from the representation theory of affine Lie algebras that will be needed. The key fact used in the later sections is the Zhu algebra decomposition (Theorem~\ref{thm:AvE_result}).

Let $\g$ be a complex simple finite-dimensional Lie algebra. It has a triangular decomposition $\g = \n_- \oplus \h \oplus \n_+$.

Let $\Delta$ be the set of roots of $\g$ and let $\Delta_+$ and $\Delta_-$ be the sets of positive and negative roots, respectively. Let $\Pi=\{\al_1,\ds, \al_l\}$ be the set of simple roots. The Weyl group $W$ acts on $\h^*$ via the standard linear action denoted by $w(\la)$, as well as the \textbf{shifted (dot) action} defined by
\begin{equation}
    w \circ \la = w(\la + \rho) - \rho
\end{equation}
for $w \in W$ and $\la \in \h^*$. The coroot lattice is $\check{Q} = \sum_{\alpha \in \Delta} \Z \alpha^\vee$, where $\alpha^\vee := \frac{2 \alpha}{(\alpha,\alpha)}$ and $(\cdot,\cdot)$ is the invariant bilinear form on $\h^\ast$ normalized so that the highest root $\theta$ satisfies $(\theta, \theta)=2$. The highest short root $\theta_s$ satisfies $(\theta_s, \theta_s)=2/r^\vee$, where $r^\vee$ is the lacing number of $\g$. Let $P$ be the weight lattice of $\g$ and let $\check{P}$ be the coweight lattice. The basis of $\check{Q}$ formed by simple coroots is dual to the basis of $P$ formed by fundamental weights $\omega_i$. Analogously, fundamental coweights of $\check{P}$ are defined as the dual basis to the simple roots. Denote by $\rho$ the half sum of the positive roots of $\g$ and by $\rho^\vee$ the half sum of the positive coroots. The set of dominant weights of $\g$ is $P_+ = \sum_i \Z_{\ge0} \omega_i$ and the set of dominant coweights is $\check{P}_+ = \sum_i \Z_{\ge0} \omega_i^\vee$.

Denote by $L(\la)$ the simple $\g$-module of highest weight $\la \in \h^\ast$.
A primitive ideal in the universal enveloping algebra $U(\g)$ is by definition the annihilator of some irreducible $\g$-module. Denote $J_\la := \operatorname{Ann}_{U(\g)} L(\la)$.

The center of $U(\g)$ is denoted by $Z(\g)$ and the character $\gamma_\la: Z(\g) \rightarrow \C$ is defined by $z v_\la = \gamma_\la(z) v_\la$, where $v_\la \in L(\la)$ is the highest-weight vector.

Let $\hat{\g} = \g[t,t^{-1}] \oplus \mathbb{C}K$ be the affine Kac--Moody algebra associated with $\g$ and let $\hat{\mathfrak{h}} = \mathfrak{h} \oplus \mathbb{C}K$ be its standard Cartan subalgebra. The extended Cartan subalgebra is denoted by $\tilde{\h} = \h \oplus \mathbb{C}K \oplus \mathbb{C}d$, and their duals are denoted by $\hat{\mathfrak{h}}^* = \mathfrak{h}^* \oplus \mathbb{C}\Lambda_0 $ and $\tilde{\mathfrak{h}}^* = \mathfrak{h}^* \oplus \mathbb{C}\Lambda_0 \oplus \mathbb{C}\delta$, where $d$ is the degree operator, $\Lambda_0$ is the fundamental affine weight and $\delta$ is the imaginary root. The affine Lie algebra $\hat{\g}$ has a triangular decomposition $\hat{\g} = \hat{\n}_- \oplus \hat{\h} \oplus \hat{\n}_+$, where $ \hat{\n}_\pm = t^{\pm 1}\g[t^{\pm1}] \oplus \n_\pm$. For any $k\in \C$ one can construct a vertex algebra $V^k(\g)$~\cite[Section 2]{FZ92} as a (generalized) Verma $\hat{\g}$-module
\begin{equation}
    V^k(\g):= U(\hat{\g})\otimes_{U(\g[t] \oplus \C K)} \C_k,
\end{equation}
where $\C_k$ is the one-dimensional $\g[t] \oplus \C K$-module.
Let $h^\vee$ be the dual Coxeter number of $\g$. If $k \neq -h^\vee$, $V^k(\g)$ has a VOA structure with $Y(a(-1)\vac,z)=a(z)=\sum_{n\in \Z} a(n) z^{-n-1}$, where $a(n)=a\o t^n$ for $a\in \g$ and $n\in \Z$.
Moreover, $V^k(\g)$ has a unique simple quotient $L_k(\g)$, called the \textbf{simple affine VOA at level $k$}. 

For any weight $\la \in \mathfrak{h}^*$, let $\hat{\la} = \la + k\Lambda_0 \in \hat{\mathfrak{h}}^*$ denote its corresponding affine weight. Let $\widehat{W} = W \ltimes \check{Q}$ be the affine Weyl group and let $\overline{W} = W \ltimes \check{P}$ be the extended affine Weyl group of $\hat{\g}$.

The sets of real roots and positive real roots are defined as
\[
\begin{split}
    \hat{\Delta}^{re} &= \{\alpha + n \delta \mid \alpha \in \Delta, n \in \Z\}, \\
    \hat{\Delta}^{re}_+ &= \{\alpha + n \delta \mid \alpha \in \Delta_+, n \in \Z_{\ge0}\} \cup \{-\alpha + n \delta \mid \alpha \in \Delta_+, n \in \Z_{\ge1}\}.
\end{split}
\]
The integral root system $\hat{\Delta}(\hat{\la})$ associated with an affine weight $\hat{\la} \in \hat{\mathfrak{h}}^*$ is defined as
\[
\hat{\Delta}(\hat{\la}) = \{\alpha \in \hat{\Delta}^{re} \mid \langle \hat{\la}+\hat{\rho}, \alpha^{\vee} \rangle \in \mathbb{Z}\},
\]
where $\hat{\rho} = \rho + h^{\vee}\Lambda_0$.

An affine weight $\hat{\la} \in \hat{\mathfrak{h}}^*$ is called \textbf{admissible}\label{def:admissible} if it satisfies the following two conditions:
\begin{enumerate}
    \item \textbf{Regular dominance:}\label{cond:regdom} $\langle \hat{\la}+\hat{\rho}, \alpha^{\vee} \rangle \notin \{0, -1, -2, \dots \}$ for all positive real roots $\alpha \in \hat{\Delta}_{+}^{re}$.
    \item \textbf{Maximal rank condition:} $\mathbb{Q}\hat{\Delta}(\hat{\la}) = \mathbb{Q}\hat{\Delta}^{re}$.
\end{enumerate}

The set $\Pr^k \subset \mathfrak{h}^*$ of \textbf{level $k$ admissible weights} is defined by
\[
\Pr^k = \{ \la \in \mathfrak{h}^* \mid \hat{\la} = \la + k\Lambda_0 \text{ is admissible and } \exists y \in \overline{W} \text{ such that } \hat{\Delta}(\hat{\la}) = y ( \hat{\Delta}(k\Lambda_0)) \}.
\]
For $\la \in \h^*$ and $k\in \C$, denote by $\hat{L}_k(\la)$ the unique simple quotient of the Verma module of $\hat{\g}$ with highest weight $\hat\la=\la + k \Lambda_0 \in \hat{\h}^*$. The module $\hat{L}_k(\la)$ is called admissible if $\hat{\la}$ is admissible. A complex number $k$ is called admissible if $k \Lambda_0$ is admissible. The conformal dimension $a_\la$ of $\hat{L}_k(\la)$ is given by
\begin{equation}\label{eq:confdim_prelim}
a_\la = \frac{\langle \la, \la + 2\rho \rangle}{2(k + h^\vee)}.
\end{equation}
\begin{proposition}[{\cite[Theorems 2.1 and 2.2]{kac1989classification}, \cite[Proposition 1.1]{Kac2008}}]
    The number $k$ is admissible if and only if
    \begin{equation}
        k + h^\vee = \frac{p}{q} \text{ and } p \ge \begin{cases}
            h^\vee \quad \text{if } (r^\vee,q)=1,\\
            h \quad \ \,\text{if } (r^\vee, q) = r^\vee,
        \end{cases}
    \end{equation}
     for $p,q \in \Z_{\ge 1}$ coprime.
\end{proposition}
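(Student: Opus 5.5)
The plan is to unwind the two conditions defining admissibility for the affine weight $k\Lambda_0$ directly. The key input is the pairing of $k\Lambda_0+\hat\rho$ with real coroots. For a real root $\alpha+n\delta$ one has $(\alpha+n\delta)^\vee=\alpha^\vee+\frac{2n}{(\alpha,\alpha)}K$. Since $\langle\Lambda_0,\alpha^\vee\rangle=0$ and $\langle\hat\rho,K\rangle=h^\vee$, this gives
\[
\langle k\Lambda_0+\hat\rho,(\alpha+n\delta)^\vee\rangle=\langle\rho,\alpha^\vee\rangle+\frac{2n}{(\alpha,\alpha)}(k+h^\vee).
\]
Here $\frac{2}{(\alpha,\alpha)}=1$ for long roots and $=r^\vee$ for short roots.

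\textbf{Step 1: the maximal rank condition.} If $k+h^\vee\notin\Q$, then $\hat\Delta(k\Lambda_0)$ contains only the roots with $n=0$. Their span is $\Q\Delta$, which is strictly smaller than $\Q\hat\Delta^{re}$, so this case is excluded. The case $k+h^\vee=0$ is excluded as well.

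Now write $k+h^\vee=p/q$ with $\gcd(p,q)=1$ and $q\ge1$, allowing $p\in\Z\setminus\{0\}$ for the moment. Then $\alpha+n\delta$ is integral if and only if:
\begin{itemize}
\item $q\mid n$, when $\alpha$ is long;
\item $q\mid r^\vee n$, when $\alpha$ is short.
\end{itemize}
This leaves two cases.
\begin{itemize}
\item If $(r^\vee,q)=1$, then $\hat\Delta(k\Lambda_0)=\{\alpha+qm\delta\}$. This is a copy of the untwisted affine root system, with simple roots $\alpha_1,\dots,\alpha_l$ and $-\theta+q\delta$.
\item If $r^\vee\mid q$, the long roots are $\alpha+qm\delta$ and the short roots are $\alpha+\frac{q}{r^\vee}m\delta$. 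This is a twisted-type affine root system, with simple roots $\alpha_1,\dots,\alpha_l$ and $-\theta_s+\frac{q}{r^\vee}\delta$.
\end{itemize}
In both cases the span is all of $\Q\hat\Delta^{re}$, so the maximal rank condition holds exactly when $k+h^\vee\in\Q^\times$.

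\textbf{Step 2: regular dominance.} For non-integral real roots the pairing is not an integer, so the condition is automatic there. It therefore suffices to check the positive roots of $\hat\Delta(k\Lambda_0)$.

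First, one must show that $p>0$. If $p<0$, take $\alpha\in\Delta_+$ and the positive root $-\alpha+qm\delta$ with $m\gg0$. Its pairing is $-\langle\rho,\alpha^\vee\rangle+mp$, which is a large negative integer, violating regular dominance.

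Assume now $p>0$. Every positive coroot of the integral subsystem is a nonnegative integral combination of its simple coroots, and the pairing is linear. Hence it is enough to require the pairing to be a positive integer on each simple coroot.
\begin{itemize}
\item For $\alpha_i$ the pairing is $\langle\rho,\alpha_i^\vee\rangle=1$.
\item For $-\theta+q\delta$ the pairing is $-\langle\rho,\theta^\vee\rangle+p=p-(h^\vee-1)$. This is $\ge1$ if and only if $p\ge h^\vee$.
\item For $-\theta_s+\frac{q}{r^\vee}\delta$ the pairing is $-\langle\rho,\theta_s^\vee\rangle+r^\vee\cdot\frac{q}{r^\vee}\cdot\frac pq=p-(h-1)$. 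Here we use the standard identity $\langle\rho,\theta_s^\vee\rangle=h-1$. This is $\ge1$ if and only if $p\ge h$.
\end{itemize}
Conversely, the same computation shows these inequalities are sufficient, which gives both directions of the equivalence.

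\textbf{Main obstacle.} The delicate step is identifying the simple system of $\hat\Delta(k\Lambda_0)$ in the twisted case $r^\vee\mid q$. One must verify that the positive integral roots, taken with respect to the ambient positivity $\hat\Delta^{re}_+$, are exactly the nonnegative combinations of $\alpha_1,\dots,\alpha_l$ and $-\theta_s+\frac{q}{r^\vee}\delta$. This also requires checking that $\theta_s$ is the correct lowest-degree short root, and that the long roots $\alpha+qm\delta$ are generated by the short ones.

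I would handle this by identifying the system with the dual (twisted) affine root system: rescale $\delta$ by $\frac{q}{r^\vee}$ and compare with the standard description in Kac's book. The identity $\langle\rho,\theta_s^\vee\rangle=h-1$ follows from $\theta_s^\vee$ being the highest root of the dual root system, whose Coxeter number is $h$.
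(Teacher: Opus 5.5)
The paper gives no proof of this statement; it only cites Kac--Wakimoto. Your route is a direct, self-contained verification, and it is the standard argument behind the cited result. You compute
$\langle k\Lambda_0+\hat\rho,(\alpha+n\delta)^\vee\rangle=\langle\rho,\alpha^\vee\rangle+\frac{2n}{(\alpha,\alpha)}(k+h^\vee)$,
read off the integral subsystem, and test regular dominance on its simple coroots. What this buys over the citation is an explanation of where the dichotomy $h^\vee$ versus $h$ comes from. It is the pairing with $(-\theta+q\delta)^\vee$, namely $p-h^\vee+1$, versus the pairing with $(-\theta_s+\frac{q}{r^\vee}\delta)^\vee$, namely $p-h+1$. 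Those computations, and the case split on $q\mid n$ versus $q\mid r^\vee n$, are correct.

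One claim is wrong and must be fixed: the critical level is not excluded by the maximal rank condition. At $k+h^\vee=0$ the pairing is $\langle\rho,\alpha^\vee\rangle\in\mathbb{Z}$ for every real root. Hence $\hat\Delta(k\Lambda_0)=\hat\Delta^{re}$ and maximal rank holds, so your assertion that it holds ``exactly when $k+h^\vee\in\Q^\times$'' is false. Because you then assume $p\neq 0$, your Step 2 never rules this case out either. The repair is one line and belongs in Step 2. At $k+h^\vee=0$ the positive real root $-\alpha_i+\delta$ has pairing $-\langle\rho,\alpha_i^\vee\rangle=-1$, so regular dominance fails. Equivalently, run your $p<0$ argument with $p\le 0$. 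In that argument, note that the formula $-\langle\rho,\alpha^\vee\rangle+mp$ is for long $\alpha$; for short $\alpha$ the second term is $r^\vee mp$, which changes nothing.

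The twisted-case step you flag as the main obstacle can be bypassed. Since $\langle\rho,\alpha^\vee\rangle$ is the height of $\alpha^\vee$ in the dual root system, it is at least $-(h-1)$ for short $\alpha$ (attained at $-\theta_s$) and at least $-(h^\vee-1)$ for long $\alpha$ (attained at $-\theta$). For $r^\vee\mid q$ and $p>0$, the positive integral roots of positive $\delta$-degree are $\alpha+m\frac{q}{r^\vee}\delta$ with $\alpha$ short and $\alpha+mq\delta$ with $\alpha$ long, $m\ge1$. Both have pairing $\langle\rho,\alpha^\vee\rangle+mp$, so these pairings are bounded below by $p-h+1$ and $p-h^\vee+1$ respectively. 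Together with $h\ge h^\vee$, this gives the equivalence with $p\ge h$ directly, with no need to identify the simple system of the twisted affine root system.
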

Following~\cite{kac1989classification,Kac2008}, an admissible level $k$ with $k+h^\vee=p/q$ is called \textbf{principal} if $(r^\vee,q)=1$ and \textbf{coprincipal} if $(r^\vee,q)=r^\vee$; since $r^\vee=1$ in types $A$, $D$, $E$, the coprincipal case occurs only for non-simply-laced $\g$.

Since every $\la\in \Pr^k$ is a rational weight, i.e., $\la\in \Q\o_\Z P$, and $k\in\Q$, the conformal dimension of $\hat{L}_k(\la)$ is rational for any $\la \in \Pr^k$.
\begin{theorem}[{\cite[Main theorem]{A16}}]\label{thm:Aramain}
    Let $k$ be an admissible number for $\hat{\g}$. Then a $\hat{\g}$-module in the category $\O_k$ is an $L_k(\g)$-module if and only if it is a direct sum of modules $\hat{L}_k(\la)$ with $\la\in \Pr^k$.
\end{theorem}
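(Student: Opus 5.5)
This statement is Arakawa's main theorem, quoted from \cite{A16}. The paper does not reprove it; the plan below is how I would reconstruct its proof.

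The argument has two directions. For the ``if'' direction, I would first show that each $\hat{L}_k(\la)$ with $\la\in\Pr^k$ is an $L_k(\g)$-module. A direct sum of $L_k(\g)$-modules is again one, so this suffices. To do it, I would identify the maximal ideal $N_k$ of $V^k(\g)$ at admissible level. By Kac--Wakimoto, $N_k$ is generated by one singular vector $\sigma$ whose weight is given by the reflection in the simple root $\dot\alpha_0=-\theta+q\delta$ (principal case) or its coprincipal analogue. The module $\hat{L}_k(\la)$ is an $L_k(\g)$-module exactly when $Y(\sigma,z)$ acts by zero on it. By the usual Frenkel--Zhu argument, this reduces to the image $[\sigma]$ in Zhu's algebra $A(V^k(\g))\cong U(\g)$ annihilating the top space $L(\la)$. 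I would then compute the projection of $[\sigma]$ to $U(\h)$ along the Harish-Chandra decomposition, which is a polynomial on $\h^*$. The goal is to show its zero set, taken $W$-dot-invariantly, matches $\Pr^k$ with respect to highest weights. This is the Malikov--Feigin--Fuchs / Arakawa computation.

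For the ``only if'' direction, let $M\in\O_k$ be an $L_k(\g)$-module. Every simple subquotient $\hat{L}_k(\mu)$ of $M$ is an $L_k(\g)$-module, so its top $L(\mu)$ is annihilated by the image $I_k$ of $N_k$ in $U(\g)$. The polynomial computation from the first step then forces $\mu\in\Pr^k$.

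The remaining point is complete reducibility: every such $M$ must be semisimple. I would prove this by showing $\operatorname{Ext}^1_{\O_k}(\hat{L}_k(\la),\hat{L}_k(\mu))=0$ within $L_k(\g)$-modules, for all $\la,\mu\in\Pr^k$. Suppose there is a nonsplit extension. If $\la\neq\mu$ and the two weights are not linked under the dot action of the integral Weyl group $\hat W(\hat\la)$, the linkage principle splits it. If they are linked, admissible weights are regular dominant, so at most one of them is maximal in its linkage class. A nonsplit extension would then be a quotient of a Verma-type module $M(\hat\la)/\ldots$ with a singular vector of weight $\hat\mu$. I would check that this contradicts the annihilation by $\sigma$, using Arakawa's semi-infinite restriction / the Zhu-algebra ideal criterion above. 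If $\la=\mu$, self-extensions in $\O$ split by the weight argument, since the highest weight spaces are one-dimensional.

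I expect the main obstacle to be this Ext-vanishing, especially for linked pairs. There I would follow Arakawa's method: pass through the Drinfeld--Sokolov reduction $H^0_{DS}$ to the $W$-algebra, or use the minimal-model characterization of the associated variety. This gives exactness and shows that any extension is detected on highest weight vectors annihilated by $I_k$. I would then finish by using that $U(\g)/I_k$ acts semisimply on the relevant tops.
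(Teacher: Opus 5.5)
The paper does not prove this statement; it imports it from \cite{A16}. Your reconstruction therefore has to stand on its own, and it has a genuine gap exactly where the content of the theorem lies. Your reduction is sound: $\hat{L}_k(\la)$ is an $L_k(\g)$-module if and only if the image $I_k$ of $N_k$ in $A(V^k(\g))\cong U(\g)$ annihilates $L(\la)$, if and only if the Harish-Chandra projections of the weight-zero elements of $I_k$ vanish at $\la$.

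The decisive step, ``show its zero set matches $\Pr^k$'', is asserted rather than supplied, and the ``Malikov--Feigin--Fuchs / Arakawa computation'' you invoke does not exist in this form. As stated, the step is not even well posed. $[\sigma]$ has nonzero $\h$-weight (a positive multiple of $\theta$, resp.\ $\theta_s$), so its own Harish-Chandra projection is $0$. You must take the whole zero-weight space of the $\operatorname{ad}\g$-module generated by $[\sigma]$, which in general yields a system of polynomials (the Adamovi\'c--Milas/Per\v{s}e method). Evaluating these requires $[\sigma]$ explicitly. The MFF formula gives $\sigma$ only as a formal product of fractional powers of Chevalley generators along a reduced word for $s_{\dot\alpha_0}$, and this route has been completed only for $\sl_2$ and a few low-rank cases. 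Arakawa's general proof avoids explicit singular vectors and obtains the classification structurally, via Joseph's characteristic varieties. Both directions of your argument rest on this missing step.

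A smaller point: the zero locus is not $W\circ$-stable. For $\la\in\Pr^k_\Z$ and $w\neq 1$, $L(w\circ\la)$ has central character $\gamma_\la$ but is infinite-dimensional, so it is not a module over the finite-dimensional block $\A_\la$.

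Conversely, the step you single out as the main obstacle is routine once the classification is known, and your case analysis there never draws the conclusion it needs. If $\hat\la$ is admissible, then $\langle\hat\la+\hat\rho,\alpha^\vee\rangle\in\Z_{>0}$ for all $\alpha\in\hat\Delta(\hat\la)_+$. For $w\neq 1$ in the integral Weyl group, one has $\hat\Delta(w\circ\hat\la)=\hat\Delta(\hat\la)$ and $\langle w\circ\hat\la+\hat\rho,\alpha^\vee\rangle\in\Z_{<0}$ for some $\alpha\in\hat\Delta(\hat\la)_+$. Hence each linkage class contains at most one admissible weight, and your ``linked'' case is empty.

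A nonsplit extension of distinct simples in $\O$ forces one to be a subquotient of the other's Verma module, hence (Kac--Kazhdan) forces linkage. You already dispose of self-extensions. This is the Kac--Wakimoto theorem that a module in $\O$ all of whose simple subquotients are admissible is completely reducible. No further use of $\sigma$ and no Drinfeld--Sokolov reduction is involved. Your plan thus treats the routine step as the hard one and the actual theorem as a known computation.
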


\begin{proposition}[{\cite[Proposition 2.4]{arakawa2017weight}}]\label{prop:arakawa_primitive}
    Let $\la,\mu \in \Pr^k$. Then:
    \begin{itemize}
        \item $J_\la$ is the unique maximal two-sided ideal containing $U(\g)\ker \gamma_\la$. In particular, $U(\g)/J_\la$ is a simple algebra.
        \item We have $J_\la = J_\mu$ if and only if $\la \in W \circ \mu$.
    \end{itemize}
\end{proposition}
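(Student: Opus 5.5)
The plan is to reduce both assertions to classical facts about primitive ideals of $U(\g)$: Harish-Chandra's theorem, Duflo's theorem, and the description of the maximal primitive ideal with a given central character. The only input from admissibility will be a dominance property of $\la+\rho$, which I read off from the regular dominance condition in the definition of admissible weights.

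\emph{Step 1: dominance.} Let $\la\in\Pr^k$ and apply regular dominance to the real roots $\alpha+0\delta$ with $\alpha\in\Delta_+$. Since $\langle \hat\la+\hat\rho,\alpha^\vee\rangle=\langle\la+\rho,\alpha^\vee\rangle$ for these roots, we get
\[
\langle \la+\rho,\alpha^\vee\rangle\notin\{0,-1,-2,\dots\}\quad\text{for all }\alpha\in\Delta_+ .
\]
So for every root $\alpha$ of the integral root system $\Delta_{[\la]}=\{\alpha\in\Delta:\langle\la+\rho,\alpha^\vee\rangle\in\Z\}$ that is positive, the pairing is a positive integer. Hence $\la$ is dominant and regular for the dot action of the integral Weyl group $W_{[\la]}$.

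\emph{Step 2: the first assertion.} Let $I_\la:=U(\g)\ker\gamma_\la$.
\begin{enumerate}
\item By Duflo's theorem, $I_\la=\operatorname{Ann}_{U(\g)}M(\la)$ for the Verma module $M(\la)$.
\item Every primitive ideal containing $I_\la$ is of the form $J_{w\circ\la}$ with $w\in W$.
\item Any proper two-sided ideal containing $I_\la$ lies in a maximal two-sided ideal. Such an ideal is primitive by Dixmier's theorem that maximal ideals of $U(\g)$ are primitive, so it is one of the $J_{w\circ\la}$.
\item For $\la$ dominant in the sense of Step 1, the classical result (Conze--Duflo, Joseph) is that $J_\la\supseteq J_{w\circ\la}$ for all $w$. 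Equivalently, $L(\la)$ is the largest simple quotient in its block as measured by Gelfand--Kirillov dimension, and $J_\la$ is the maximal element of the finite set $\{J_{w\circ\la}\}$.
\end{enumerate}
Hence $J_\la$ is the unique maximal two-sided ideal containing $I_\la$. Since it is maximal, $U(\g)/J_\la$ is a simple algebra.

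\emph{Step 3: the second assertion.} First suppose $J_\la=J_\mu$. The central characters then agree, $\gamma_\la=\gamma_\mu$, because $Z(\g)$ acts on $U(\g)/J_\la$ through a single character. Harish-Chandra's theorem gives $\mu+\rho\in W(\la+\rho)$, that is, $\mu\in W\circ\la$. Conversely, suppose $\mu=w\circ\la$. Then $\gamma_\mu=\gamma_\la$, so $I_\mu=I_\la$. Both $\la$ and $\mu$ lie in $\Pr^k$, so Step 1 applies to each. By Step 2, $J_\la$ and $J_\mu$ are then both the unique maximal ideal containing $I_\la=I_\mu$, so they coincide.

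\emph{Main obstacle.} The delicate point is Step 2(4) for non-integral $\la$, which is the generic case here since $\Pr^k$ consists of rational weights. I need the maximality of $\operatorname{Ann}L(\la)$ for $\la$ dominant with respect to $\Delta_{[\la]}$, rather than for integral dominant $\la$. I would first cite Joseph's extension of the Duflo theory to non-integral weights, via the translation principle and the integral Weyl group $W_{[\la]}$. If that citation is not clean, the fallback is to deduce maximality from the fact that for $\la$ dominant, $L(\la)$ is a quotient of $M(\la)$ generated by its highest weight vector. The minimal primitive ideal $I_\la$ then has $U(\g)/I_\la\hookrightarrow\operatorname{End}M(\la)$, and comparing with $\mathcal L(M(\la),M(\la))$ (Conze--Duflo) shows that the unique simple quotient corresponds to $J_\la$.
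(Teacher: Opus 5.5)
\textbf{Assessment.} The paper gives no proof of this proposition; it quotes it from~\cite[Proposition 2.4]{arakawa2017weight}. So there is no internal argument to compare against, and your proposal is the standard derivation behind that citation. It is correct.

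Your Step~1 is the only place where admissibility enters. The real roots $\alpha+0\delta$ with $\alpha\in\Delta_+$ turn regular dominance of $\hat\la$ into $\langle\la+\rho,\alpha^\vee\rangle\notin\Z_{\le 0}$. After that, four ingredients give both bullets as you describe: Duflo's classification, the primitivity of maximal ideals (elementary in any unital ring), the maximality of $J_\la$ for dominant $\la$, and Harish-Chandra's theorem. The converse in Step~3 correctly uses dominance of both $\la$ and $\mu$.

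\textbf{Points to fix.}

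\emph{The parenthetical in item~(4) of Step~2.} It is backwards, and it is not an equivalence. For dominant $\la$ the module $L(\la)$ has the \emph{smallest} Gelfand--Kirillov dimension among the $L(w\circ\la)$; it is finite-dimensional when $\la$ is integral. Moreover, comparing GK dimensions does not by itself give inclusions of primitive ideals. Drop this remark.

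\emph{Non-integrality.} It is not actually an obstacle. Projectivity of $M(\la)$ in $\O$, and the Conze--Duflo and Bernstein--Gelfand results, are all stated for arbitrary $\la$ with $\langle\la+\rho,\alpha^\vee\rangle\notin\Z_{<0}$ for $\alpha\in\Delta_+$. That is exactly what Step~1 produces, so no separate passage through $W_{[\la]}$ or a further citation of Joseph is needed.

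\emph{The fallback argument.} It does not isolate the mechanism, because $L(\la)$ is the unique simple quotient of $M(\la)$ for \emph{every} $\la$. What dominance buys is the Bernstein--Gelfand isomorphism
\[
U(\g)/J\cong\mathcal{L}\bigl(M(\la),M(\la)/JM(\la)\bigr)
\]
for every two-sided ideal $J\supseteq I_\la$. The argument then runs as follows.
\begin{itemize}
\item A proper such $J$ therefore has $JM(\la)\neq M(\la)$.
\item Hence $JM(\la)$ lies in the maximal submodule of $M(\la)$.
\item So $J$ annihilates $L(\la)$, i.e.\ $J\subseteq J_\la$.
\end{itemize}
Dominance is essential here: for a non-dominant $\la$ with $M(\la)$ simple, every ideal strictly between $I_\la$ and $U(\g)$ satisfies $JM(\la)=M(\la)$. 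This short argument is the cleanest proof of item~(4), and it is what you should write if you want more than a citation.
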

For a fixed admissible level $k$, the set $\Pr^k$ of admissible weights of level $k$ is finite~\cite{kac1989classification}.
Let $[\Pr^k]:= \Pr^k/\sim$, where the equivalence classes are the orbits of the dot action of the finite Weyl group: $\mu \sim \la$ if $\mu \in W\circ\la$.
\begin{theorem}[{\cite[Theorem 3.4]{AvE}}]\label{thm:AvE_result}
Let $k\in\C$ be an admissible number.
Then the Zhu algebra $  \A = A(L_k(\g)) $ admits a decomposition:
\begin{equation}\label{eq:admAV}
\A \cong \prod_{[\la] \in [\Pr^k]} \A_\la, \qquad \A_\la:=U(\g)/J_\la,
\end{equation}
which depends only on the class of $\la$ (Proposition~\ref{prop:arakawa_primitive}). Moreover, $\A_\la$ is finite-dimensional if and
only if $[\la]\cap P_+\neq\emptyset$, in which case $\A_\la\cong L(\mu)\otimes_\C L(\mu)^\ast$
for $\mu\in[\la]\cap P_+$.
\end{theorem}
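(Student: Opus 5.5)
The plan is to realize $\A=A(L_k(\g))$ as a quotient of $A(V^k(\g))\cong U(\g)$ and to show that the kernel is exactly $\bigcap_{[\la]\in[\Pr^k]}J_\la$. The product decomposition then follows from the Chinese remainder theorem, because the $J_\la$ for distinct classes are distinct maximal two-sided ideals (Proposition~\ref{prop:arakawa_primitive}).

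First, by Frenkel--Zhu, $A(V^k(\g))\cong U(\g)$ via $[a(-1)\vac]\mapsto a$. Since $L_k(\g)=V^k(\g)/N_k$, we get $\A\cong U(\g)/I$, where $I$ is the image of the maximal ideal $N_k$ in the Zhu algebra.

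For the inclusion $I\subseteq J_\la$, take $\la\in\Pr^k$. By Theorem~\ref{thm:Aramain}, $\hat L_k(\la)$ is an $L_k(\g)$-module, and its top space $\Om(\hat L_k(\la))=L(\la)$ is an $\A$-module. Hence $I$ annihilates $L(\la)$, that is, $I\subseteq J_\la$. This gives a surjection $\A\twoheadrightarrow\prod_{[\la]}U(\g)/J_\la$, which is surjective by the Chinese remainder theorem since the $J_\la$ are pairwise comaximal.

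The main obstacle is the reverse inclusion $\bigcap_\la J_\la\subseteq I$. I would first show that the image of $Z(\g)$ in $\A$ is finite-dimensional. The $\A$-modules that are highest-weight $\g$-modules in $\O$ are top spaces of $L_k(\g)$-modules in $\O_k$. By Theorem~\ref{thm:Aramain} these have central characters only among the finitely many $\gamma_\la$, $\la\in\Pr^k$. Because $U(\g)/I$ is detected by its highest-weight quotients (for instance the Verma-type quotients $U(\g)/(I+U(\g)\n_++\sum U(\g)(h-\mu(h)))$), a polynomial in each generator of $Z(\g)$ vanishes modulo $I$. This produces central idempotents in $\A$ splitting it into blocks $\A_\chi=U(\g)/(I+U(\g)(\ker\gamma_\la)^N)$.

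It remains to show that each block is $U(\g)/J_\la$, i.e. that $K:=I+U(\g)(\ker\gamma_\la)^N\supseteq J_\la$. I would use Duflo's theorem: a two-sided ideal of finite codimension over the center is the annihilator of some module in $\O$, e.g. of $(U(\g)/K)\otimes_{U(\g)}M$ summed over suitable Verma modules $M$. Every object of $\O$ that is a $U(\g)/K$-module is the top of an $L_k(\g)$-module in $\O_k$, so it is semisimple with simple constituents $L(w\circ\la)$, $w\circ\la\in\Pr^k$. All of these have annihilator $J_\la$, so $K=J_\la$. The delicate point, where I expect most of the work, is to pass rigorously from semisimplicity of $\O_k$ to semisimplicity of the relevant $U(\g)/K$-modules in $\O$, including the non-highest-weight ones. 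Here I would invoke the left adjoint $\Phi^\L$ and the functor $\Om$ to lift such modules to $L_k(\g)$-modules in $\O_k$.

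Finally, suppose $[\la]\cap P_+\ni\mu$. Then $J_\la=J_\mu=\ker(U(\g)\to\End L(\mu))$, and this map is surjective by the Jacobson density theorem (Burnside), since $L(\mu)$ is finite-dimensional and simple. Hence $\A_\la\cong\End L(\mu)\cong L(\mu)\otimes L(\mu)^*$. Conversely, if $\A_\la$ is finite-dimensional, then $L(\la)$ is a finite-dimensional simple $\g$-module, so $[\la]$ contains a dominant integral weight.
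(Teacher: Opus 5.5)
Your inclusion $I\subseteq J_\la$, the Chinese-remainder surjection onto $\prod_{[\la]}U(\g)/J_\la$, and the finite-dimensionality criterion at the end are all fine. The paper does not reprove this theorem; it quotes it from \cite{AvE}. There is, however, a genuine gap in the reverse inclusion $\bigcap_{[\la]}J_\la\subseteq I$.

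Every module you use to probe $I$ has a \emph{strict} central character. The quotient $M(\mu)/IM(\mu)$ of the Verma module $M(\mu)$ is killed by $I+U(\g)\ker\gamma_\mu$. So these quotients only see the algebras $U(\g)/(I+U(\g)\ker\chi)$, and they can never detect a nilpotent action of $Z(\g)$.

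Consequently, ``$U(\g)/I$ is detected by its highest-weight quotients'' is false in general. For $I=U(\g)(\ker\chi)^2$, every $M(\mu)/IM(\mu)$ is killed by $U(\g)\ker\chi\neq I$. The form of Duflo's theorem you invoke at the end fails for the same reason. The statement $K=\operatorname{Ann}(M(\mu)/KM(\mu))$ for dominant $\mu$ holds when $K\supseteq U(\g)\ker\gamma_\mu$, not when $K$ only contains $U(\g)(\ker\gamma_\mu)^N$.

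Correctly repaired, your argument shows that $I+U(\g)\ker\gamma_\la=J_\la$ whenever the left side is proper. In other words, the primitive ideals containing $I$ are exactly the $J_\la$, so $\bigcap_{[\la]}J_\la/I$ is the nilpotent radical of $\A$. This route also gives finite-dimensionality of the image of $Z(\g)$, which is true but not for the reason you state. Nothing in the proposal shows that this radical vanishes, i.e.\ that $Z(\g)$ acts semisimply on $\A$ and one may take $N=1$.

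This missing step cannot be obtained by lifting modules through $\Phi^\L$ and $\Om$, so the ``delicate point'' you flag is not the real obstruction. Take $\g=\sl_2$ and $[\la]\in[\Pr^k\bs P]$, which is nonempty once $q\geq 2$. Then $\<\la+\rho,\alpha^\vee\>\notin\Z$. Hence $L(\la)=M(\la)$ and $L(s\circ\la)=M(s\circ\la)$, with $s$ the simple reflection, lie in two different blocks of $\O$, each semisimple with a single simple projective object. So $U(\g)/U(\g)\ker\gamma_\la$ and $U(\g)/U(\g)(\ker\gamma_\la)^2$ have exactly the same modules in $\O$. No argument that uses only $\A$-modules in $\O$ (equivalently, objects of $\O_k$) can distinguish them.

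For the classes meeting $P_+$ you can close the gap:
\begin{itemize}
\item Since the radical is nilpotent, the block satisfies $J_\la^M\subseteq K\subseteq J_\la$ for some $M$.
\item Hence $U(\g)/K$ is finite-dimensional, and so completely reducible by Weyl's theorem.
\item As a left module it has annihilator exactly $K$, and its simple summands are copies of $L(\mu)$ with $\mu\in[\la]\cap P_+$.
\item Therefore $K=J_\mu=J_\la$.
\end{itemize}
For the classes in $[\Pr^k\bs P]$, some input beyond the semisimplicity of $\O_k$ is needed to kill the nilpotent part of the center, and your proposal does not supply it.
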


\section{Coinvariants of $L_k(\g)$-modules in the category $\O_k$}\label{sec:fusion}
\subsection{Categories of $L_k(\g)$-modules}
The category $\O_k$ is the full subcategory of the category of finitely generated $\hat{\g}$-modules of level $k$ whose objects satisfy the following conditions:
\begin{enumerate}
    \item $L(0)$ acts locally finitely;
    \item $\hat{\n}_+=\hat{\g}_+\op \mathfrak{n}_+$ acts locally nilpotently, where $\hat{\g}_+:=\g\o t\C[t]$;
    \item $\hat{\mathfrak{h}}$ acts semisimply.
\end{enumerate}

A $\hat{\g}$-module $M$ is called smooth if, for any $a\in \g$ and $v\in M$, one has $a(n)v=0$ for sufficiently large $n$. Recall that any smooth $\hat{\g}$-module of level $k$ is a weak $V^k(\g)$-module and vice versa~\cite[p.5]{A16}. Since $\hat{\n}_+$ acts locally nilpotently on the modules of $\O_k$, these modules are $\N$-gradable, and we regard the category $\O_k$ as a subcategory of $\Adm(V^k(\g))$. However, not every $V^k(\g)$-module is an $L_k(\g)$-module, which motivates the following definition.

\begin{definition}\label{def:cats}
We introduce the following categories of modules over the VOA $L_k(\g)$ and the associative algebra $\A = A(L_k(\g))$:
\begin{enumerate}
    \item Let $\O_k(L_k(\g))$ be the full subcategory of $\Adm(L_k(\g))$ whose objects lie in the category $\O_k$ as $\hat{\g}$-modules.
Then it follows immediately from Theorem~\ref{thm:Aramain} that the category $\O_k(L_k(\g))$ is a semisimple category with simple objects $\{\hat{L}_k(\la):\la\in \Pr^k\}$.
\item Let $\O^\fin_k$ be the full subcategory of $\Mod(\A)$ whose objects lie in the Bernstein--Gelfand--Gelfand category $\O$ of $\g$, that is, the category of finitely generated $\g$-modules on which $\mathfrak{n}_+$ acts locally nilpotently and $\h$ acts semisimply.

It was proved in~\cite[Corollary 3.2]{AvE} that $\O^\fin_k$ is a semisimple category with simple objects $\{L(\la): \la\in \Pr^k\}$.
\item Let $\O_{k,\Ord}$ be the full subcategory of $\O_k(L_k(\g))$ whose objects are ordinary $L_k(\g)$-modules, i.e., any object $M\in \O_{k,\Ord}$ has a direct sum decomposition $M=\bigoplus_{\la\in \C} M_\la$, where $M_\la$ is a finite-dimensional eigenspace for $L(0)$ with eigenvalue $\la\in \C$. Moreover, given $\la\in \C$, $M_{\la+n}=0$ for $n\in \Z$ and $n\ll0$.

In particular, $\O_{k,\Ord}$ is a semisimple category with simple objects $\{\hat{L}_k(\la):\la\in \Pr^k\cap P\}$: the module $\hat{L}_k(\la)$ is ordinary if and only if its bottom degree $L(\la)$ is finite-dimensional, that is, if and only if $\la\in P_+$, and $\Pr^k\cap P_+=\Pr^k\cap P$ by the description of $\Pr^k\cap P$ recalled in~\eqref{eq:PrkZ} below.

\item Let $\O^{\fin}_{k,\Ord}$ be  the essential image of the restricted functor \[\Omega\big|_{\O_{k,\Ord}}\colon\mathcal O_{k,\Ord}\longrightarrow \Mod(\A),\]
which is the full subcategory of $\O^\fin_k$ whose objects are isomorphic to  $\Om(M)$ for some $M\in \O_{k,\Ord}$. Then $\O^{\fin}_{k,\Ord}$ is also a semisimple category with simple objects $\{L(\la): \la\in \Pr^k\cap P\} $.

\end{enumerate}
The relations between these categories are given by the following diagram:
\begin{equation}
  \begin{tikzcd}[row sep=large, column sep=large]
\O_{k,\Ord}\arrow[r,hook]\arrow[d,"\Om", dashed, shift left =2pt]&\arrow[d, dashed, shift left =2pt,"\Om"]\O_{k}(L_k(\g))\arrow[r,hook]&\mathsf{Adm}(L_k(\g))\arrow[d,"\Om", shift left =2pt]\\
\O^{\fin}_{k,\Ord}\arrow[r,hook]\arrow[u,dashed,"\Phi^\L",shift left = 2pt]& \O^{\fin}_{k} \arrow[r,hook]\arrow[u,dashed,"\Phi^\L",shift left = 2pt]& \Mod(\A),\arrow[u,"\Phi^\L",shift left = 2pt]
\end{tikzcd}
\end{equation}

where $\Phi^\L\dashv \Om$ is the adjoint pair introduced after Definition~\ref{def:Verma}.
\end{definition}

Note that in general the generalized Verma module functor $\Phi^\L(-)$ is \textbf{not} the irreducible quotient module functor $L(-):\Mod(\A)\ra \Adm(V)$ in~\cite[Theorem 6.2]{DLM}.

\begin{theorem}\label{thm:equiofcats}
     Retaining the notation from Definition~\ref{def:cats}, the adjoint functors $\Phi^\L\dashv \Om$ between $\Adm(L_k(\g))$ and $\Mod(\A)$ restrict to well-defined adjoint functors
\begin{equation}\label{eq:equicat}
   (\Phi^\L\dashv \Om):\O_k^\fin\leftrightarrows \O_k(L_k(\g)).
\end{equation}
Moreover, this adjoint pair~\eqref{eq:equicat} is an adjoint equivalence between the categories $\O_k^\fin$ and $\O_k(L_k(\g))$.

In particular, any irreducible $L_k(\g)$-module $\hat{L}_k(\la)$, with $\la\in \Pr^k$, in the category $\O_k(L_k(\g))$ is a generalized Verma module associated with its bottom degree $L(\la)$:
\begin{equation}\label{eq:LklambdaVerma}
\hat{L}_k(\la)\cong \Phi^\L(L(\la)).
\end{equation}
\end{theorem}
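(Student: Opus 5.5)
We first check that both functors land in the stated subcategories. For $M\in\O_k(L_k(\g))$, Theorem~\ref{thm:Aramain} gives $M\cong\bigoplus_i \hat{L}_k(\la_i)$ with $\la_i\in\Pr^k$. Since $M$ is finitely generated, the sum is finite. Then $\Om(M)\cong\bigoplus_i\Om(\hat{L}_k(\la_i))$, so it suffices to show $\Om(\hat{L}_k(\la))\cong L(\la)$. The space $\Om(\hat{L}_k(\la))$ is the subspace killed by $\g\otimes t\C[t]$. It contains the top degree $L(\la)$. Conversely, a nonzero vector in a lower degree annihilated by $\hat{\g}_+$ would generate a proper submodule, since $U(\hat{\g})$ applied to it stays in degrees $\geq$ its own, by PBW. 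This contradicts simplicity. Hence $\Om(\hat{L}_k(\la))=L(\la)$, which lies in $\O^\fin_k$, and $\Om$ restricts as claimed.

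For the other functor, take $S\in\O^\fin_k$. By \cite[Corollary 3.2]{AvE}, $S\cong\bigoplus_j L(\mu_j)$ with $\mu_j\in\Pr^k$, a finite sum. Since $\Phi^\L$ is a left adjoint, it commutes with direct sums. The key step is therefore \eqref{eq:LklambdaVerma}: $\Phi^\L(L(\la))\cong\hat{L}_k(\la)$. I would argue as follows. $\Phi^\L(L(\la))$ is an admissible $L_k(\g)$-module generated by its degree-zero space $L(\la)$, and its top degree is exactly $L(\la)$. As a $\hat{\g}$-module it is therefore a quotient of the generalized Verma module $U(\hat{\g})\otimes_{U(\g[t]\oplus\C K)}L(\la)$. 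It is consequently finitely generated, $\hat{\h}$-semisimple, has locally nilpotent $\hat{\n}_+$-action and locally finite $L(0)$ (Sugawara), so it lies in $\O_k$. Theorem~\ref{thm:Aramain} then shows it is semisimple, a direct sum of $\hat{L}_k(\nu)$'s. It is also generated by its top degree, which is the simple $\g$-module $L(\la)$, so it is a highest weight module and hence indecomposable. A semisimple indecomposable module is simple, so it equals $\hat{L}_k(\la)$.

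Once both restrictions are in place, the adjunction restricts automatically because the subcategories are full. To get the equivalence I would check that the unit and counit are isomorphisms. By semisimplicity and additivity of both functors, it suffices to do this on simples. The unit $L(\la)\to\Om\Phi^\L(L(\la))=\Om(\hat{L}_k(\la))=L(\la)$ is the inclusion of the degree-zero part, hence an isomorphism. The counit $\Phi^\L\Om(\hat{L}_k(\la))\to\hat{L}_k(\la)$ is a nonzero map between simple modules, both isomorphic to $\hat{L}_k(\la)$, so it is an isomorphism.

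I expect the main obstacle to be making the claim $\Phi^\L(L(\la))\in\O_k$ rigorous. One has to justify that the completed induction defining $\Phi^\L$ agrees with the ordinary $\hat{\g}$-induction quotient. Equivalently, $\scrU/\rN^1_\L\scrU\otimes_{\scrU_0}S$ must be spanned by PBW monomials in negative modes $a(-n)$ acting on $S$, which uses strong generation by $V_1=\g$. We also need that $\Phi^\L(S)$ is indeed an $L_k(\g)$-module, not merely a $V^k(\g)$-module; this holds because it is a $\scrU(L_k(\g))$-module.
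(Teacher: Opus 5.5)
Your proposal is correct and follows essentially the same route as the paper: reduce to simple objects via Arakawa's semisimplicity, identify $\Om(\hat{L}_k(\la))=L(\la)$, place $\Phi^\L(L(\la))$ in $\O_k$ as a quotient of the induced $\hat{\g}$-module, and use semisimplicity to force $\Phi^\L(L(\la))\cong\hat{L}_k(\la)$. The differences are only in sub-steps. You cite the standard fact that highest-weight modules lie in $\O$, where the paper checks local nilpotency of $\hat{\n}_+$ by an explicit commutator computation. You deduce simplicity from indecomposability of a highest-weight module rather than by the paper's complement argument. Your explicit check that the unit and counit are isomorphisms makes the adjoint-equivalence conclusion slightly more precise than the paper's bare appeal to semisimplicity.
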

\begin{proof}
First, we show that the essential image $\Om(\O_k(L_k(\g)))$ is contained in $\O_k^\fin$. Indeed, since $\O_k(L_k(\g))$ is semisimple, we only need to show $\Om(W)\in \O_k^\fin$ for any irreducible module $W=\hat{L}_k(\la)$, with $\la\in \Pr^k$. It follows from~\cite[Proposition 5.4]{DLM} that $\Om(W)=W(0)=L(\la)$, which is in $\O_k^\fin$; see Definition~\ref{def:cats}(2).

Conversely, given an irreducible module $S=L(\la)\in \O_k^\fin$, by Definition~\ref{def:Verma}, $\Phi^\L(S)$ is an admissible $L_k(\g)$-module. We claim that it is in the category $\O_k$.

Recall that $L_k(\g)=V^k(\g)/N_k$, where $N_k$ is the maximal proper ideal in the universal affine VOA $V^k(\g)$. 
The enveloping algebra $\scrU(V^k(\g))$ is isomorphic to the completed universal enveloping algebra $\tilde{U}_k(\hat{\g})$~\cite[Section 3.1]{FZ92}, \cite[Section 4.4.3]{frenkel_ben_zvi_book}. Then
 $\scrU(L_k(\g))\cong \tilde{U}_k(\hat{\g})/\tilde{I}_k$, where $\tilde{I}_k$ is the closed two-sided ideal generated by the images of the modes $a_{[n]}$, $a\in N_k$, $n\in\Z$, under $\scrU(V^k(\g))\xrightarrow{\simeq} \tilde{U}_k(\hat{\g})$.
 Then, by Definition~\ref{def:Verma},
\begin{align*}
 \Phi^\L(S)&=(\scrU/\rN^1_\L\scrU)\o_{\scrU_0} S= \bigoplus_{p=0}^\infty \Phi^\L(S)(p),\\
\Phi^\L(S)(p)&=\spn\{a^1(-n_1)\cdots a^r(-n_r)\o w:a^i\in \g,\ \\ & n_1\geq \dots\geq n_r\geq 1,\ \sum_{i=1}^r n_i=p, \ w\in L(\la)\}.
\end{align*}
Since $L(0)$ acts on the bottom degree $\Phi^\L(S)(0)=L(\la)$ by the scalar $\frac{\<\la,\la+2\rho\>}{2(k+h^\vee)}$, and $\h$ acts semisimply there, it is clear that $L(0)$ acts locally finitely on $\Phi^\L(S)$, and $\hat{\h}=\h\op \C K$ acts semisimply on $\Phi^\L(S)$. Moreover, $\hat{\g}_+$ acts locally nilpotently on $\Phi^\L(S)$, since for any $X(n)\in \hat{\g}_+$ with $n\geq 1$, $X(n).\Phi^\L(S)(p)\subset \Phi^\L(S)(p-n)$. It remains to show that $\hat{\n}_+$ acts locally nilpotently on $\Phi^\L(S)$.

The $\hat{\g}$-module $\Phi^\L(S)$ is generated by $S=\Phi^\L(S)(0)$, on which $\hat{\g}_+$ acts by zero (for degree reasons) and $\g$ acts through the $\A$-module structure of $S$. By the universal property of induced modules, $\Phi^\L(S)$ is therefore a quotient of
\[
\widetilde{V}(S):=U(\hat{\g})\o_{U(\g[t]\op\C K)} S\cong U(\hat{\g}_{<0})\o_\C S,\qquad \hat{\g}_{<0}:=\g\o t^{-1}\C[t^{-1}],
\]
where $\g[t]$ acts on $S$ through the evaluation at zero, and $K$ acts by $k$. The module $\widetilde{V}(S)$ is $\N$-graded by $\deg\big(a^1(-n_1)\cdots a^r(-n_r)\o w\big)=\sum_i n_i$, the quotient map is graded, and local nilpotency of an operator passes to quotients. It therefore suffices to prove that $\hat{\n}_+$ acts locally nilpotently on $\widetilde{V}(S)$. Since $\widetilde{V}(S)=U(\hat{\g}_{<0})\o_\C S$ is a tensor product of vector spaces, operators may be defined factorwise.

Let $X\in\n_+$. Since $[X(0),a(-n)]=(\ad X(a))(-n)$ for $a\in\g$ and $n\geq 1$, the derivation $\ad X(0)$ of $U(\hat{\g})$ preserves the subalgebra $U(\hat{\g}_{<0})$. Hence
\[
A(u\o w):=[X(0),u]\o w,
\qquad
B(u\o w):=u\o Xw,\qquad u\in U(\hat{\g}_{<0}),\ w\in S,
\]
are well-defined linear operators on $\widetilde{V}(S)$, and for $u=a^1(-n_1)\cdots a^r(-n_r)$ and $w\in L(\la)$ we have
\[
X(0).(u\o w)
=
[X(0),u]\o w + u\o (X.w)=(A+B)(u\o w).
\]
Thus
$
X(0)=A+B$ as operators on $\widetilde{V}(S)$, and $ A B= B A$, since $AB(u\o w)=[X(0),u]\o Xw=BA(u\o w)$.
It follows that
\[
X(0)^N.(u\o w)=(A+B)^N(u\o w)=\sum_{j=0}^N \binom{N}{j} A^j B^{N-j}(u\o w).
\]
Since $X\in\n_+$ acts locally nilpotently on $L(\la)$, it is clear that $B^m(u\o w)=0$ for $m$ sufficiently large.

Next, we show that $A^M(u\o w)=0$ for large enough $M$. Indeed, since $\g$ is finite-dimensional semisimple and $X\in\n_+$, the operator $\ad X$ is nilpotent on $\g$. Therefore for each $a\in\g$ there exists $N(a)$ such that
\[
(\ad X)^{N(a)}(a)=0.
\]
Moreover, for $m>0$ we have
$
A.(a(-m)\o w)
=
[X(0),a(-m)]\o w=\ad X(a)(-m)\o w
$. Hence 
\[
A^{N(a)}(a(-m)\o w)=\left((\ad X)^{N(a)}(a)\right)(-m)\o w=0.
\]
Since $A$ acts as $\ad X$, which is a derivation on the product $u=a^1(-n_1)\cdots a^r(-n_r)$, repeated applications of $A$ distribute among the $r$ factors. Therefore, if $M>N(a^1)+\cdots+N(a^r)$, then $(\ad X)^M(u)=0$. This shows $A^M(u\o w)=0$ for sufficiently large $M$.

Now let $N_1$ and $N_2$ be such that $A^{N_1}(u\o w)=0$ and $B^{N_2}(u\o w)=0$. For $N>N_1+N_2$, we have
$X(0)^N.(u\o w)=0$. Hence every $X(0)$, with $X\in \n_+$, acts locally nilpotently on $\widetilde{V}(S)$.

Next, let $\nu=X(0)+Y\in \hat{\n}_+$ be an arbitrary element, with $X\in\n_+$ and $Y\in\hat{\g}_+$ a finite sum of homogeneous terms $Z(m)$, $Z\in\g$, $m\geq 1$. Fix $v\in\bigoplus_{e\leq p}\widetilde{V}(S)(e)$. Expand $\nu^N$ as a sum of words in the letters $X(0)$ and the homogeneous components of $Y$. Any word containing more than $p$ letters from $\hat{\g}_+$ annihilates $v$, since each such letter strictly lowers the degree and $\widetilde{V}(S)$ is graded by $\N$. In a word with $t\leq p$ letters from $\hat{\g}_+$, push every letter $X(0)$ to the right using $[X(0),Z(m)]=(\ad X(Z))(m)$: each move either transposes the pair or replaces it by the single letter $(\ad X(Z))(m)\in\hat{\g}_+$. Since $\ad X$ is nilpotent on $\g$, say $(\ad X)^{h}=0$, each letter of $\hat{\g}_+$ can absorb at most $h-1$ letters $X(0)$ before vanishing, so at least $N-t-t(h-1)\geq N-ph$ letters $X(0)$ reach the right end. Every surviving word is therefore of the form (a word in $\hat{\g}_+$ of length $\leq t$)$\,\cdot\, X(0)^b v$ with $b\geq N-ph$, and choosing $N>ph+N_0$, where $X(0)^{N_0}v=0$ by the preceding argument, gives $\nu^N v=0$. Hence $\hat{\n}_+$ acts locally nilpotently on $\widetilde{V}(S)$, and therefore on its quotient $\Phi^\L(S)$.

Lastly, $\Phi^\L(S)$ is a finitely generated $\hat{\g}$-module, being generated by the cyclic $\g$-module $S=L(\la)$ (indeed by a single highest-weight vector of $L(\la)$). Together with the properties established above, this shows $\Phi^\L(S)\in \O_k(L_k(\g))$, and hence that $\Phi^\L\dashv \Om$ restricts to a well-defined adjoint pair $\O_k^\fin\leftrightarrows \O_k(L_k(\g))$.

Finally, we show that $\Phi^\L\dashv \Om$ is an adjoint equivalence between $\O_k^\fin$ and $\O_k(L_k(\g))$.
Given an irreducible module $S=L(\la)\in \O_k^\fin$,
we first show that $\Phi^\L(S)$ is an irreducible $L_k(\g)$-module. Indeed, suppose $0\neq \mathcal{K}\subsetneq \Phi^\L(S)$ is a proper submodule.
Since $\Phi^\L(S)=\bigoplus_{p=0}^\infty \Phi^\L(S)(p)$ is a direct sum of $L(0)$-eigenspaces with distinct eigenvalues, we have $\mathcal{K}=\bigoplus_{p=0}^\infty \mathcal{K}(p)$, where $\mathcal{K}(p)=\mathcal{K}\cap \Phi^\L(S)(p)$. Now $\mathcal{K}(0)$ is an $\A$-submodule of the simple $\A$-module $S$, so $\mathcal{K}(0)=0$ or $\mathcal{K}(0)=S$; the latter is impossible, since $S$ generates $\Phi^\L(S)$ and would force $\mathcal{K}=\Phi^\L(S)$, contradicting properness. Hence $\mathcal{K}(0)=0$. Since the category $\O_k(L_k(\g))$ is semisimple, we have $\Phi^\L(S)\cong \mathcal{K}\op (\Phi^\L(S)/\mathcal{K})$ as an $L_k(\g)$-module, and $\mathcal{K}(0)=0$ gives $S\hookrightarrow \Phi^\L(S)/\mathcal{K}$. Realizing $\Phi^\L(S)/\mathcal{K}$ as a direct summand of $\Phi^\L(S)$ containing $S$, and using again that $S$ generates $\Phi^\L(S)$, we conclude that this summand is all of $\Phi^\L(S)$, so $\mathcal{K}=0$, contrary to our assumption. Hence $\Phi^\L(S)$ is an irreducible $L_k(\g)$-module in $\O_k$. Applying Theorem~\ref{thm:Aramain} again, we see that $\Phi^\L(S)$ is a direct sum of $\hat{L}_k(\mu)$, with $\mu\in \Pr^k$. But it is irreducible with bottom degree $L(\la)$. Hence $\Phi^\L(S)\cong\hat{L}_k(\la)$.


Therefore, given a simple object $W=\hat{L}_k(\la)\in \O_k(L_k(\g))$, we have $\Phi^\L(\Om(W))=\Phi^\L(L(\la))\cong \hat{L}_k(\la)=W$. Conversely, given a simple object $S=L(\la)\in \O_k^\fin$, we have $\Om(\Phi^\L(S))=\Om(\hat{L}_k(\la))=\hat{L}_k(\la)(0)=S$. Since $\O_k(L_k(\g))$ and $\O_k^\fin$ are both semisimple, we have $\Phi^\L\circ \Om\cong \Id$ and $\Om\circ \Phi^\L\cong \Id$. 
\end{proof}

\begin{corollary}\label{coro:equiofcats}
The adjoint pair~\eqref{eq:equicat} further restricts to an equivalence between the subcategories of ordinary modules:
\[(\Phi^\L\dashv \Om):\O_{k,\Ord}^\fin\leftrightarrows \O_{k,\Ord}.\]
\end{corollary}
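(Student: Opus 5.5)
The plan is to deduce the corollary formally from Theorem~\ref{thm:equiofcats}: an adjoint equivalence restricts to an equivalence between full subcategories as soon as each functor maps one subcategory into the other. Both $\O_{k,\Ord}$ and $\O^{\fin}_{k,\Ord}$ are full subcategories of $\O_k(L_k(\g))$ and $\O^\fin_k$, respectively, by Definition~\ref{def:cats}. Once each functor is known to land in the correct subcategory, the unit and counit of the restricted pair are the restrictions of the natural isomorphisms $\Phi^\L\circ\Om\cong\Id$ and $\Om\circ\Phi^\L\cong\Id$ from Theorem~\ref{thm:equiofcats}. The adjunction bijection $\Hom(\Phi^\L(S),W)\cong\Hom_\A(S,\Om(W))$ likewise restricts, because the subcategories are full.

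First I will check that $\Om$ sends $\O_{k,\Ord}$ into $\O^{\fin}_{k,\Ord}$. This holds by definition, since $\O^{\fin}_{k,\Ord}$ is the essential image of $\Om|_{\O_{k,\Ord}}$.

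Second, I will check that $\Phi^\L$ sends $\O^{\fin}_{k,\Ord}$ into $\O_{k,\Ord}$. Let $S\in\O^{\fin}_{k,\Ord}$, so $S\cong\Om(M)$ for some $M\in\O_{k,\Ord}$. Then $\Phi^\L(S)\cong\Phi^\L(\Om(M))\cong M$ by Theorem~\ref{thm:equiofcats}. Since $\O_{k,\Ord}$ is a full subcategory closed under isomorphism, it follows that $\Phi^\L(S)\in\O_{k,\Ord}$. As a cross-check at the level of simple objects, $\Phi^\L(L(\la))\cong\hat L_k(\la)$ by \eqref{eq:LklambdaVerma}. For $\la\in\Pr^k\cap P=\Pr^k\cap P_+$ the module $L(\la)$ is finite-dimensional, so every graded piece $\Phi^\L(L(\la))(p)$ is finite-dimensional. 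The graded pieces are spanned by monomials $a^1(-n_1)\cdots a^r(-n_r)\o w$ with $\sum_i n_i=p$, and there are finitely many such monomials for each $p$. Each $\Phi^\L(L(\la))(p)$ is the $L(0)$-eigenspace with eigenvalue $a_\la+p$, and the grading is bounded below, so $\Phi^\L(L(\la))$ is ordinary.

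No step is a genuine obstacle. The only point needing care is closure under isomorphism and finite direct sums, which is where semisimplicity of both categories enters. By Theorem~\ref{thm:Aramain}, objects of $\O_k$ are finitely generated direct sums, hence finite direct sums, of simple modules. A finite direct sum of ordinary modules is ordinary, and $\Phi^\L$ preserves finite direct sums because it is a left adjoint. Combining these with the first two steps, both restricted functors are well-defined and mutually quasi-inverse, which proves the corollary.
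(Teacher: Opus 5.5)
Your proof is correct, but it takes a different route from the paper's. The paper argues through semisimplicity. Both $\O_{k,\Ord}$ and $\O^\fin_{k,\Ord}$ are semisimple, with simple objects $\hat{L}_k(\la)$ and $L(\la)$ for $\la\in\Pr^k\cap P$. By \eqref{eq:LklambdaVerma}, $\Phi^\L$ and $\Om$ interchange these simples, and the paper takes that as sufficient.

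Your main argument is instead the formal fact that an equivalence restricts to an equivalence between a replete full subcategory and its essential image. Since $\O^\fin_{k,\Ord}$ is \emph{defined} as the essential image of $\Om|_{\O_{k,\Ord}}$, your second step ($S\cong\Om(M)$ implies $\Phi^\L(S)\cong\Phi^\L(\Om(M))\cong M$) already handles arbitrary objects, not just simple ones. It uses neither semisimplicity nor the explicit list of simples. So your closing paragraph is not needed: semisimplicity plays no role in your proof, only in the cross-check.

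What the paper's route gives in addition is concrete content. It shows that the finite-side category is exactly the semisimple subcategory of $\O^\fin_k$ whose simples are the finite-dimensional $L(\la)$ with $\la\in\Pr^k\cap P$, as recorded in Definition~\ref{def:cats}(3)--(4). This is the form in which the equivalence is used later, for instance in the splitting $\A'$ of Example~\ref{ex:Aadm}. Your cross-check, that $\Phi^\L(L(\la))\cong\hat{L}_k(\la)$ is ordinary because $L(\la)$ is finite-dimensional, recovers exactly that information.
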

\begin{proof}
Both categories are semisimple, so it suffices to check that the equivalence~\eqref{eq:equicat} matches their simple objects. By Definition~\ref{def:cats}, the simple objects of $\O_{k,\Ord}$ are the $\hat{L}_k(\la)$ with $\la\in \Pr^k\cap P$, and those of $\O^\fin_{k,\Ord}$ are the $L(\la)$ with $\la\in \Pr^k\cap P$; by~\eqref{eq:LklambdaVerma} the functors $\Phi^\L$ and $\Om$ interchange them.
\end{proof}

Following the notation of~\cite{A16,AvEM22}, we write $\Pr^k_\Z=\Pr^k\cap P$ for the set of ordinary admissible weights. By~\cite[eq.~(10)]{A16}, under the correspondence $\hat{\la}=\la+k\Lambda_0\in \hat{\h}^\ast$ for $\la\in \h^\ast$,
\begin{equation}\label{eq:PrkZ}
\Pr^k_\Z=\begin{cases}
\left\{\la\in \h^\ast: \<\la,\al^\vee_i\>\in \Z_{\geq 0}\ \forall i,\ \<\la,\theta^\vee\>\leq p-h^\vee \right\}& \text{if }(r^\vee,q)=1,\\
\left\{\la\in\h^\ast: \<\la,\al^\vee_i\>\in \Z_{\geq 0}\ \forall i,\ \<\la,\theta_s^\vee\>\leq p-h \right\}& \text{if }(r^\vee,q)=r^\vee.
\end{cases}
\end{equation}
We record the properties of this set that will be used throughout. Let $w_0\in W$ be the longest Weyl group element, so that $w_0(\Delta_+)=\Delta_-$.

\begin{lemma}\label{lm:w0closed}
\begin{enumerate}
\item $\Pr^k_\Z\subset P_+$. Consequently $\Pr^k\cap P=\Pr^k\cap P_+$, and the quotient map restricts to a bijection
\begin{equation}\label{eq:PrkZclasses}
\Pr^k_\Z\xrightarrow{\ \simeq\ }\left\{[\la]\in[\Pr^k]:\la\in \Pr^k_\Z\right\},
\end{equation}
so that the two index sets may be used interchangeably.
\item $\Pr^k_\Z$ is stable under $\la\mapsto -w_0(\la)$, and for every $\la\in \Pr^k_\Z$ the dual module $L(\la)^\ast\cong L(-w_0(\la))$ is again a simple object of $\O^\fin_{k,\Ord}$.
\end{enumerate}
\end{lemma}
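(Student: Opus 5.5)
The plan is to read everything off the explicit description \eqref{eq:PrkZ} of $\Pr^k_\Z$ and the standard facts on the dot action and on $-w_0$.

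For part (1), take $\la\in\Pr^k_\Z$. By \eqref{eq:PrkZ}, $\<\la,\al_i^\vee\>\in\Z_{\geq0}$ for every simple coroot. Since the fundamental weights $\om_i$ form the dual basis to the simple coroots, $\la=\sum_i\<\la,\al_i^\vee\>\om_i\in P_+$. Hence $\Pr^k\cap P=\Pr^k_\Z\subset P_+$, and so $\Pr^k\cap P=\Pr^k\cap P_+$.

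For the bijection \eqref{eq:PrkZclasses}, surjectivity holds by definition of the target, so only injectivity needs proof. Suppose $\la,\mu\in\Pr^k_\Z$ with $\mu=w\circ\la$. Then $\la+\rho$ and $\mu+\rho=w(\la+\rho)$ are both strictly dominant, since $\<\la+\rho,\al_i^\vee\>\geq1$. A strictly dominant weight has trivial stabilizer in $W$, and each $W$-orbit contains exactly one dominant element. Therefore $w=1$ and $\la=\mu$. By Proposition~\ref{prop:arakawa_primitive} this is the same as saying $J_\la\neq J_\mu$ for distinct $\la,\mu\in\Pr^k_\Z$, which is what is needed to use the two index sets interchangeably in Theorem~\ref{thm:AvE_result}.

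For part (2), recall that $-w_0$ permutes the simple roots, hence permutes the simple coroots. It also fixes $\theta$ and $\theta_s$: $w_0\theta$ is the lowest root, which is $-\theta$, and the same argument applies to the short roots. The form is $W$-invariant, so for $\la$ satisfying \eqref{eq:PrkZ} we get
\[
\<-w_0\la,\al_i^\vee\>=\<\la,\al_{\si(i)}^\vee\>\in\Z_{\geq0}
\qquad\text{and}\qquad
\<-w_0\la,\theta^\vee\>=\<\la,\theta^\vee\>,
\]
where $\si$ is the permutation of simple roots induced by $-w_0$. The same identity holds with $\theta_s^\vee$ in place of $\theta^\vee$. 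Thus both bounds in \eqref{eq:PrkZ} are preserved and $-w_0\la\in\Pr^k_\Z$.

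The isomorphism $L(\la)^\ast\cong L(-w_0\la)$ is the classical fact that the lowest weight of $L(\la)$ is $w_0\la$. Consequently $L(\la)^\ast$ has highest weight $-w_0\la$, and by Definition~\ref{def:cats}(4) it is a simple object of $\O^\fin_{k,\Ord}$.

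The only point requiring some care is the invariance of the highest short root $\theta_s$ under $-w_0$ in the coprincipal case. I would justify it by noting that $w_0$ preserves root lengths and reverses the dominance order, so it sends the unique highest short root to the unique lowest short root, which is $-\theta_s$.
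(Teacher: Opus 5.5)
Your proposal is correct and follows essentially the same route as the paper. Both arguments read $\la\in P_+$ directly off the first condition in \eqref{eq:PrkZ}, derive injectivity from strict dominance of $\la+\rho$, and prove stability under $-w_0$ by permuting the simple coroots while fixing $\theta^\vee$ (respectively $\theta_s^\vee$). Your added justification that $-w_0$ fixes $\theta_s$ and your remark on the ideals $J_\la$ are harmless extras.
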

\begin{proof}
(1) The first condition in either case of~\eqref{eq:PrkZ} says exactly that $\la\in P_+$. Distinct elements of $P_+$ lie in distinct $W\circ$-orbits, since $\la+\rho$ is then strictly dominant, which gives~\eqref{eq:PrkZclasses}.

(2) The involution $-w_0$ permutes $\Delta_+$, hence permutes the simple roots, say $-w_0(\al_i)=\al_{\si(i)}$ for a permutation $\si$ of the index set, and fixes the highest root and the highest short root; the same holds for the dual action on the coroots. Since $w_0^{-1}=w_0$, for $\la\in \Pr^k_\Z$ in the case $(r^\vee,q)=1$,
\[
\<-w_0(\la),\al^\vee_i\>=\<\la,-w_0(\al^\vee_i)\>=\<\la,\al_{\si(i)}^\vee\>\in \Z_{\geq 0},\qquad
\<-w_0(\la),\theta^\vee\>=\<\la,\theta^\vee\>\leq p-h^\vee,
\]
so $-w_0(\la)\in \Pr^k_\Z$; the coprincipal case is identical, with $\theta_s$ in place of $\theta$. Finally $L(\la)^\ast\cong L(-w_0(\la))$ for $\la\in P_+$, and $\dim L(\la)<\infty$, so this is a simple object of $\O^\fin_{k,\Ord}$ by Definition~\ref{def:cats}.
\end{proof}

\subsection{$(n+1)$-point coinvariants of $L_k(\g)$ on the projective line}


\subsubsection{Coinvariants on $(n+1)$-pointed $\P^1$}
		If $C\bs P_\bullet$ is affine, then by~\cite[Section 5.1]{DG}, \cite[Lemma 2.6.1]{DGT2}, and~\cite[Section 5.1]{NT}, there is an identification
	$
	\mathcal{L}_{C\bs P_\bullet}(V)\cong H^0(C\bs P_\bullet, \mathscr{V}_C\o \om_C)/\im\nabla,
	$
	where
	\[
	H^0(C\bs P_\bullet, \mathscr{V}_C\o \om_C)\cong \bigoplus_{m=0}^\infty V_m\o H^0(C\bs P_\bullet, \om_C^{1-m}),
	\]
    as vector spaces.
Now we consider the case when $C$ is a smooth projective curve of genus zero (i.e.\ $\P^1$) with $n+1$ marked points $P_\bullet=(P_0,P_1,\ds,P_n)$. In view of Remark~\ref{remark:coordinatefree} we may assume, without loss of generality, that $P_0=\infty$; we fix the affine coordinate $z$ on $\P^1\bs\{P_0\}$ and identify each of the remaining marked points $P_1,\ds,P_n$ with its coordinate in $\C$. Then
    \[
H^0(C\bs P_\bullet, \om_C^{1-m})\cong \C[z, (z-P_1)^{-1},\ds, (z-P_n)^{-1}] (dz)^{1-m}.
    \]
Moreover, $\im\nabla$ can be identified with the image of \[\nabla=L(-1)\o 1+1\o \frac{d}{dz}:\ \ \bigoplus_{m=0}^\infty V_m\o H^0(C\bs P_\bullet,\om_C^{-m})\ra \bigoplus_{m=0}^\infty V_m\o H^0(C\bs P_\bullet, \om_C^{1-m}). \]
Hence we may represent sections in the chiral Lie algebra $ \mathcal{L}_{\P^1\bs \{P_0,P_1,\ds ,P_n\}}(V)$ by
\begin{equation}\label{eq:si}
    \si=a\o f(z) (dz)^{1-m}=	a\o \frac{z^r}{(z-P_1)^{m_1}\ds (z-P_n)^{m_n}} (dz)^{1-m},
\end{equation}
where $ a\in V_m$ and $r,m_1,\ds,m_n\in \N$,
subject to the relations
\[L(-1)a\o f(z) (dz)^{-m}=-a\o \frac{d}{dz}(f(z)) (dz)^{1-m},\]
for all $a\in V_m$ and $f(z)\in \C[z, (z-P_1)^{-1},\ds, (z-P_n)^{-1}]$.

Let $M^0,M^1,\ds, M^n$ be admissible $V$-modules attached to $P_0,P_1,\ds,P_n$. In this subsection we write $M^\bullet:=M^1\o\cdots\o M^n$ for the tensor product of the modules attached to the finite points and $M^\bullet(0):=M^1(0)\o\cdots\o M^n(0)$ for the tensor product of their degree-zero subspaces, so that the module attached to all $n+1$ points is $M^0\o M^\bullet$. The action of the chiral Lie algebra element $\si=a \o  f(z) (dz)^{1-m}$~\eqref{eq:si} in Definition~\ref{df:chiralLie} has an explicit description with the choice of local coordinates $t_\bullet=(1/z,z-P_1,\ds ,z-P_n)$; see~\cite[Definition 5.1.2]{NT}.

First, for the point $P_i$, with $1\leq i\leq n$, the attached $V$-module $M^i$ is a module over the chiral Lie algebra via
\[
\begin{split}
   & \rho_{P_i}: \mathcal{L}_{\P^1\bs \{P_0,P_1,\ds ,P_n\}}(V)\xrightarrow{\iota_{z=P_i}} \mathcal{L}(V)^\L \stackrel{\rho}{\longrightarrow} \gl(M^i),\\
 \si &\mapsto a\o \iota_{z=P_i}(f(z))\mapsto \Res_{z=P_i}Y_{M^i}(a,z-P_i)\iota_{z=P_i}(f(z))=\si_{P_i},
\end{split}
\]
where $\iota_{z=P_i}(f(z))$ is a Laurent series expansion of $f(z)$ near $P_i$, and $\rho$ is the action of $\mathcal{L}(V)^\L$ on $M^i$.
More precisely, for any $u\in M^i$, we have
\begin{equation}\label{eq:Piaction}
    \begin{aligned}
\si_{P_i}.u&=  \sum_{j,j_1,\ds ,j_n\geq 0} \binom{r}{j}\binom{-m_1}{j_1}\cdots \widehat{\binom{-m_i}{j_i}}\cdots \binom{-m_n}{j_n} \\
 &\cdot P_i^{r-j} (P_i-P_1)^{-m_1-j_1}\cdots \widehat{(P_i-P_{i})}\cdots(P_i-P_n)^{-m_n-j_n} a_{(j+j_1+\ds\hat{j_i}\ds +j_n-m_i)}u.
    \end{aligned}
\end{equation}
Second, for the point $P_0=\infty$, the attached $V$-module $M^0$ is a module over the chiral Lie algebra via
\[
\rho_{P_0}: \mathcal{L}_{\P^1\bs \{P_0,P_1,\ds ,P_n\}}(V)\ra \gl(M^0),\quad \si\mapsto \si_{P_0}=\Res_{z=P_0}Y_{M^0}(\vartheta(a),z^{-1})\iota_{z=P_0}(f(z)),
\]
where $\vartheta(a)=-e^{zL(1)}(-z^{-2})^{L(0)}(a)$.
More precisely, for any $v\in M^0$, we have
\begin{equation}\label{eq:P0action}
    \begin{aligned}
\si_{P_0}.v&=  -\sum_{j_1,\ds ,j_n\geq 0}\binom{-m_1}{j_1}\cdots \binom{-m_n}{j_n} \\
 &\cdot  (-P_1)^{j_1}\cdots (-P_n)^{j_n} a'_{(r-m_1-j_1-\ds -m_n-j_n)}v,
    \end{aligned}
\end{equation}
where  $a'_{(m)}=\sum_{i\geq 0} \frac{(-1)^{\wt a}}{i!} (L(1)^ia)_{(2\wt a-m-i-2)}$~\cite[Section 5.2]{FHL93}.

Finally, it follows from~\eqref{chiralLieact} that $M^0\o M^1\o\cdots\o M^n$ is a tensor product module over the chiral Lie algebra
\[
\rho: \mathcal{L}_{\P^1\bs \{P_0,P_1,\ds ,P_n\}}(V)\ra \gl(M^0\o M^1\o\cdots\o M^n),
\]
where the action of $\si\in \mathcal{L}_{\P^1\bs \{P_0,P_1,\ds ,P_n\}}(V)$ is given by~\eqref{eq:Piaction} and~\eqref{eq:P0action}:
\[
\si.(v\o u^1\o\cdots\o u^n)=(\si_{P_0}.v)\o u^1\o\cdots\o u^n+\sum_{i=1}^n v\o u^1\o\cdots\o (\si_{P_i}.u^i)\o\cdots\o u^n.
\]
For a smooth genus-zero curve $C$, the restriction of sections gives an embedding
\[H^0(C,\om_C^{1-m})\ssq H^0(C\bs P_\bullet,\om_C^{1-m}),\]
where the sections in $H^0(C,\om_C^{1-m})$ are regular at all marked points $P_\bullet$.
In particular, for $C\bs P_\bullet=\P^1\bs\{P_0,P_1,\ds ,P_n\}$, we have $H^0(C,\om_C^{1-m})=\spn\{ z^l (dz)^{1-m}: 0\leq l\leq 2m-2\}$. If $m=1$, then $H^0(C,\om_C^{1-m})=H^0(C,\O_C)\cong \C$.

Introduce the following subspaces of $\mathcal{L}_{C\bs P_\bullet}(V)$:
\begin{equation}\label{eq:gvreg}
\begin{aligned}
\mathcal{L}_{C\bs P_\bullet}(V)_{\reg}&:=\left(\bigoplus_{m=0}^\infty V_m\o H^0(C,\om_C^{1-m})+\im\nabla\right)/\im\nabla,  \\
\g(V)_\reg&:=\left(V_1\o H^0(C,\O_C)+\im\nabla\right)/\im\nabla.
\end{aligned}
\end{equation}
Note that $\g(V)_\reg=0$ when $V_1=0$.
\begin{lemma}\label{lm:injgv}
Let $V$ be a VOA of CFT-type. Then $L(-1)$ acts injectively on $V_+:=\bigoplus_{m\geq 1}V_m$, and the subspace $\g(V)_\reg$ can be identified with $V_1\o H^0(C,\O_C)\cong V_1$.
\end{lemma}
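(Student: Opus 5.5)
The plan is to prove the two claims in turn: injectivity of $L(-1)$ on $V_+$ from the $\mathfrak{sl}_2$ relation $[L(1),L(-1)]=2L(0)$, and then the identification of $\g(V)_\reg$ by showing $\im\nabla$ meets $V_1\o H^0(C,\O_C)$ trivially.

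\textbf{Injectivity.} Fix $m\geq 1$ and suppose $0\neq a\in V_m$ with $L(-1)a=0$. Then
\[
L(1)L(-1)a-L(-1)L(1)a=2L(0)a=2m\,a .
\]
The first term on the left vanishes, so $-L(-1)L(1)a=2m\,a$. Here $L(1)a\in V_{m-1}$.

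If $m=1$, then $L(1)a\in V_0=\C\vac$. Since $L(-1)\vac=0$ (from $Y(\vac,z)=\id$ and the derivative property), the left side is $0$. This gives $2a=0$, a contradiction.

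For $m\geq 2$ one cannot argue directly, because $L(1)a$ need not lie in $\ker L(-1)$. Instead I would use the standard fact that $Y(a,z)\vac=e^{zL(-1)}a$ (skew-symmetry/creation). If $L(-1)a=0$, then $Y(a,z)\vac=a$ is constant in $z$. Using translation covariance $[L(-1),Y(a,z)]=\frac{d}{dz}Y(a,z)$ together with locality, one gets $Y(a,z)b=e^{zL(-1)}Y(b,-z)a$, and the right side has no negative powers of $z$ except through $Y(b,-z)a$. Comparing with the $L(0)$-grading would then force $a_{(n)}=0$ for $n\neq -1$. In particular $\om_{(1)}a=L(0)a=0$, which contradicts $\wt a=m>0$. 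An alternative, cleaner route is the derivative property $Y(L(-1)a,z)=\frac{d}{dz}Y(a,z)$: if $L(-1)a=0$ then $Y(a,z)$ is constant, so $a=a_{(-1)}\vac$ and $a_{(n)}=0$ for $n\neq -1$. Then $a_{(0)}\om=0$, and skew-symmetry expresses $L(-1)a=\om_{(0)}a$ through the modes $a_{(j)}\om$. More useful is $L(0)a=\om_{(1)}a=\sum_j(\pm)\frac{L(-1)^j}{j!}a_{(1+j)}\om=0$, hence $m\,a=0$, a contradiction. I will use this derivative/skew-symmetry argument, which covers all $m\geq1$ at once; the main care needed is getting the skew-symmetry signs right.

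\textbf{Identification.} It remains to see that $\big(V_1\o H^0(C,\O_C)\big)\cap\im\nabla=0$ in $\bigoplus_m V_m\o H^0(C\bs P_\bullet,\om_C^{1-m})$. Since $\nabla$ preserves the $V$-grading shifted by $L(-1)$, the $V_1$-component of $\nabla(\sum_m b_m\o g_m)$ equals
\[
L(-1)b_0\o g_0+b_1\o g_1' ,
\]
with $b_0\in V_0=\C\vac$. As $L(-1)\vac=0$, this is just $b_1\o g_1'$.

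We also need the other components to vanish. The $V_0$-component is $\vac\o g_0'$, forcing $g_0$ constant. The $V_m$-components for $m\geq2$ are $L(-1)b_{m-1}\o g_{m-1}+b_m\o g_m'$. We may take $\vac\o g_0$ to be harmless, since it only contributes $\vac\o g_0'$, which already vanishes.

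A tidier way to organize this is to filter by the highest $m$ appearing in the preimage. If $b_M\o g_M$ is the top term, the $V_{M+1}$-component $L(-1)b_M\o g_M$ must vanish, and by injectivity this forces $g_M=0$ whenever $M\geq1$. Descending induction then kills all terms with $m\geq 2$, leaving $b_1\o g_1$ with $g_1$ a function and contribution $b_1\o dg_1$.

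So the intersection is $V_1\o\big(H^0(C,\O_C)\cap d\,H^0(C\bs P_\bullet,\O)\big)$. A constant $c\,(dz)^0$ in the $1$-form slot, viewed in $\om_C^{0}$-twisted terms, means the element is $b\o c$ with $c\in\C$. Exactness would require $c=g'$ for a regular function, and there is a degree/residue mismatch: in the graded bookkeeping, $V_1\o\om^0$ sits in a slot where $\nabla$ lands only via $L(-1)V_0=0$. I expect this bookkeeping of which $\om^{1-m}$ slot each term sits in to be the main obstacle. With that settled, the intersection is $0$, and $\g(V)_\reg\cong V_1\o H^0(C,\O_C)\cong V_1$.
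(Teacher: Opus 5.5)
Your injectivity argument, in the form you settle on, is exactly the paper's: the derivative property forces $a_{(n)}=0$ for $n\neq -1$, and skew-symmetry with $b=\omega$, $n=1$ gives $L(0)a=0$. The $\mathfrak{sl}_2$ detour for $m=1$ and the vaguer first attempt are unnecessary but harmless.

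The gap is in the identification step. Your top-degree argument is the right tool, and by your own criterion it kills $\xi_M$ whenever $M\geq 1$. But you then stop at $m\geq 2$ and keep a residual term $b_1\otimes g_1$, which you misplace. In the paper's grading it lies in $V_1\otimes H^0(C\backslash P_\bullet,\omega_C^{-1})$, not in the $V_1\otimes\mathcal{O}$ slot. $\nabla$ sends it to $L(-1)b_1\otimes g_1(dz)^{-1}+b_1\otimes g_1'$, which has components in both the $V_2$- and the $V_1$-summands.

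Your closing justification is also false. You claim $\nabla$ reaches the summand $V_1\otimes H^0(C\backslash P_\bullet,\omega_C^{0})$ only through $L(-1)V_0=0$. But for $b\in V_1$ one has $\nabla\big(b\otimes z(dz)^{-1}\big)=L(-1)b\otimes z(dz)^{-1}+b\otimes 1$. So constants \emph{do} occur as $V_1$-components of elements of $\operatorname{Im}\nabla$. Likewise, the intersection you reduce to, read correctly as $H^0(C,\mathcal{O}_C)\cap\frac{d}{dz}H^0(C\backslash P_\bullet,\omega_C^{-1})$, contains $1=\frac{d}{dz}z$ and is not zero. No slot bookkeeping can settle this. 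What excludes such elements is their nonzero $V_2$-component.

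The fix is to run your top-degree argument at $M=1$ as well. Suppose $M$ is maximal with $\xi_M\neq 0$ and $M\geq 1$. Then the $V_{M+1}$-component of $\nabla(\xi)$ is $(L(-1)\otimes 1)(\xi_M)$. It must vanish, since $a\otimes 1$ has no component in degree $M+1\geq 2$. This contradicts injectivity of $L(-1)$ on $V_M$; for $M=1$ this is exactly where injectivity on $V_1$ from the first half is used. Hence $\xi=\vac\otimes g$, and $\nabla(\xi)=\vac\otimes g'\,dz$ lies in the summand $V_0\otimes H^0(C\backslash P_\bullet,\omega_C)$, which forces $a\otimes 1=0$. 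This is the paper's argument.
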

\begin{proof}
For the first claim, let $a\in V$ with $L(-1)a=0$. Then $\frac{d}{dz}Y(a,z)=Y(L(-1)a,z)=0$, so $Y(a,z)=a_{(-1)}$ is constant in $z$. By skew-symmetry~\cite[Section 3.1]{LL04}, for every $b\in V$,
\[
Y(b,-z)a=e^{-zL(-1)}Y(a,z)b=e^{-zL(-1)}a_{(-1)}b\in V\lb z\rb,
\]
so $b_{(n)}a=0$ for all $b\in V$ and $n\geq 0$. Taking $b=\om$ and $n=1$ gives $L(0)a=0$, hence $a\in V_0$. Thus $\ker L(-1)=V_0=\C\vac$, and $L(-1)$ is injective on $V_+$.

For the second claim, it suffices to show $(V_1\o H^0(C,\O_C))\cap \im\nabla=0$. For any $m\geq 0$ and $b\o g(z) (dz)^{-m}\in V_m\o H^0(C\bs P_\bullet, \om_C^{-m})$, we have
\begin{equation}\label{eq:nablaformula}
\nabla(b\o g(z) (dz)^{-m})=L(-1)b\o g(z) (dz)^{-m}+b\o g'(z) (dz)^{1-m},
\end{equation}
so $\nabla$ maps $V_m\o H^0(C\bs P_\bullet,\om_C^{-m})$ into $V_{m+1}\o H^0(C\bs P_\bullet,\om_C^{-m})\op V_m\o H^0(C\bs P_\bullet,\om_C^{1-m})$.
Suppose $a\o 1=\nabla(\xi)$ with $a\in V_1$ and $\xi=\sum_{m=0}^{M}\xi_m$, where $\xi_m\in V_m\o H^0(C\bs P_\bullet,\om_C^{-m})$ and $\xi_M\neq 0$. By~\eqref{eq:nablaformula}, the component of $\nabla(\xi)$ in $V_{M+1}\o H^0(C\bs P_\bullet,\om_C^{-M})$ is $(L(-1)\o 1)(\xi_M)$, the summands $\xi_m$ with $m<M$ contributing nothing to it. If $M\geq 1$, then $M+1\geq 2$ and the corresponding component of $a\o 1\in V_1\o H^0(C,\O_C)$ is zero, so $(L(-1)\o 1)(\xi_M)=0$; since $L(-1)$ is injective on $V_M$, this forces $\xi_M=0$, a contradiction. Hence $M=0$ and $\xi=\vac\o g(z)$ for some $g\in H^0(C\bs P_\bullet,\O_C)$, so that $\nabla(\xi)=\vac\o g'(z)\,dz\in V_0\o H^0(C\bs P_\bullet,\om_C)$ because $L(-1)\vac=0$. Comparing with $a\o 1\in V_1\o H^0(C,\O_C)$ gives $a\o 1=0$. This shows $(V_1\o H^0(C,\O_C))\cap \im\nabla=0$, and it follows from~\eqref{eq:gvreg} that $\g(V)_\reg\cong V_1\o H^0(C,\O_C)\cong V_1$.
\end{proof}

\begin{lemma}\label{lm:regdegree}
Let $C\bs P_\bullet=\P^1\bs\{P_0,P_1,\ds ,P_n\}$ and let $t_\bullet=(1/z,z-P_1,\ds ,z-P_n)$. Let $M^0,M^1,\ds, M^n$ be admissible $V$-modules attached to $P_0,P_1,\ds,P_n$, with the chiral Lie algebra action described by~\eqref{eq:Piaction} and~\eqref{eq:P0action}. Then for a given section
\[
\si=a\o z^l (dz)^{1-m}\in \mathcal{L}_{\P^1\bs \{P_0,P_1,\ds ,P_n\}}(V)_\reg,\quad 0\leq l\leq 2m-2,
\]
and homogeneous subspaces $M^0(p_0),M^1(p_1),\ds, M^n(p_n)$, with $p_0,p_1,\ds,p_n\geq 0$, of the admissible $V$-modules, we have
\[
\si_{P_0}.M^0(p_0)\ssq M^0(p_0-m+l+1),\quad  \si_{P_i}.M^i(p_i)\ssq \sum_{j\geq 0}M^i(p_i+m-j-1).
\]

\end{lemma}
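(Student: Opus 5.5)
The plan is to specialize the explicit formulas \eqref{eq:P0action} and \eqref{eq:Piaction} to $f(z)=z^l$, that is, $r=l$ and $m_1=\cdots=m_n=0$. Then read off the degree of each resulting mode from the grading rule $\deg a_{(k)}=\wt a-k-1$ for admissible modules, recalled in the preliminaries. Throughout we take $a\in V_m$ homogeneous, so $\wt a=m$. Since $\si$ is linear in $a$, this suffices.

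\emph{The point $P_0$.} With all $m_i=0$, every binomial $\binom{0}{j_i}$ vanishes unless $j_i=0$. So \eqref{eq:P0action} collapses to the single term $\si_{P_0}.v=-a'_{(l)}v$. Next expand $a'_{(l)}=\sum_{i\geq0}\frac{(-1)^{m}}{i!}(L(1)^ia)_{(2m-l-i-2)}$. The vector $L(1)^ia$ has weight $m-i$, so the mode $(L(1)^ia)_{(2m-l-i-2)}$ has degree
\[
(m-i)-(2m-l-i-2)-1=-m+l+1,
\]
independent of $i$. Hence every summand maps $M^0(p_0)$ into $M^0(p_0-m+l+1)$. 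This is the first inclusion.

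\emph{The points $P_i$, $i\geq1$.} Again only $j_1=\cdots=j_n=0$ survive, and $m_i=0$. So \eqref{eq:Piaction} becomes $\si_{P_i}.u=\sum_{j=0}^{l}\binom{l}{j}P_i^{l-j}a_{(j)}u$. This is just the expansion $z^l=\sum_j\binom{l}{j}P_i^{l-j}(z-P_i)^j$ inserted into $\Res_{z=P_i}Y_{M^i}(a,z-P_i)$. The mode $a_{(j)}$ has degree $m-j-1$, so it sends $M^i(p_i)$ into $M^i(p_i+m-j-1)$. Summing over $j\geq0$ gives the second inclusion; the sum is finite, with $j\leq l$.

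\emph{Main obstacle.} The only delicate step is the bookkeeping at $P_0$. One has to verify that the contragredient-type modes $a'_{(k)}$ have a uniform degree, despite the $L(1)$-corrections, and that the sign and index conventions of \eqref{eq:P0action} (the coordinate $1/z$ and $\vartheta$) yield exactly the index $l$ rather than, say, $-l-2$. I would double-check this by computing directly from $\si_{P_0}=\Res_{z=\infty}Y_{M^0}(\vartheta(a),z^{-1})z^l$. Degree shifts are additive under composition with $e^{zL(1)}$ and $z^{-2L(0)}$, so the net degree depends only on the power of $z$. The restriction $0\leq l\leq 2m-2$ is not needed for the inclusions; it only guarantees that $\si$ is regular.
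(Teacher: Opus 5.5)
Your proposal is correct and follows the paper's proof: you specialize \eqref{eq:P0action} and \eqref{eq:Piaction} to $r=l$ and $m_1=\cdots=m_n=0$, which gives $\sigma_{P_0}=-a'_{(l)}$ and $\sigma_{P_i}=\sum_{j\geq 0}\binom{l}{j}P_i^{l-j}a_{(j)}$. You then read off the degrees $-m+l+1$ (uniform in the $L(1)$-corrections) and $m-j-1$ from $\deg a_{(k)}=\wt a-k-1$, exactly as the paper does.
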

\begin{proof}
For $\si=a\o z^l (dz)^{1-m}$, it follows from~\eqref{eq:P0action} that
\[
\si_{P_0}.M^0(p_0)=-a'_{(l)}M^0(p_0)=-\sum_{i\geq 0} \frac{(-1)^m}{i!} (L(1)^ia)_{(2m-l-i-2)}M^0(p_0),
\]
with $\deg((L(1)^ia)_{(2m-l-i-2)})=-m+l+1$. Hence $\si_{P_0}.M^0(p_0)\ssq M^0(p_0-m+l+1)$. On the other hand, given any $u^i\in M^i(p_i)$, by~\eqref{eq:Piaction} we have
\[
\si_{P_i}.u^i=\sum_{j\geq 0} \binom{l}{j} P_i^{l-j} a_{(j)}u^i\in \sum_{j\geq 0} M^i(p_i+m-j-1).
\]
\end{proof}

\begin{corollary}\label{coro:surjmap}
Let
$\tau=a\o 1\in \g(V)_\reg,$ with $a\in V_1$.
Then the action of $\tau$ preserves $M^0(0)\o M^1(0)\o\cdots\o M^n(0)$. Moreover, the canonical embedding map $M^0(0)\o M^\bullet(0)\ra M^0\o M^\bullet$, $v\o u^\bullet\mapsto v\o u^\bullet$, induces a well-defined linear map to the space of coinvariants:
\begin{equation}\label{eq:0coinvtocoinv}
\frac{M^0(0)\o M^\bullet(0)}{\g(V)_\reg.(M^0(0)\o M^\bullet(0))}\ra [M^0\o M^\bullet]_{(C,P_\bullet,t_\bullet)}.
\end{equation}
\end{corollary}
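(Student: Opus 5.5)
The plan is to read off both claims from Lemma~\ref{lm:regdegree} with $m=1$ and $l=0$, using Lemma~\ref{lm:injgv} to regard $\tau=a\o 1$ as a genuine element of $\mathcal{L}_{C\bs P_\bullet}(V)$.

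\textbf{Step 1: degree bookkeeping at each point.} By Lemma~\ref{lm:injgv}, $\tau=a\o 1$ is a well-defined nonzero element of $\mathcal{L}_{C\bs P_\bullet}(V)_\reg$, identified with $a\in V_1$. At $P_0$ we have $m=1$ and $l=0$, so Lemma~\ref{lm:regdegree} gives
\[
\tau_{P_0}.M^0(0)\ssq M^0(0-1+0+1)=M^0(0).
\]
Concretely, $\tau_{P_0}=-a'_{(0)}$. The $L(1)^ia$ terms with $i\geq 1$ vanish or are scalars, because $L(1)a\in V_0=\C\vac$ and $\vac_{(n)}$ acts as $\delta_{n,-1}$; so $\tau_{P_0}$ is a combination of $a_{(0)}$ and scalars. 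At $P_i$, formula~\eqref{eq:Piaction} with $r=0$ and $m_j=0$ reduces to $\tau_{P_i}.u=a_{(0)}u$. Since $\deg a_{(0)}=\wt a-0-1=0$, the operator $a_{(0)}$ preserves $M^i(0)$. The one point to watch is that Lemma~\ref{lm:regdegree} only gives the sum $\sum_{j\geq0}M^i(p_i-j)$; this lands in $M^i(0)$ because negative degrees vanish, or more directly because only the $j=0$ binomial term survives when $l=0$.

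\textbf{Step 2: stability of the tensor product.} Since $\tau$ acts on $M^0\o M^\bullet$ as a derivation, the sum of the pointwise actions, Step~1 shows that $\tau$ preserves $M^0(0)\o M^1(0)\o\cdots\o M^n(0)$. This proves the first claim.

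\textbf{Step 3: the induced map.} The embedding $\iota\colon M^0(0)\o M^\bullet(0)\ra M^0\o M^\bullet$ composed with the quotient map to $[M^0\o M^\bullet]_{(C,P_\bullet,t_\bullet)}$ kills
\[
\g(V)_\reg.\bigl(M^0(0)\o M^\bullet(0)\bigr).
\]
Indeed, this subspace lies inside $\mathcal{L}_{C\bs P_\bullet}(V).(M^0\o M^\bullet)$, because $\g(V)_\reg\ssq\mathcal{L}_{C\bs P_\bullet}(V)$ and the action commutes with $\iota$ by Step~2. The composite therefore factors through the quotient, which gives the well-defined map~\eqref{eq:0coinvtocoinv}.

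No serious obstacle is expected. The only care needed is the $L(1)$-correction in $a'_{(0)}$ at $P_0$, and that is handled by $V$ being of CFT type.
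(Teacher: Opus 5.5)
Your proposal is correct and follows the paper's proof. Specialize Lemma~\ref{lm:regdegree} to $m=1$, $l=0$ to get $\tau_{P_0}.M^0(0)\ssq M^0(0)$ and $\tau_{P_i}.M^i(0)\ssq M^i(0)$, then use $\g(V)_\reg\ssq\mathcal{L}_{C\bs P_\bullet}(V)$ so that the embedding factors through the coinvariants; your explicit formulas $\tau_{P_i}=a_{(0)}$ and $\tau_{P_0}=a_{(0)}+(L(1)a)_{(-1)}$ are a harmless refinement, which the paper itself records (with $L(1)a=0$) just after the corollary.
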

\begin{proof}
    Let $\tau=a\o 1\in \g(V)_\reg$, with $m=1$ and $l=0$. Then we have
\[
\tau_{P_0}. M^0(0)\ssq M^0(0),\quad \tau_{P_i}.M^i(0)\ssq \sum_{j\geq 0} M^i(0-j)=M^i(0).
\]
Thus, $\g(V)_\reg. (M^0(0)\o M^\bullet(0))\ssq M^0(0)\o M^\bullet(0)$. Moreover, since $\g(V)_\reg$ by definition is contained in $\mathcal{L}_{\P^1\bs \{P_0,P_1,\ds ,P_n\}}(V)$, we have a well-defined linear map~\eqref{eq:0coinvtocoinv}.
\end{proof}
\subsubsection{Affine VOA coinvariants}
From now on we assume that $V=L_k(\g)$ is the simple affine VOA at an admissible level $k$ with $k+h^\vee=p/q$.
We exclude the trivial case $k=0$, where $L_0(\g)=\C$ and all statements below hold trivially. For $k\neq 0$, $L_k(\g)$ is strongly generated by its degree-one subspace $L_k(\g)_1$, which is the Lie algebra $\g$; in particular $L_k(\g)$ is $C_1$-cofinite. In this paper we do not use the shifted Virasoro element for the VOA $L_k(\g)$ as in~\cite{DLM96}.
\begin{definition}\label{def:bottomgen}
An admissible $V$-module $M=\bigoplus_{d\in \N}M(d)$ is \textbf{generated in degree zero} if
\[
M=\spn_\C\left\{a^1_{(m_1)}\cdots a^r_{(m_r)}u\ :\ r\geq 0,\ a^s\in V,\ m_s\in \Z,\ u\in M(0)\right\}.
\]
\end{definition}

\begin{lemma}\label{lm:bottomgen}
Let $V$ be a VOA strongly generated by $V_1$ and let $M$ be an admissible
$V$-module generated in degree zero. Then for every $d\geq 1$,
\begin{equation}\label{eq:bottomgen}
M(d)=\spn_\C\left\{a^1_{(-j_1)}\cdots a^r_{(-j_r)}u\ :\ a^s\in V_1,\ j_s\geq 1,\ \sum\nolimits_{s=1}^r j_s=d,\ u\in M(0)\right\}.
\end{equation}
\end{lemma}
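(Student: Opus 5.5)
We prove the two inclusions in~\eqref{eq:bottomgen} separately. The inclusion $\supseteq$ is a degree count. For $a\in V_1$ the mode $a_{(-j)}$ has degree $\wt a-(-j)-1=j$, so every monomial on the right-hand side lies in $M(d)$.

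For $\subseteq$ we first reduce to modes of degree-one generators. By hypothesis $M$ is spanned by the vectors $a^1_{(m_1)}\cdots a^r_{(m_r)}u$ with $u\in M(0)$ and $a^s\in V$ arbitrary. Since $V$ is strongly generated by $V_1$, every $a\in V$ is a linear combination of monomials $b^1_{(-n_1)}\cdots b^t_{(-n_t)}\vac$ with $b^i\in V_1$ and $n_i\geq 1$. We expand the modes of such a monomial by iterating the Jacobi identity~\eqref{eq:componentJacobi}, taking $l=-n_1$. Concretely, we use the associativity formula
\[
(b_{(-n)}c)_{(m)}=\sum_{i\geq 0}(-1)^i\binom{-n}{i}\left(b_{(-n-i)}c_{(m+i)}-(-1)^{n}c_{(-n+m-i)}b_{(i)}\right).
\]
On any fixed vector $w\in M$ both sums are finite, because the modes are truncated on $M$. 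By induction on $t$ we conclude that $M$ is spanned by vectors $b^1_{(k_1)}\cdots b^N_{(k_N)}u$ with $b^i\in V_1$, $k_i\in\Z$ and $u\in M(0)$. Put differently, $M$ is the $\hat{\g}$-submodule generated by $M(0)$, where $\hat{\g}=L(V)_{\text{deg-1 part}}$ acts through the modes $b_{(k)}$, $b\in V_1$. These modes close under brackets: $[b_{(m)},c_{(n)}]=(b_{(0)}c)_{(m+n)}+m(b_{(1)}c)_{(m+n-1)}$, where $b_{(0)}c\in V_1$ and $b_{(1)}c\in\C\vac$.

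Next we apply a PBW argument for the Lie algebra $\mathcal{K}$ spanned by the modes $b_{(k)}$, $b\in V_1$, together with the identity. It splits as $\mathcal{K}=\mathcal{K}_{<0}\oplus\mathcal{K}_{0}\oplus\mathcal{K}_{>0}$ according to degree $-k-1+1=-k$: the negative-degree modes are the $b_{(k)}$ with $k\geq 1$, the degree-zero modes are the $b_{(0)}$ together with $\vac_{(-1)}$, and the positive-degree modes are the $b_{(-j)}$ with $j\geq 1$. These three pieces are subalgebras. PBW gives $U(\mathcal{K})=U(\mathcal{K}_{>0})U(\mathcal{K}_0)U(\mathcal{K}_{<0})$. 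Now $\mathcal{K}_{<0}$ kills $M(0)$ for degree reasons, since $M(d)=0$ for $d<0$, and $\mathcal{K}_0$ preserves $M(0)$. Hence $M=U(\mathcal{K}_{>0})M(0)$. Taking the degree-$d$ component, and using that the monomials $b^1_{(-j_1)}\cdots b^r_{(-j_r)}u$ are homogeneous of degree $\sum j_s$, yields~\eqref{eq:bottomgen}.

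The step requiring the most care is the reduction from arbitrary $a\in V$ to modes of elements of $V_1$. We must check that the infinite-looking sums in the associativity expansion are finite on each vector, which holds by truncation, and that the induction on the length of the monomial is well founded. An alternative route would avoid the iteration entirely: observe that the subspace of $M$ spanned by $V_1$-monomials applied to $M(0)$ is stable under all $a_{(m)}$, $a\in V$. Since $V$ is generated by $V_1$, this follows from the standard fact that the set of $a\in V$ whose modes preserve a given subspace is closed under $(-)_{(n)}$-products, again by~\eqref{eq:componentJacobi}.
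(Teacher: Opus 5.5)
Your proof is correct and follows essentially the same route as the paper. You first reduce to products of $V_1$-modes via the associator formula (the $m=0$ case of \eqref{eq:componentJacobi}, as in \cite[Lemma 4.0.1]{DG}), and then move negative- and zero-degree modes to the right using $[b_{(m)},c_{(n)}]=(b_{(0)}c)_{(m+n)}+m(b_{(1)}c)_{(m+n-1)}$; the only difference is that you package this second step as the PBW factorization $U(\mathcal{K})=U(\mathcal{K}_{>0})U(\mathcal{K}_0)U(\mathcal{K}_{<0})$, where the paper does the same straightening by a direct induction on the length of the monomial.
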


\begin{proof}
Since $\wt a^s=1$ we have $\deg\big(a^s_{(m)}\big)=-m$, so the right-hand side of
\eqref{eq:bottomgen} is contained in $M(d)$ and it suffices to prove the reverse
inclusion. By hypothesis $M$ is spanned by elements $b^1_{(m_1)}\cdots b^\ell_{(m_\ell)}u$
with $b^t\in V$ and $u\in M(0)$. As $V$ is strongly generated by $V_1$, each $b^t$ is a
linear combination of vectors $c^1_{(-k_1)}\cdots c^{s}_{(-k_{s})}\vac$ with $c^i\in V_1$
and $k_i\geq 1$; substituting these and expanding by repeated use of the associator
formula---the case $m=0$ of~\eqref{eq:componentJacobi}, which expresses $(a_{(l)}b)_{(n)}$ through products of modes of $a$ and $b$---rewrites each such
element as a linear combination of elements $a^1_{(m_1)}\cdots a^r_{(m_r)}u$ with all
$a^s\in V_1$. This is the argument of~\cite[Lemma 4.0.1]{DG}, whose first step is
precisely the hypothesis of Definition~\ref{def:bottomgen}.

It remains to arrange $j_s:=-m_s\geq 1$. Since $M(e)=0$ for $e<0$, a mode of negative
degree annihilates $M(0)$, and a mode of degree zero preserves $M(0)$; so the rightmost
factor may be assumed of positive degree, after replacing $u$ by another element of
$M(0)$. For the remaining factors we induct on $r$ and use the commutator formula, the
case $l=0$ of~\eqref{eq:componentJacobi}: for $a,b\in V_1$,
\[
\big[a_{(m)},b_{(n)}\big]=\sum_{i\geq 0}\binom{m}{i}\big(a_{(i)}b\big)_{(m+n-i)}
=\big(a_{(0)}b\big)_{(m+n)}+m\,\big(a_{(1)}b\big)_{(m+n-1)},
\]
because $a_{(i)}b\in V_{1-i}$ vanishes for $i\geq 2$. Here $a_{(0)}b\in V_1$, while
$a_{(1)}b\in V_0=\C\vac$ contributes only through $\vac_{(m+n-1)}$, which is zero unless
$m+n=0$. Hence transposing two adjacent factors produces only terms with a shorter
string of $V_1$-modes, and the induction closes.
\end{proof}

\begin{lemma}\label{lm:DG1}
Let $V$ be a VOA of CFT-type strongly generated by $V_1$, let $(C,P_\bullet,t_\bullet)$
be a smooth $(n+1)$-pointed coordinatized curve of genus zero, and let
$M^0,M^1,\ds,M^n$ be admissible $V$-modules, each generated in degree zero
(Definition~\ref{def:bottomgen}). Then the canonical map
\[
M^0(0)\o M^\bullet(0)\longrightarrow [M^0\o M^\bullet]_{(C,P_\bullet,t_\bullet)},\qquad
u^\bullet\mapsto [u^\bullet],
\]
is surjective. No finiteness assumption is made on the spaces $M^i(0)$.
\end{lemma}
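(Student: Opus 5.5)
We will show that every pure tensor $v\o u^1\o\cdots\o u^n$ with homogeneous entries is congruent, modulo the image of the chiral Lie algebra $\mathcal{L}_{C\bs P_\bullet}(V)$, to an element of $M^0\o M^\bullet$ of strictly smaller total degree $p_0+p_1+\cdots+p_n$. Induction on total degree then reduces everything to total degree zero, i.e.\ to $M^0(0)\o M^\bullet(0)$.

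\textbf{Step 1: reduce to a single creation operator.} By Lemma~\ref{lm:bottomgen}, each $M^i(d)$ with $d\geq 1$ is spanned by vectors $a_{(-j)}w$ with $a\in V_1$, $j\geq 1$ and $w\in M^i(d-j)$. So it suffices to treat a pure tensor in which some factor has the form $a_{(-j)}w$ with $a\in V_1$ and $j\geq1$. For the point $P_0=\infty$ we write $a_{(-j)}$ through the contragredient modes $a'_{(m)}$. For $a\in V_1$ the formula for $a'_{(m)}$ reduces to $-a_{(-m)}+(L(1)a)_{(-m-1)}$, where $L(1)a\in V_0=\C\vac$ only contributes through $\vac_{(-m-1)}$, which vanishes unless $m=0$. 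Consequently a degree-raising mode at $P_0$ is, up to sign, $a'_{(j)}$ with $j\geq 1$.

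\textbf{Step 2: choose a section with a pole at only one marked point.} For the factor at $P_i$ with $i\geq1$, take $\si=a\o (z-P_i)^{-j}\in\mathcal{L}_{C\bs P_\bullet}(V)$ with $m=1$. By~\eqref{eq:Piaction}, its component at $P_i$ is exactly $a_{(-j)}$. At each other finite point $P_l$ it expands as a power series in $z-P_l$, so it acts by a combination of modes $a_{(s)}$ with $s\geq0$; these have degree $\leq 0$ and do not raise degree. At $P_0$ the function $(z-P_i)^{-j}$ lies in $z^{-j}\C\lb z^{-1}\rb$, so by~\eqref{eq:P0action} (with $r=0$) it acts through modes $a'_{(-j-s)}$, $s\geq0$. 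These have degree $-j-s<0$, so they strictly lower degree. The relation $\si.(\cdots)\equiv0$ therefore expresses the tensor with $a_{(-j)}w$ at $P_i$ as minus the sum of terms with $w$ at $P_i$ and with a degree-nonincreasing action at the other points. Each such term has total degree at most $\text{total}-j$, which is strictly smaller. For the factor at $P_0$, use instead $\si=a\o z^{j-1}$. By~\eqref{eq:P0action} it acts at $P_0$ by $-a'_{(j-1)}$, and by Lemma~\ref{lm:regdegree} this mode has degree $j$; if needed, re-index so that it matches the mode from Step~1. At the finite points $z^{j-1}$ is a polynomial, so it acts through nonnegative modes $a_{(s)}$, which again do not raise degree.

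\textbf{Step 3: induct.} Since every term produced in Step~2 has strictly smaller total degree, and all degrees are bounded below by $0$, the reduction terminates. The result is that every element of $M^0\o M^\bullet$ is congruent to an element of $M^0(0)\o M^\bullet(0)$, which is the claimed surjectivity. No finiteness of the spaces $M^i(0)$ is used, because each reduction step involves only finitely many terms.

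\textbf{Main obstacle.} The hardest point is the bookkeeping at $P_0$. We must check, using the explicit formula~\eqref{eq:P0action} and $a'_{(m)}$ for $a\in V_1$, that (i) the degree-raising part is exactly the desired single mode, and (ii) the $\vac$-correction from $L(1)a$ contributes only a scalar multiple of the identity in degree zero, which cannot spoil the strict decrease in total degree. A second point needing care is the case $j>1$ at a finite point: we need the expansions at the other finite points to produce only nonnegative modes, which holds because $(z-P_i)^{-j}$ is regular there.
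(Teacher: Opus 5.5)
Your strategy is the same as the paper's. You filter $M^0\o M^\bullet$ by total degree, use Lemma~\ref{lm:bottomgen} to write one factor as $a_{(-j)}u$ with $a\in V_1$, and cancel it with a section $a\o\mu$ that has a single pole of order $j$ at that marked point. That section acts there by exactly $a_{(-j)}$ and by degree-nonincreasing modes elsewhere. Your treatment of a finite point $P_i$ matches the paper exactly, including the check that $(z-P_i)^{-j}$ acts at $P_0$ through $a'_{(-j-s)}=-a_{(j+s)}$.

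There is one off-by-one slip at $P_0$: the section must be $a\o z^{j}$, not $a\o z^{j-1}$.

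\begin{itemize}
\item By~\eqref{eq:P0action} with $r=l$ and all $m_k=0$, the section $a\o z^l$ acts at $P_0$ by $-a'_{(l)}=a_{(-l)}+\delta_{l,0}\cdot(\text{scalar})$.
\item By Lemma~\ref{lm:regdegree} with $m=1$, this operator has degree $l$, not $l+1$.
\item So $a\o z^{j-1}$ produces $a_{(1-j)}u^0$ rather than $a_{(-j)}u^0$.
\item For $j=1$ it is the constant section $a\o 1\in\g(V)_\reg$. This acts by degree-zero modes at every point and creates nothing.
\end{itemize}

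Taking $\mu=z^j$ is what your own Step~1 identification of the mode $a'_{(j)}$ calls for. It gives $\si_{P_0}=a_{(-j)}$ and $\si_{P_k}=\sum_{s\geq0}\binom{j}{s}P_k^{\,j-s}a_{(s)}$ at the finite points. With that choice, the rest of your induction goes through unchanged.

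Two cosmetic points:
\begin{itemize}
\item For $a\in V_1$ one has $a'_{(m)}=-a_{(-m)}-(L(1)a)_{(-m-1)}$. Your sign on the correction term is wrong, but this is harmless since that term is a scalar supported at $m=0$.
\item Placing $P_0$ at $\infty$ with coordinates $(1/z,z-P_1,\ds,z-P_n)$ should be justified, as in the paper, by Remark~\ref{remark:coordinatefree}.
\end{itemize}
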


\begin{proof}
Write $N:=M^0\o M^\bullet$ and filter it by total degree,
\[
\mathcal F_d N:=\bigoplus_{d_0+\ds+d_n\leq d} M^0(d_0)\o\cdots\o M^n(d_n),\qquad d\in \N,
\]
so that $\mathcal F_0 N=M^0(0)\o M^\bullet(0)$ and $N=\bigcup_{d\geq 0}\mathcal F_d N$. It
suffices to prove
\begin{equation}\label{eq:filtrationstep}
\mathcal F_d N\ssq \mathcal F_{d-1}N+\LL_{C\bs P_\bullet}(V).N \qquad \text{for all } d\geq 1,
\end{equation}
since iterating~\eqref{eq:filtrationstep} gives
$\mathcal F_dN\ssq \mathcal F_0N+\LL_{C\bs P_\bullet}(V).N$ for every $d$, which is the assertion.

Let $d\geq 1$. By Lemma~\ref{lm:bottomgen} the space $\mathcal F_dN$ is spanned by pure
tensors $w^\bullet=w^0\o\cdots\o w^n$ with $w^k\in M^k(d_k)$ homogeneous,
$\sum_k d_k\leq d$, and such that for at least one index $i$ with $d_i\geq 1$ we may write
\[
w^i=a_{(-j)}u^i,\qquad a\in V_1,\ j\geq 1,\ u^i\in M^i(d_i-j)\ \text{homogeneous}.
\]
(If all $d_k=0$ then $w^\bullet\in\mathcal F_0N$ and there is nothing to prove.)

\emph{Construction of a section.} By Remark~\ref{remark:coordinatefree} we may take
\[
(C,P_\bullet,t_\bullet)=\big(\P^1,\ (P_0,P_1,\ds,P_n),\ (1/z,z-P_1,\ds,z-P_n)\big),
\qquad P_0=\infty.
\]
Because $a\in V_1$, a section of the form $\si=a\o f(z)(dz)^{1-1}=a\o f(z)$,
with $f$ a rational function whose poles lie in $P_\bullet$, is an element of
$\LL_{\P^1\bs P_\bullet}(V)$. Set
\[
\si:=a\o\mu,\qquad
\mu:=\begin{cases}
(z-P_i)^{-j}, & 1\leq i\leq n,\\[2pt]
z^{\,j}, & i=0,
\end{cases}
\]
so that in either case $\mu$ has a pole of order exactly $j$ at $P_i$ and no other pole.
This is the only step that uses the genus-zero hypothesis (together with strong generation in degree one):
for $a\in V_1$ the coefficient of $a$ is an ordinary rational \emph{function} rather than a
section of a power of $\om_C$, and on a rational curve a function with one prescribed pole
and no other pole exists for every pole order $j\geq 1$.

\emph{Expansions.} We record the action of $\si$ at each marked point using
\eqref{eq:Piaction} and~\eqref{eq:P0action}, in which $f(z)=z^r\prod_{k=1}^n(z-P_k)^{-m_k}$.
Note first that for $a\in V_1$ formula~\eqref{eq:P0action} simplifies: as in the proof of
Lemma~\ref{lm:regdegree},
\[
-a'_{(l)}=\sum_{s\geq 0}\tfrac{1}{s!}\big(L(1)^sa\big)_{(-l-s)}=a_{(-l)}+\delta_{l,0}\cdot(\text{scalar}),
\]
since $L(1)a\in V_0=\C\vac$, $L(1)^2a=0$ and $\vac_{(-l-1)}=\delta_{l,0}\id$. The
$L(1)$-correction therefore occurs only in the component $l=0$, where it is a scalar, and it
affects none of the degree bounds below. 

\emph{Case $1\leq i\leq n$}: here $\mu=(z-P_i)^{-j}$, i.e.\ $r=0$, $m_i=j$ and $m_k=0$ for
$k\neq i$. Since $\binom{0}{s}=\delta_{s,0}$, every binomial factor attached to an index
$\neq i$ collapses, and
\begin{align*}
\si_{P_i}&=a_{(-j)},\\
\si_{P_k}&=\sum_{s\geq 0}\binom{-j}{s}(P_k-P_i)^{-j-s}a_{(s)}\qquad (k\neq i,\ k\geq 1),\\
\si_{P_0}&=\sum_{s\geq 0}\binom{-j}{s}(-P_i)^{s}a_{(j+s)}.
\end{align*}

\emph{Case $i=0$}: here $\mu=z^{j}$, i.e.\ $r=j$ and all $m_k=0$, so
\[
\si_{P_0}=a_{(-j)},\qquad
\si_{P_k}=\sum_{s\geq 0}\binom{j}{s}P_k^{\,j-s}a_{(s)}\quad (1\leq k\leq n).
\]

In both cases $\si$ acts at the distinguished point $P_i$ by the \emph{single} mode
$a_{(-j)}$, raising degree by exactly $j$, and at every other marked point by a combination
of modes $a_{(s)}$ with $s\geq 0$ if that point is finite, resp.\ $a_{(j+s)}$ with $s\geq 0$
if that point is $P_0$. Since $\deg\big(a_{(s)}\big)=-s$ for $a\in V_1$, in all cases
\begin{equation}\label{eq:nonincreasing}
\si_{P_k}.M^k(d_k)\ssq \bigoplus_{e\leq d_k}M^k(e)\qquad (k\neq i);
\end{equation}
that is, $\si$ raises degree at no marked point other than $P_i$.

\emph{Conclusion.} Put $v^\bullet:=w^0\o\cdots\o w^{i-1}\o u^i\o w^{i+1}\o\cdots\o w^n$, an
element of $\mathcal F_{d-j}N$. Because $\LL_{C\bs P_\bullet}(V)$ acts on $N$ one tensor
factor at a time through the expansions $\si_{P_k}$, see~\eqref{chiralLieact}, and because
$\si_{P_i}.u^i=a_{(-j)}u^i=w^i$, we get
\[
\si.v^\bullet-w^\bullet
=\sum_{k\neq i}\left(w^0\o\cdots\o \si_{P_k}.w^k\o\cdots\o u^i\o\cdots\o w^n\right),
\]
and every summand on the right lies in $\mathcal F_{d-j}N\ssq \mathcal F_{d-1}N$ by
\eqref{eq:nonincreasing}, using $j\geq 1$. As
$\si.v^\bullet\in \LL_{C\bs P_\bullet}(V).N$, this proves~\eqref{eq:filtrationstep}.
(Should $\si$ vanish in
$\LL_{C\bs P_\bullet}(V)=H^0(C\bs P_\bullet,\mathscr{V}_C\o\om_C)/\im\nabla$, the same identity
gives $w^\bullet\in\mathcal F_{d-1}N$ directly.)
\end{proof}

\begin{remark}\label{rmk:bottomgen_necessary}
Some hypothesis of this kind is necessary: an admissible grading may be shifted. If
$M=\bigoplus_{d\in\N}M(d)$ is admissible and $M'$ denotes the same weak module with the
grading $M'(d):=M(d-1)$, then $M'$ is again admissible, since
$a_{(m)}M'(d)=a_{(m)}M(d-1)\ssq M(d-1+\wt a-m-1)=M'(d+\wt a-m-1)$, but $M'(0)=0$ while
$[M'^\bullet]_{(C,P_\bullet,t_\bullet)}=[M^\bullet]_{(C,P_\bullet,t_\bullet)}$, the space of
coinvariants depending only on the underlying weak modules. Being generated in degree zero
forces $M(0)\neq 0$ and rules this out.
\end{remark}

\begin{remark}\label{rmk:bottomgen_holds}
All modules to which Lemma~\ref{lm:DG1} is applied below are generated in degree zero.
\begin{enumerate}
\item For any $\A$-module $S$ the generalized Verma module $\Phi^\L(S)$ is generated in
degree zero by construction, and $\Phi^\L(S)(0)=S$; see Definition~\ref{def:Verma}. This
covers $M^0=\Phi^\L(\A_\mu)$ for $\mu\in \Pr^k\bs P$, \emph{including} the case
$\dim_\C \A_\mu=\dim_\C U(\g)/J_\mu=\infty$, which is the relevant one in
Lemma~\ref{lm:coinv0} and Theorem~\ref{thm:fusion_closedness}.
\item If $M$ is a simple admissible $V$-module then $\scrU.M(0)$ is a nonzero submodule of
$M$, hence equals $M$. This covers the simple modules $\hat{L}_k(\la)$, $\la\in \Pr^k$.
\item A finite direct sum $\bigoplus_{j}M_j$ of admissible modules generated in degree
zero is generated in degree zero for the grading with $\left(\bigoplus_j M_j\right)(d)=\bigoplus_j M_j(d)$,
i.e.\ the grading in which the bottom spaces are aligned. This covers arbitrary objects of
$\O_{k,\Ord}$: by Definition~\ref{def:cats}(3) the category $\O_{k,\Ord}$ is semisimple with
simple objects $\hat{L}_k(\la)$, $\la\in \Pr^k\cap P$, and each of its objects is a
\emph{finite} direct sum of these, as in the first paragraph of the proof of
Corollary~\ref{coro:fusion_closedness_general}.
\end{enumerate}
\end{remark}

Let $(C,P_\bullet)=(\P^1,(P_0,P_1,\ds,P_n))$, and let $M^0,M^1,\ds ,M^n\in \Adm(L_k(\g))$ be generated in degree zero (Definition~\ref{def:bottomgen}; see Remark~\ref{rmk:bottomgen_holds}). 
By Corollary~\ref{coro:surjmap} the map below is well-defined, and it is surjective by Lemma~\ref{lm:DG1}:
\[
    \frac{M^0(0)\o M^\bullet(0)}{\g(V)_\reg.(M^0(0)\o M^\bullet(0))}\twoheadrightarrow [M^0\o M^\bullet]_{(C,P_\bullet,t_\bullet)},\quad [v\o u^\bullet]\mapsto [v\o u^\bullet],
\]
 where $v\in M^0(0)$, $u^i\in M^i(0)$, for $1\leq i\leq n$.

Since $V=L_k(\g)$ is of CFT-type, it follows from Lemma~\ref{lm:injgv} that $\g(V)_\reg\cong V_1\cong \g,\ \tau=a(-1)\vac\o 1\mapsto a(-1)\vac\mapsto a$. Since we also have $L(1)a=0$ for any $a\in \g=V_1$---indeed $L(1)a(-1)\vac=[L(1),a(-1)]\vac=a(0)\vac=0$, using $[L(m),a(n)]=-n\,a(m+n)$---it follows from~\eqref{eq:Piaction} and~\eqref{eq:P0action} that the action of $\tau=a(-1)\vac\o 1\in \g(V)_\reg$ is given by
\[
\tau_{P_0}. v=a(0)v,\quad \tau_{P_i}.u^i=a(0)u^i,\quad 1\leq i\leq n.
\]
In other words, the action of $\g(V)_\reg$ on $M^0(0)$ and $M^i(0)$ also agrees with the action of the finite Lie algebra $\g$ on these modules.
Hence
\[
  \frac{M^0(0)\o M^\bullet(0)}{\g(V)_\reg.(M^0(0)\o M^\bullet(0))}\cong M^0(0)\o_{U(\g)} M^\bullet (0),
\]
where $M^0(0)$ is naturally viewed as a right $U(\g)$-module via the anti-involution $\theta_\g: U(\g)\ra U(\g)$, $a\mapsto -a$ for $a\in \g$, and $M^\bullet(0)=M^1(0)\o\cdots\o M^n(0)$ is the usual tensor product of $\g$-modules.
In other words, we have the following result:
\begin{proposition}
Let $V=L_k(\g)$ be an admissible-level affine VOA, and let $M^0,M^1,\ds,M^n\in \Adm(V)$ be generated in degree zero (Definition~\ref{def:bottomgen}). Then 
there exists a canonical surjection:
\begin{equation}\label{eq:gsurj}
M^0(0)\o_{U(\g)} M^\bullet (0)\twoheadrightarrow [M^0\o M^\bullet]_{(C,P_\bullet,t_\bullet)},
\end{equation}
where $C$ is a smooth genus-zero curve, and $M^0(0)$ is regarded as a right $U(\g)$-module via the anti-involution $\theta_\g: U(\g) \rightarrow U(\g)$, $a \mapsto -a$, for $a\in \g$.
\end{proposition}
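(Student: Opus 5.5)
The surjection~\eqref{eq:gsurj} will be obtained by composing two maps that the paper has essentially already built. Corollary~\ref{coro:surjmap} gives a well-defined linear map
\[
\frac{M^0(0)\o M^\bullet(0)}{\g(V)_\reg.(M^0(0)\o M^\bullet(0))}\longrightarrow [M^0\o M^\bullet]_{(C,P_\bullet,t_\bullet)}.
\]
Lemma~\ref{lm:DG1} then shows that already the composite $M^0(0)\o M^\bullet(0)\to[M^0\o M^\bullet]$ is surjective. Its hypotheses hold: $L_k(\g)$ is of CFT-type and strongly generated by $V_1=\g$ (for $k\neq 0$; the case $k=0$ is trivial), and each $M^i$ is generated in degree zero by assumption. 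Hence the map from the quotient is surjective as well. By Remark~\ref{remark:coordinatefree} we may fix $C=\P^1$, $P_0=\infty$, and coordinates $t_\bullet=(1/z,z-P_1,\ds,z-P_n)$; canonicity means the map is induced by the inclusion of degree-zero spaces.

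It then remains to identify the quotient with $M^0(0)\o_{U(\g)}M^\bullet(0)$. First, by Lemma~\ref{lm:injgv}, $\g(V)_\reg\cong V_1\cong\g$ via $a\o 1\mapsto a$. Next I would compute the action of $\tau=a\o 1$ on degree-zero spaces. Here $m=1$, $l=0$, and $L(1)a=0$, which follows from $[L(1),a(-1)]=a(0)$ and $a(0)\vac=0$.
\begin{itemize}
\item At a finite point $P_i$, formula~\eqref{eq:Piaction} gives $\tau_{P_i}=a_{(0)}=a(0)$.
\item At $P_0$, formula~\eqref{eq:P0action} gives $-a'_{(0)}=a_{(0)}$ plus $L(1)$-corrections, and these corrections vanish.
\end{itemize}
So $\tau$ acts on $M^0(0)\o M^\bullet(0)$ as $a(0)$ on every tensor factor. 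This is the diagonal $\g$-action, and on bottom spaces it agrees with the $\A$-module structure.

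Consequently the quotient is the space of $\g$-coinvariants $(M^0(0)\o M^\bullet(0))_\g$. The standard identity $(X\o Y)_\g\cong X^{\theta_\g}\o_{U(\g)}Y$ then identifies it with $M^0(0)\o_{U(\g)}M^\bullet(0)$. The right module structure on $X$ is $x\cdot a:=-a.x$, and the relation $a.x\o y+x\o a.y=0$ becomes $x\cdot a\o y=x\o a.y$. Since $\g$ generates $U(\g)$ and $\theta_\g$ extends to an anti-involution, these relations generate the balanced tensor relations.

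The main point needing care is the sign and $L(1)$-correction at $P_0$. One must confirm that the scalar term $\delta_{l,0}\cdot(\text{scalar})$ noted in the proof of Lemma~\ref{lm:DG1} actually vanishes because $L(1)a=0$. If it did not, the action would be shifted by a character, and the identification with $\theta_\g$ would have to be twisted accordingly.
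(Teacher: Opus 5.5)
Your proposal is correct and is essentially the paper's own argument: Corollary~\ref{coro:surjmap} and Lemma~\ref{lm:DG1} give the surjection from the $\g(V)_\reg$-quotient, and Lemma~\ref{lm:injgv} identifies $\g(V)_\reg\cong\g$. Since $L(1)a=0$, the element $\tau=a\o 1$ acts by $a(0)$ at every marked point, including $P_0$, so the quotient is the space of diagonal $\g$-coinvariants, which $\theta_\g$ identifies with $M^0(0)\o_{U(\g)}M^\bullet(0)$; the concern you flag at $P_0$ is already settled by your computation, because the only correction term in $-a'_{(0)}$ is $(L(1)a)_{(-1)}$, and it vanishes.
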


Note that the weight lattice $P$ is invariant under the action of the finite Weyl group $W$, and that $\rho\in P$. Hence, for any $\la,\mu\in \Pr^k$ with $\la=\sigma\circ\mu$ for some $\sigma\in W$, we have $\la\in P$ if and only if $\mu\in P$; that is, membership in $P$ is a property of the class $[\mu]\in[\Pr^k]$.
Hence we may write
\begin{equation}\label{eq:decofPrk}
[\Pr^k]=\Pr^k_\Z\sqcup [\Pr^k\bs P].
\end{equation}
Recall from~\cite[Theorem 3.4]{AvE} that $\A=A(L_k(\g))=U(\g)/I_k$, where $I_k:=\bigcap_{\la\in[\Pr^k]}J_\la$; since $[\Pr^k]$ is finite and the $J_\la$ are pairwise distinct maximal two-sided ideals, the Chinese remainder theorem identifies this quotient with the product~\eqref{eq:admAV}.
\begin{lemma}\label{lm:coinv0}
Let $M^0(0)$ be the left $\A$-module $\A_\mu=U(\g)/J_\mu$ for some $\mu\in \Pr^k\bs P$, and let $M^i(0)=L(\la_i)$ for some $\la_i\in \Pr^k\cap P$, $1\leq i\leq n$. Then we have
\[
 M^0(0)\o_{U(\g)} M^\bullet (0) =0.
\]
\end{lemma}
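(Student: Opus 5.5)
The plan is to rewrite the tensor product over $U(\g)$ as a tensor product over a quotient algebra of $U(\g)$, and then use the central-character block decomposition of $\A$ (Theorem~\ref{thm:AvE_result}) to show that the two sides live in different blocks.

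\textbf{Step 1: reduce to the Zhu algebra.} First I will note that the right $U(\g)$-action on $M^0(0)=\A_\mu$ obtained through $\theta_\g$ factors through $\theta_\g(J_\mu)$. The left $U(\g)$-action on $M^\bullet(0)=L(\la_1)\o\cdots\o L(\la_n)$ is the diagonal one.

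Since each $L(\la_i)$ is finite-dimensional (Lemma~\ref{lm:w0closed}(1)), the tensor product $M^\bullet(0)$ is a finite-dimensional $\g$-module. By Weyl's complete reducibility it is a finite direct sum $\bigoplus_\nu L(\nu)^{\op c_\nu}$ with $\nu\in P_+$.

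So it suffices to show $\A_\mu\o_{U(\g)}L(\nu)=0$ for every $\nu\in P_+$.

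\textbf{Step 2: identify the twisted module.} Through $\theta_\g$, the right module $\A_\mu$ becomes a left module of the form $U(\g)/\theta_\g(J_\mu)$. Then, using the standard identity $R\o_{U(\g)}N\cong N/\theta_\g(I)N$ for $R=U(\g)/I$, we get
\[
\A_\mu\o_{U(\g)}L(\nu)\cong L(\nu)/\theta_\g(J_\mu)L(\nu).
\]

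\textbf{Step 3: show $\theta_\g(J_\mu)$ acts surjectively.} It suffices to show that $\theta_\g(J_\mu)L(\nu)=L(\nu)$, and for this it is enough to exhibit one element of $\theta_\g(J_\mu)$ acting invertibly on $L(\nu)$.

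The ideal $\theta_\g(J_\mu)$ contains $\theta_\g(\ker\gamma_\mu)\subset Z(\g)$. The antipode $\theta_\g$ maps central characters to central characters: it sends $\ker\gamma_\mu$ to $\ker\gamma_{\mu^*}$, where $\mu^*=-w_0(\mu)$. (One way to see this: $\theta_\g$ composed with the Chevalley involution acts trivially on $Z(\g)$, and the Chevalley involution twists $\gamma_\mu$ to $\gamma_{-w_0\mu}$. Alternatively, use duality of Verma modules.)

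$Z(\g)$ acts on $L(\nu)$ by the character $\gamma_\nu$. If $\gamma_{\mu^*}\neq\gamma_\nu$, pick $z\in\ker\gamma_{\mu^*}$ with $\gamma_\nu(z)\neq 0$. Then $z\in\theta_\g(J_\mu)$ acts on $L(\nu)$ by the nonzero scalar $\gamma_\nu(z)$, so the quotient vanishes.

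\textbf{Step 4: check the central characters differ.} By Harish-Chandra, $\gamma_{\mu^*}=\gamma_\nu$ forces $\mu^*\in W\circ\nu$. Since $\nu\in P$ and $P$ is $W$-dot stable (as $\rho\in P$), this would give $\mu^*\in P$, and hence $\mu=-w_0(\mu^*)\in P$. This contradicts $\mu\notin P$.

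\textbf{Main obstacle.} The delicate point is the bookkeeping of how $\theta_\g$ transforms the central character; this is essentially the computation that the antipode sends the infinitesimal character of $\mu$ to that of $-w_0\mu$. Note, however, that non-integrality is preserved under any twist by $\pm w$. So even a cruder statement suffices: $\theta_\g(\ker\gamma_\mu)=\ker\gamma_{\mu'}$ for some $\mu'\in\pm W\circ\mu$, and every such $\mu'$ is non-integral. This makes the argument robust.
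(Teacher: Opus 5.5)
Your proposal is correct and follows essentially the same route as the paper. Both arguments reduce by Weyl's theorem to a single $L(\nu)$ with $\nu\in P_+$, move a central element across the tensor sign using $\gamma_\mu\circ\theta_\g=\gamma_{-w_0(\mu)}$, and reach a contradiction from $-w_0(\mu)\notin P\supset W\circ\nu$. Your rewriting $\A_\mu\otimes_{U(\g)}L(\nu)\cong L(\nu)/\theta_\g(J_\mu)L(\nu)$ is a repackaging of the paper's scalar comparison $\gamma_\mu(\theta_\g(z))\,\alpha=\gamma_\nu(z)\,\alpha$. One small wording slip: the $\theta_\g$-twisted $\A_\mu$ is the \emph{right} module $U(\g)/\theta_\g(J_\mu)$, not a left one, although your resulting formula is correct.
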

\begin{proof}
By assumption, $M^i(0)=L(\la_i)$ is finite-dimensional for all $i$. Then, by Weyl's complete reducibility theorem, $M^\bullet(0)\cong \bigoplus_l L(\nu_l)$ as a $U(\g)$-module, where $\nu_l\in P_+$. It follows from~\cite[eq.~(3.3)]{AvE} that
\[
M^0(0)=U(\g)/J_\mu\cong \A\o_{Z} \C_{\gamma_\mu},\qquad \C_{\gamma_\mu}=\C 1_{\gamma_\mu},
\]
where $Z=Z(U(\g))$ and $\gamma_\mu:Z\ra \C$ is the central character associated with $\mu\in \Pr^k\bs P$. Given any $\nu\in P_+$, $z\in Z$, and $\al=([a]\o_Z 1_{\gamma_\mu})\o_{U(\g)} v\in M^0(0)\o_{U(\g)}L(\nu)$, we have
\begin{align*}
\gamma_\mu(\theta_\g(z))\cdot \al&=([a]\o_Z 1_{\gamma_\mu}).z\o_{U(\g)} v=([a]\o_Z 1_{\gamma_\mu})\o_{U(\g)} z.v=\gamma_{\nu}(z)\cdot \al.
\end{align*}
It therefore suffices to produce $z\in Z$ with $\gamma_\mu(\theta_\g(z))\neq \gamma_\nu(z)$. Now $\theta_\g$ is the antipode of $U(\g)$, and under the Harish-Chandra isomorphism it corresponds to $\mu+\rho\mapsto -(\mu+\rho)$; since $-(\mu+\rho)$ and $-w_0(\mu+\rho)=-w_0(\mu)+\rho$ lie in the same $W$-orbit, this gives
\[
\gamma_\mu \circ \theta_\g = \gamma_{-w_0(\mu)}.
\]
As $W$ preserves $P$ and $-P=P$, the map $-w_0$ preserves $P$, so $\mu\notin P$ forces $-w_0(\mu)\notin P$. Were $\gamma_{-w_0(\mu)}=\gamma_\nu$, we would get $-w_0(\mu)\in W\circ \nu\subset P$, using $\rho\in P$, a contradiction. Hence $\al=0$, and $ M^0(0)\o_{U(\g)} M^\bullet (0)\cong  \bigoplus_l M^0(0)\o_{U(\g)} L(\nu_l)=0$.
\end{proof}

Now we prove the main theorem of this section.

\begin{theorem}\label{thm:fusion_closedness}
Let $M^0=\Phi^\L(\A_\mu)$ for some $\mu\in \Pr^k\bs P$, and let $M^1,\ds,M^n$ be simple ordinary modules in $\O_{k,\Ord}$. Then the space of coinvariants $[M^0\o M^\bullet]_{(C,P_\bullet,t_\bullet)}$ on a smooth $(n+1)$-pointed genus-zero curve $C$ is zero.
\end{theorem}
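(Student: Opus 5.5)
The plan is to combine the canonical surjection~\eqref{eq:gsurj} with the vanishing of Lemma~\ref{lm:coinv0}. The only work is to check that the hypotheses of these two results are met by $M^0=\Phi^\L(\A_\mu)$ and by the simple ordinary modules $M^1,\ds,M^n$.

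\emph{Step 1: the modules are generated in degree zero.} By Remark~\ref{rmk:bottomgen_holds}(1), $M^0=\Phi^\L(\A_\mu)$ is generated in degree zero, with $M^0(0)=\A_\mu=U(\g)/J_\mu$ as a left $\A$-module. This holds even when $\A_\mu$ is infinite-dimensional, and Lemma~\ref{lm:DG1} makes no finiteness assumption on $M^0(0)$. For $1\leq i\leq n$, $M^i$ is simple in $\O_{k,\Ord}$, so $M^i\cong \hat{L}_k(\la_i)$ with $\la_i\in \Pr^k\cap P$ (Definition~\ref{def:cats}(3)). By Remark~\ref{rmk:bottomgen_holds}(2) each $M^i$ is generated in degree zero, and $M^i(0)=L(\la_i)$ by Theorem~\ref{thm:equiofcats}, since $\Om(\hat{L}_k(\la_i))=L(\la_i)$.

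\emph{Step 2: reduce to the bottom degrees.} Fix the coordinatized curve $(C,P_\bullet,t_\bullet)$. By Remark~\ref{remark:coordinatefree} we may take $C=\P^1$, $P_0=\infty$ and the standard coordinates; changing coordinates only changes the coinvariant space by an isomorphism, so vanishing is unaffected. The Proposition above then gives the canonical surjection~\eqref{eq:gsurj},
\[
M^0(0)\o_{U(\g)} M^\bullet(0)\twoheadrightarrow [M^0\o M^\bullet]_{(C,P_\bullet,t_\bullet)}.
\]
Here $M^0(0)$ is regarded as a right $U(\g)$-module via $\theta_\g$.

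\emph{Step 3: vanishing.} With $M^0(0)=\A_\mu$ for $\mu\in\Pr^k\bs P$ and $M^i(0)=L(\la_i)$ for $\la_i\in\Pr^k\cap P$, Lemma~\ref{lm:coinv0} gives $M^0(0)\o_{U(\g)}M^\bullet(0)=0$. A vector space that is the surjective image of $0$ is $0$, so $[M^0\o M^\bullet]_{(C,P_\bullet,t_\bullet)}=0$.

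I expect no real obstacle, since the substantive inputs are already established in Lemma~\ref{lm:DG1} and Lemma~\ref{lm:coinv0}. The one point needing care is the identification of the right $U(\g)$-module structure on $M^0(0)$ used in~\eqref{eq:gsurj} with the one used in Lemma~\ref{lm:coinv0}. Both come from the same $\theta_\g$-twisted action of $\g(V)_\reg\cong\g$ at $P_0$, where $\tau_{P_0}$ acts by $a(0)$ because $L(1)a=0$. Consequently the quotient by $\g(V)_\reg$ is exactly the tensor product over $U(\g)$ appearing in that lemma.
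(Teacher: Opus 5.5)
Your proposal is correct and follows the paper's proof essentially verbatim. The paper reduces to $\P^1$ with standard coordinates via Remark~\ref{remark:coordinatefree}, identifies $M^i(0)=L(\la_i)$ with $\la_i\in\Pr^k\cap P$, and concludes from the surjection~\eqref{eq:gsurj} together with Lemma~\ref{lm:coinv0}. Your additional checks, namely generation in degree zero via Remark~\ref{rmk:bottomgen_holds} and the $\theta_\g$-twisted right module structure on $M^0(0)$, are precisely the hypotheses the paper uses implicitly at this step.
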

\begin{proof}
By Remark~\ref{remark:coordinatefree} we may assume that $(C,P_\bullet,t_\bullet)=(\P^1,(P_0,P_1,\ds, P_n),(1/z,z-P_1,\ds,z-P_n))$. The simple objects of $\O_{k,\Ord}$ are the $\hat{L}_k(\la)$ with $\la\in \Pr^k\cap P$, so $M^i(0)=L(\la_i)$ with $\la_i\in\Pr^k\cap P$, and~\eqref{eq:gsurj} together with Lemma~\ref{lm:coinv0} gives
$
0=M^0(0)\o_{U(\g)} M^\bullet(0)\twoheadrightarrow [M^0\o M^\bullet]_{(C,P_\bullet,t_\bullet)}.
$
\end{proof}

\begin{corollary}\label{coro:fusion_closedness_general}
Let $M^0=\Phi^\L(\A_\mu)$ for some $\mu\in \Pr^k\bs P$, and let $M^1,\ds,M^n$ be arbitrary (not necessarily simple) modules in $\O_{k,\Ord}$. Then the space of coinvariants $[M^0\o M^\bullet]_{(C,P_\bullet,t_\bullet)}$ on a smooth $(n+1)$-pointed genus-zero curve $C$ is zero.
\end{corollary}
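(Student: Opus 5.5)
The plan is to reduce to the simple case, Theorem~\ref{thm:fusion_closedness}, by using semisimplicity of $\O_{k,\Ord}$ and the additivity of coinvariants in each module slot.

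First, by Definition~\ref{def:cats}(3), each $M^i$ is a direct sum of simple modules $\hat{L}_k(\la)$ with $\la\in\Pr^k\cap P$. I will argue that the sum is finite. An object of $\O_k$ is finitely generated, and every summand $\hat{L}_k(\la)$ is nonzero. Hence $M^i\cong\bigoplus_{s\in S_i}\hat{L}_k(\la_{i,s})$ with each $S_i$ finite. Alternatively, one could avoid finiteness entirely, since the argument below works for arbitrary direct sums.

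Next, tensor products commute with direct sums, so
\[
M^0\o M^\bullet\cong\bigoplus_{(s_1,\ds,s_n)} M^0\o \hat{L}_k(\la_{1,s_1})\o\cdots\o\hat{L}_k(\la_{n,s_n}).
\]
The chiral Lie algebra $\LL_{C\bs P_\bullet}(V)$ acts on each tensor factor separately through $\si_{P_k}$, by~\eqref{chiralLieact}. It therefore preserves each summand in this decomposition. The image $\LL_{C\bs P_\bullet}(V).(M^0\o M^\bullet)$ is then the direct sum of the images in the individual summands. Taking cokernels, the space of coinvariants $[M^0\o M^\bullet]_{(C,P_\bullet,t_\bullet)}$ becomes the direct sum of the coinvariant spaces of the simple tuples. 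Each of those vanishes by Theorem~\ref{thm:fusion_closedness}, applied with the same $M^0=\Phi^\L(\A_\mu)$ and the simple ordinary modules $\hat{L}_k(\la_{i,s_i})$. Therefore the total space of coinvariants is zero.

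There is no substantial obstacle here. The only points needing care are, first, that the chiral action genuinely respects the direct-sum decomposition, which is immediate from its factorwise definition. Second, Theorem~\ref{thm:fusion_closedness} must be applicable to each summand, which requires only that the summands be the simple modules $\hat{L}_k(\la)$ with $\la\in\Pr^k\cap P$.
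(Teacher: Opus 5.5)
Your proof is correct and follows the paper's argument: you decompose each $M^i$ into simple summands $\hat{L}_k(\la)$, $\la\in\Pr^k\cap P$, observe that the factorwise chiral action is block-diagonal so the coinvariants split as a direct sum over tuples of summands, and apply Theorem~\ref{thm:fusion_closedness} to each summand. The only difference is how you get finiteness of the decomposition. The paper derives it from ordinarity (finite-dimensional $L(0)$-eigenspaces together with finiteness of $\Pr^k$). You instead derive it from finite generation of objects of $\O_k$, and you also note, correctly, that finiteness is not needed, since cokernels commute with arbitrary direct sums of block-diagonal maps.
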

\begin{proof}
The category $\O_{k,\Ord}$ is semisimple with simple objects $\{\hat{L}_k(\la):\la\in \Pr^k\cap P\}$, and this set is finite since the set $\Pr^k$ of admissible weights of level $k$ is finite~\cite[Theorems 2.1 and 2.2]{kac1989classification}. Consequently, every $M^i\in \O_{k,\Ord}$ decomposes as a \emph{finite} direct sum $M^i\cong \bigoplus_{j=1}^{r_i} \hat{L}_k(\la_{i,j})$: if some isomorphism class occurred with infinite multiplicity in $M^i$, the corresponding $L(0)$-eigenspace of $M^i$ at the bottom weight of that class would be an infinite direct sum of copies of a nonzero finite-dimensional space, contradicting that $M^i$ is ordinary.

By~\eqref{chiralLieact}, the chiral Lie algebra $\mathcal{L}_{C\bs P_\bullet}(V)$ acts on $M^0\o M^\bullet$ diagonally, one tensor factor at a time, and the vertex operators of a direct sum of modules act block-diagonally with respect to the direct sum decomposition. Expanding $M^\bullet=M^1\o\cdots\o M^n$ via the finite decompositions above, the action map
\[
\mathcal{L}_{C\bs P_\bullet}(V)\o_{\C} (M^0\o M^\bullet) \longrightarrow M^0\o M^\bullet
\]
therefore splits as a finite direct sum, indexed by $(j_1,\ds,j_n)\in \{1,\ds,r_1\}\times\ds\times\{1,\ds,r_n\}$, of the action maps for the tuples $(M^0,\hat{L}_k(\la_{1,j_1}),\ds,\hat{L}_k(\la_{n,j_n}))$. Since taking cokernels commutes with finite direct sums of block-diagonal maps, we obtain
\[
[M^0\o M^\bullet]_{(C,P_\bullet,t_\bullet)}\cong \bigoplus_{j_1,\ds,j_n} [M^0\o \hat{L}_k(\la_{1,j_1})\o\cdots\o \hat{L}_k(\la_{n,j_n})]_{(C,P_\bullet,t_\bullet)}.
\]
Each summand on the right vanishes by Theorem~\ref{thm:fusion_closedness}, since each $\hat{L}_k(\la_{i,j_i})$ is a simple ordinary module. Hence $[M^0\o M^\bullet]_{(C,P_\bullet,t_\bullet)}=0$.
\end{proof}

\section{The partial strong identity property}\label{sec:strong_identity}

\subsection{Splitting of the mode transition algebras}

Throughout this subsection we assume that the Zhu algebra $\A=A(V)$ of a VOA $V$ has a decomposition as a direct product of ideals, each of which is an algebra with its own identity element:
\begin{equation}
    \A \cong \A' \times \A'',
\end{equation}
with the product on $\A$ given by $(a',a'')\cdot(b',b'')=(a'\cdot b',a''\cdot b'')$. 
\begin{example}\label{ex:Aadm}
By Theorem~\ref{thm:AvE_result}, the Zhu algebra of the admissible-level affine VOA $L_k(\g)$ decomposes as a direct product of algebras. Collecting the finite-dimensional factors in $\A'$, we obtain
\begin{equation}\label{eq:Aadm}
    \A'=\prod_{\la\in \Pr^k_\Z} L(\la)\otimes_\C L(\la)^\ast\cong \prod_{\la\in \Pr^k_\Z}\End_\C(L(\la)),\quad \text{and}\quad \A''=\prod_{\la\in [\Pr^k\bs P]} U(\g)/J_{\la},
\end{equation}
indexed via~\eqref{eq:PrkZclasses}, so $\A'$ is finite-dimensional semisimple and $\A''$ is a product of simple infinite-dimensional algebras. On each factor of $\A'$ the product is
\begin{equation}\label{eq:alambda}
(u\o f)\cdot (v\o g)=\<f,v\>\cdot u\o g,\quad u,v\in L(\la),\ f,g\in L(\la)^\ast,
\end{equation}
so the identity element is $1_{\A'}=\prod_{\la\in \Pr^k_\Z} \Id_{L(\la)}$. We write $a'=(a'_\la)_{\la\in \Pr^k_\Z}$, with $a'_\la\in L(\la)\o_\C L(\la)^\ast$, for a general element of $\A'$.
\end{example}

We analyze the strong identity property of the mode transition algebras $\fA_d$ with respect to the decomposition of $\A$. Note that the natural projections
\[
p':\A\ra\A',\ a=(a',a'')\mapsto a',\quad  p'':\A\ra \A'',\ a=(a',a'')\mapsto a'',
\]
are algebra homomorphisms. In particular, $\A'$, and similarly $\A''$, is a module over the degree-zero enveloping algebra $\scrU_0$ via the homomorphism:
\begin{equation}\label{eq:projhomo}
\scrU_0\twoheadrightarrow \A\xrightarrow{p'}\A',\quad \al\mapsto [\al]_0\mapsto p'([\al]_0),
\end{equation}
for all $\al\in \scrU_0$.  Let
\begin{align*}
\fA'&=\Phi(\A')=(\scrU/\rN^1_\L\scrU)\o _{\scrU_0} \A'\o_{\scrU_0} (\scrU/\rN^1_\R\scrU),\\
\fA''&=\Phi(\A'')=(\scrU/\rN^1_\L\scrU)\o _{\scrU_0} \A''\o_{\scrU_0} (\scrU/\rN^1_\R\scrU),
\end{align*}
where $\Phi:=\Phi^\L\circ\Phi^\R$, and $\A'$ (resp. $\A''$) is a $\scrU_0$-module via~\eqref{eq:projhomo}. Since the product $\A\cong \A'\times\A''$ is finite, $\Phi$ commutes with it and $\fA\cong \fA'\op\fA''$ as $\scrU$--$\scrU$-bimodules. In particular, $\fA'$ (resp. $\fA''$) has the bigrading inherited from $\fA$:
\[
\fA'=\bigoplus_{m,n\in \N} \fA'_{n,-m}=\bigoplus_{m,n\in \N} (\scrU/\rN^1_\L\scrU)_n\o _{\scrU_0} \A'\o_{\scrU_0} (\scrU/\rN^1_\R\scrU)_{-m}.
\]
Let $\fA'_d=\fA'_{d,-d}$ for all $d\in \N$. We observe the following basic facts:
\begin{enumerate}
    \item  $\fA'$ is a $\scrU$--$\scrU$-bimodule, since $(\scrU/\rN^1_\L\scrU)_n$ and $(\scrU/\rN^1_\R\scrU)_{-m}$ are naturally left and right $\scrU$-modules, respectively. Moreover, the $\scrU$-actions match the degrees:  
\begin{align*}
    &\scrU_r\times \fA'_{n,-m}\ra \fA'_{r+n,-m},\quad (u,\fa')\mapsto u\cdot \fa',\\
    & \fA'_{n,-m}\times \scrU_{r}\ra \fA'_{n,-m+r},\quad (\fb',u)\mapsto \fb'\cdot u,
\end{align*}
for any $m,n\in \N$, $r\in \Z$, $\fa',\fb'\in \fA'_{n,-m}$, and $u\in \scrU_r$.
\item In particular, when $d=0$, it follows from Lemma~\ref{lm5.1} and~\eqref{eq:projhomo} that
\begin{equation}\label{eq:fA0identification}
    \begin{aligned}
    \fA'_0&=(\scrU_0/\rN^1_\L\scrU_0)\o _{\scrU_0} \A'\o_{\scrU_0} (\scrU_0/\rN^1_\R\scrU_0)\cong \A\o _{\scrU_0}\A'\o_{\scrU_0}\A\\
&\cong \A\o _{\A}\A'\o_{\A}\A \cong \A'.
    \end{aligned}
\end{equation}
\item There is a natural pairing map defined by~\eqref{eq:projhomo}:
\begin{equation}\label{eq:pairing}
\begin{aligned}
&(\scrU/\rN^1_\R\scrU)_{-j}\times (\scrU/\rN^1_\L\scrU)_k\ra \A\xrightarrow{p'}\A',\\
& (\al^{-j},\b^k)\mapsto \begin{cases}
p'([\al^{-j}\b^k]_0),&\text{if }j=k,\\
0,& \text{otherwise},
\end{cases}
\end{aligned}
\end{equation}
where $\al^{-j}=b^1_{[m_1]}\cdots b^s_{[m_s]}+\rN^1_\R \scrU $ and $\b^{k}=a^1_{[n_1]}\cdots a^r_{[n_r]}+\rN^1_\L \scrU$, with degree conditions $\sum_{i=1}^s \deg(b^i_{[m_i]})=-j$ and $\sum_{i=1}^r \deg(a^i_{[n_i]})=k$, respectively, and their product is given by $[\al^{-j}\b^k]_0=b^1_{[m_1]}\cdots b^s_{[m_s]}a^1_{[n_1]}\cdots a^r_{[n_r]}+\rN^1_\L\scrU_0\in \A$ when $j=k$, in view of Lemma~\ref{lm5.1}. 
\end{enumerate}

The following lemma is an immediate consequence of Definition~\ref{def:mta} and the definitions above.

\begin{lemma}
With the notation as above, $\fA'$ (resp. $\fA''$) is an associative algebra with respect to the $\star$-product defined using~\eqref{eq:defofprodonmta}, \eqref{eq:projhomo}, and~\eqref{eq:pairing}. More precisely, given $\b^i\o a'\o \al^{-j}\in \fA'_{i,-j}$ and $\b^k\o b'\o \al^{-l}\in \fA'_{k,-l}$, we have
\begin{equation}\label{eq:defofprodonmtaA'}
				\left(\b^i\o a'\o \al^{-j}\right) \star \left(\b^k\o b'\o \al^{-l}\right)= \b^i\o a'\cdot p'([\al^{-j}\b^k]_0)\cdot b'\o \al^{-l},
			\end{equation}
where $[\al^{-j}\b^k]_0:=0$ if $j\neq k$.
Moreover, the $\star$-product is compatible with the $\scrU$--$\scrU$-bimodule action
\begin{equation}\label{eq:starprodcompatibility}
    (\fa'\cdot u)\star\fb'=\fa'\star(u\cdot \fb'),
\end{equation}
for all $\fa'\in \fA'_{i,-j}$, $\fb'\in \fA'_{k,-l}$, and $u\in \scrU_r$.
\end{lemma}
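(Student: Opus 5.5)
The plan is to reduce everything to the corresponding statements for the full mode transition algebra $\fA$ in Definition~\ref{def:mta}, by passing to quotients along the projection $p'$. The statement has three parts: the product~\eqref{eq:defofprodonmtaA'} on $\fA'$ is well defined, it is associative, and it satisfies the compatibility~\eqref{eq:starprodcompatibility}. I will treat them in that order.

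\textbf{Well-definedness.} The formula on pure tensors $\b^i\o a'\o \al^{-j}$ must respect the relations defining $\o_{\scrU_0}$ on both sides. The key is that $p'\colon \A\ra\A'$ is a surjective algebra homomorphism. Consequently the $\scrU_0$-bimodule structure on $\A'$ factors through $\A$, and $\fA'\cong \fA\o_{\A}\A'$. Equivalently, $\fA'$ is the quotient of $\fA$ by the sub-bimodule $\Phi(\A'')$, using $\fA\cong\fA'\op\fA''$.

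It then suffices to show that $\fA''$ is a two-sided $\star$-ideal in $\fA$. Take $a''\in\A''$ and $b\in\A$. Any middle term $a''\cdot[\al^{-j}\b^k]_0\cdot b$ lies in $\A''$, because $\A''$ is a two-sided ideal of $\A\cong\A'\times\A''$. The same holds with the roles of the two factors reversed.

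Hence $\star$ descends to $\fA/\fA''\cong\fA'$. On representatives $a',b'\in\A'\subset\A$, the middle term $a'\cdot[\al^{-j}\b^k]_0\cdot b'$ equals $a'\cdot p'([\al^{-j}\b^k]_0)\cdot b'$. This holds because $a'$ and $b'$ are killed by the $\A''$-component. This recovers~\eqref{eq:defofprodonmtaA'}, including the convention $[\al^{-j}\b^k]_0:=0$ for $j\neq k$, which is inherited from~\eqref{eq:defofprodonmta}.

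\textbf{Associativity.} This is inherited directly, since a quotient of an associative algebra by a two-sided ideal is associative. It can also be checked by hand on pure tensors. Both bracketings give
\[
\b^i\o a'\cdot p'([\al^{-j}\b^k]_0)\cdot b'\cdot p'([\al^{-l}\b^m]_0)\cdot c'\o\al^{-n},
\]
using only associativity of $\A'$.

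\textbf{Compatibility~\eqref{eq:starprodcompatibility}.} Write $\fa'=\b^i\o a'\o\al^{-j}$ and $\fb'=\b^k\o b'\o\al^{-l}$, and take $u\in\scrU_r$. Then $\fa'\cdot u$ has right factor $\al^{-j}u$ of degree $-j+r$. Similarly $u\cdot\fb'$ has left factor $u\b^k$ of degree $k+r$.

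Both sides of~\eqref{eq:starprodcompatibility} reduce to $\b^i\o a'\cdot p'([\al^{-j}u\b^k]_0)\cdot b'\o\al^{-l}$. Moreover, both vanish unless $j-r=k$, so the degree conditions match.

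The only subtle point is whether $[\ \cdot\ ]_0$ of the product is unambiguous when $\al^{-j}u$ is reduced modulo $\rN^1_\R\scrU$, and when $u\b^k$ is reduced modulo $\rN^1_\L\scrU$. This is exactly the well-definedness of the pairing $\scrU/\rN^1_\R\scrU\times\scrU/\rN^1_\L\scrU\ra\A$. That pairing is well defined because $\rN^1_\R\scrU_0$ and $\rN^1_\L\scrU_0$ both map to zero in $\A$, by Lemma~\ref{lm5.1} and the equality $\rN^1_\L\scrU_0=\rN^1_\R\scrU_0$. Applying $p'$ preserves this.

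I expect this compatibility bookkeeping to be the main obstacle, though it is routine. The intended course is simply to cite the corresponding facts for $\fA$ from~\cite[Section 3]{DGK2} and push them through the quotient map $\fA\twoheadrightarrow\fA'$.
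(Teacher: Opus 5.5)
Your proposal is correct and essentially follows the paper: associativity is the evident computation on pure tensors, and the compatibility~\eqref{eq:starprodcompatibility} is the same one-line check using $(\al^{-j}u)\b^k=\al^{-j}(u\b^k)$ in $\scrU$, with the degree condition $j-r=k$ the same on both sides. Your extra step, deducing well-definedness from the fact that $\fA''=\Phi(\A'')$ is a two-sided $\star$-ideal of $\fA$ so that $\fA'\cong\fA/\fA''$ inherits the product of Definition~\ref{def:mta}, is a sound justification of what the paper treats as immediate; phrase it via the splitting $\fA\cong\fA'\oplus\fA''$ (i.e.\ replacing the middle factor $\A$ by $\A'$) rather than the shorthand $\fA\o_{\A}\A'$, which is ambiguous because the outer $\scrU_0$-actions on $\fA$ do not factor through $\A$.
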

\begin{proof}
The product~\eqref{eq:defofprodonmtaA'} is clearly associative. To show~\eqref{eq:starprodcompatibility}, we let $\fa'=\b^i\o a'\o \al^{-j}$ and $\fb'=\b^k\o b'\o \al^{-l}$. Then
\begin{align*}
(\fa'\cdot u)\star\fb'&=\left(\b^i\o a'\o \al^{-j}u\right)\star \left(\b^k\o b'\o \al^{-l}\right)\\
&=\b^i\o a'\cdot p'([(\al^{-j}u)\b^k]_0)\cdot b'\o \al^{-l}\\
&=\fa'\star(u\cdot \fb'),
\end{align*}
where we used the fact that $(\al^{-j}u)\b^k=\al^{-j}(u\b^k)$ in $\scrU$.
\end{proof}

The following lemma is~\cite[Lemma 5.1.5]{DGK2} in our situation.
\begin{deflemma}\label{def:strong_identity}
    With the notation as above, a sequence of elements $\I'_d\in \fA'_d$, with $d\in \N$ and $\I'_0=1_{\A'}\in \fA'_0\cong \A'$~\eqref{eq:fA0identification}, is said to satisfy the \textbf{(partial) strong identity property} if
\begin{equation}\label{eq:strongidentitydf'}
\fa'\star\I'_p=\fa'=\I'_q\star \fa'
\end{equation}
for all $p,q\in \N$ and all $\fa'\in \fA'_{q,-p}$. This holds if and only if the sequence $\{\I'_d\}_{d\in \N}$ satisfies the \textbf{strong identity equations}:
\begin{equation}\label{eq:5.18}
J_n(a)\cdot \I'_d=\I'_{d-n}\cdot J_{n}(a),
\end{equation}
for all $n\in \Z$, $d\in \N$ and all $J_n(a)=a_{[\wt a +n-1]}\in L(V)_{-n}\subset \scrU_{-n}$, with the convention $\I'_j:=0$ for $j<0$. (For $n>d$ equation~\eqref{eq:5.18} reads $J_n(a)\cdot \I'_d=0$, which holds automatically: the left-hand side lies in $\fA'_{d-n,-d}$, and $\fA'=\bigoplus_{m,n\in\N}\fA'_{n,-m}$ has no components of negative left degree.)
\end{deflemma}
\begin{proof}
Assume the sequence satisfies~\eqref{eq:strongidentitydf'}. Let $\fa'=J_n(a)\cdot\I'_d\in \fA'_{d-n,-d}$. Applying~\eqref{eq:starprodcompatibility} and~\eqref{eq:strongidentitydf'}, we have
\[
J_n(a)\cdot \I'_d=\I'_{d-n}\star(J_n(a)\cdot \I'_d)=(\I'_{d-n}\cdot J_n(a))\star \I'_d=\I'_{d-n}\cdot J_n(a),
\]
where the first and last equalities follow from~\eqref{eq:strongidentitydf'}, with $q=d-n$ and $p=d$, respectively, and the middle equality is~\eqref{eq:starprodcompatibility}.

Conversely, assume the sequence satisfies~\eqref{eq:5.18}. Then~\eqref{eq:strongidentitydf'} follows by the same induction as in~\cite[Lemma 5.1.5]{DGK2}; the base case is the identity $\fa'\star 1_{\A'}=\fa'=1_{\A'}\star \fa'$ for all $\fa'\in \fA'_0\cong \A'$, which holds because $\I'_0=1_{\A'}$.
 \end{proof}

\begin{remark}
As in~\cite[Lemma 5.1.5]{DGK2}, we can show that the partial strong identity property for the sequence $\I'_d\in \fA'_d$ is equivalent to 
\begin{equation}\label{eq:strongidentitydf''}
\fa'\star\I'_p=\fa'\quad \text{and}\quad \I'_q\star \fb'=\fb',
\end{equation}
for all $p,q\in \N$, and all $\fa'\in \fA'_{0,-p}$ and $\fb'\in \fA'_{q,0}$.
\end{remark}

\subsection{Partial strong identity elements of the mode transition algebras of $L_k(\g)$}
We now specialize to $\A=A(L_k(\g))$, with the splitting $\A=\A'\times \A''$ of Example~\ref{ex:Aadm}.

\begin{proposition}\label{prop:partial_strong_identity}
  With the notation as above, there exists a sequence of elements $\{\I'_d\in \fA'_d\}_{d\in \N}$ satisfying the partial strong identity property
  \eqref{eq:strongidentitydf'}.
\end{proposition}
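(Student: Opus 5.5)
We construct $\I'_d$ directly from the ordinary modules, following the strategy of~\cite[Section 5]{DGK2} for rational VOAs, restricted to the finite-dimensional semisimple factor $\A'$. For each $\la\in\Pr^k_\Z$ put $W_\la:=\hat L_k(\la)\cong\Phi^\L(L(\la))$; by~\eqref{eq:LklambdaVerma} this is simple. Its contragredient $W_\la'$ has bottom degree $L(\la)^\ast$, and $W_\la'\cong\hat L_k(-w_0(\la))$ is again in $\O_{k,\Ord}$ by Lemma~\ref{lm:w0closed}(2). Since $W_\la$ is ordinary, each $W_\la(d)$ is finite-dimensional. We regard $W_\la(d)\o W_\la(d)^\ast$ as a piece of a right generalized Verma module through $\Phi^\R(L(\la)^\ast)\to W_\la'$.

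The first step is to identify $\fA'_{n,-m}$ with $\bigoplus_{\la\in\Pr^k_\Z}W_\la(n)\o_\C W_\la(m)^\ast$. There is a natural map
\[
\fA'_{n,-m}=(\scrU/\rN^1_\L\scrU)_n\o_{\scrU_0}\A'\o_{\scrU_0}(\scrU/\rN^1_\R\scrU)_{-m}\longrightarrow \bigoplus_{\la}\Phi^\L(L(\la))(n)\o\Phi^\R(L(\la)^\ast)(-m),
\]
obtained by splitting $\A'_\la\cong L(\la)\o L(\la)^\ast$ as an $(\A,\A)$-bimodule, as in~\eqref{eq:alambda}. This map is an isomorphism: because $\A'_\la$ is a matrix algebra, the tensor product over $\scrU_0$ factors as (left module)$\,\o_\C\,$(right module), and $\Phi$ commutes with the finite product. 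The isomorphism $\Phi^\L(L(\la))\cong W_\la$ then comes from Theorem~\ref{thm:equiofcats}. On the right-hand side I still need $\Phi^\R(L(\la)^\ast)\cong W_\la'$, viewed as a right $\scrU$-module through $\theta$~\eqref{eq:involution}. I expect to get this by applying $\theta$ to the left statement for $-w_0(\la)$, using Corollary~\ref{coro:equiofcats}, which says that $\Phi^\L$ of a simple ordinary bottom space is already simple. This is the key input that distinguishes $\fA'$ from $\fA''$: on the ordinary part the generalized Verma modules have no radical. Under this identification the $\star$-product~\eqref{eq:defofprodonmtaA'} becomes composition via the pairing $W_\la(m)^\ast\times W_\la(m)\to\C$. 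The pairing~\eqref{eq:pairing}, followed by projection to $\A'_\la$, must match the contragredient pairing on degree-$m$ pieces. I would check this compatibility on generators, writing $\al^{-m}$ and $\b^m$ as products of modes and using $\theta$.

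Next, define $\I'_d:=\sum_{\la\in\Pr^k_\Z}\Id_{W_\la(d)}\in\bigoplus_\la W_\la(d)\o W_\la(d)^\ast\cong\fA'_d$. This is the canonical element $\sum_i e_i\o e_i^\ast$, which is well defined because $\dim W_\la(d)<\infty$. Under~\eqref{eq:fA0identification} we get $\I'_0=1_{\A'}$. The partial strong identity property~\eqref{eq:strongidentitydf'} then follows, since composition with identity endomorphisms is trivial. Equivalently, the strong identity equations~\eqref{eq:5.18} hold: $J_n(a)\cdot\Id_{W(d)}$ and $\Id_{W(d-n)}\cdot J_n(a)$ are both the operator $J_n(a)\colon W(d)\to W(d-n)$, read as an element of $W(d-n)\o W(d)^\ast$. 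Here Definition/Lemma~\ref{def:strong_identity} is used.

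The main obstacle is the first step, namely proving that the map from $\fA'_{n,-m}$ is injective. A priori the induced tensor product could fail to see the maximal submodule of the generalized Verma module, which would make $\fA'$ larger than the space of operators between the simple modules. I would handle this with the fact, noted above, that $\Phi^\L(L(\la))$ is already simple, together with the compatibility of the left and right Verma constructions under $\theta$. The argument mirrors~\cite[Theorem 5.2.x]{DGK2} in the rational case, with the Chinese-remainder splitting of Theorem~\ref{thm:AvE_result} isolating the factors where this works.
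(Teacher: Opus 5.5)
Your proposal is correct and follows essentially the same route as the paper. You split $\A'_\la\cong L(\la)\otimes_\C L(\la)^\ast$ so that $\fA'_{n,-m}\cong\prod_{\la}\Phi^\L(L(\la))(n)\otimes_\C\Phi^\R(L(\la)^\ast)(-m)$, and identify ${}^\theta\Phi^\R(L(\la)^\ast)$ with the contragredient $\hat{L}_k(\la)^\vee\cong\hat{L}_k(-w_0(\la))$ using that $\Phi^\L$ of a simple ordinary bottom space is already simple (Theorem~\ref{thm:equiofcats}). You then match the pairing~\eqref{eq:pairing} with the contragredient evaluation pairing, so that $\star$ becomes composition, and take $\I'_d$ to be the identity of $\prod_\la\End_\C(\hat{L}_k(\la)(d))$; the ``injectivity'' obstacle you single out is handled in the paper exactly this way, with simplicity of ${}^\theta\Phi^\R(L(\la)^\ast)$ obtained from the adjunction surjection $\Phi^\L(L(\la)^\ast)\twoheadrightarrow{}^\theta\Phi^\R(L(\la)^\ast)$ onto a module generated by its bottom degree.
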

\begin{proof}
Denote by $\scrU$ the universal enveloping algebra of the VOA $L_k(\g)$ and let $\la\in \Pr^k_\Z$. The right $\scrU$-module $\Phi^\R(L(\la)^\ast)$ becomes an $\N$-graded left $\scrU$-module---that is, an admissible $L_k(\g)$-module---via the anti-involution $\theta:\scrU\ra\scrU^{\mathrm{op}}$ of~\eqref{eq:involution}. We write ${}^\theta\Phi^\R(L(\la)^\ast)$ for this left $\scrU$-module.

We claim that
${}^\theta\Phi^\R(L(\la)^\ast)$ is isomorphic to the VOA-contragredient module $\Phi^\L(L(\la))^\vee$. Its bottom degree is, as a left $\A$-module, $L(\la)^\ast$, with action
\[
\<[a].f,v\>=\<f,\theta([a])v\>,\quad [a]\in \A,\ f\in L(\la)^\ast,\ v\in L(\la),
\]
which is also the $\A$-action on $\Phi^\L(L(\la))^\vee(0)=L(\la)^\ast$. Since ${}^\theta\Phi^\R(L(\la)^\ast)$ is an admissible module generated by its bottom degree, the adjunction of Definition~\ref{def:Verma} yields a surjection $\Phi^\L(L(\la)^\ast)\twoheadrightarrow{}^\theta\Phi^\R(L(\la)^\ast)$. By Lemma~\ref{lm:w0closed}(2) we have $L(\la)^\ast\cong L(-w_0(\la))$ with $-w_0(\la)\in \Pr^k_\Z$, so Theorem~\ref{thm:equiofcats} identifies $\Phi^\L(L(\la)^\ast)\cong \hat{L}_k(-w_0(\la))$, which is simple; a surjection from a simple module onto a nonzero one is an isomorphism. The same argument applied to $\Phi^\L(L(\la))^\vee$, which is likewise admissible and generated by its bottom degree $L(\la)^\ast$, gives
\begin{equation}\label{eq:contragredient}
    {}^\theta\Phi^\R(L(\la)^\ast)\cong \Phi^\L(L(\la)^\ast)\cong  \Phi^\L(L(\la))^\vee\cong \hat{L}_k(-w_0(\la)).
\end{equation}

Note that~\eqref{eq:contragredient} is an isomorphism of $L_k(\g)$-modules, and that the $\N$-grading on the admissible $L_k(\g)$-module $\Phi^\L(L(\la))^\vee$ is given by the graded dual spaces $\Phi^\L(L(\la))^\vee=\bigoplus_{d=0}^\infty \Phi^\L(L(\la))(d)^\ast$.
It follows that
\begin{align*}
\fA'_{n,-m}&=\Phi^\L(\Phi^\R(\A'))_{n,-m}\\
&=\prod_{\la\in \Pr^k_\Z}\left((\scrU/\rN^1_\L\scrU)_n\o_{\scrU_0}L(\la)\right)\o_\C \left(L(\la)^\ast\o_{\scrU_0}(\scrU/\rN^1_\R\scrU)_{-m}\right)\\
&=\prod_{\la\in \Pr^k_\Z} \Phi^\L(L(\la))(n)\otimes_\C \Phi^\R(L(\la)^\ast)(-m)\\
&\cong \prod_{\la\in \Pr^k_\Z} \Phi^\L(L(\la))(n)\otimes_\C \Phi^\L(L(\la))(m)^\ast,
\end{align*}
where the last isomorphism is~\eqref{eq:dualgraded} below.
The natural pairing between the quotients of $\scrU$ by the right and left neighborhoods~\eqref{eq:pairing} descends to a pairing between $\Phi^\R(L(\la)^\ast)(-j)$ and $\Phi^\L(L(\la))(j)$:
\[
\begin{tikzcd}
L(\la)^\ast\o_\C (\scrU/\rN^1_\R\scrU)_{-j}\times (\scrU/\rN^1_\L\scrU)_j\o _\C L(\la)\arrow[r,"\eqref{eq:pairing}"]\arrow[d,two heads] & L(\la)^\ast\o_\C \A'\o_\C L(\la)\arrow[d]\\
\Phi^\R(L(\la)^\ast)(-j)\times \Phi^\L(L(\la))(j)\arrow[r,dashed]& \C,
\end{tikzcd}
\]
where the left vertical arrow is the canonical surjection from $\o_\C$ onto $\o_{\scrU_0}$ and the right one is $f\o a\o v\mapsto \<f,a.v\>$. More precisely, given $f\o \al^{-j}\in \Phi^\R(L(\la)^\ast)(-j)$ and $\b^j\o v\in \Phi^\L(L(\la))(j)$, we have
\begin{equation}\label{eq:pairing2}
\<f\o \al^{-j}, \b^j\o v\>=\<f,p'([\al^{-j}\b^j]_0).v\>=\<f.p'([\al^{-j}\b^j]_0),v\>,
\end{equation}
where $f.p'([\al^{-j}\b^j]_0)$ is the natural right action of $p'([\al^{-j}\b^j]_0)\in \A'$ on $f\in L(\la)^\ast$. 
This pairing is perfect. Indeed, \eqref{eq:contragredient} identifies ${}^\theta\Phi^\R(L(\la)^\ast)$ with the graded dual $\Phi^\L(L(\la))^\vee=\bigoplus_{d\geq 0}\Phi^\L(L(\la))(d)^\ast$, and since the $\theta$-twist reverses the grading this gives, in each degree,
\begin{equation}\label{eq:dualgraded}
\Phi^\R(L(\la)^\ast)(-j)\cong\Phi^\L(L(\la))(j)^\ast.
\end{equation}
The contragredient module carries by definition the evaluation pairing with $\Phi^\L(L(\la))$, and transporting it through this identification returns~\eqref{eq:pairing2}; hence~\eqref{eq:pairing2} is a perfect pairing.
Moreover, the $\star$-product of the mode transition algebras $\fA'_{i,-j}\times \fA'_{k,-l}\ra \delta_{j,k}\fA'_{i,-l}$~\eqref{eq:defofprodonmtaA'} has an alternative description via~\eqref{eq:pairing2}
\begin{align*}
  & (\b^i\o (u_\la\o f_{\la})\o\al^{-j})_{\la\in \Pr^k_\Z}\star (\tilde{\b}^k\o (v_\mu\o g_{\mu})\o \tilde{\al}^{-l})_{\mu\in \Pr^k_\Z}\\
  &=\delta_{j,k}\<f_{\la},p'([\al^{-j}\tilde{\b}^k]_0).v_{\la}\>\cdot (\b^i\o (u_\la\o g_\la) \o \tilde{\al}^{-l})_{\la\in \Pr^k_\Z},
\end{align*}
which, under the identification ${}^\theta\Phi^\R(L(\la)^\ast)\cong  \Phi^\L(L(\la))^\vee$ again, can be written on elementary tensors as
\begin{equation}\label{eq:staralt}
    (U_\la\o F_{\la})_{\la\in \Pr^k_\Z}\star (V_\mu\o G_{\mu})_{\mu\in \Pr^k_\Z}=\left(\<F_\la,V_\la\> \cdot U_\la\o G_\la\right)_{\la\in \Pr^k_\Z},
\end{equation}
and extended bilinearly in general.
Since each $\hat{L}_k(\la)$, $\la\in\Pr^k_\Z$, is an ordinary module, its graded pieces $\Phi^\L(L(\la))(d)\cong \hat{L}_k(\la)(d)$ are \emph{finite-dimensional}; consequently
\[
\fA'_d\cong \prod_{\la\in\Pr^k_\Z}  \Phi^\L(L(\la))(d)\otimes_\C \Phi^\L(L(\la))(d)^\ast\cong  \prod_{\la\in\Pr^k_\Z} \End_\C\left(\Phi^\L(L(\la))(d)\right),
\]
where the second isomorphism uses finite-dimensionality. (This is precisely the point at which the restriction to the semisimple summand $\A'$ is used: the blocks of $\A''$ have infinite-dimensional graded pieces, and the identity operator on them does not lie in $\Phi^\L(-)(d)\o \Phi^\L(-)(d)^\ast$.)
Let $\I'_d\in \fA'_d$ be the identity element corresponding to $\prod_{\la\in \Pr^k_\Z} \Id_{\Phi^\L(L(\la))(d)}$. Since~\eqref{eq:staralt} holds in arbitrary bidegree, it follows that
\[
 (U_\la\o F_{\la})_{\la\in \Pr^k_\Z}\star \I'_p= (U_\la\o F_{\la})_{\la\in \Pr^k_\Z}=\I'_q\star  (U_\la\o F_{\la})_{\la\in \Pr^k_\Z},
\]
for all $p,q\in \N$ and all $ (U_\la\o F_{\la})_{\la\in \Pr^k_\Z}\in \fA'_{q,-p}$, where $U_\la\in  \Phi^\L(L(\la))(q)$ and $F_\la\in  \Phi^\L(L(\la))(p)^\ast$.
This shows the sequence $\{\I'_d\}_{d\in \N}$ satisfies the partial strong identity property~\eqref{eq:strongidentitydf'}.
\end{proof}




\section{Smoothing in genus zero} \label{sec:smoothing}

In this section we prove the main theorem. We first record the criterion that reduces local freeness of a coherent sheaf of coinvariants on $\overline{\M}_{0,n}$ to the constancy of its fiber dimension, and we recall from~\cite{DGK2} the smoothing morphism, which compares the coinvariants of a one-parameter family of curves acquiring a node with the coinvariants of the partial normalization of its central fiber, with the generalized Verma bimodule $\fA$ inserted at the two branches of the node. We then specialize to $V=L_k(\g)$ and to insertions from $\O_{k,\Ord}$: the vanishing theorem of Section~\ref{sec:fusion} yields a factorization theorem in which only the summand $\fA'$ contributes (Theorem~\ref{thm:restricted_factorization}). The partial strong identity elements of Section~\ref{sec:strong_identity} produce a smoothing morphism that restricts to this factorization isomorphism on the central fiber and hence is an isomorphism over the whole family (Theorem~\ref{thm:main_smoothing}); induction on the number of nodes then gives the constancy of the fiber dimension, hence Theorem~\ref{thm:mainA}.

\subsection{Smoothing setup}

\begin{proposition}\label{prop:flatness}
    Let $M^1,\dots, M^n$ be $V$-modules with rational conformal dimensions. Assume the sheaf of coinvariants $\VV_{0,n}(V,M^\bullet)$ is coherent over $\overline{\M}_{0,n}$. For a point $x\in \overline{\M}_{0,n}$, not necessarily closed, write $\VV_{0,n}|_x:=\VV_{0,n}(V,M^\bullet)\o\kappa(x)$ for the fiber at $x$; at a closed point $x=(C,P_\bullet)$ this is the space of coinvariants $[M^\bullet]_{(C,P_\bullet)}$. Then the following are equivalent:
    \begin{itemize}
        \item The dimension $\dim_{\kappa(x)}\VV_{0,n}|_x$ is the same for all points $x \in \overline{\M}_{0,n}$.
        \item The sheaf $\VV_{0,n}(V,M^\bullet)$ is locally free (that is, a vector bundle).
    \end{itemize}
\end{proposition}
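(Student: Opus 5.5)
This is the standard fact that a coherent sheaf on a reduced Noetherian scheme is locally free if and only if its fiber dimension is locally constant. The plan is to reduce to that fact, using only that $\overline{\M}_{0,n}$ is a smooth, hence reduced, connected projective variety (Knudsen). We write $\mathcal F:=\VV_{0,n}(V,M^\bullet)$. The rational-conformal-dimension hypothesis only ensures, via Definition/Proposition~\ref{prop:isocoinvfiber}, that $\mathcal F$ descends to $\overline{\M}_{0,n}$; coherence is assumed outright.

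\emph{Locally free $\Rightarrow$ constant rank.} Suppose $\mathcal F$ is locally free. Then on an open cover it is isomorphic to $\O^{r}$, with $r$ locally constant. Base change gives $\dim_{\kappa(x)}\mathcal F\otimes\kappa(x)=r$ at every point $x$, closed or not. Since $\overline{\M}_{0,n}$ is irreducible, hence connected, $r$ is a single constant.

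\emph{Constant rank $\Rightarrow$ locally free.} Work on an affine open $\operatorname{Spec}R$, with $R$ a Noetherian reduced ring and $\mathcal F$ corresponding to a finitely generated module $N$. Fix a prime $\mathfrak p$ and put $r=\dim\kappa(\mathfrak p)\otimes N$.
\begin{enumerate}
\item By Nakayama's lemma, choose $s_1,\dots,s_r\in N$ generating $N_{\mathfrak p}$. After shrinking to some $D(f)$ they generate $N$, which gives a surjection $R^r\to N\to 0$ with kernel $K$.
\item Let $\mathfrak q$ be any point of $D(f)$. Tensoring with $\kappa(\mathfrak q)$ gives a surjection $\kappa(\mathfrak q)^r\to N\otimes\kappa(\mathfrak q)$. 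The target has dimension $r$ by hypothesis, so this map is an isomorphism.
\item Hence every $k\in K$ has all coordinates in $\mathfrak q$ for every prime $\mathfrak q$ of $R_f$. So these coordinates lie in the nilradical, which is zero because $R$ is reduced. Therefore $K=0$ and $N_f\cong R_f^r$.
\end{enumerate}

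The main subtlety is the step where reducedness is used: without it, the kernel need only be nilpotent, and constant fiber dimension does not imply local freeness. This is exactly why non-closed points are included in the statement and why the smoothness of $\overline{\M}_{0,n}$ is invoked. Alternatively, one could check constancy only at closed points and deduce the general case by upper semicontinuity, but the argument above handles all points directly.
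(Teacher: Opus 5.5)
Your argument is correct and follows the paper's route. The paper simply cites Hartshorne, Exercise II.5.8, for a coherent sheaf on the integral Noetherian scheme $\overline{\M}_{0,n}$, and your Nakayama-plus-reducedness argument, which forces the kernel's coordinates into the nilradical of $R_f$, is precisely the proof of that exercise.

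Two small points. The paper also justifies the identification of the fiber at a closed point with $[M^\bullet]_{(C,P_\bullet)}$, using compatibility of coinvariants with base change (DGK2, Lemma 4.1.1); you do not address this. Your closing remark also conflates two things: reducedness, not the inclusion of non-closed points, is what excludes nilpotent counterexamples, and on a variety constancy at closed points already propagates to all points.
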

\begin{proof}
    We follow the argument given in~\cite[Corollary 5.2.6]{DGK2}. 
    Recall that $\overline{\M}_{0,n}$ is a smooth projective variety and, in particular, an integral Noetherian scheme~\cite[Theorem 6.1]{Knudsen1983b}.    By hypothesis, $\VV_{0,n}(V,M^\bullet)$ is coherent, so~\cite[Exercise II.5.8]{Hartshorne} applies: a coherent sheaf on an integral Noetherian scheme is locally free if and only if the dimension of the fiber $\VV_{0,n}(V,M^\bullet)\o \kappa(x)$ is independent of the (not necessarily closed) point $x$. Since the formation of coinvariants is compatible with base change~\cite[Lemma 4.1.1]{DGK2}, at a closed point that fiber is the space of coinvariants $[M^\bullet]_{(C,P_\bullet)}$ of the corresponding curve, which justifies the description of the fibers given in the statement.
\end{proof}

We now relate the sheaf of coinvariants on a nodal curve to the one on its normalization, in the following situation.
\begin{setting}\label{setting}
    Let $(\CCC, P_\bullet, t_\bullet)$ be a family over $\mathcal{S}=\operatorname{Spec} \C\lb \varepsilon\rb $ whose central fiber $\CCC_0$ has a node $Q$, and which smooths this node: $\CCC$ is obtained by sewing the partial normalization of $\CCC_0$ at $Q$ with parameter $\varepsilon$, so that $\CCC$ has local equation $xy=\varepsilon$ at $Q$. Let $M^1,\dots,M^n$ be an $n$-tuple of $V$-modules attached to the points $P_1,\dots, P_n$, where $V$ is a VOA. Let $(\widetilde{\CCC},  P_\bullet \cup Q_\pm, t_\bullet \cup s_\pm) := (\widetilde{\CCC}_0 \times \S,  P_\bullet \cup Q_\pm, t_\bullet \cup s_\pm)$ be a trivial family extending the partial normalization $\eta: \widetilde{\CCC}_0 \rightarrow \CCC_0$, where $Q_\pm = \eta^{-1}(Q)$ and $s_\pm$ are formal coordinates at $Q_\pm$.
\end{setting}

The sheaf of chiral Lie algebras $\LL_{\CCC \setminus P_\bullet}(V)$ on a nodal curve is naturally identified with a subsheaf of chiral Lie algebras $\LL_{\widetilde{\CCC} \setminus (P_\bullet \cup Q_\pm)}(V)$ on its partial normalization by~\cite[Proposition 3.3.2]{DGT2}. In order to relate the sheaf of coinvariants on $(\CCC,P_\bullet, t_\bullet)$ to the one on $(\widetilde{\CCC},P_\bullet\cup Q_\pm, t_\bullet\cup s_\pm)$, we must specify $V$-modules at $Q_\pm$. The natural choice is to attach there the two sides of the generalized Verma bimodule $\fA=\Phi^\L(\Phi^\R(\A))$ of the Zhu algebra. This leads to the following definition.
\begin{definition}
    Assume Setting~\ref{setting}. Given $\I_0 \in \A \subset \fA$, a map
    \begin{equation}
         \alpha_0: M^\bullet \longrightarrow M^\bullet \otimes \mathfrak{A}, \quad w \mapsto w \otimes \I_0
    \end{equation}
    is called a \textbf{factorization morphism} if it is an $\LL_{\CCC_0 \setminus P_\bullet}(V)$-module homomorphism.
\end{definition}
\begin{proposition}[{\cite[Lemma 4.4.4]{DGK2}}]\label{prop:FM}
    Assume Setting~\ref{setting}. Then $1_\A \in \A \subset \fA$ defines a factorization morphism $\alpha_0$. Moreover, if $V$ is $C_1$-cofinite, it induces an isomorphism on the vector spaces of coinvariants
    \begin{equation}
         [\alpha_0]: [M^\bullet]_{(\CCC_0,P_\bullet,t_\bullet)} \stackrel{\cong}{\longrightarrow} [M^\bullet \otimes \fA]_{(\widetilde{\CCC}_0,P_\bullet \cup Q_\pm,t_\bullet \cup s_\pm)}, \quad w \mapsto w \otimes 1_{\A}.
    \end{equation}
\end{proposition}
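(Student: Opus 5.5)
The plan is to prove the two assertions separately: first that $\alpha_0$ is equivariant, then that $[\alpha_0]$ is bijective, following the structure of~\cite[Lemma 4.4.4]{DGK2}.

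\emph{Equivariance.} By~\cite[Proposition 3.3.2]{DGT2}, a section $\sigma\in\LL_{\CCC_0\setminus P_\bullet}(V)$ is a section on the partial normalization $\widetilde{\CCC}_0\setminus(P_\bullet\cup Q_\pm)$ that satisfies the gluing condition $\gamma\circ\pi_+=\pi_-$ at $Q_\pm$. I would take three steps.
\begin{enumerate}
\item Expand $\sigma$ in the coordinates $s_\pm$. The gluing condition says the restrictions $\sigma_{Q_+}$ and $\sigma_{Q_-}$ only involve modes of nonpositive degree. Moreover, the degree-zero parts are exchanged by the anti-involution $\theta$ of~\eqref{eq:involution}: $\sigma_{Q_-}$ acts on the right factor through $\theta(\sigma_{Q_+})$, up to the sign in the convention for $\gamma$.
\item Act on $w\otimes 1_\A$. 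The left action of negative-degree modes on $1_\A\in\fA_{0,0}$ vanishes, since these modes lie in $\rN^1_\L\scrU$. Symmetrically, the right action of modes of positive right degree vanishes, since they lie in $\rN^1_\R\scrU$.
\item What remains is the degree-zero contribution $[u]_0\cdot 1_\A-1_\A\cdot[u]_0$. This vanishes in $\fA_0\cong\A$, where the left and right $\scrU_0$-actions both factor through $\scrU_0\twoheadrightarrow\A$.
\end{enumerate}
Hence $\sigma\cdot(w\otimes 1_\A)=(\sigma\cdot w)\otimes 1_\A$, so $\alpha_0$ is a factorization morphism.

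\emph{Surjectivity of $[\alpha_0]$.} It suffices to show that every element $w\otimes\beta\otimes a\otimes\alpha$ of $M^\bullet\otimes\fA$ is congruent to something in $M^\bullet\otimes 1_\A$ modulo the action of $\LL_{\widetilde{\CCC}_0\setminus(P_\bullet\cup Q_\pm)}(V)$. I would argue by induction on the bidegree $(n,m)$ of $\beta\otimes a\otimes\alpha$, as in the proof of Lemma~\ref{lm:DG1}.
\begin{enumerate}
\item By $C_1$-cofiniteness, $\fA$ is spanned by words in modes $J_{\pm j}(a)$ with $a$ in a finite-dimensional generating space.
\item For each such mode, pick a section of $\mathscr V\otimes\omega$ on the affine curve $\widetilde{\CCC}_0\setminus(P_\bullet\cup Q_\pm)$ whose polar part at $Q_+$ (respectively $Q_-$) is exactly that mode. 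Its action at the other points only involves modes of lower degree.
\item Subtract the action of this section. This moves one mode off the $\fA$-factor and onto $M^\bullet$, up to terms of strictly smaller bidegree.
\item The induction ends at $M^\bullet\otimes\A$. There the degree-zero action of $\A$ at $Q_+$ can be transferred to $M^\bullet$ by the same procedure, reducing to $M^\bullet\otimes 1_\A$.
\end{enumerate}

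\emph{Injectivity of $[\alpha_0]$.} I would construct an inverse
\[
[M^\bullet\otimes\fA]_{(\widetilde{\CCC}_0,P_\bullet\cup Q_\pm,t_\bullet\cup s_\pm)}\longrightarrow[M^\bullet]_{(\CCC_0,P_\bullet,t_\bullet)},
\]
using the universal property of the induced bimodule $\fA=\Phi^\L(\Phi^\R(\A))$. A functional on $M^\bullet$ that is invariant under $\LL_{\CCC_0\setminus P_\bullet}(V)$ extends uniquely to an invariant functional on $M^\bullet\otimes\fA$. On $M^\bullet\otimes\A$ it is determined by its value on $M^\bullet\otimes 1_\A$, and it propagates to all of $M^\bullet\otimes\fA$ through the chiral action, exactly as in the surjectivity step.

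I expect the main obstacle to be checking that this extension is well defined, that is, independent of the chosen sections. The relations defining $\scrU$ (the closure $\bar J$) and the tensor products over $\scrU_0$ must be respected by the propagation. I would first try to compare two choices of section: their difference is a section regular at $Q_\pm$ that satisfies the gluing condition, hence an element of $\LL_{\CCC_0\setminus P_\bullet}(V)$, and the original functional kills its action. This is also the step where $C_1$-cofiniteness is needed to keep the propagation finite in each degree.
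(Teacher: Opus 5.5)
The paper does not prove this statement; it imports it from \cite[Lemma 4.4.4]{DGK2}. So the question is whether your sketch stands on its own.

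\emph{What works.} Your equivariance argument is sound. At $Q_\pm$ the expansions of $\sigma\in\LL_{\CCC_0\setminus P_\bullet}(V)$ have nonpositive degree, and the strictly negative parts annihilate $1_\A$ on the relevant side. The degree-zero parts, matched through $\theta$, cancel because $[u]_0\cdot 1_\A=1_\A\cdot[u]_0$ in $\fA_0\cong\A$. Your surjectivity argument is also sound as an induction on the $\fA$-bidegree, with one correction. Degree control is needed only at the two branches of the node: the section that removes a mode at $Q_+$ should vanish to high order at $Q_-$, and conversely. It is not needed at $P_\bullet$, where nothing is filtered. This step uses no $C_1$-cofiniteness, since $\fA$ is spanned by finite words in modes for any $V$.

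\emph{The gap: injectivity.} Recasting injectivity as the existence of an invariant extension of every $\LL_{\CCC_0\setminus P_\bullet}(V)$-invariant functional $\phi$ on $M^\bullet$ does not reduce the problem. Uniqueness of the extension is just your surjectivity. Existence for all $\phi$ is equivalent to
\[
\alpha_0^{-1}\big(\LL_{\widetilde{\CCC}_0\setminus(P_\bullet\cup Q_\pm)}(V)\cdot(M^\bullet\o\fA)\big)\subseteq\LL_{\CCC_0\setminus P_\bullet}(V)\cdot M^\bullet,
\]
which is injectivity itself. So everything rests on the well-definedness of the propagated value, and the check you propose fails.

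Two admissible sections with the same principal part at $Q_+$ differ by a section of nonpositive degree at $Q_\pm$. Its degree-zero components need not be related by $\theta$: it may have degree-zero part $J_0(b)$ at $Q_+$ and none at $Q_-$. Such a section is not in $\LL_{\CCC_0\setminus P_\bullet}(V)$, nothing says $\phi$ kills its action, and on the $\A$-factor it acts by left multiplication by $[b]$. Normalizing the degree-zero parts of your sections does not help. The last step of your propagation, moving $[b]\in\A$ onto $M^\bullet$, must use exactly such sections.

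What is missing is a Zhu-type statement at the node, with four parts:
\begin{enumerate}
\item For sections $\tau$ of negative degree at $Q_-$, the value $\phi(\tau_{P_\bullet}\cdot w)$ must depend on the degree-zero component $\tau_{Q_+,0}$ only through its image in $\A=\scrU_0/\rN^1_\L\scrU_0$. In particular it must vanish when $\tau_{Q_+,0}=J_0(x)$ with $x\in O(V)$.
\item Iterating such moves must reproduce the product $\ast$.
\item The same must hold at $Q_-$ with $\A^{\mathrm{op}}$.
\item The propagation must respect the closure $\bar J$ and both tensor products over $\scrU_0$ that define $\fA$.
\end{enumerate}
None of this follows from comparing two sections. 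It must either be derived from the Jacobi identity on the curve, or obtained structurally. One structural route: use that $\fA=\Phi^\L(\Phi^\R(\A))$ is induced to rewrite $[M^\bullet\o\fA]_{(\widetilde{\CCC}_0,P_\bullet\cup Q_\pm,t_\bullet\cup s_\pm)}$ as coinvariants of $M^\bullet\o\A$ for the sections of nonpositive degree at $Q_\pm$. Then prove that the degree-zero modes at the two branches act on those coinvariants through $\A\o\A^{\mathrm{op}}$.

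Relatedly, the role you assign to $C_1$-cofiniteness, keeping the propagation finite, is not a real issue: the bigrading already forces the propagation to terminate. Your sketch never locates where that hypothesis is actually needed.
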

\begin{definition}\label{def:smoothingmorphism}
    Assume Setting~\ref{setting}. Given $\I \in \fA\lb \varepsilon\rb $, let
    \begin{equation}
         \alpha: M^\bullet\lb \varepsilon\rb  \longrightarrow (M^\bullet \otimes \fA)\lb \varepsilon\rb, \quad w \mapsto w \otimes \I
    \end{equation}
    be a map induced by a morphism $M^\bullet \rightarrow M^\bullet \otimes \fA\lb \varepsilon\rb $, $w\mapsto w \otimes \I$ and extended by $\varepsilon$-adic continuity. The map $\alpha$ is called a \textbf{smoothing morphism} if it is an $\LL_{\CCC\setminus P_\bullet}(V)$-module homomorphism. (We do not require $\I_0=1_\A$, in contrast to the notion of smoothing map of~\cite[Section 5]{DGK2}.)
\end{definition}
\begin{proposition}\label{prop:smoothing_well_defined}
    Let $(\CCC, P_\bullet, t_\bullet)$ be a family of curves over $\operatorname{Spec}\C\lb \varepsilon\rb$, let $M^1,\dots,M^n$ be a collection of $V$-modules for a VOA $V$, let $\I=\sum_{d\geq 0}\I_d\varepsilon^d\in\fA\lb\varepsilon\rb$ with $\I_d\in \fA_d$ for every $d\in\N$, and let $\alpha$ be the associated map of Definition~\ref{def:smoothingmorphism}. The following are equivalent:
    \begin{itemize}
        \item $\alpha$ is a smoothing morphism;
        \item the sequence $(\I_d)_{d \in \N}$ of coefficients of $\I$ satisfies the strong identity equations~\eqref{eq:5.18}.
    \end{itemize}
\end{proposition}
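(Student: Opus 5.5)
The equivalence is local near the node, so I would reduce it to a computation with the sewing coordinates and then invoke \cite[Proposition 5.1.2]{DGK2}. The point to check is that the argument there never uses $\I_0=1_\A$ or the vanishing of $\fA''$; it only compares coefficients of powers of $\varepsilon$ for the action of modes at $Q_\pm$. I would begin with the local description of sections of $\LL_{\CCC\setminus P_\bullet}(V)$. By Setting~\ref{setting}, $\CCC$ is obtained by sewing with $s_+s_-=\varepsilon$. Hence a section $\si$ of the chiral Lie algebra over $\CCC\setminus P_\bullet$ restricts on $\widetilde{\CCC}\setminus(P_\bullet\cup Q_\pm)$, via \cite[Proposition 3.3.2]{DGT2}, to a section whose expansions at $Q_+$ and $Q_-$ are tied by the substitution $s_-=\varepsilon/s_+$. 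Concretely, for homogeneous $a$ the mode $J_n(a)$ appearing at $Q_+$ is paired with $\varepsilon^{n}\,\theta(J_n(a))$, or its $\theta$-transform $J_{-n}$-type mode, at $Q_-$, up to the $L(1)$-correction in~\eqref{eq:involution}.

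The second step is to write out the condition that $\alpha$ is a homomorphism. Let $w\in M^\bullet$ and let $\si$ be a section. Then $\alpha(\si w)=\si w\o\I$, while
\[
\si\alpha(w)=\si w\o \I+w\o\Big(\si_{Q_+}\cdot\I+\I\cdot\si_{Q_-}\Big),
\]
with the left and right $\scrU$-actions on $\fA$ used at $Q_+$ and $Q_-$ respectively. So $\alpha$ is a homomorphism exactly when $\si_{Q_+}\I+\I\si_{Q_-}=0$ in $\fA\lb\varepsilon\rb$ for every $\si$. Because the family is formal in $\varepsilon$ and the sections are dense, this reduces to the generating sections $J_n(a)$ acting near the node. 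Expanding $\I=\sum_d\I_d\varepsilon^d$ and taking the coefficient of $\varepsilon^{d}$ in bidegree $(d-n,-d)$ of $\fA$ then gives precisely $J_n(a)\cdot\I_d=\I_{d-n}\cdot J_n(a)$. This is~\eqref{eq:5.18}, after the sign and $\theta$ bookkeeping is matched with the conventions of \cite{DGK2}. The bigrading of $\fA$ makes distinct $(d,n)$ independent, so the relation holds termwise. Both directions then follow at once: the identity for all $\si$ is equivalent to the family of coefficient identities. For $n>d$ the equations are automatic, as noted after Definition/Lemma~\ref{def:strong_identity}.

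The main obstacle is the reduction from arbitrary sections $\si\in\LL_{\CCC\setminus P_\bullet}(V)$ to the single modes $J_n(a)$ at the node. In the forward direction one needs enough sections to realize every $J_n(a)$, modulo terms acting at the $P_i$ that cancel. I would handle this exactly as \cite[Proposition 5.1.2]{DGK2} does, since that step uses only the geometry of the sewing and the $\scrU$--$\scrU$-bimodule structure of $\fA$, not any property of $\I_0$. For our purposes, the proposition is then applied with $\I=\sum_d\I'_d\varepsilon^d$ and $\I'_d\in\fA'_d\subset\fA_d$ from Proposition~\ref{prop:partial_strong_identity}. Since $\fA'$ is a sub-bimodule of $\fA$, the equations~\eqref{eq:5.18} hold in $\fA$, and $\alpha$ is a smoothing morphism.
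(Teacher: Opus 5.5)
Your proposal is correct and follows the paper's route. The paper also just invokes \cite[Proposition 5.1.2]{DGK2} and observes that its proof never uses the normalization $\I_0=1_\A$, and your coefficient comparison at the node simply unpacks that citation: $J_n(a)$ at $Q_+$ is matched against $\varepsilon^n\theta(J_n(a))$ at $Q_-$, and the bigrading of $\fA$ separates the resulting equations. One small caveat, which the paper's statement shares: your step ``$\alpha$ is a homomorphism exactly when $\si_{Q_+}\I+\I\si_{Q_-}=0$'', and hence the implication from smoothing morphism to the strong identity equations, tacitly requires $M^\bullet\neq 0$.
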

\begin{proof}
This is~\cite[Proposition 5.1.2]{DGK2} and the proof translates verbatim. The one real difference is that
\cite[Definition 5.1.1]{DGK2} normalizes $\I_0=1_\A$, which we drop in
Definition~\ref{def:smoothingmorphism}; its proof does not use the normalization,
so the equivalence holds as stated, and this is what permits $\I_0=1_{\A'}$ in
Theorem~\ref{thm:main_smoothing}.
\end{proof}
\begin{remark}
    We emphasize that this proposition does not require the existence of strong identity elements in $\fA$, but only that the strong identity equations hold. For example, $\I=0$, that is, $\I_d\equiv 0$, satisfies those equations trivially, and the induced zero map is indeed a smoothing morphism, that is, a well-defined $\LL_{\CCC \setminus P_\bullet}(V)$-module homomorphism---though of course far from an isomorphism. It is the following proposition that supplies the injectivity of a smoothing map on coinvariants.
\end{remark}
\begin{proposition}\label{prop:smoothing}
    Assume Setting~\ref{setting}. Let $\fB\subseteq\fA$ be a $\scrU$--$\scrU$-sub-bimodule which is a direct summand of $\fA$ (e.g.\ $\fB=\fA$ or $\fB=\fA'$), and let $\alpha$ be a smoothing morphism with $\I\in\fB\lb\varepsilon\rb$. Then $\alpha$ takes values in $(M^\bullet\o\fB)\lb\varepsilon\rb$ and induces a well-defined morphism of sheaves of coinvariants
    \begin{equation}\label{eq:alphaB}
         [\alpha]: [M^\bullet\lb \varepsilon\rb]_{(\CCC,P_\bullet, t_\bullet)}  \longrightarrow [M^\bullet \otimes \fB \lb \varepsilon\rb ]_{(\widetilde{\CCC},P_\bullet \cup Q_\pm, t_\bullet \cup s_\pm)}, \quad w \mapsto w \otimes \I.
    \end{equation}
    Assume both sheaves in this equation are coherent. Then the following are equivalent:
    \begin{itemize}
        \item The morphism $[\alpha_0]$ obtained by restricting $[\alpha]$ to the central fiber is an isomorphism.
        \item $[\alpha]$ is an isomorphism.
    \end{itemize}
\end{proposition}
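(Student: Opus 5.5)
The plan has three parts: show that $\alpha$ lands in the $\fB$-summand, check that $[\alpha]$ descends to coinvariants, and then prove the equivalence using compatibility of coinvariants with base change together with Nakayama's lemma over the complete local ring $\C\lb\varepsilon\rb$.

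\emph{Well-definedness.} Since $\fA=\fB\op\fB^c$ as $\scrU$--$\scrU$-bimodules and every $\I_d\in\fB$, the image of $w$ is $w\o\I\in (M^\bullet\o\fB)\lb\varepsilon\rb$. The chiral Lie algebra $\LL_{\widetilde{\CCC}\setminus(P_\bullet\cup Q_\pm)}(V)$ acts on the factor at $Q_\pm$ through the left and right $\scrU$-actions on $\fA$. These actions preserve the summand $\fB$, so $(M^\bullet\o\fB)\lb\varepsilon\rb$ is a submodule and direct summand of $(M^\bullet\o\fA)\lb\varepsilon\rb$. Hence the coinvariants of the former are a direct summand of those of the latter. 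By \cite[Proposition 3.3.2]{DGT2}, $\LL_{\CCC\setminus P_\bullet}(V)$ embeds into $\LL_{\widetilde{\CCC}\setminus(P_\bullet\cup Q_\pm)}(V)$. Because $\alpha$ is an $\LL_{\CCC\setminus P_\bullet}(V)$-homomorphism, it sends $\LL_{\CCC\setminus P_\bullet}(V).M^\bullet\lb\varepsilon\rb$ into the image of the subalgebra, and therefore into the image of the larger chiral Lie algebra. So $[\alpha]$ descends to a map of $\C\lb\varepsilon\rb$-modules, which gives \eqref{eq:alphaB}.

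\emph{The equivalence.} Both sheaves are coherent over $\C\lb\varepsilon\rb$, i.e.\ finitely generated modules, and their formation commutes with base change \cite[Lemma 4.1.1]{DGK2}. Reducing modulo $\varepsilon$ therefore turns $[\alpha]$ into $[\alpha_0]$: the central fiber of $\CCC$ is $\CCC_0$, the fiber of $\widetilde{\CCC}$ is $\widetilde{\CCC}_0$, and $\I$ reduces to $\I_0$. If $[\alpha]$ is an isomorphism, then so is its reduction, since $-\o_{\C\lb\varepsilon\rb}\C$ is a functor. Conversely, suppose $[\alpha_0]$ is an isomorphism. Its surjectivity and Nakayama's lemma (the target is finitely generated over the local ring) give surjectivity of $[\alpha]$.

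\emph{Injectivity, the main obstacle.} I would first try to show that both modules are free over $\C\lb\varepsilon\rb$, or at least torsion-free. Then a surjection between finitely generated free modules of the same rank, which the isomorphism on the special fiber supplies, is an isomorphism.

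Failing that, I would use the kernel sequence $0\to K\to[M^\bullet\lb\varepsilon\rb]\to T\to0$, where $T$ is the target. Tensoring with $\C$ gives the exact sequence $\mathrm{Tor}_1(T,\C)\to K/\varepsilon K\to [\alpha_0]$. The target sheaf is the coinvariants of a trivial family $\widetilde{\CCC}_0\times\S$, so it equals $[M^\bullet\o\fB]_{\widetilde{\CCC}_0}\o\C\lb\varepsilon\rb$. This is flat, so the Tor term vanishes. Then $K/\varepsilon K\hookrightarrow\ker[\alpha_0]=0$. The kernel $K$ is finitely generated, as a submodule of a finitely generated module over a Noetherian ring, so Nakayama gives $K=0$. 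This second route seems the cleanest, and I would use it. The subtle point is justifying that coinvariants on the trivial family are the constant extension, which follows from base change.
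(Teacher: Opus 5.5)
Your proposal is correct and follows essentially the paper's proof. Well-definedness comes from the direct-summand property. Surjectivity comes from Nakayama. Injectivity comes from freeness of the target on the trivial family plus Nakayama applied to the finitely generated kernel. Your $\mathrm{Tor}$-vanishing step is equivalent to the paper's observation that a surjection onto a free module splits, giving $\mathcal{F}\cong\ker[\alpha]\oplus\mathcal{G}$.

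One small correction: the constant-extension identification of the target is not a consequence of base change alone. The paper obtains it from \cite[Corollary 4.3.1]{DGK2}, which uses the coherence of the target.
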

\begin{proof}
    Since $\fB$ is a $\scrU$--$\scrU$-sub-bimodule and a direct summand, $M^\bullet\o\fB$ is an $\LL_{\widetilde{\CCC}\setminus(P_\bullet\cup Q_\pm)}(V)$-submodule and direct summand of $M^\bullet\o\fA$, so the first assertion is clear. The only non-trivial direction of the equivalence is that an isomorphism $[\alpha_0]$ on the central fiber lifts to an isomorphism over the whole family; the argument is that of~\cite[Lemma 5.2.5]{DGK2}. Let $\mathcal{F}=[M^\bullet\lb \varepsilon\rb]_{(\CCC,P_\bullet, t_\bullet)}$ and $\mathcal{G}=[M^\bullet \otimes \fB \lb \varepsilon\rb ]_{(\widetilde{\CCC},P_\bullet \cup Q_\pm, t_\bullet \cup s_\pm)}$. By assumption both are finitely generated $\C\lb \varepsilon\rb$-modules, and since $\mathcal{G}$ is coherent and $\widetilde{\CCC}=\widetilde{\CCC}_0\times\S$ is the trivial family, \cite[Corollary 4.3.1]{DGK2} gives
    \[
    \mathcal{G}\cong[M^\bullet\o\fB]_{(\widetilde{\CCC}_0,P_\bullet\cup Q_\pm,t_\bullet\cup s_\pm)}\o_\C\C\lb \varepsilon\rb,
    \]
    a free $\C\lb \varepsilon\rb$-module; see also~\cite[Remark 5.2.4]{DGK2}. Since $[\alpha_0]=[\alpha]\otimes\C\lb\varepsilon\rb/(\varepsilon)$ is surjective, Nakayama's lemma~\cite[Theorem 2.2]{matsumura1989} shows that $[\alpha]$ is surjective. As $\mathcal{G}$ is free, the surjection $[\alpha]$ splits, $\mathcal{F}\cong\ker[\alpha]\oplus\mathcal{G}$, and reducing modulo $\varepsilon$ gives $\mathcal{F}/(\varepsilon)\cong\ker[\alpha]/(\varepsilon)\oplus\mathcal{G}/(\varepsilon)$ with $[\alpha_0]$ the projection onto the second summand. Since $[\alpha_0]$ is injective, $\ker[\alpha]/(\varepsilon)=0$. Finally, $\ker[\alpha]$ is a submodule of the finitely generated module $\mathcal{F}$ over the Noetherian ring $\C\lb\varepsilon\rb$, hence finitely generated, so Nakayama's lemma~\cite[Theorem 2.2]{matsumura1989} gives $\ker[\alpha]=0$.
\end{proof}

\subsection{Proof of the main theorem}
In this subsection, we restrict to the setting of the main theorem (Theorem~\ref{thm:mainA}). Its two components are the restricted factorization theorem (Theorem~\ref{thm:restricted_factorization}) and the smoothing theorem (Theorem~\ref{thm:main_smoothing}).

\begin{figure}[htbp]
    \centering
    
    \begin{subfigure}[b]{0.32\textwidth}
        \centering
        \includegraphics[width=\linewidth, page=1]{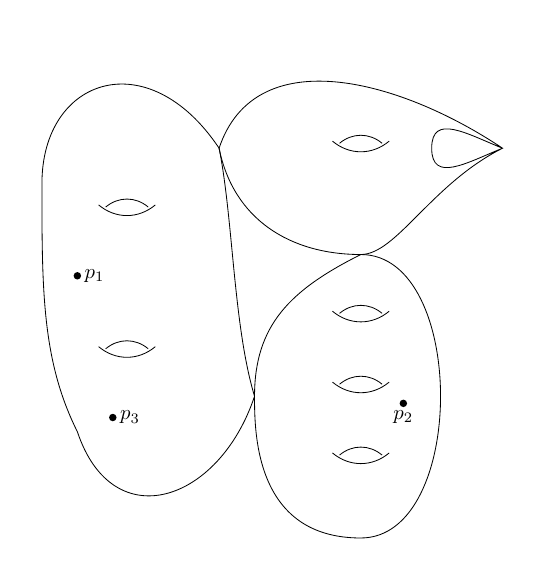}
        \caption{Topological surface}
        \label{fig:dual_panel1}
    \end{subfigure}\hfill
    \begin{subfigure}[b]{0.32\textwidth}
        \centering
        \includegraphics[width=\linewidth, page=2]{Graph.pdf}
        \caption{Algebraic curve}
        \label{fig:dual_panel2}
    \end{subfigure}\hfill
    \begin{subfigure}[b]{0.32\textwidth}
        \centering
        \includegraphics[width=\linewidth, page=3]{Graph.pdf}
        \caption{Dual graph}
        \label{fig:dual_panel3}
    \end{subfigure}
    
    \caption{Construction of the dual graph of a stable curve.}
    \label{fig:dual_graph}
\end{figure}

Let us recall some terminology from the moduli theory of curves. Following~\cite[Definition X.2.16]{arbarello2011geometry}, in the context of nodal pointed curves a graph $\Gamma$ consists of the following data:
\begin{itemize}
    \item A finite set of vertices $\mathrm{Vert}(\Gamma)$ and a finite set of half-edges $H(\Gamma)$;
    \item An assignment $H(\Gamma) \rightarrow \mathrm{Vert}(\Gamma)$ of a vertex to each half-edge, and an involution $\iota:H(\Gamma) \rightarrow H(\Gamma)$.
\end{itemize}
Orbits of $\iota$ of size two are called edges, $E_j:=\{h_j,\iota(h_j)\}$; fixed points of $\iota$ are called legs. (We write $\mathrm{Vert}$ and $H$ rather than $V$ and $L$, which are reserved for the VOA and for simple modules.)

To any nodal curve $C$ we associate its dual graph $\Gamma_C$: the vertices are the irreducible components of $C$, the edges correspond to the nodes, and the legs correspond to the marked points. In particular, self-intersecting components correspond to vertices carrying a loop; the construction is illustrated in Figure~\ref{fig:dual_graph}.

A leaf of a graph is by definition a vertex that has only one edge. In terms of the original curve, a leaf is an irreducible component meeting the rest of the curve in exactly one node.
Since a stable curve $(C,P_\bullet)$ of genus zero has arithmetic genus $p_a(C)=b_1(\Gamma_C)+\sum_i p_a(C_i)=0$~\cite[Chapter X, Section 2]{arbarello2011geometry}, where $\Gamma_C$ is the dual graph and $C_i$ are the irreducible components, every component is a $\P^1$ and $\Gamma_C$ is a tree. In particular every node of $C$ is separating, and if $C$ has at least one node then $\Gamma_C$ has at least two leaves. We first extend the vanishing theorem from smooth to stable curves.

\begin{lemma}\label{lm:nodal_vanishing}
Let $V=L_k(\g)$ for an admissible number $k$, let $(C,P_0\cup P_\bullet)\in\overline{\M}_{0,n+1}$, and let $M^0=\Phi^\L(\A_\mu)$ with $\mu\in[\Pr^k\bs P]$ be inserted at $P_0$ and $M^1,\dots,M^n\in\O_{k,\Ord}$ at $P_\bullet$. Then $[M^0\o M^\bullet]_{(C,P_0\cup P_\bullet)}=0$.
\end{lemma}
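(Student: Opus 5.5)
We argue by induction on the number $\delta$ of nodes of $C$. The base case $\delta=0$ is Corollary~\ref{coro:fusion_closedness_general}, since then $C$ is a smooth $(n+1)$-pointed genus-zero curve.

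For $\delta\geq 1$, pick a node $Q$ of $C$ and let $\eta:\widetilde C\to C$ be its partial normalization with $Q_\pm=\eta^{-1}(Q)$. Because $\Gamma_C$ is a tree, $Q$ is separating, so $\widetilde C=C_+\sqcup C_-$. Each $(C_\pm,\text{marked points}\cup Q_\pm)$ is a stable genus-zero curve with fewer nodes. Label the pieces so that $P_0\in C_-$.

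By Proposition~\ref{prop:FM} (applicable since $L_k(\g)$ is $C_1$-cofinite), applied to the constant family at the central fiber, we get
\[
[M^0\o M^\bullet]_{(C,P_0\cup P_\bullet)}\cong[M^0\o M^\bullet\o\fA]_{(\widetilde C,P_0\cup P_\bullet\cup Q_\pm)}.
\]
Using $\fA\cong\bigoplus_{[\nu]\in[\Pr^k]}\Phi^\L(\A_\nu)\o_{\A_\nu}\Phi^\R(\A_\nu)$ and the disconnectedness of $\widetilde C$, the coinvariants of the disjoint union of two curves with a bimodule inserted at $Q_\pm$ factor as
\[
[M^0\o M^\bullet_-\o\Phi^\L(\A_\nu)]_{C_-}\o_{\A_\nu}[{}^\theta\Phi^\R(\A_\nu)\o M^\bullet_+]_{C_+}.
\]
This is a direct sum over $[\nu]$ of a (balanced) tensor product of coinvariants on $C_-$ and $C_+$, where $M^\bullet_\pm$ denotes the ordinary modules on $C_\pm$. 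We expect this decomposition step to be the main obstacle: one has to justify that the tensor product over $\A_\nu$ of the two one-sided coinvariant spaces correctly computes the coinvariants of the disconnected curve. We would follow the argument of~\cite[Lemma 4.4.4]{DGK2} and~\cite{DGT2}, using that the chiral Lie algebra of a disjoint union is the direct sum of the two chiral Lie algebras.

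Now split into cases according to $\nu$.

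\emph{Case $\nu\notin P$.} The curve $C_+$ carries only ordinary modules together with ${}^\theta\Phi^\R(\A_\nu)$. This last module is a generalized Verma module $\Phi^\L(\A_{\nu'})$, where $\nu'=-w_0(\nu)\notin P$, by the $\theta$-twist argument of Lemma~\ref{lm:coinv0} ($\gamma_\nu\circ\theta_\g=\gamma_{-w_0(\nu)}$). The induction hypothesis on $C_+$ then gives zero.

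\emph{Case $\nu\in\Pr^k_\Z$.} The module at $Q_-$ is $\Phi^\L(L(\nu))^{\oplus}\cong\hat L_k(\nu)^{\oplus}$ by Theorem~\ref{thm:equiofcats}, so it lies in $\O_{k,\Ord}$. The curve $C_-$ then carries $M^0$ together with only ordinary modules, and the induction hypothesis on $C_-$ gives zero. If $\A_\nu$ is infinite in direct-sum form, we instead use the surjectivity in Lemma~\ref{lm:DG1} together with the same central-character argument on bottom degrees.

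In either case the tensor product vanishes, so every summand is zero and hence $[M^0\o M^\bullet]_{(C,P_0\cup P_\bullet)}=0$.
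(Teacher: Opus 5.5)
Your argument is correct, but it takes a different route from the paper's.

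\textbf{The paper's route.} The paper does not cut at an arbitrary node. It chooses a leaf component $C_+\cong\P^1$ not containing $P_0$; one exists because $\Gamma_C$ is a tree with at least two leaves. It places $\Phi^\L(\A_\nu)$ on $C_+$ and ${}^\theta\Phi^\R(\A_\nu)$ on the side containing $P_0$. The summands with $\nu\in[\Pr^k\bs P]$ then vanish at once by the smooth case, Corollary~\ref{coro:fusion_closedness_general}. Induction is used only for $\nu\in\Pr^k_\Z$, where ${}^\theta\Phi^\R(\A_\nu)$ is a finite sum of ordinary modules by \eqref{eq:contragredient}.

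\textbf{Your route.} You cut at any separating node and put $\Phi^\L(\A_\nu)$ on the $P_0$ side. This makes the ordinary case the easy one: it needs only Theorem~\ref{thm:equiofcats}, not \eqref{eq:contragredient}. The cost is in the non-ordinary case. There you must run the induction on $C_+$ with ${}^\theta\Phi^\R(\A_\nu)$ playing the role of $M^0$. That requires the extra fact that ${}^\theta\Phi^\R(\A_\nu)\cong\Phi^\L(\A_{\nu'})$ with $[\nu']\in[\Pr^k\bs P]$.

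\textbf{Completing the extra fact.} You only gesture at this through the central-character identity of Lemma~\ref{lm:coinv0}. To make it complete:
\begin{enumerate}
\item Since $\theta(\rN^1_\L\scrU)=\rN^1_\R\scrU$, the map $\bar u\o z\mapsto z\o\overline{\theta(u)}$ gives ${}^\theta\Phi^\R(Z)\cong\Phi^\L({}^\theta Z)$.
\item $\theta$ is an anti-involution of $\A=\prod\A_\la$, so it permutes the central idempotents $e_\la$. Hence ${}^\theta\A_\nu\cong\A_{\nu'}$ as left $\A$-modules, where $\nu'$ is determined by $\theta(e_{\nu'})=e_\nu$.
\item Since $\theta$ restricts to $-\id$ on $\g$, one gets $\gamma_{\nu'}=\gamma_\nu\circ\theta_\g=\gamma_{-w_0(\nu)}$. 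Therefore $\nu'\in W\circ(-w_0(\nu))$, so $\nu'\notin P$.
\end{enumerate}
In particular, your ``$\nu'=-w_0(\nu)$'' should be read up to the dot action.

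\textbf{Comparison.} Your version works at any separating node but needs this block-permutation lemma. The paper's leaf choice avoids that lemma entirely and reuses \eqref{eq:contragredient}, which it had already proved for Proposition~\ref{prop:partial_strong_identity}.

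Your fallback sentence about $\A_\nu$ being ``infinite in direct-sum form'' is moot. For $\nu\in\Pr^k_\Z$, $\A_\nu\cong L(\nu)\o_\C L(\nu)^\ast$ is finite-dimensional.
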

\begin{proof}
We induct on the number $e$ of nodes of $C$; for $e=0$ this is Corollary~\ref{coro:fusion_closedness_general}. Let $e\geq 1$ and choose a leaf component $C_+\cong\P^1$ not containing $P_0$, attached to the rest of the curve $C_-$ at a node $Q$; such a component exists because $\Gamma_C$ is a tree with at least two vertices, hence has at least two leaves, while $P_0$ lies on a single component; the construction is illustrated in Figure~\ref{fig:main_three_panels}. Let $\tilde{C}=C_+\sqcup\tilde{C}_-\to C$ be the partial normalization at $Q$, with $Q_\pm\in C_\pm$, and write $M^\bullet=M_+^\bullet\o M_-^\bullet$ according to the marked points on $C_\pm$. Applying the factorization isomorphism of Proposition~\ref{prop:FM} at $Q$, using that the chiral Lie algebra splits as $\LL_{\tilde{C}\setminus (P_0\cup P_\bullet\cup Q_\pm)}(V)\cong \LL_{C_+\setminus (P^+_\bullet\cup Q_+)}(V)\times \LL_{\tilde{C}_-\setminus (P_0\cup P^-_\bullet\cup Q_-)}(V)$ and that $\A\cong\prod_{\nu\in[\Pr^k]}\A_\nu$ is a finite product, we obtain
\begin{equation}\label{eq:leaf_split}
    [M^0\o M^\bullet]_{(C,P_0\cup P_\bullet)} \cong \bigoplus_{\nu \in [\Pr^k]} [M_+^\bullet \otimes \Phi^\L(\A_\nu)]_{(C_+,P_\bullet^+\cup Q_+)} \otimes_{\A_\nu} [M^0  \otimes M_-^\bullet  \otimes {}^\theta\Phi^\R(\A_\nu)]_{(\tilde{C}_-,P_0 \cup P_\bullet^-\cup Q_-)}.
\end{equation}
If $\nu\in[\Pr^k\bs P]$, the first factor vanishes by Corollary~\ref{coro:fusion_closedness_general}, since $C_+$ is smooth. If $\nu\in\Pr^k_\Z$, then $\A_\nu\cong L(\nu)\o_\C L(\nu)^\ast$ by Theorem~\ref{thm:AvE_result}, so ${}^\theta\Phi^\R(\A_\nu)$ is a finite direct sum of copies of ${}^\theta\Phi^\R(L(\nu)^\ast)\cong\hat{L}_k(\nu)^\vee\in\O_{k,\Ord}$~\eqref{eq:contragredient}, and the second factor vanishes by the inductive hypothesis, as $\tilde{C}_-$ has $e-1$ nodes and carries $M^0$ together with ordinary insertions only.
\end{proof}

\begin{theorem}[Restricted factorization theorem]\label{thm:restricted_factorization}
Let $V = L_k(\g)$ for an admissible number $k$ and let $M^1,\dots,M^n$ be $V$-modules in $\O_{k,\Ord}$. Let $(C,P_\bullet)\in \overline{\M}_{0,n} \setminus \M_{0,n}$ be a curve with at least one node $Q$, and let $\tilde{C} \rightarrow C$ be its partial normalization at $Q$. Then we have a factorization isomorphism
\begin{equation} \label{eq:ordinary_factorization}
    [M^\bullet]_{(C,P_\bullet)} \cong \bigoplus_{\la \in \Pr_\Z^k} [M_+^\bullet \otimes \hat{L}_k(\la)]_{(\tilde{C}_+,P_\bullet^+\cup Q_+)} \otimes [\hat{L}_k(\la)^\vee  \otimes M_-^\bullet ]_{(\tilde{C}_-,Q_- \cup P_\bullet^- )}
\end{equation}
induced by $M^\bullet \rightarrow M^\bullet \otimes \Phi(\A')$,  $w \mapsto w \otimes 1_{\A'}$.
\end{theorem}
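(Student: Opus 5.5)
We will start from the general factorization isomorphism of Proposition~\ref{prop:FM}, which applies because $L_k(\g)$ is $C_1$-cofinite (it is strongly generated by $V_1=\g$). It gives
\[
[M^\bullet]_{(C,P_\bullet)}\cong [M^\bullet\o\fA]_{(\tilde C,P_\bullet\cup Q_\pm)},\qquad w\mapsto w\o 1_\A .
\]
Every node of a genus-zero stable curve is separating, so $\tilde C=\tilde C_+\sqcup\tilde C_-$. The chiral Lie algebra of $\tilde C\setminus(P_\bullet\cup Q_\pm)$ then splits as a product of the chiral Lie algebras of the two components. Combining this with $\fA=\Phi^\L(\A)\o_\A{}^\theta\Phi^\R(\A)$, the right-hand side becomes
\[
[M_+^\bullet\o\Phi^\L(\A)]_{(\tilde C_+)}\o_\A[{}^\theta\Phi^\R(\A)\o M_-^\bullet]_{(\tilde C_-)},
\]
exactly as in~\eqref{eq:leaf_split}. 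Since $\A\cong\prod_{\nu\in[\Pr^k]}\A_\nu$ is a finite product, $\fA=\fA'\op\fA''$ and the coinvariants split accordingly. The element $1_\A=1_{\A'}+1_{\A''}$ then maps $w$ to the pair of classes of $w\o 1_{\A'}$ and $w\o 1_{\A''}$.

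The next step is to kill the $\fA''$ summand, using Lemma~\ref{lm:nodal_vanishing}. For $\nu\in[\Pr^k\bs P]$ the factor $[M_+^\bullet\o\Phi^\L(\A_\nu)]_{(\tilde C_+,P_\bullet^+\cup Q_+)}$ vanishes. Here $\tilde C_+$ is a stable genus-zero curve with possibly further nodes, and it carries only ordinary insertions besides $\Phi^\L(\A_\nu)$. One point needs checking: the lemma is stated for the single block $\A_\nu$, and the finite product over $\nu$ is handled by additivity of coinvariants as in Corollary~\ref{coro:fusion_closedness_general}. Also, $(\tilde C_+,P^+_\bullet\cup Q_+)$ is stable because $C$ is. Thus $w\mapsto w\o 1_{\A'}$ alone induces an isomorphism $[M^\bullet]_{(C,P_\bullet)}\cong[M^\bullet\o\fA']_{(\tilde C,P_\bullet\cup Q_\pm)}$. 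This is well defined as a map on coinvariants because $\fA'$ is a $\scrU$--$\scrU$ direct summand (the argument in Proposition~\ref{prop:smoothing}) and $1_\A\mapsto 1_{\A'}$ under the projection $p'$.

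Finally, we identify the $\fA'$ term block by block. For $\la\in\Pr^k_\Z$ we have $\A_\la\cong L(\la)\o_\C L(\la)^\ast$, so
\[
\Phi^\L(\A_\la)\o_{\A_\la}{}^\theta\Phi^\R(\A_\la)\cong \Phi^\L(L(\la))\o_\C{}^\theta\Phi^\R(L(\la)^\ast).
\]
This is the Morita reduction for the matrix algebra $\End(L(\la))$: $\Phi^\L(\A_\la)\cong \Phi^\L(L(\la))\o L(\la)^\ast$, and similarly on the right, and the tensor product over $\A_\la$ contracts the multiplicity spaces. By Theorem~\ref{thm:equiofcats} and~\eqref{eq:contragredient} these modules are $\hat L_k(\la)$ and $\hat L_k(\la)^\vee$. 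Since the chiral Lie algebra acts on the two components independently, coinvariants of a tensor product over the disjoint union factor as the tensor product of the coinvariants on each component. This yields~\eqref{eq:ordinary_factorization}.

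The main obstacle is the compatibility between the Morita contraction over $\A_\la$ and taking coinvariants. Concretely, one must show that forming coinvariants commutes with $-\o_{\A_\la}-$, and that the resulting spaces are the ordinary $\C$-tensor product. I would argue this via right exactness of coinvariants together with the fact that the multiplicity spaces $L(\la)$ and $L(\la)^\ast$ are finite-dimensional and carry no chiral action.
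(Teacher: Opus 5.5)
Your proposal is correct and follows the paper's proof essentially step for step: factorization via Proposition~\ref{prop:FM} at the separating node, splitting over the blocks of $\A$, killing the non-ordinary blocks with Lemma~\ref{lm:nodal_vanishing} on $\tilde C_+$, and identifying each ordinary block through $\A_\la\cong L(\la)\otimes_\C L(\la)^\ast$, Theorem~\ref{thm:equiofcats} and~\eqref{eq:contragredient}. Your right-exactness argument that coinvariants commute with $-\otimes_{\A_\la}-$ (the right $\A_\la$-action commutes with the chiral action) makes explicit a step the paper leaves implicit in~\eqref{eq:full_split}.
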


\begin{figure}[htbp]
    \centering
    
    \begin{subfigure}[b]{0.32\textwidth}
        \centering
        \includegraphics[width=\linewidth, page=1]{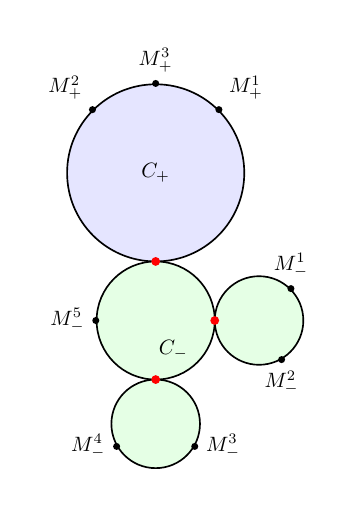}
        \caption{Singular curve}
        \label{fig:panel1}
    \end{subfigure}\hfill
    \begin{subfigure}[b]{0.32\textwidth}
        \centering
        \includegraphics[width=\linewidth, page=2]{Pic.pdf}
        \caption{Applied factorization}
        \label{fig:panel2}
    \end{subfigure}\hfill
    \begin{subfigure}[b]{0.32\textwidth}
        \centering
        \includegraphics[width=\linewidth, page=3]{Pic.pdf}
        \caption{Restriction to $\Pr^k_\Z$}
        \label{fig:panel3}
    \end{subfigure}
    
    \caption{Proof of Lemma~\ref{lm:nodal_vanishing} and Theorem~\ref{thm:restricted_factorization}. (a) The singular curve of genus zero. (b) Factorization applied to decouple one node. (c) Restriction to the ordinary modules decouples the module structure.}
    \label{fig:main_three_panels}
\end{figure}

\begin{proof}
Since $Q$ is separating, $\tilde{C}=\tilde{C}_+\sqcup\tilde{C}_-$ with $Q_\pm\in\tilde{C}_\pm$; write $M^\bullet=M_+^\bullet\o M_-^\bullet$ accordingly. Exactly as in~\eqref{eq:leaf_split}, the factorization isomorphism of Proposition~\ref{prop:FM} at $Q$, induced by $w\mapsto w\o 1_\A$, gives
\begin{equation}\label{eq:full_split}
    [M^\bullet]_{(C,P_\bullet)} \cong \bigoplus_{\la \in [\Pr^k]} [M_+^\bullet \otimes \Phi^\L(\A_\la)]_{(\tilde{C}_+,P_\bullet^+\cup Q_+)} \otimes_{\A_\la} [{}^\theta\Phi^\R(\A_\la) \otimes M_-^\bullet]_{(\tilde{C}_-,Q_- \cup P_\bullet^-)}.
\end{equation}
By Lemma~\ref{lm:nodal_vanishing}, the summands with $\la\in[\Pr^k\bs P]$ vanish. For $\la\in\Pr^k_\Z$, Theorem~\ref{thm:AvE_result} gives $\A_\la\cong L(\la)\o_\C L(\la)^\ast$, whence $\Phi^\L(\A_\la)\o_{\A_\la}{}^\theta\Phi^\R(\A_\la)\cong \Phi^\L(L(\la))\o{}^\theta\Phi^\R(L(\la)^\ast)\cong\hat{L}_k(\la)\o\hat{L}_k(\la)^\vee$ by~\eqref{eq:contragredient}. This is~\eqref{eq:ordinary_factorization}; and since $1_\A=(1_{\A'},1_{\A''})$ with the $\A''$-components of~\eqref{eq:full_split} equal to zero, the isomorphism is induced by $w\mapsto w\o 1_{\A'}$.
\end{proof}

\begin{theorem}[Smoothing theorem]\label{thm:main_smoothing}
    Assume Setting~\ref{setting}, where $\CCC_0$ is a stable curve of genus zero. Let $V = L_k(\g)$ be an admissible-level affine VOA, let $M^1,\dots, M^n$ be in $\O_{k,\Ord}$, and let $\I:=\sum_{d\geq 0}\I'_d\varepsilon^d\in\fA'\lb\varepsilon\rb\subset\fA\lb\varepsilon\rb$, where $\{\I'_d\}_{d\in\N}$ are the partial strong identity elements of Proposition~\ref{prop:partial_strong_identity}. Then $\alpha:w\mapsto w\o\I$ is a smoothing morphism, and it induces isomorphisms
    \begin{equation}\label{eq:smoothing_iso}
    [M^\bullet\lb\varepsilon\rb]_{(\CCC,P_\bullet,t_\bullet)}\ \xrightarrow{\ [\alpha]\ }\ [(M^\bullet\o\fA')\lb\varepsilon\rb]_{(\widetilde{\CCC},P_\bullet\cup Q_\pm,t_\bullet\cup s_\pm)}\ \cong\ [M^\bullet\o\fA']_{(\widetilde{\CCC}_0,P_\bullet\cup Q_\pm,t_\bullet\cup s_\pm)}\o_\C\C\lb\varepsilon\rb.
    \end{equation}
\end{theorem}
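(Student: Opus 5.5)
The argument has three steps: show that $\alpha$ is a smoothing morphism, check coherence of the two sheaves in~\eqref{eq:smoothing_iso}, and then identify the central-fiber map with the restricted factorization isomorphism so that Proposition~\ref{prop:smoothing} applies with $\fB=\fA'$.

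\emph{Step 1: $\alpha$ is a smoothing morphism.} By Proposition~\ref{prop:partial_strong_identity}, the elements $\I'_d\in\fA'_d$ satisfy the partial strong identity property~\eqref{eq:strongidentitydf'}. By Definition/Lemma~\ref{def:strong_identity}, this is equivalent to the strong identity equations~\eqref{eq:5.18} inside $\fA'$. Since $\fA\cong\fA'\oplus\fA''$ as $\scrU$--$\scrU$-bimodules, these equations also hold when $\I'_d$ is regarded as an element of $\fA_d$ via $\fA'\subset\fA$: both sides of~\eqref{eq:5.18} are computed by the bimodule action, which preserves the summand $\fA'$. Proposition~\ref{prop:smoothing_well_defined} then shows that $\alpha$ is a smoothing morphism. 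This is exactly where that proposition needs no normalization $\I_0=1_\A$, because here $\I_0=1_{\A'}$. Proposition~\ref{prop:smoothing}, applied with $\fB=\fA'$, gives the well-defined morphism $[\alpha]$ landing in $[(M^\bullet\o\fA')\lb\varepsilon\rb]$.

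\emph{Step 2: coherence.} Proposition~\ref{prop:smoothing} needs both sheaves to be coherent. For the family over $\C\lb\varepsilon\rb$ with insertions $M^\bullet\in\O_{k,\Ord}$, I would invoke~\cite[Corollary A]{DG}: $L_k(\g)$ is strongly generated in degree one, so genus-zero coinvariants of modules generated in degree zero are coherent (Remark~\ref{rmk:bottomgen_holds}).

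For the normalization side, $\fA'_{n,-m}\cong\prod_{\la}\Phi^\L(L(\la))(n)\o\Phi^\L(L(\la))(m)^\ast$ with $\Pr^k_\Z$ finite, so the left and right module structures of $\fA'$ are finite sums of modules in $\O_{k,\Ord}$ tensored with their contragredients. Hence $[M^\bullet\o\fA']$ on $\widetilde{\CCC}_0$ splits as $\bigoplus_\la[M^\bullet_+\o\hat L_k(\la)]\o[\hat L_k(\la)^\vee\o M^\bullet_-]$. This is a finite sum of genus-zero coinvariants of ordinary modules, each finite-dimensional by the same coherence result. The isomorphism with the constant family $[M^\bullet\o\fA']_{\widetilde{\CCC}_0}\o_\C\C\lb\varepsilon\rb$ then follows from~\cite[Corollary 4.3.1]{DGK2}, exactly as in the proof of Proposition~\ref{prop:smoothing}.

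\emph{Step 3: the central fiber.} Setting $\varepsilon=0$ leaves only $\I'_0=1_{\A'}$, so $[\alpha_0]$ is the map $w\mapsto w\o1_{\A'}$ into $[M^\bullet\o\fA']_{\widetilde{\CCC}_0}$. Since $\Gamma_{\CCC_0}$ is a tree, the node $Q$ is separating. Theorem~\ref{thm:restricted_factorization} states precisely that this map is an isomorphism onto the right-hand side of~\eqref{eq:ordinary_factorization}, and that right-hand side is the $\fA'$-summand of the full factorization. Proposition~\ref{prop:smoothing} then upgrades $[\alpha]$ to an isomorphism.

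\emph{Main obstacle.} The delicate point is Step 3: the target of Theorem~\ref{thm:restricted_factorization} must agree with $[M^\bullet\o\fA']$ computed with $\fA'$ as a bimodule. Equivalently, $\Phi^\L(\A_\la)\o_{\A_\la}{}^\theta\Phi^\R(\A_\la)$ must match the $\la$-block of $\fA'$, and the $\fA''$-part must contribute nothing. I would settle this using the vanishing Lemma~\ref{lm:nodal_vanishing} together with the decomposition $1_\A=(1_{\A'},1_{\A''})$, as at the end of the proof of Theorem~\ref{thm:restricted_factorization}.
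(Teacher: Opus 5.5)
Your proposal is correct and follows the paper's proof essentially step for step. You transport the strong identity equations from $\fA'$ to $\fA$ to invoke Proposition~\ref{prop:smoothing_well_defined}, establish coherence of both sides (the target via the finite decomposition $\fA'\cong\bigoplus_{\la\in\Pr^k_\Z}\hat{L}_k(\la)\o\hat{L}_k(\la)^\vee$ at the separating node), and identify the central fiber through Theorem~\ref{thm:restricted_factorization} before applying Proposition~\ref{prop:smoothing} with $\fB=\fA'$. The one step you compress is coherence of the source: \cite[Corollary A]{DG} is a statement on $\overline{\M}_{0,n}$, and the paper transfers it to the formal family through the classifying map $\varkappa:\S\ra\overline{\M}_{0,n}$ and the base-change and formal/algebraic comparison results of \cite{DGK2} (Lemma 4.1.1, Propositions 4.2.2 and 4.2.3).
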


\begin{proof}
    \emph{$\alpha$ is a smoothing morphism.} By Definition/Lemma~\ref{def:strong_identity} the sequence $\{\I'_d\}$ satisfies the strong identity equations~\eqref{eq:5.18} in $\fA'$; since $\fA'$ is a $\scrU$--$\scrU$-sub-bimodule of $\fA$ with the inherited bigrading, these equations hold in $\fA$ and $\I'_d\in\fA'_d\subset\fA_d$. Hence Proposition~\ref{prop:smoothing_well_defined} applies.

    \emph{Coherence of the source.} Let $\varkappa:\S\ra\overline{\M}_{0,n}$ be the classifying morphism of $(\CCC,P_\bullet)$. Because $L_k(\g)$ is strongly generated in degree one, $\VV_{0,n}(L_k(\g),M^\bullet)$ is coherent on $\overline{\M}_{0,n}$ by~\cite[Corollary A]{DG}.
    Base change and the comparison of algebraic with formal coinvariants     \cite[Lemma 4.1.1, Propositions 4.2.2 and 4.2.3]{DGK2} then give
    \begin{equation}\label{eq:basechange}
    [M^\bullet\lb\varepsilon\rb]_{(\CCC,P_\bullet,t_\bullet)}\ \cong\
    \varkappa^\ast\,\VV_{0,n}(L_k(\g),M^\bullet),
    \end{equation}
    coherent over $\C\lb\varepsilon\rb$.

        \emph{Coherence of the target.} By~\eqref{eq:Aadm} and~\eqref{eq:contragredient},
    $\fA'\cong\bigoplus_{\la\in\Pr^k_\Z}\hat{L}_k(\la)\o\hat{L}_k(\la)^\vee$ is a
    \emph{finite} sum of pairs of simple ordinary insertions at $Q_\pm$---this is where
    $\fA'$ replaces $\fA$, whose $\A''$-part is infinite-dimensional. Splitting
    $\widetilde{\CCC}_0=\widetilde{C}_+\sqcup\widetilde{C}_-$ at the separating node $Q$ as
    in the proof of Theorem~\ref{thm:restricted_factorization}, and applying the argument
    of~\eqref{eq:basechange} to each component of the trivial family
    $\widetilde{\CCC}=\widetilde{\CCC}_0\times\S$, identifies
    $[(M^\bullet\o\fA')\lb\varepsilon\rb]_{(\widetilde{\CCC},P_\bullet\cup Q_\pm,t_\bullet\cup s_\pm)}$ with
    \[
    \bigoplus_{\la\in \Pr^k_\Z}\Big(\VV_{0,n_++1}\big(L_k(\g),(M_+^\bullet,\hat{L}_k(\la))\big)\big|_{x_+}\o_\C \VV_{0,n_-+1}\big(L_k(\g),(M_-^\bullet,\hat{L}_k(\la)^\vee)\big)\big|_{x_-}\Big)\o_\C\C\lb\varepsilon\rb,
    \]
    where $x_\pm:=[(\widetilde{C}_\pm,P^\pm_\bullet\cup Q_\pm)]$. Each fiber $\VV_{0,n_\pm+1}|_{x_\pm}$ is finite-dimensional by~\cite[Corollary A]{DG}, all insertions being simple ordinary modules. This is the second isomorphism in~\eqref{eq:smoothing_iso}: a free $\C\lb\varepsilon\rb$-module of finite rank, in particular coherent.

    \emph{$[\alpha]$ is an isomorphism.} Both hypotheses of Proposition~\ref{prop:smoothing}, with $\fB=\fA'$, are now verified, so it suffices to show that the restriction $[\alpha_0]$ of $[\alpha]$ to the central fiber is an isomorphism. That restriction is the map induced by $w\mapsto w\o 1_{\A'}$, which is an isomorphism onto $[M^\bullet\o\fA']_{(\widetilde{\CCC}_0,P_\bullet\cup Q_\pm,t_\bullet\cup s_\pm)}$ by Theorem~\ref{thm:restricted_factorization}.
\end{proof}

\begin{theorem}[Main theorem]\label{thm:mainA}
Let $\g$ be a complex simple Lie algebra, let $L_k(\g)$ be a simple affine VOA at an admissible level $k$, and let $n\geq 3$. Let $M^\bullet=(M^1,\dots, M^n)$ be an $n$-tuple of $L_k(\g)$-modules in the category $\O_{k,\Ord}$. Then the sheaf of coinvariants $\VV_{0,n}(L_k(\g),M^\bullet)$ is coherent, locally free, and globally generated over $\overline{\M}_{0,n}$. In particular, the dual sheaf of conformal blocks is a vector bundle on $\overline{\M}_{0,n}$.
\end{theorem}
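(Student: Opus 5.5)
The plan is to combine coherence with the smoothing theorem to get constancy of the fiber dimension, and then invoke Proposition~\ref{prop:flatness}.

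\emph{Coherence.} Since $L_k(\g)$ is strongly generated by $L_k(\g)_1=\g$, coherence of $\VV_{0,n}(L_k(\g),M^\bullet)$ follows from~\cite[Corollary A]{DG}. The sheaf descends to $\overline{\M}_{0,n}$ by Definition/Proposition~\ref{prop:isocoinvfiber}, since every $\la\in\Pr^k$ gives a rational conformal dimension. Global generation is immediate: the sheaf is a quotient of the trivial sheaf $M^\bullet\o\O_{\overline{\M}_{0,n}}$, and by coherence finitely many sections, for instance from a finite-dimensional piece of degree zero via Lemma~\ref{lm:DG1}, already generate. Finite direct sums reduce everything to simple ordinary insertions, as in the proof of Corollary~\ref{coro:fusion_closedness_general}.

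\emph{Constancy of fiber dimension.} Because $\overline{\M}_{0,n}$ is irreducible and the function $x\mapsto\dim\VV_{0,n}|_x$ is upper semicontinuous for a coherent sheaf, it suffices to show that every closed point has the same fiber dimension as the generic point. I will argue by induction on the number of nodes $e$ of $(C,P_\bullet)$.
\begin{itemize}
\item \textbf{Smooth curves ($e=0$).} $\M_{0,n}$ is connected, and any two smooth curves are joined by a family over a smooth base. So I first show that the dimension is constant on $\M_{0,n}$. Here I would use the projectively flat connection on coinvariants, or alternatively realise nearby smooth curves as smoothings of a fixed nodal curve.
\item \textbf{Nodal curves ($e\ge1$).} Choose a node $Q$ and a one-parameter smoothing $\CCC\to\operatorname{Spec}\C\lb\varepsilon\rb$ of $Q$ only, as in Setting~\ref{setting}. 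The generic fiber then has $e-1$ nodes. Theorem~\ref{thm:main_smoothing} shows that $[M^\bullet\lb\varepsilon\rb]_{\CCC}$ is free over $\C\lb\varepsilon\rb$, with rank equal to the central fiber dimension $\dim[M^\bullet]_{\CCC_0}$, where this identification uses Theorem~\ref{thm:restricted_factorization}. Via~\eqref{eq:basechange}, the generic fiber dimension equals the dimension at a curve with $e-1$ nodes.
\end{itemize}
Since every point of the stratum of $e$-nodal curves arises as the special fiber of such a smoothing, the dimension at $e$-nodal curves equals the dimension on the $(e-1)$-nodal stratum. That stratum's dimension is constant by the induction hypothesis, though its connectedness needs checking.

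The main obstacle is making the induction cover all points, including non-closed ones. This means identifying the generic fiber of $\varkappa^\ast\VV$ over $\C\lb\varepsilon\rb$ with the fiber at a genuine point of the open stratum, and checking connectedness of each stratum $\{e \text{ nodes}\}$. The latter fails in general, since different dual trees give different components. To handle it, I would compare each closed point with the generic point of $\overline{\M}_{0,n}$ directly. Every boundary point lies on a curve $\operatorname{Spec}\C\lb\varepsilon\rb\to\overline{\M}_{0,n}$ that smooths all of its nodes simultaneously. So I would iterate the smoothing argument node by node, using freeness at each stage and upper semicontinuity, to show that the fiber dimension at any point equals the dimension at the generic point. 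Once the dimension is constant, Proposition~\ref{prop:flatness} gives local freeness, and the dual of a vector bundle is a vector bundle.
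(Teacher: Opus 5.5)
Your skeleton matches the paper's: coherence from \cite[Corollary A]{DG}, descent via rational conformal dimensions, the projectively flat connection for $e=0$, Theorem~\ref{thm:main_smoothing} with Theorem~\ref{thm:restricted_factorization} for a one-node smoothing, and Proposition~\ref{prop:flatness} at the end. But the inductive step has a genuine gap at exactly the point you flag, and your proposed fix does not close it.

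The base change \eqref{eq:basechange} gives $\dim_{\kappa(y)}\VV_{0,n}|_y=d=\dim[M^\bullet]_{x_0}$ only at the \emph{non-closed} point $y=\varkappa(\xi)$. Upper semicontinuity then yields only $d\le\dim\VV_{0,n}|_z$ for closed $z\in\overline{\{y\}}$, not the equality you assert. The missing ingredient is an explicit global lower bound. Your reduction to ``compare with the generic point'' points the right way, but you never use it in the inductive step. It goes as follows:
\begin{itemize}
\item $\VV_{0,n}$ is locally free on a dense open subset of the integral scheme $\overline{\M}_{0,n}$.
\item That subset contains closed points of $\M_{0,n}$, so the generic rank is the common value $r$ on $\M_{0,n}$.
\item Semicontinuity then gives $\dim\VV_{0,n}|_x\ge r$ at \emph{every} point.
\item Now take a closed point $z$ of the nonempty locally closed set $\overline{\{y\}}\cap Z_{e-1}$, where $Z_{e-1}$ is the locus of curves with exactly $e-1$ nodes; it exists because $\overline{\M}_{0,n}$ is a variety.
\item This gives the sandwich $r\le\dim[M^\bullet]_{x_0}=d=\dim\VV_{0,n}|_y\le\dim\VV_{0,n}|_z=r$.
\end{itemize}
If you phrase the inductive hypothesis as ``dimension $r$ at every closed point with fewer nodes'', connectedness of the strata never enters.

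Your fallback via a simultaneous smoothing of all nodes is not available. Theorem~\ref{thm:main_smoothing} concerns Setting~\ref{setting}, where exactly one node is smoothed and the others persist, so you have no freeness statement over a multi-node smoothing. Iterating node by node along one formal family does not make sense, since after the first step you sit at a non-closed point. Even granting freeness, the generic point would map to a non-closed point of $\M_{0,n}$, and you would need the same two-sided bound to pin down its fiber dimension.

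Your global generation argument is also not valid as stated. The sheaf $\VV_{0,n}(L_k(\g),M^\bullet)$ on $\overline{\M}_{0,n}$ is obtained by descent along the $(\Aut\O)^n$-torsor of formal coordinates. It is therefore a quotient of a twisted bundle of modules, not of the constant sheaf $M^\bullet\otimes\O_{\overline{\M}_{0,n}}$. A vector of $M^\bullet$ yields a global section only if it is fixed by the action used in the descent (essentially the degree-zero piece). Coherence does not produce global sections. Lemma~\ref{lm:DG1} gives surjectivity of the degree-zero piece onto coinvariants only at \emph{smooth} curves, whereas global generation requires it at boundary points as well. That statement is the content of \cite[Main theorem]{DG}, which the paper cites. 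One could try to derive it from Theorem~\ref{thm:restricted_factorization} together with Lemma~\ref{lm:DG1} on the components, but that requires an argument you have not given.
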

\begin{proof}
    In this proof we closely follow~\cite[Corollary 5.2.6]{DGK2}, although we do not assume the existence of strong identity elements.

    We first record that the sheaf is defined on $\overline{\M}_{0,n}$ and coherent there. Since every module in $\O_{k,\Ord}$ is a finite direct sum of the simple modules $\hat{L}_k(\la)$, $\la\in\Pr^k_\Z$, all of which have rational conformal dimension, Definition/Proposition~\ref{prop:isocoinvfiber} applies summandwise and $\widetriangle{\VV}_{0,n}(L_k(\g),M^\bullet)$ descends from $\widetriangle{\M}_{0,n}$ to a sheaf $\VV_{0,n}(L_k(\g),M^\bullet)$ on $\overline{\M}_{0,n}$; it is coherent by~\cite[Corollary A]{DG}, since $L_k(\g)$ is strongly generated in degree one, applying that corollary summandwise to the simple ordinary summands of the $M^i$.

    By Proposition~\ref{prop:flatness}, $\VV_{0,n}(L_k(\g),M^\bullet)$ is then a vector bundle if and only if the fiber dimension $\dim_{\kappa(x)}\VV_{0,n}|_x$ is constant for all points $x \in \overline{\M}_{0,n}$. We first prove this constancy at closed points $x=(C,P_\bullet)$, where the fiber is $[M^\bullet]_{(C,P_\bullet)}$, by induction on the number $e$ of nodes, noting that $0\leq e\leq n-3$ on $\overline{\M}_{0,n}$; the passage to arbitrary points is carried out at the end of the proof.

    For smooth $n$-pointed curves the dimension of $[M^\bullet]_{(C, P_\bullet, t_\bullet)}$ is constant across the connected smooth locus $\M_{0,n}$, because the sheaf of coinvariants carries a projectively flat connection there~\cite[Main theorem]{DGT1}; let $r$ denote this common dimension. This is the base case $e=0$. It also yields a global lower bound: since $\overline{\M}_{0,n}$ is integral and $\VV_{0,n}$ is coherent, there is a dense open subset over which $\VV_{0,n}$ is locally free; that subset meets $\M_{0,n}$, and $\overline{\M}_{0,n}$ is a variety, so the open subset contains closed points of $\M_{0,n}$ and the generic rank is $r$. By upper semicontinuity of fiber dimension \cite[Exercise II.5.8]{Hartshorne},
    \begin{equation}\label{eq:lowerbound}
    \dim\,[M^\bullet]_{x}=\dim_{\kappa(x)}\VV_{0,n}\big|_x\ \geq\ r\qquad\text{for \emph{every} point }x\in\overline{\M}_{0,n}.
    \end{equation}

    For the inductive step, let $x_0=(C_0,P_\bullet)$ be a closed point whose curve has $e+1$ nodes, and assume $\dim[M^\bullet]_z=r$ at every closed point $z$ with at most $e$ nodes. Choose a node $Q$ of $C_0$ and let $\CCC\ra\S$ be the family of Setting~\ref{setting} obtained by sewing the partial normalization $\widetilde{\CCC}_0$ at $Q_\pm$ with parameter $\varepsilon$: its central fiber is $(C_0,P_\bullet)$, and its geometric fiber over the generic point $\xi$ of $\S$ has exactly $e$ nodes, the node $Q$ acquiring the local equation $xy=\varepsilon$ while the remaining nodes persist. By Theorem~\ref{thm:main_smoothing},
    \[
    \mathcal F:=[M^\bullet\lb\varepsilon\rb]_{(\CCC,P_\bullet,t_\bullet)}\ \cong\ [M^\bullet\o\fA']_{(\widetilde{\CCC}_0,P_\bullet\cup Q_\pm,t_\bullet\cup s_\pm)}\o_\C\C\lb\varepsilon\rb
    \]
    is a \emph{free} $\C\lb\varepsilon\rb$-module; write $d$ for its rank. By the base-change identification~\eqref{eq:basechange}, applied to the present family with classifying morphism $\varkappa$, base change to the two points of $\S$~\cite[Lemma 4.1.1]{DGK2} gives $d=\dim \mathcal F/\varepsilon\mathcal F=\dim\VV_{0,n}|_{\varkappa(0)}=\dim[M^\bullet]_{x_0}$ and $\dim_{\kappa(y)}\VV_{0,n}|_y=d$ for $y:=\varkappa(\xi)$, and $y$ lies in the locally closed locus $Z_e\subset\overline{\M}_{0,n}$ of curves with exactly $e$ nodes. The subset $\overline{\{y\}}\cap Z_e$ is nonempty and locally closed in the variety $\overline{\M}_{0,n}$, hence contains a closed point $z$, and $z$ has $e$ nodes, so $\dim\VV_{0,n}|_z=r$ by the inductive hypothesis. Since $z\in\overline{\{y\}}$, upper semicontinuity gives $d=\dim\VV_{0,n}|_y\leq \dim\VV_{0,n}|_z=r$; combined with~\eqref{eq:lowerbound} at $x_0$ this yields
    \[
    r\ \leq\ \dim[M^\bullet]_{x_0}\ =\ d\ \leq\ r,
    \]
    so $\dim[M^\bullet]_{x_0}=r$, completing the induction. (Note that no algebraization of the smoothing family is required: because $\overline{\M}_{0,n}$ is a fine moduli space, the formal disc maps to it directly, and the comparison between the formal family and the global sheaf is~\eqref{eq:basechange}.)

    Constancy at closed points implies constancy at every point: given any $y\in\overline{\M}_{0,n}$, the closure $\overline{\{y\}}$ contains a closed point $z$, and $\dim\VV_{0,n}|_y\leq\dim\VV_{0,n}|_z=r$ by upper semicontinuity, while~\eqref{eq:lowerbound} gives the reverse inequality. This establishes the constancy of the fiber dimension, and hence local freeness. Finally, since $L_k(\g)$ is strongly generated by its degree-one subspace $L_k(\g)_1$, it follows from~\cite[Main theorem]{DG} that $\VV_{0,n}(L_k(\g),M^\bullet)$ is globally generated.
\end{proof}

\section{Fusion rules and rank of the $L_k(\g)$-coinvariant bundles}\label{sec:fusion_rules}
As an application of our results from the previous sections, we give a formula for the fusion rules of the ordinary $L_k(\g)$-modules using the language of conformal blocks (Section~\ref{subsec:fusion3pt}), and then specialize it to a concrete formula for $\g=\sl_2$ (Section~\ref{subsec:sl2fusion}).
Then we give a rank formula for the $L_k(\g)$-coinvariant bundles (Section~\ref{subsec:rank_trees}).

\subsection{Fusion tensor product of the category $\O^\fin_{k,\Ord}$}\label{subsec:fusion3pt}
It was proved in~\cite{creutzig2018braided} that the category $\O_{k,\Ord}$ admits a braided tensor category structure $\boxtimes$ in the sense of Huang--Lepowsky--Zhang. Together with the main theorem in~\cite{A16}, this implies that $\O_{k,\Ord}$ is a semisimple braided tensor category with finitely many simple objects, so that $\boxtimes$ of two simple objects decomposes as a finite direct sum of simples. (Rigidity, and hence the property of being a \emph{fusion} category in the strict sense, is known in types $ADE$~\cite{Creutzig19} but open in types $B$, $C$, $F_4$, $G_2$; see Remark~\ref{rmk:hypF}. Nothing below uses rigidity.) Under the equivalence of categories  $(\Phi^\L\dashv \Om):\O_{k,\Ord}^\fin\leftrightarrows \O_{k,\Ord}$, the subcategory $\O_{k,\Ord}^\fin\subset \Mod(\A)$ inherits the same structure.
We first give a description of the fusion tensor product as well as the fusion rules of simple objects in the category $\O_{k,\Ord}^\fin$.

Let $V$ be a general VOA. 
It is well known that for any ordinary $V$-modules $M^1,M^2,M^3$, the space of three-point conformal blocks on $\P^1$ is naturally isomorphic to the space of (formal) intertwining operators~\cite{FHL93}, whose dimension is the fusion rule~\cite{TUY,L23,GLZ24}:
\begin{equation}\label{eq:cb_io}
 [(M^3)'\o M^1\o M^2]^\ast_{(\P^1,(\infty,1,0),(1/z,z-1,z))}\cong I\fusion{M^1}{M^2}{M^3},
 \end{equation}
where $(M^3)'$ is the VOA-contragredient module.

Using the technique of restricted VOA-conformal blocks, the second-named author introduced the following construction for $M^1,M^2\in \Adm(V)$ in~\cite{L26}:
\begin{equation}\label{eq:M1odotM2}
M^1\odot M^2=\frac{M^1\o M^2}{\mathcal{L}_{\P^1\bs\{1,0\}}(V)_{<0}. (M^1\o M^2)},
\end{equation}
where $\mathcal{L}_{\P^1\bs\{1,0\}}(V)_{<0}$ is a subalgebra of the chiral Lie algebra $\mathcal{L}_{\P^1\bs\{1,0,\infty\}}(V)$ defined by
\begin{align*}
&\mathcal{L}_{\P^1\bs\{1,0\}}(V)_{<0}=\frac{\bigoplus_{r\geq 0} V_r\otimes H^0(\P^1\bs \{1,0\}, \om_{\P^1}^{1-r}(-r [\infty]))}{\im\nabla }\\
&=\spn\left\{a\o \frac{z^n}{(z-1)^m} (dz)^{1-\wt a}+\im\nabla: a\in V,\ m,n\in\Z,\  n-m<\wt a-1\right\}.
\end{align*}
In~\cite[Theorem 4.8]{L26} it was proved that $M^1\odot M^2$ is a left $\A$-module if $M^1$ or $M^2$ is generated by its bottom degree.
Furthermore,
$M^1\odot M^2$ is spanned by the symbols $v^1\odot v^2$, which are bilinear in $v^1$ and $v^2$, subject to the following relations:
 \begin{equation}\label{eq:uniformrel}
\sum_{j\geq 0} \binom{\wt a-s}{j}  a_{(j-t)}v^1\odot v^2
	=-\sum_{j\geq 0}\binom{-t}{j} (-1)^{-t-j} v^1\odot a_{(\wt a-s+j)}v^2,
\end{equation}
for all $s,t\in \Z$ such that $s+t>1$,
with the left $\A$-module action on $M^1\odot M^2$ given by
\begin{equation}\label{eq:Aaction}
[a]. (v^1\odot v^2)=\sum_{j\geq 0} \binom{\wt a-1}{j} a_{(j)}v^1\odot v^2+ v^1\odot o(a)v^2,
\end{equation}
where $a\in V$ is homogeneous, $v^1\in M^1$, $v^2\in M^2$, and $o(a):=a_{(\wt a-1)}$ denotes the zero-degree mode. 

\begin{theorem}[{\cite[Theorem 6.2]{L26}}]\label{thm:odotfusion}
Let $M^1,M^2,M^3$ be ordinary $V$-modules. Suppose  $(M^3)'$ is isomorphic to the generalized Verma module $\Phi^\L(\Om((M^3)') )$ associated with the left $\A$-module $\Om((M^3)')$, and either $M^1$ or $M^2$ is generated by its bottom degree. Then 
\[
I\fusion{M^1}{M^2}{M^3}\cong \Hom_\A(M^1\odot M^2,\Om(M^3)).
\]
\end{theorem}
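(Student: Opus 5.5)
The plan is to pass through three-point conformal blocks via~\eqref{eq:cb_io}. Write $S:=\Om((M^3)')$. By~\eqref{eq:cb_io}, the intertwining space $I\fusion{M^1}{M^2}{M^3}$ is the dual of the coinvariant space $[(M^3)'\o M^1\o M^2]_{(\P^1,(\infty,1,0),(1/z,z-1,z))}$. It therefore suffices to prove two things:
\begin{equation*}
[(M^3)'\o M^1\o M^2]_{(\P^1,(\infty,1,0))}\cong S\o_{\A}(M^1\odot M^2),
\end{equation*}
where $S$ is made a right $\A$-module through the anti-involution $\theta$ of Lemma~\ref{lm5.1}, and
\begin{equation*}
\Om(M^3)\cong S^\ast \text{ as left }\A\text{-modules}.
\end{equation*}
Granting both, Hom--tensor adjunction gives $\Hom_\C(S\o_\A(M^1\odot M^2),\C)\cong \Hom_\A(M^1\odot M^2,S^\ast)\cong\Hom_\A(M^1\odot M^2,\Om(M^3))$, which is the claim.

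For the coinvariant identification I would argue by ``Frobenius reciprocity'' for the induced module $(M^3)'\cong\Phi^\L(S)=(\scrU/\rN^1_\L\scrU)\o_{\scrU_0}S$. First I would split the chiral Lie algebra $\LL_{\P^1\bs\{\infty,1,0\}}(V)$ according to pole order at $\infty$:
\[
\LL_{\P^1\bs\{\infty,1,0\}}(V)=\LL_{\P^1\bs\{1,0\}}(V)_{<0}+\LL_{\geq0},
\]
where $\LL_{\geq 0}$ is spanned by the sections $a\o z^n(dz)^{1-\wt a}$ with $n\geq\wt a-1$. By the computation of Lemma~\ref{lm:regdegree}, $\LL_{\geq0}$ acts at $\infty$ through modes of degree $\geq 0$ in the $\theta$-twisted sense. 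These are exactly the modes that generate $\Phi^\L(S)$ from $S$ in positive degree, while the degree-zero ones act on $S$ through $\A$.

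Next I would use $\LL_{\geq0}$ to push every vector of $(M^3)'$ down to $S$, exactly as in the filtration argument of Lemma~\ref{lm:DG1}. This yields a surjection $S\o(M^1\o M^2)\twoheadrightarrow[\,\cdots]$. The relations it must respect come from two sources. The sections in $\LL_{\P^1\bs\{1,0\}}(V)_{<0}$ kill $S$ at $\infty$, for degree reasons, so they impose precisely the relations defining $M^1\odot M^2$, that is,~\eqref{eq:uniformrel}. The degree-zero sections $a\o z^{\wt a-1}(dz)^{1-\wt a}$ transfer the action of $o(a)$ on $S$ to the action~\eqref{eq:Aaction} on $M^1\odot M^2$, which produces $\o_\A$.

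To show the resulting map $S\o_\A(M^1\odot M^2)\to[\,\cdots]$ is injective, I would build the inverse on the level of functionals. Given an $\A$-balanced functional on $S\o(M^1\odot M^2)$, I would extend it uniquely to $\Phi^\L(S)\o M^1\o M^2$ by declaring it invariant under $\LL_{\geq 0}$. This extension is well defined because $\Phi^\L(S)$ is induced, so it has no relations beyond those coming from $\scrU_0\to\A$ and $\rN^1_\L\scrU$. Finally I would check invariance under all of $\LL_{\P^1\bs\{\infty,1,0\}}(V)$ using the commutator formula.

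For the second identification, $M^3\cong((M^3)')'=\Phi^\L(S)^\vee$ is graded dual to $\Phi^\L(S)$, and degree-lowering modes on $M^3$ are transposes of degree-raising modes on $\Phi^\L(S)$. A vector of $\Phi^\L(S)(d)^\ast$ with $d>0$ that is killed by all lowering modes must vanish on the image of all raising modes into degree $d$. Since $\Phi^\L(S)$ is generated by $S$ in degree zero, that image is all of $\Phi^\L(S)(d)$, so the vector is zero. Hence $\Om(M^3)=S^\ast$, and the $\A$-action on $S^\ast$ is the $\theta$-twisted one, as in~\eqref{eq:contragredient}.

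I expect the main obstacle to be the injectivity step, i.e.\ well-definedness of the extended functional. One difficulty is that $\Phi^\L$ is induced over the completed algebra $\scrU$ modulo Jacobi relations, not over an honest Lie algebra. Another is that the decomposition of the chiral Lie algebra is not a direct sum: $\LL_{<0}\cap\LL_{\geq0}$ is nonzero modulo $\im\nabla$, and $L(1)$-corrections appear in the action at $\infty$. I would handle this by working degree by degree: on each finite-degree piece only finitely many modes act nontrivially. The hypothesis that $M^1$ or $M^2$ is generated by its bottom degree is used to guarantee that $M^1\odot M^2$ carries the $\A$-module structure~\eqref{eq:Aaction}, via~\cite[Theorem 4.8]{L26}.
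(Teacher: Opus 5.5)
The paper does not prove this statement; it quotes it from \cite[Theorem 6.2]{L26}, so your outline has to stand on its own. Most of it does. The following parts are fine: the reduction via~\eqref{eq:cb_io}, the identification $\Om(M^3)\cong S^\ast$, the observation that $\LL_{\P^1\bs\{1,0\}}(V)_{<0}$ acts at $\infty$ by lowering modes and so kills $S$, the $\A$-balancing coming from the degree-zero sections, and surjectivity. For surjectivity, induct on the degree at $\infty$ only, not on total degree as in Lemma~\ref{lm:DG1}: for general $V$, a section in $\LL_{\geq0}$ acts at the point $1$ through $\sum_j\binom{n}{j}a_{(j)}$, which can raise degree there.

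The genuine gap is the existence of the extension in the injectivity step, specifically your claim that it is well defined ``because $\Phi^\L(S)$ is induced''.
\begin{itemize}
\item Declaring $\LL_{\geq0}$-invariance means setting $\tilde\phi(u.s\o x)=\phi(s\o\Theta(u)x)$. Here $\Theta$ replaces each mode at $\infty$ by the action on $M^1\o M^2$ of the corresponding section, i.e.\ (up to sign and $\theta$-twist) by $\sum_j\binom{n}{j}a_{(j)}\o 1+1\o a_{(n)}$.
\item This $\Theta$ is an anti-homomorphism on the enveloping algebra of the Lie algebra $L(V)$, but it does not factor through $\scrU$. These operators on $M^1\o M^2$ satisfy the commutator relations, but nothing forces them to satisfy the Jacobi (associativity) relations built into $\scrU$, since $M^1\o M^2$ is not a $V$-module.
\item So inducedness only gives a well-defined functional on the Lie-algebra induced module $U(L(V))\o_{U(L(V)_{\leq 0})}S$. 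Your inductive check of invariance under the whole chiral Lie algebra does go through there. But $\Phi^\L(S)$ is the quotient of that module by the Jacobi relations, and you must show $\tilde\phi$ kills that kernel.
\item Working degree by degree makes the infinite sums in those relations finite after pairing with $\phi(s\o\cdot)$. It does not show that the resulting finite sums vanish.
\end{itemize}

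What is missing is a proof that the functionals $\phi(s\o\cdot)\in(M^1\o M^2)^\ast$ generate an honest weak $V$-module under the action transferred to $\infty$. Only then does the universal property of $\Phi^\L(S)\cong(M^3)'$ produce the extension, and this is where the generalized-Verma hypothesis is really used. By the argument of \cite[Section 6]{DLM}, it suffices to verify associativity on the generating subspace $S$. That verification must come from the Jacobi identities in $M^1$ and $M^2$, together with the vanishing of $\phi$ on $S\o\LL_{<0}(M^1\o M^2)$. This is in the spirit of Huang--Lepowsky's compatibility condition for elements of $(M^1\o M^2)^\ast$, and it is the real content of the theorem. Without it, injectivity of $S\o_\A(M^1\odot M^2)\to[(M^3)'\o M^1\o M^2]$ is not established.
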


Now we let $M^1=\hat{L}_k(\la)$, $M^2=\hat{L}_k(\mu)$, and $M^3=\hat{L}_k(\nu)$ be simple objects in the category $\O_{k,\Ord}\subset \Adm(L_k(\g))$ as in Definition~\ref{def:cats}, where $\la,\mu,\nu\in \Pr^k_\Z=\Pr^k\cap P$. Since both $M^3$ and $(M^3)'$ are irreducible $V$-modules, we have $\Om(M^3)=M^3(0)$ and $\Om((M^3)')=M^3(0)^\ast$ as left $\A$-modules. By the equivalence of categories  $(\Phi^\L\dashv \Om):\O_{k,\Ord}^\fin\leftrightarrows \O_{k,\Ord}$ in Corollary~\ref{coro:equiofcats}, we have $\Phi^\L(\Om((M^3)'))\cong (M^3)'$. Moreover, both $M^1$ and $M^2$ are generated by their bottom degrees. Therefore, by Theorem~\ref{thm:odotfusion}, we have
\begin{equation}\label{eq:homlamunu}
I\fusion{\hat{L}_k(\la)}{\hat{L}_k(\mu)}{\hat{L}_k(\nu)}\cong \Hom_\A(\hat{L}_k(\la)\odot \hat{L}_k(\mu), L(\nu)),
\end{equation}
where $\A=A(L_k(\g))$ and $L(\nu)=\Om(\hat{L}_k(\nu))=\hat{L}_k(\nu)(0)$. Denote the dimension of the vector space~\eqref{eq:homlamunu} by $N_{\la\mu}^\nu$.

\begin{proposition}\label{prop:6.2}
    The tensor products of simple objects $L(\la)$ and $L(\mu)$ in the braided tensor category $\O^\fin_{k,\Ord}\subset \Mod(\A)$ admit the following identification:
\begin{equation}\label{eq:affinefinitefusion}
L(\la)\boxtimes L(\mu)\cong \hat{L}_k(\la)\odot \hat{L}_k(\mu),\quad \forall\  \la,\mu\in \Pr^k_\Z=\Pr^k\cap P.
\end{equation}
\end{proposition}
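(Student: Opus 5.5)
The plan is to identify both sides through their $\Hom$-spaces into simple objects and then invoke semisimplicity of $\O^\fin_{k,\Ord}$.

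The tensor product $\boxtimes$ on $\O^\fin_{k,\Ord}$ is transported from the Huang--Lepowsky--Zhang product on $\O_{k,\Ord}$ via Corollary~\ref{coro:equiofcats}. Concretely, $L(\la)\boxtimes L(\mu)=\Om\big(\hat{L}_k(\la)\boxtimes\hat{L}_k(\mu)\big)$. By the universal property of the HLZ tensor product, for every $\nu\in\Pr^k_\Z$ we have
\[
\Hom_{\O_{k,\Ord}}\big(\hat{L}_k(\la)\boxtimes\hat{L}_k(\mu),\hat{L}_k(\nu)\big)\cong I\fusion{\hat{L}_k(\la)}{\hat{L}_k(\mu)}{\hat{L}_k(\nu)}.
\]
Applying the equivalence, the left side equals $\Hom_\A(L(\la)\boxtimes L(\mu),L(\nu))$. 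By \eqref{eq:homlamunu}, the right side equals $\Hom_\A(\hat{L}_k(\la)\odot\hat{L}_k(\mu),L(\nu))$. So the multiplicity of $L(\nu)$ in $L(\la)\boxtimes L(\mu)$ is $N^\nu_{\la\mu}$, which is also the dimension of $\Hom_\A(\hat{L}_k(\la)\odot\hat{L}_k(\mu),L(\nu))$.

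The main obstacle is to show that $X:=\hat{L}_k(\la)\odot\hat{L}_k(\mu)$ actually lies in $\O^\fin_{k,\Ord}$, i.e.\ that it is a finite direct sum of the $L(\nu)$ with $\nu\in\Pr^k_\Z$. Once this is known, both modules are semisimple with the same multiplicities and therefore isomorphic. To establish it, I would proceed in three steps.

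First, show that $X$ is finite-dimensional. The relations \eqref{eq:uniformrel} with large $t$ let one move negative modes $a_{(-t)}$ off $v^1$ onto $v^2$ as nonnegative-degree-lowering modes. So $X$ is spanned by $\hat{L}_k(\la)(0)\odot\hat{L}_k(\mu)$, and symmetrically by $L(\la)\odot\hat{L}_k(\mu)(d)$ for bounded $d$. This is a Zhu-type $C_1$ argument, using that $\hat{L}_k(\mu)$ is generated in degree zero by Lemma~\ref{lm:bottomgen}. It would show that $L(\la)\otimes L(\mu)$ surjects onto $X$. Indeed, the $\g$-action \eqref{eq:Aaction} for $a\in\g$ is $a(0)\otimes1+1\otimes a(0)$, so this surjection is $\g$-equivariant. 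Consequently $X$ is a finite-dimensional $\A$-module whose $\g$-structure is a quotient of $L(\la)\otimes L(\mu)$.

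Second, use the central character decomposition. Since $X$ is an $\A$-module, Theorem~\ref{thm:AvE_result} splits it along the blocks $\A_\nu$. Each finite-dimensional $\g$-summand is an $L(\nu')$ with $\nu'\in P_+$, so its central character is integral. The argument of Lemma~\ref{lm:coinv0} then excludes the blocks with $[\nu]\subset\Pr^k\bs P$. Hence $X$ is a module over $\A'=\prod_{\nu\in\Pr^k_\Z}\End(L(\nu))$, and is therefore a finite direct sum of the $L(\nu)$, $\nu\in\Pr^k_\Z$. The delicate point is the first step, where the precise form of \eqref{eq:uniformrel} must be handled to guarantee the surjection. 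If that fails, I would fall back on \cite{L26}, which shows that $X$ is finitely generated as an $\A$-module, and then use the semisimplicity of $\A'$ together with the observation that finite-dimensional $\A$-modules must factor through $\A'$.

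Finally, since both $L(\la)\boxtimes L(\mu)$ and $X$ are semisimple $\A'$-modules and for each $\nu\in\Pr^k_\Z$ the multiplicity of $L(\nu)$ in each is $N^\nu_{\la\mu}$, they are isomorphic. This isomorphism is natural in $\la$ and $\mu$, which gives \eqref{eq:affinefinitefusion}.
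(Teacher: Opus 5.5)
Your proposal is correct and is essentially the paper's argument. The paper also shows that $L(\la)\otimes_\C L(\mu)\to\hat{L}_k(\la)\odot\hat{L}_k(\mu)$ is surjective: it first reduces to $L(\la)\odot\hat{L}_k(\mu)$ via \cite[Lemma 4.5]{L26}, and then uses the specialization $(s,t)=(1+r,0)$ of \eqref{eq:uniformrel}, which gives $u\odot a(-r)v'=-a(0)u\odot v'$ for $u\in L(\la)$, so the step you flag as delicate goes through. It then concludes that this finite-dimensional module lies in $\O^\fin_{k,\Ord}$ (your central-character argument is a valid way to make that step explicit) and matches the multiplicities $N^\nu_{\la\mu}$ via \eqref{eq:homlamunu} and semisimplicity.

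Two side remarks. Your fallback would not have worked as stated, because finite generation over $\A$ does not force finite-dimensionality ($\A''$ is infinite-dimensional). Your closing naturality claim is unsupported, but it is not needed for the statement.
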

\begin{proof}
We first show
that the canonical map
\[
L(\la)\o_\C L(\mu)\ra \hat{L}_k(\la)\odot \hat{L}_k(\mu),\quad u\o v\mapsto u\odot v
\]
is surjective; see also~\cite[Lemma 7.5]{L26} for a special case.
Indeed, since $\g\cong L_k(\g)_1$ and $\hat{L}_k(\la)=U(\hat{\g}_{<0}).L(\la)$, it follows from~\cite[Lemma 4.5]{L26} that
\[
\hat{L}_k(\la)\odot \hat{L}_k(\mu)=\spn\{u\odot a^1(-n_1)\cdots a^r(-n_r)v: a^i\in\g,\ n_i\geq 1,\ u\in L(\la),\ v\in L(\mu)\}.
\]
Given a spanning element $u\odot a^1(-n_1)\cdots a^r(-n_r)v$ of $\hat{L}_k(\la)\odot \hat{L}_k(\mu)$, if $r=0$ then $u\odot v$ lies in the image of $L(\la)\o_\C L(\mu)$. Assume $r\geq 1$, and write $a^1(-n_1)\cdots a^r(-n_r)v=a^1(-n_1)v'$.
Then, by~\eqref{eq:uniformrel}, we have
\[
u\odot a^1(-n_1)v'=-\sum_{j\geq 0}\binom{-n_1}{j} a^1(j)u\odot v'=-a^1(0)u\odot v',
\]
where $a^1(0)u\in L(\la)$. Using induction on the length $r$ of $a^1(-n_1)\cdots a^r(-n_r)v$, we conclude that $u\odot a^1(-n_1)\cdots a^r(-n_r)v$ lies in the image of $L(\la)\o_\C L(\mu)$. Thus $L(\la)\o_\C L(\mu)\ra \hat{L}_k(\la)\odot \hat{L}_k(\mu)$ is surjective.
In particular, the $\A=A(L_k(\g))$-module $\hat{L}_k(\la)\odot \hat{L}_k(\mu)$ is finite-dimensional and is in the category $\O^\fin_{k,\Ord}$.

Since the fusion tensor on $\O^\fin_{k,\Ord}$ is given by the pull-back of the fusion tensor on $\O_{k,\Ord}$ under the equivalence of categories  $(\Phi^\L\dashv \Om):\O_{k,\Ord}^\fin\leftrightarrows \O_{k,\Ord}$, we have
\begin{align*}
L(\la)\boxtimes L(\mu)&=\Om(\Phi^\L(L(\la))\boxtimes \Phi^\L(L(\mu)))\cong\Om(\hat{L}_k(\la)\boxtimes \hat{L}_k(\mu))=\bigoplus_{\nu\in \Pr^k_\Z}N_{\la\mu}^\nu\Om(\hat{L}_k(\nu))\\
&=\bigoplus_{\nu\in \Pr^k_\Z}\dim \Hom_\A\left(\hat{L}_k(\la)\odot \hat{L}_k(\mu), L(\nu) \right) \cdot L(\nu)\\
&\cong \hat{L}_k(\la)\odot \hat{L}_k(\mu),
\end{align*}
where the last isomorphism follows from the fact that the category $\O^\fin_{k,\Ord} $ is semisimple, with simple objects $L(\nu)$ for $\nu\in \Pr^k_\Z$.
\end{proof}

\subsection{Fusion tensor of admissible-level $L_{k}(\sl_2)$-ordinary modules}\label{subsec:sl2fusion}
In this subsection, we discuss an alternative way of calculating the fusion rules among irreducible ordinary modules over the type $A_1$ affine VOA at an admissible level using~\eqref{eq:affinefinitefusion}.

Let $\la\in \Pr^k_\Z$. As a $\hat{\g}$-module, $\hat{L}_k(\la)=V^k(\la)/N^k(\la)$, where $V^k(\la)=U(\hat{\g})\o_{U(\g[t]\op\C K)}L(\la)$ is the generalized Verma (Weyl) module and $N^k(\la)$ is its maximal proper submodule, generated by the singular vectors in $V^k(\la)$.
In order to calculate $\hat{L}_k(\la)\odot \hat{L}_k(\mu)$, we need a concrete description of $N^k(\la)$.

For the Lie algebra $\g=\sl_2 = \left<e,f,h\right>$, with $\Delta=\{\al,-\al\}$ and the fundamental weight $\omega$, the submodule $N^k(\la)\subset V^k(\la)$ can be described explicitly using the main results in~\cite{malikov1986singular}. Therefore, we can use~\eqref{eq:affinefinitefusion} to determine the fusion tensor product for admissible-level ordinary $L_k(\sl_2)$-modules.
Let
\[
k+2=\frac{p}{q},\qquad p,q\in \Z_{>0},\ (p,q)=1,\ p\geq 2.
\]
The admissible weights at level $k$ are parametrized by
$
\la_{r,s}
=
\left(
r-1-(s-1)\frac{p}{q}
\right)\omega,
$
where
$
1\leq r\leq p-1,
$ and $
1\leq s\leq q$~\cite{kac1989classification,DLM96}.
 Then $\la_{r,s}\in P=\Z\om$ if and only if
$
q\mid (s-1)
$, or equivalently, $s=1$ since $s\leq q$. Hence
\begin{equation}\label{eq:6.9}
    \Pr^k_\Z
=
\Pr^k\cap P
=
\left\{
m\omega:
0\leq m\leq p-2
\right\},
\end{equation}
and the simple objects in $\O_{k,\Ord}$ are given by $\{\hat{L}_k(m\om):0\leq m\leq p-2\}$.

Let $M^k(m\omega)$ be the affine Verma module with highest-weight vector $v_m$. Then the maximal proper submodule of $M^k(m\om)$ is generated by the two singular vectors $F_1(m+1,1)v_m=f(0)^{m+1}v_m$ and $F_2(m+1,1)v_m$~\cite{DLM96,malikov1986singular}. Let
$
\pi_m:M^k(m\omega)\twoheadrightarrow V^k(m\omega)
$
be the canonical quotient. Let
\[
\chi_m:=\pi_m(F_2(m+1,1)v_m)\in V^k(m\om).
\]
With the standard Malikov--Feigin--Fuchs formal-power notation~\cite{malikov1986singular}, we may write
\begin{equation}\label{eq:MFFsingular}
\chi_m=
e(-1)^{(2q-1)\kappa-m-1}
f(0)^{(2q-2)\kappa-m-1}
\cdots
e(-1)^{\kappa-m-1}v_m,\quad \kappa= k+2=\frac{p}{q},
\end{equation}
where the right-hand side is understood in the Malikov--Feigin--Fuchs formal-power sense, and the factors alternate between $e(-1)$ and $f(0)$. Then $\hat{L}_k(m\om)=V^k(m\om)/U(\hat{\sl}_2)\chi_m$.

Note that the total number of times $e(-1)$ appears in $\chi_m$ is
$
\sum_{j=1}^q ((2j-1)\kappa-m-1)=q^2\kappa-q(m+1)
$,
while the total number of times $f(0)$ appears in $\chi_m$ is
$
\sum_{j=1}^{q-1}(2j\kappa-m-1)=q(q-1)\kappa-(q-1)(m+1).
$ We have
\begin{equation}\label{eq:chimeigenvalue}
\begin{aligned}
    h(0)\chi_m&=\Big(m+2\big(q^2\kappa-q(m+1)-q(q-1)\kappa+(q-1)(m+1)\big)\Big)\chi_m\\
    &=(2p-m-2)\chi_m.
\end{aligned}
\end{equation}

\begin{lemma}\label{lm:6.3}
Let $0\leq m,n\leq p-2$. The Weyl modules $V^k(m\om)$ and $V^k(n\om)$ are also modules over the vacuum module VOA $V^k(\sl_2)$. We have the following isomorphism of modules over $A(V^k(\sl_2))\cong U(\sl_2)$~\cite{FZ92}:
\[
V^k(m\om)\odot V^k(n\om)\cong L(m\om)\o_\C L(n\om),
\]
where the right-hand side is the usual tensor product of $\sl_2$-modules.
\end{lemma}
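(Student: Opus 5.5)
Let $M^1=V^k(m\om)$ and $M^2=V^k(n\om)$, both viewed as $V^k(\sl_2)$-modules, and take $V=V^k(\sl_2)$ in the construction~\eqref{eq:M1odotM2}. The plan is to construct mutually inverse $U(\sl_2)$-module maps $\Psi:L(m\om)\o_\C L(n\om)\to V^k(m\om)\odot V^k(n\om)$, $u\o v\mapsto u\odot v$, and $\Xi$ in the reverse direction.

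\emph{Surjectivity of $\Psi$.} This is the argument already used in the proof of Proposition~\ref{prop:6.2}. Both Weyl modules are generated by their bottom degrees $L(m\om)$, $L(n\om)$ over $U(\hat{\g}_{<0})$. By~\cite[Lemma 4.5]{L26}, $V^k(m\om)\odot V^k(n\om)$ is spanned by the elements $u\odot a^1(-n_1)\cdots a^r(-n_r)v$. The relation~\eqref{eq:uniformrel} with $\wt a=1$ moves $a(-n_1)$ across as $-a(0)$, and induction on $r$ gives surjectivity. Using~\eqref{eq:Aaction}, $[a].(u\odot v)=a(0)u\odot v+u\odot a(0)v$ for $a\in\g$, so $\Psi$ is a $U(\sl_2)$-map for the coproduct action. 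Here $A(V^k(\sl_2))\cong U(\sl_2)$ by~\cite{FZ92}.

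\emph{Injectivity via a functional.} I will build $\Xi$ from the universal property of the quotient defining $\odot$. I want a linear map $\Theta:V^k(m\om)\o V^k(n\om)\to L(m\om)\o L(n\om)$ that kills $\mathcal{L}_{\P^1\bs\{1,0\}}(V)_{<0}.(M^1\o M^2)$ and restricts to the identity on bottom degrees.

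The natural candidate is the following. Use the PBW identification $V^k(m\om)\cong U(\hat{\g}_{<0})\o L(m\om)$. Define $\Theta(xu\o v):=u\o S(x_0)v$, where $x\mapsto x_0$ is the algebra map $U(\hat{\g}_{<0})\to U(\g)$ sending each $a(-j)\mapsto a$, composed with the antipode $S$ (up to the signs dictated by~\eqref{eq:uniformrel}). One then moves the second tensor factor to the first symmetrically.

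More conceptually, I would realize $\Theta$ as a genus-zero three-point coinvariant. The space $\Hom_{U(\sl_2)}(V^k(m\om)\odot V^k(n\om),W)$ should equal $\Hom_{U(\sl_2)}(L(m\om)\o L(n\om),W)$ for every finite-dimensional $W$. This follows from the existence of intertwining operators of type $\binom{V^k(\nu)^{\vee}}{V^k(m\om)\,V^k(n\om)}$ for the universal VOA $V^k(\sl_2)$. Such operators exist because Weyl modules are induced: a three-point conformal block for $V^k(\g)$ with Weyl modules at $0,1$ is determined freely by a $\g$-invariant on the bottom spaces (the classical Tsuchiya--Ueno--Yamada fact that, for the universal affine VOA, coinvariants with Weyl insertions on $\P^1$ reduce to $\g$-coinvariants of bottom spaces with no further relations).

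Concretely, I would prove that $(V^k(m\om)\o V^k(n\om)\o M^0)/\mathcal L.(\ldots)\cong (L(m\om)\o L(n\om)\o M^0(0))_\g$ for a Weyl module $M^0$ at $\infty$. Surjectivity comes from the filtration argument of Lemma~\ref{lm:DG1}. Injectivity comes from writing $V^k(\la)=\mathrm{Ind}$ and using Frobenius reciprocity for the Lie algebra $\g\o\C[z,z^{-1},(z-1)^{-1}]$: the tensor product of three induced modules is induced from $\g\o\C$-module $L\o L\o L$ modulo the regular part, so its coinvariants are exactly the $\g$-coinvariants. Taking $W$ to range over all $L(\nu)$ with $\nu\in P_+$ then separates points of the finite-dimensional semisimple $\g$-module $L(m\om)\o L(n\om)$, and combined with surjectivity this gives the isomorphism.

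The main obstacle is this injectivity/Frobenius-reciprocity step: checking that the chiral Lie algebra of $\P^1\bs\{0,1,\infty\}$ for the \emph{universal} $V^k(\g)$ acts on three Weyl modules through the current algebra $\g\o\C[z^{\pm1},(z-1)^{-1}]$ plus the central extension, with no additional relations. That it does is because $V^k(\g)$ is freely strongly generated. Then the induced-module isomorphism $\mathrm{Ind}(L_1)\o\mathrm{Ind}(L_2)\o\mathrm{Ind}(L_3)\cong U(\mathfrak{L})\o_{U(\g)}(L_1\o L_2\o L_3)$ should follow by a PBW/degree comparison, in the style of the usual proof of the rank of affine conformal blocks before the singular-vector quotient. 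I would try this decomposition first, choosing the complement $\mathfrak{L}=\bigoplus_i\g\o(\text{principal parts at }P_i)\oplus\g\o1$.
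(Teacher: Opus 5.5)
Your surjectivity and $\sl_2$-equivariance steps are the same as the paper's; the two arguments differ only in how injectivity is obtained.

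\textbf{The paper's route.} It writes down the inverse explicitly. The map $\tilde\varphi_{m,n}$ is defined on tensor products of PBW monomials by induction on total degree, removing one mode at a time via the specializations $(s,t)=(1,r)$ and $(1+r,0)$ of \eqref{eq:uniformrel}. The fact that it kills all the defining relations is left to a direct check as in \cite[Lemma 7.5]{L26}. On a monomial tensored with a bottom vector, this is your candidate with the signs fixed: evaluation at $t^{-1}=-1$ followed by the antipode, i.e.\ the anti-homomorphism $x(-r)\mapsto(-1)^{r+1}x$ that reappears in Lemma~\ref{lm:6.4}.

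\textbf{Your route.} You replace that relation-by-relation check with two standard inputs.
\begin{enumerate}
\item Genus-zero Frobenius reciprocity for Weyl modules. With $R=\C[z,z^{-1},(z-1)^{-1}]$, the map $R\op\bigoplus_i\C\lb t_i\rb\to\bigoplus_i\C((t_i))$ is onto with kernel $\C$, because $H^1(\P^1,\O)=0$. Hence three-point current-algebra coinvariants are the $\g$-coinvariants of the bottom spaces.
\item The comparison of chiral-Lie-algebra and current-algebra coinvariants for $V^k(\g)$. The relevant input here is strong generation by $V_1$, not freeness.
\end{enumerate}
This argument is sound and works uniformly in $\g$. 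It produces the inverse only implicitly. That costs nothing downstream: Lemma~\ref{lm:6.4} and \eqref{eq:firstdescriptionofodot} use only $\varphi_{m,n}\circ\iota_{m,n}=\Id$, equivariance, and the relations of $\odot$.

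\textbf{One step to fix.} Do not pass from blocks to $\Hom(V^k(m\om)\odot V^k(n\om),-)$ by citing Theorem~\ref{thm:odotfusion} with $M^3=V^k(\nu)^\vee$. Its hypothesis $(M^3)'\cong\Phi^\L(\Om((M^3)'))$ fails in general at admissible level. For example, take $\nu=m\om$ with $0\leq m\leq p-2$:
\begin{itemize}
\item $\chi_m\neq0$ in $V^k(m\om)$, because $\hat{L}_k(m\om)=V^k(m\om)/U(\hat{\sl}_2)\chi_m$ is a proper quotient;
\item a singular vector is killed by $\g\o t\C[t]$, so $\Om(V^k(m\om))\supsetneq L(m\om)$;
\item therefore $\Phi^\L(\Om(V^k(m\om)))$ is strictly larger than $V^k(m\om)$.
\end{itemize}

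You only need the elementary direction, which holds without that hypothesis. Let $\beta$ be a three-point block with $M^0$ at $\infty$, and let $w^0\in M^0(0)$. Then $\beta(w^0\o-)$ vanishes on $\LL_{\P^1\bs\{1,0\}}(V)_{<0}.(M^1\o M^2)$. The reason is that these sections act at $\infty$ by modes of negative degree (cf.\ Lemma~\ref{lm:regdegree}), so they annihilate $w^0$. Pulled back along your $\Psi$, these functionals are exactly the $\phi(w^0\o-)$ with $\phi$ a $\g$-invariant functional on the three bottom spaces. As $\nu$, $\phi$ and $w^0$ vary, they span the dual of $L(m\om)\o L(n\om)$, which is precisely injectivity.
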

\begin{proof}
As in the proof of Proposition~\ref{prop:6.2}, the canonical map
\[
\iota_{m,n}:L(m\om)\o_\C L(n\om)\ra V^k(m\om)\odot V^k(n\om),\quad u\o v\mapsto u\odot v
\]
is surjective. Conversely, fix PBW bases of the generalized Verma modules $V^k(m\om)$ and $V^k(n\om)$, consisting of ordered monomials $a^1(-r_1)\cdots a^s(-r_s)u$ with $a^i\in\{e,h,f\}$, $r_i\geq 1$ and $u$ running over a basis of $L(m\om)$ (resp.\ $L(n\om)$). Define a linear map
\[
\tilde\varphi_{m,n}: V^k(m\om)\o_\C V^k(n\om)\ra L(m\om)\o_\C L(n\om)
\]
on the tensor products of PBW basis elements by induction on the total degree, as follows. For $u\in L(m\om)$ and $v\in L(n\om)$, let $\tilde\varphi_{m,n}(u\o v):=u\o v$. A PBW basis element of positive degree can be written uniquely as $a(-r)u$ with $r\geq 1$, $a\in \{e,h,f\}$, and $u$ a PBW basis element of lower degree (peel off the leftmost factor). If the left tensor factor has positive degree, set
\begin{equation}\label{eq:defvarphimn}
\tilde\varphi_{m,n}(a(-r)u\o v):=-\sum_{j\geq 0}\binom{-r}{j}(-1)^{-r-j} \tilde\varphi_{m,n}(u\o a(j)v),
\end{equation}
and if the left factor $u$ lies in $L(m\om)$ while the right factor has positive degree, set
\begin{equation}\label{eq:defvarphimn2}
\tilde\varphi_{m,n}(u\o a(-r)v):=-\sum_{j\geq 0}\binom{-r}{j} \tilde\varphi_{m,n}(a(j)u\o v).
\end{equation}
(The two formulas are the specializations $(s,t)=(1,r)$ and $(s,t)=(1+r,0)$ of~\eqref{eq:uniformrel}, which is why the sign $(-1)^{-r-j}$ occurs in the first and not in the second.) On the right-hand sides we first express $u\o a(j)v$ and $a(j)u\o v$ in terms of the PBW bases---these have lower total degree since $j\geq 0$---and then apply $\tilde\varphi_{m,n}$ using the induction hypothesis. A direct check, as in the proof of~\cite[Lemma 7.5]{L26}, shows that $\tilde\varphi_{m,n}$ annihilates the defining relations~\eqref{eq:uniformrel} of the product $V^k(m\om)\odot V^k(n\om)$. Hence $\tilde\varphi_{m,n}$ descends to a well-defined linear map $\varphi_{m,n}: V^k(m\om)\odot V^k(n\om)\ra L(m\om)\o_\C L(n\om)$ with $\varphi_{m,n}\circ \iota_{m,n}=\Id$, and~\eqref{eq:defvarphimn}--\eqref{eq:defvarphimn2} hold with $\o$ replaced by $\odot$.

Finally, for any $a\in \sl_2=V^k(\sl_2)_1$ and $u\odot v\in V^k(m\om)\odot V^k(n\om)$, by~\eqref{eq:Aaction}, we have
\[
[a].(u\odot v)=a(0)u\odot v+u\odot o(a)v,
\]
and $o(a)=a(0)$. Hence $\varphi_{m,n}$ is an $\sl_2$-module isomorphism.
\end{proof}


By the Clebsch--Gordan theorem, $L(m\om)\o_\C L(n\om)\cong \bigoplus_{i=0}^{\min\{m,n\}} L((m+n-2i)\om)$.
For each $0\leq i\leq \min\{m,n\}$, the vector
\begin{equation}\label{eq:sl2highestweight}
u_i^{m,n}=
\sum_{r=0}^{i}
(-1)^r
\binom{i}{r}
\frac{(n-i+r)!}{(n-i)!}
\frac{(m-r)!}{m!}\,
f(0)^rv_m\otimes f(0)^{i-r}v_n,
\end{equation}
 is the $\sl_2$-highest-weight vector of the direct summand $L((m+n-2i)\om)$ in the tensor product $L(m\om)\o_\C L(n\om)$.

 Since $\hat{L}_k(m\om)=V^k(m\om)/U(\hat{\sl}_2)\chi_m$, where $\chi_m$ is the singular vector~\eqref{eq:MFFsingular}, and the restricted chiral Lie algebra $\mathcal{L}_{\P^1\bs\{1,0\}}(V^k(\sl_2))_{<0}$ surjects onto $\mathcal{L}_{\P^1\bs\{1,0\}}(L_k(\sl_2))_{<0}$ via the quotient map $V^k(\sl_2)\twoheadrightarrow L_k(\sl_2)$, it follows that the isomorphism $\varphi_{m,n}$ of Lemma~\ref{lm:6.3} induces an isomorphism
\begin{equation}\label{eq:firstdescriptionofodot}
\hat{L}_k(m\om)\odot \hat{L}_k(n\om)\simeq \frac{L(m\om)\o_\C L(n\om)}{U(\sl_2).\left(\varphi_{m,n}(\chi_m\odot L(n\om)+L(m\om)\odot \chi_n )\right)}.
\end{equation}

In order to determine the fusion tensor product using~\eqref{eq:affinefinitefusion}, we need to give a more concrete description of $\varphi_{m,n}(\chi_m\odot L(n\om)+L(m\om)\odot \chi_n )$. With the notation as above, write
\begin{equation}\label{eq:aandd}
  a:=p-m-1,\qquad  d:=m+n-p+1=n-a.
\end{equation}
Observe that $\chi_m\odot L(n\om)=\spn\{\chi_m\odot f(0)^jv_n:0\leq j\leq n\}$. For any $0\leq j\leq n$, it follows from~\eqref{eq:Aaction} and~\eqref{eq:chimeigenvalue} that the following relation holds in $V^k(m\om)\odot V^k(n\om)$:
\begin{align*}
    [h].(\chi_m\odot f(0)^jv_n)&=h(0)\chi_m\odot f(0)^jv_n+\chi_m\odot o(h)f(0)^jv_n\\
    &=(m+n+2a-2j) (\chi_m\odot f(0)^jv_n).
\end{align*}
Since $\varphi_{m,n}$ is an $\sl_2$-module isomorphism, $\varphi_{m,n}(\chi_m\odot f(0)^jv_n)\in L(m\om)\o_\C L(n\om)$ is of $h(0)$-weight $m+n+2a-2j$.
Then, by the Clebsch--Gordan theorem, $\varphi_{m,n}(\chi_m\odot f(0)^jv_n)=0$ in $L(m\om)\o_\C L(n\om)$ if $m+n+2a-2j>m+n$, that is, if $j<a$. Denote by
\begin{equation}\label{eq:defRi}
R_i:=\varphi_{m,n}(\chi_m\odot f(0)^{a+i}v_n)\in \varphi_{m,n}(\chi_m\odot L(n\om)),
\end{equation}
where $0\leq i\leq d=n-a$.  Note that $R_i$ is of $h(0)$-weight $m+n-2i$.

\begin{lemma}\label{lm:6.4}
For $0\leq i\leq d$, define the following submodule of $L(m\om)\o_\C L(n\om)$:
\[
J_{<i}:=\sum_{\ell=0}^{i-1} U(\sl_2).u^{m,n}_\ell\cong \bigoplus_{\ell=0}^{i-1} L((m+n-2\ell)\om).
\]
Then $R_i\in J_{<i+1}$, and
there exists a nonzero number $C_i\in \C$ such that
\begin{equation}\label{eq:Ricongruence}
R_i\equiv C_i \cdot v_m\o f(0)^iv_n\pmod{J_{<i}}.
\end{equation}
In particular, $J_{<d+1}=U(\sl_2).\spn\{R_0,\ds, R_d\}$.
\end{lemma}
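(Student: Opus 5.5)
The plan is to split the statement into a soft part, which follows from weights and the Clebsch--Gordan decomposition, and a hard part, the nonvanishing of $C_i$, which needs an explicit evaluation of $\varphi_{m,n}$ on the Malikov--Feigin--Fuchs singular vector.

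\emph{Soft part.} I would first note that $d\leq\min\{m,n\}$: indeed $d=n-a\leq n$ since $a\geq 1$, and $d=m+n-p+1\leq m$ since $n\leq p-2$. Hence all $u_i^{m,n}$ with $0\leq i\leq d$ are defined. By the remark before~\eqref{eq:defRi}, $R_i$ has $h(0)$-weight $m+n-2i$. In $L(m\om)\o_\C L(n\om)\cong\bigoplus_\ell L((m+n-2\ell)\om)$, this weight occurs only in summands with $m+n-2\ell\geq m+n-2i$, i.e.\ $\ell\leq i$. Therefore $R_i\in J_{<i+1}$.

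Next I would check that the weight space $W_i$ of weight $m+n-2i$ has dimension $i+1$, with basis $f(0)^rv_m\o f(0)^{i-r}v_n$, $0\leq r\leq i$. Its intersection with $J_{<i}$ has dimension $i$, so $W_i/(W_i\cap J_{<i})$ is one-dimensional and spanned by the class of $u_i^{m,n}$.

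To see that $v_m\o f(0)^iv_n\notin J_{<i}$, I would use the tensor product of the contravariant (Shapovalov) forms on $L(m\om)$ and $L(n\om)$. This form is nondegenerate and its isotypic components are mutually orthogonal. Hence $u_i^{m,n}\perp J_{<i}$. On the other hand $\<u_i^{m,n},v_m\o f(0)^iv_n\>$ equals the $r=0$ coefficient $\frac{n!}{(n-i)!}\neq 0$ times $\|v_m\|^2\,\|f(0)^iv_n\|^2\neq 0$. It follows that any $R\in W_i$ satisfies $R\equiv C\cdot v_m\o f(0)^iv_n\pmod{J_{<i}}$, where $C$ is $\<u_i^{m,n},R\>$ up to a nonzero normalizing factor. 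So~\eqref{eq:Ricongruence} holds automatically, and only $C_i\neq 0$ needs proof.

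\emph{Hard part (main obstacle).} The goal is $\<u_i^{m,n},R_i\>\neq 0$. To compute $R_i=\varphi_{m,n}(\chi_m\odot f(0)^{a+i}v_n)$, I would repeatedly apply~\eqref{eq:defvarphimn} to transfer the modes of $\chi_m$ one at a time from the left factor to the right.
\begin{itemize}
\item A leftmost $e(-1)$ becomes $-\sum_j\binom{-1}{j}(-1)^{-1-j}e(j)$ acting on the right. Once the right factor is a bottom-degree vector, only $j=0$ survives.
\item Each $f(0)$ is a degree-zero mode. I would move it using the $\A$-action~\eqref{eq:Aaction}, i.e.\ $a(0)u\odot v=[a].(u\odot v)-u\odot a(0)v$.
\end{itemize}
This should show that $\chi_m\odot w=\Theta(\chi_m)\cdot(v_m\odot w)$, up to the $\A$-action, for an explicit element $\Theta(\chi_m)\in U(\sl_2)$ obtained from the Malikov--Feigin--Fuchs monomial by $e(-1)\mapsto \pm e(0)$ on the right factor and $f(0)\mapsto$ a diagonal combination.

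The resulting formal-power expression should then be evaluated on the weight vector $f(0)^{a+i}v_n$. I would use the standard MFF device: commuting formal powers of $e$ and $f$ past each other on a fixed weight vector yields a polynomial in the weight, which becomes a genuine finite expression. The expected answer is $R_i=\big(\text{explicit product of factors linear in }i,m,n,p,q\big)\cdot v_m\o f(0)^iv_n+(\text{lower terms})$. For $0\leq i\leq d$ none of the factors should vanish, because each vanishing would force an inequality contradicting $i\leq d=m+n-p+1$ or $m,n\leq p-2$. This is the step I expect to be hardest. If the general $q$ computation is intractable, I would first verify the pattern for $q=1$ (integrable level, where $\chi_m=e(-1)^{p-m-1}v_m$ and the computation is elementary) and then argue that the MFF polynomial specializes compatibly.

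\emph{Final claim.} Given $C_i\neq 0$, the equality $J_{<d+1}=U(\sl_2).\spn\{R_0,\ds,R_d\}$ follows by induction on $i$. Suppose $U(\sl_2).\spn\{R_0,\ds,R_{i-1}\}=J_{<i}$. Then $R_i$ is a nonzero element of the one-dimensional space $W_i/(W_i\cap J_{<i})$. It is therefore a highest-weight vector of $J_{<i+1}/J_{<i}\cong L((m+n-2i)\om)$ and generates it. Hence $U(\sl_2).\spn\{R_0,\ds,R_i\}=J_{<i+1}$.
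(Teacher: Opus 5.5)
Your soft part is correct and is exactly the paper's argument. $R_i\in J_{<i+1}$ follows by weights, $W_i/(W_i\cap J_{<i})$ is one-dimensional, and the tensor product of contravariant forms shows $v_m\otimes f(0)^iv_n\notin J_{<i}$. The same induction then gives $J_{<d+1}=U(\mathfrak{sl}_2).\mathrm{span}\{R_0,\dots,R_d\}$. Noting that the congruence is automatic and only $C_i\neq 0$ matters is a clean reorganization. (The $r=0$ coefficient of $u_i^{m,n}$ is $1$, not $n!/(n-i)!$, but this is harmless.)

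The gap is that $C_i\neq 0$ is the entire content of the lemma, and you only describe what the answer ``should'' look like. Your fallback does not close it. For general $q$ the singular vector $\chi_m$ has $2q-1$ alternating factors whose exponents depend on $\kappa=p/q$. No deformation connects $q=1$ to $q>1$ with $m,n$ fixed integers: moving $\kappa$ in a Malikov--Feigin--Fuchs formula of fixed shape moves $m\omega$ off the integral weights. So nonvanishing at the integrable level says nothing about other admissible levels.

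Concretely, two ideas are missing.

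\emph{Controlling the $f_\Delta$ terms.} Your transfer of $f(0)$ ``up to the $\mathsf{A}$-action'' leaves uncontrolled terms $f_\Delta(\cdots)$. The paper removes them by working modulo $J_{<i}$. If $f_\Delta\alpha$ has weight $m+n-2i$, then $\alpha$ has weight $m+n-2i+2$ and hence lies in $J_{<i}$. So modulo $J_{<i}$ one has $f(0)u\odot w\equiv -u\odot f(0)w$, while $e(-1)u\odot w=u\odot e(0)w$ for $w\in L(n\omega)$. Consequently $R_i\equiv v_m\otimes\Theta(\chi_m)f(0)^{a+i}v_n$ (formally), where $\Theta$ is the anti-homomorphism $e(-1)\mapsto e$, $f(0)\mapsto -f$. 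In your pairing language this is $\langle u_i^{m,n},f_\Delta x\rangle=\langle e\,u_i^{m,n},x\rangle=0$; you had it available but did not use it.

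\emph{Evaluating the fractional powers.} On the module $\tilde L(n\omega)$ with basis $v_\beta$, $\beta\in\mathbb{C}$, one has $e^{\alpha}v_\beta=c_n(\alpha,\beta)v_{\beta-\alpha}$ with $c_n$ a ratio of Gamma functions. The chain runs $v_{a+i}\mapsto\cdots\mapsto v_i$, and $C_i$ is a nonzero phase times $\prod_{r=1}^q c_n(A_r,\beta_r)$. Here $A_r=(2r-1)\kappa-m-1$, $\beta_r=i+r\kappa-m-1$, and
\[
c_n(A_r,\beta_r)=\frac{\Gamma(i+a+1-(q-r)\kappa)\,\Gamma(n-i+1+(r-1)\kappa)}{\Gamma(i+1-(r-1)\kappa)\,\Gamma(d-i+1+(q-r)\kappa)}.
\]
This is not a product of linear factors. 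Its nonvanishing does not follow from $i\leq d$ and $m,n\leq p-2$ alone: one needs $\gcd(p,q)=1$. That guarantees $(r-1)\kappa\notin\mathbb{Z}$ for $r>1$ and $(q-r)\kappa\notin\mathbb{Z}$ for $r<q$. Then the only arguments that could be poles are $i+1$, $n-i+1$, $i+a+1$, $d-i+1$, which are positive integers for $0\leq i\leq d$.

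Without such an explicit evaluation, or an equivalent argument, your proposal does not prove the lemma.
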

\begin{proof}
The only summands of $L(m\om)\o_\C L(n\om)$ that contribute to the $h(0)$-weight space of weight $m+n-2i$ are $L((m+n-2\ell)\om)$, where $0\leq \ell\leq i$. Hence $R_i\in J_{<i+1}$.

We want to use the relations of $\odot$ to reduce $R_i$ modulo $J_{<i}$.
By~\eqref{eq:Aaction} and~\eqref{eq:uniformrel}, for any $w\in L(n\om)$, the $\odot$-product satisfies the following relations:
\[
e(-1)u\odot w=u\odot e(0)w,\quad [f].(u\odot w)=f(0)u\odot w+u\odot f(0)w.
\]
Then $f_{\Delta}.\varphi_{m,n}(u\odot w)=\varphi_{m,n}(f(0)u\odot w+u\odot f(0)w)$ in $L(m\om)\o_\C L(n\om)$, where $f_\Delta$ is the diagonal action on the tensor product.
Note that if $f_\Delta \al$ has $h(0)$-weight $m+n-2i$, which is the case for $R_{i}$, then $\al$ must have weight $m+n-2i+2$. The only summands of $L(m\om)\o_\C L(n\om)$ that contain this weight are $L((m+n-2\ell)\om)$ for $0\leq \ell\leq i-1$, which are all contained in $J_{<i}$. Hence $f_\Delta\al\equiv 0\pmod{J_{<i}}$, and
\[
\varphi_{m,n}(f(0)u\odot w)\equiv -\varphi_{m,n}(u\odot f(0)w)\pmod{J_{<i}},
\]
if $f(0)u\odot w$ is of $h(0)$-weight $m+n-2i$. Define
\[
\Phi_i:
V^k(m\omega)\odot V^k(n\omega)
\xrightarrow{\varphi_{m,n}}
L(m\omega)\otimes L(n\omega)
\twoheadrightarrow \frac{L(m\omega)\otimes L(n\omega)}{J_{<i}}.
\]
Then the relations above can be written as
\[
\Phi_i(e(-1)u\odot w)
=
\Phi_i(u\odot e(0)w)
\] and, whenever the resulting weight of $f(0)u\odot w$ is $m+n-2i$, we have
\[
\Phi_i(f(0)u\odot w)
=
-\Phi_i(u\odot f(0)w).
\]
In particular, for any positive integer $r$, we have
\begin{equation}\label{eq:tworel}
\Phi_i(e(-1)^ru\odot w)=\Phi_i(u\odot e(0)^rw),\quad
\Phi_i(f(0)^ru\odot w)=\Phi_i(u\odot (-f(0))^rw).
\end{equation}
Since the formal expression of $\chi_m$~\eqref{eq:MFFsingular} involves rational powers of $e(-1)$ and $f(0)$, we introduce the following $\sl_2$-module, on which complex powers of $e$ and $f$ are defined. Let
\[
\tilde{L}(n\om)=\bigoplus_{\b\in \C} \C v_\b,\qquad
f.v_\b=v_{\b+1},\quad e.v_\b=\b(n-\b+1)v_{\b-1},\quad h.v_\b=(n-2\b)v_\b.
\]
The $\sl_2$-relations are easy to verify.
Note that $L(n\om)$ is the quotient of $\spn\{v_\b:\b\in \Z_{\geq 0}\}$ by the submodule $\spn\{v_\b:\b\in \Z_{\geq n+1}\}$, via $v_\b\mapsto f(0)^\b v_n$.

On $\tilde{L}(n\om)$, the complex powers
\[
(-f)^\al v_\b=\exp(\pi i\al)\,v_{\b+\al},\qquad e^\al v_\b=c_n(\al,\b)v_{\b-\al}
\]
are well-defined and satisfy $e^{\al}e^{\al'}=e^{\al+\al'}$, where
\begin{equation}\label{eq:Gammafun}
   c_n(\al,\b)
=
\frac{\Gamma(\beta+1)}
{\Gamma(\beta-\alpha+1)}
\frac{\Gamma(n-\beta+\alpha+1)}
{\Gamma(n-\beta+1)}.
\end{equation}
If $\al,\b$ are non-negative integers, these formulas specialize to the usual relations in $L(n\om)$.

Consider the map $\tau:\sl_2[t^{-1}]\ra \sl_2$, $\tau(x(-r))=(-1)^{r+1}x$, which satisfies $\tau([X,Y])=[\tau(Y),\tau(X)]$. Then $\tau$ induces an algebra homomorphism $\Theta:U(\sl_2[t^{-1}])\ra U(\sl_2)^{\mathrm{op}}$ with $\Theta(e(-1))=e$ and $\Theta(f(0))=-f$. It follows from~\eqref{eq:tworel} that
\begin{equation}\label{eq:transfer}
\Phi_i(u\,v_m\odot w)=\Phi_i\big(v_m\odot \Theta(u)w\big),\qquad u\in U(\sl_2[t^{-1}]),\ w\in L(n\om),
\end{equation}
whenever $u\,v_m\odot w$ has $h(0)$-weight $m+n-2i$. Since $\Theta$ respects the commutation rule $X^\al Y=\sum_{l\geq 0}\binom{\al}{l}\big((\ad X)^lY\big)X^{\al-l}$, it extends to the complex powers occurring in~\eqref{eq:MFFsingular}, sending $e(-1)^\al$ and $f(0)^\al$ to the operators $e^\al$ and $(-f)^\al$ on $\tilde{L}(n\om)$. We define
\[
\tilde\Phi_i(u\,v_m\odot \xi):=\Phi_i\big(v_m\odot \Theta(u)\,\xi\big),\qquad \xi\in \tilde{L}(n\om),
\]
where $u$ is a Malikov--Feigin--Fuchs monomial as in~\eqref{eq:MFFsingular}. Then, by~\eqref{eq:transfer}, $\tilde\Phi_i=\Phi_i$ whenever all exponents are non-negative integers and $\xi\in L(n\om)$. The passage to the fractional exponents of~\eqref{eq:MFFsingular} is to be understood in the Malikov--Feigin--Fuchs formal sense~\cite{malikov1986singular}: the identity~\eqref{eq:transfer} is first established for non-negative integer exponents, where it is a consequence of~\eqref{eq:tworel}, and is then extended to complex exponents by the same formal-calculus argument that makes the right-hand side of~\eqref{eq:MFFsingular} a well-defined vector; see~\cite{feigin1994fusion} for this technique in the $\hat{\sl}_2$ fusion setting.

Write $A_r=(2r-1)\kappa-m-1,
$ and $
B_r=2r\kappa-m-1$. Then, by~\eqref{eq:MFFsingular},
\[
\chi_m=e(-1)^{A_q}f(0)^{B_{q-1}}\cdots f(0)^{B_1}e(-1)^{A_1}v_m.
\]
On the other hand, write $\beta_r:=i+r\kappa-m-1.$ Then $\b_q=i+p-m-1=a+i$. Now using the formal extension $\tilde\Phi_i$, together with the fact that $\tilde\Phi_i=\Phi_i$ on vectors of $V^k(m\om)\odot V^k(n\om)$ with integral exponents, we obtain
\begin{align*}
&\Phi_i(\chi_m\odot f(0)^{a+i}v_n)\\
&\equiv \tilde\Phi_i\left(e(-1)^{A_{q-1}}f(0)^{B_{q-2}}\ds v_m\odot (-f(0))^{B_{q-1}}e(0)^{A_q}f(0)^{\b_q}v_n\right)    \\
&=\pm c_n(A_q,\b_q)\cdot \tilde\Phi_i\left(e(-1)^{A_{q-1}}f(0)^{B_{q-2}}\ds v_m\odot f(0)^{i+(q-1)\kappa-m-1} v_n\right)\\
&= \pm c_n(A_q,\b_q)\cdot \tilde\Phi_i
\left(e(-1)^{A_{q-2}}f(0)^{B_{q-3}}\ds v_m\odot (-f(0))^{B_{q-2}}e(0)^{A_{q-1}}f(0)^{\b_{q-1}} v_n\right)\\
&=\pm c_n(A_q,\b_q)c_n(A_{q-1},\b_{q-1})\cdot \tilde\Phi_i\left(e(-1)^{A_{q-2}}f(0)^{B_{q-3}}\ds v_m\odot f(0)^{i+(q-2)\kappa-m-1}v_n\right)\\
&\vdots \\
&= C'\cdot\tilde\Phi_i(e(-1)^{A_1}v_m\odot f(0)^{\b_1}v_n)\\
&=C_i\cdot \tilde\Phi_i(v_m\odot f(0)^iv_n)=C_i\cdot \Phi_i(v_m\odot f(0)^iv_n)\\
&=C_i\cdot v_m\o f(0)^iv_n,
\end{align*}
where $
C_i
=
\exp\big((B_1+\cdots+B_{q-1})\pi i\big)
\prod_{r=1}^{q}
c_n(A_r,\beta_r)$.

This constant is nonzero. Indeed, substituting $\al=A_r$ and $\b=\b_r$ in \eqref{eq:Gammafun} and using $q\kappa=p$, $m=p-a-1$ and $m+n+2-p=d+1$, we have 
\[
c_n(A_r,\b_r)=\frac{\Gamma\big(i+a+1-(q-r)\kappa\big)\,\Gamma\big(n-i+1+(r-1)\kappa\big)}{\Gamma\big(i+1-(r-1)\kappa\big)\,\Gamma\big(d-i+1+(q-r)\kappa\big)}.
\]
Since $\Gamma$ has no zeros and only simple poles at $\Z_{\leq 0}$, we have $c_n(A_r,\b_r)\in \C^\times$ unless one of these four arguments lies in $\Z_{\leq 0}$; a non-integral argument imposes no condition. As $(p,q)=1$, for $1\leq r\leq q$ we have $(r-1)\kappa\in \Z$ only for $r=1$, and $(q-r)\kappa\in \Z$ only for $r=q$. The arguments to be checked are therefore $i+1$ and $n-i+1$ (when $r=1$), and $i+a+1$ and $d-i+1$ (when $r=q$); all four lie in $\Z_{\geq 1}$, since $0\leq i\leq d\leq n$ and $a=p-m-1\geq 1$ by \eqref{eq:aandd}. Hence $C_i\neq 0$, and this shows \eqref{eq:Ricongruence}.

Now we use induction on $i$ to show that $J_{<i+1}=U(\sl_2).\spn\{R_0,\ds, R_i\}$ for $0\leq i\leq d$. We first claim that
\begin{equation}\label{eq:uiDi}
u_i^{m,n}\equiv D_i\cdot v_m\o f(0)^iv_n\pmod{J_{<i}},\qquad D_i\neq 0.
\end{equation}
Let $W_i\subset L(m\om)\o_\C L(n\om)$ be the $h(0)$-weight space of weight $m+n-2i$; it has basis $\{f(0)^rv_m\o f(0)^{i-r}v_n\}_{0\leq r\leq i}$. Each summand $L((m+n-2\ell)\om)$ with $0\leq \ell\leq i$ meets $W_i$ in a line, so $\dim (J_{<i}\cap W_i)=i$ and $W_i=(J_{<i}\cap W_i)\op \C u_i^{m,n}$. Let $(\cdot,\cdot)$ be the tensor product of the contravariant forms on $L(m\om)$ and $L(n\om)$; it is a nondegenerate $\sl_2$-contravariant form on $L(m\om)\o_\C L(n\om)$ for which the basis $\{f(0)^rv_m\o f(0)^{i-r}v_n\}_r$ of $W_i$ is orthogonal with nonzero norms, and for which distinct isotypic components are orthogonal. Hence $J_{<i}\cap W_i=\{w\in W_i: (u_i^{m,n},w)=0\}$. Since the coefficient of $v_m\o f(0)^iv_n$ in~\eqref{eq:sl2highestweight} is $1$, we get $(u_i^{m,n},v_m\o f(0)^iv_n)=(v_m,v_m)(f(0)^iv_n,f(0)^iv_n)\neq 0$, so $v_m\o f(0)^iv_n\notin J_{<i}$. As $W_i/(J_{<i}\cap W_i)$ is one-dimensional and $u_i^{m,n}\notin J_{<i}$, this proves~\eqref{eq:uiDi}. Combining~\eqref{eq:uiDi} with~\eqref{eq:Ricongruence}, we obtain \[R_i\equiv s_i\cdot u^{m,n}_i\pmod{J_{<i}}\] for some scalar $s_i\neq 0$. In particular, $J_{<1}=U(\sl_2).u_0^{m,n}=U(\sl_2).\spn\{R_0\}$. Assume $J_{<i}=U(\sl_2).\spn\{R_0,\ds, R_{i-1}\}$. Then
because $u_i^{m,n}\equiv s_i^{-1}R_i\pmod{J_{<i}}$, we have
\[U(\sl_2).u^{m,n}_i\ssq U(\sl_2).( s_i^{-1}R_i+J_{<i})\subseteq U(\sl_2).\spn\{R_0,\ds, R_i\}.\]
Hence $J_{<i+1}=U(\sl_2).\spn\{R_0,\ds, R_i\}$.
\end{proof}

\begin{proposition}\label{prop:fusiontensor}
With the notation as above,
write 
\[\mathcal{J}_{m,n}=J_{<d+1}=\sum_{\ell=0}^{d} U(\sl_2).u^{m,n}_\ell\cong\bigoplus_{\ell=0}^{d}L((m+n-2\ell)\om),\]
where $d=m+n-p+1=n-a$~\eqref{eq:aandd}.
Note that $\mathcal{J}_{m,n}=0$ if $m+n<p-1$, i.e., if $d<0$; the elements $R_i$ and the subspaces $J_{<i}$ above are only defined when $d\geq 0$. Then
	\begin{equation}\label{eq:tensor}
L(m\om )\boxtimes L(n\om)\cong \hat{L}_k(m\om)\odot \hat{L}_k(n\om)\cong \frac{L(m\om )\o_\C L(n\om)}{\mathcal{J}_{m,n}}.
\end{equation}
\end{proposition}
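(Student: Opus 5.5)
The first isomorphism in~\eqref{eq:tensor} is Proposition~\ref{prop:6.2} with $\la=m\om$, $\mu=n\om$, so the work lies in the second. I would start from the description~\eqref{eq:firstdescriptionofodot}, which presents $\hat{L}_k(m\om)\odot\hat{L}_k(n\om)$ as the quotient of $L(m\om)\o_\C L(n\om)$ by the $U(\sl_2)$-submodule
\[
\mathcal{K}:=U(\sl_2).\varphi_{m,n}\big(\chi_m\odot L(n\om)+L(m\om)\odot\chi_n\big).
\]
The statement then reduces to showing $\mathcal{K}=\mathcal{J}_{m,n}$.

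For the inclusion $\mathcal{J}_{m,n}\subseteq\mathcal{K}$ I would use Lemma~\ref{lm:6.4}, whose last assertion is $\mathcal{J}_{m,n}=J_{<d+1}=U(\sl_2).\spn\{R_0,\dots,R_d\}$. Since each $R_i=\varphi_{m,n}(\chi_m\odot f(0)^{a+i}v_n)$ already lies in $\mathcal{K}$, the inclusion follows. When $d<0$ we have $\mathcal{J}_{m,n}=0$, so there is nothing to prove.

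For the reverse inclusion I would use weights.

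\emph{The $\chi_m$-part.} By the weight computation preceding~\eqref{eq:defRi}, the vector $\varphi_{m,n}(\chi_m\odot f(0)^jv_n)$ has $h(0)$-weight $m+n+2a-2j$. It vanishes for $j<a$, and for $j=a+i$ it equals $R_i$. The proof of Lemma~\ref{lm:6.4} shows $R_i\in J_{<i+1}\subseteq\mathcal{J}_{m,n}$ for $i\leq d$, and $a+i\leq n$ forces $i\leq d$. So this part of $\mathcal{K}$ lies in $\mathcal{J}_{m,n}$.

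\emph{The $\chi_n$-part.} By symmetry, $\varphi_{m,n}(f(0)^jv_m\odot\chi_n)$ has weight $m+n+2a'-2j$ with $a'=p-n-1$, and $a'+i\leq m$ gives $i\leq m-a'=d$. A vector of weight $m+n-2i$ in $L(m\om)\o L(n\om)$ lies in $\bigoplus_{\ell\le i}L((m+n-2\ell)\om)=J_{<i+1}$ only once we know it is a sum of isotypic components with $\ell\leq i$. That holds for any weight vector, since weight $m+n-2i$ only occurs in summands with $\ell\leq i$. The same argument applies to the $\chi_m$-part as well.

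Hence every generator of $\mathcal{K}$ has weight $m+n-2i$ with $i\leq d$. Since $\mathcal{J}_{m,n}=\bigoplus_{\ell\le d}L((m+n-2\ell)\om)$ contains the full weight spaces of those weights, the generators lie in $\mathcal{J}_{m,n}$. As $\mathcal{J}_{m,n}$ is a submodule, $\mathcal{K}\subseteq\mathcal{J}_{m,n}$.

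The main obstacle is the mirror-image case $L(m\om)\odot\chi_n$. The weight argument needs $\varphi_{m,n}$ to be an $\sl_2$-map on these vectors, which holds by Lemma~\ref{lm:6.3}. It also needs the weight of $\chi_n$ from~\eqref{eq:chimeigenvalue}, namely $2p-n-2$. The remaining risk is bookkeeping: checking that weights beyond $m+n$, which occur when $j<a'$, give zero, as in the text. The nonvanishing constants of Lemma~\ref{lm:6.4} are not needed for this direction.
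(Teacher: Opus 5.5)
Your proposal is correct and follows the paper's proof. Both reduce via~\eqref{eq:firstdescriptionofodot} to showing $\mathcal{K}=\mathcal{J}_{m,n}$, obtain $\mathcal{J}_{m,n}\subseteq\mathcal{K}$ from the last assertion of Lemma~\ref{lm:6.4}, and obtain $\mathcal{K}\subseteq\mathcal{J}_{m,n}$ from the weight bound $R_i\in J_{<i+1}$. Your explicit weight-space argument for the $L(m\om)\odot\chi_n$ generators (with $a'=p-n-1$ and $m-a'=d$) simply fills in the paper's ``similarly''.
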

\begin{proof}
In view of~\eqref{eq:firstdescriptionofodot}, it suffices to show
\[U(\sl_2).\left(\varphi_{m,n}(\chi_m\odot L(n\om)+L(m\om)\odot \chi_n )\right)=\mathcal{J}_{m,n}.
\]
We first show that $\varphi_{m,n}(\chi_m\odot f(0)^jv_n)\in \mathcal{J}_{m,n}$ for all $0\leq j\leq n$. 
Recall that $\varphi_{m,n}(\chi_m\odot f(0)^jv_n)=0$ if $j<a$. So it suffices to show $R_i\in \mathcal{J}_{m,n}$ for all $0\leq i\leq d$, where $R_i=\varphi_{m,n}(\chi_m\odot f(0)^{a+i}v_n)$.
By Lemma~\ref{lm:6.4}, we have
\[
R_i\in J_{<i+1}=\sum_{\ell=0}^{i}U(\sl_2).u^{m,n}_{\ell}\subset \mathcal{J}_{m,n}.
\]
 Similarly, we can show $\varphi_{m,n}(f(0)^jv_m\odot \chi_n)\in \mathcal{J}_{m,n}$ for all $0\leq j\leq m$.

Conversely, by Lemma~\ref{lm:6.4} again, we have $J_{<i+1}=U(\sl_2).\spn\{R_0,\ds, R_i\}$. Hence \[\mathcal{J}_{m,n}=J_{<d+1}=U(\sl_2).\spn\{R_0,\ds, R_d\}\subseteq U(\sl_2).\varphi_{m,n}(\chi_m\odot L(n\om)).\]
Hence $U(\sl_2).\left(\varphi_{m,n}(\chi_m\odot L(n\om)+L(m\om)\odot \chi_n )\right)=\mathcal{J}_{m,n}.$
\end{proof}

It follows immediately from~\eqref{eq:tensor} that the fusion tensor product in the category $\O_{k,\Ord}^\fin$ is given by
\[
L(m\om)\boxtimes L(n\om)\cong \bigoplus_{\ell=\max\{0,m+n-p+2\}}^{\min\{m,n\}} L((m+n-2\ell)\om).
\]
Equivalently, the fusion rules among irreducible modules in $\O_{k,\Ord}^\fin$ or $\O_{k,\Ord}$ are given by
\[
N_{m,n}^{\,r}
=
\begin{cases}
1,
&
r=m+n-2\ell
\text{ for some }
\max\{0,m+n-p+2\}\le \ell\le\min\{m,n\},\\[4pt]
0,
&\text{otherwise},
\end{cases}
\]
 for $0\leq m,n,r\leq p-2$; cf.\ \eqref{eq:6.9}.
By dropping the index $\ell$, we can rewrite them as
\begin{equation}\label{eq:fusionrules}
N_{m,n}^{\,r}
=
\begin{cases}
1,
&
|m-n|\le r\le
\min\{m+n,\;2p-4-m-n\}\\
& \text{and}\ \ r\equiv m+n\pmod{2},\\
0, &\text{otherwise}.
\end{cases}
\end{equation}
This agrees with the fusion rules among ordinary modules of $L_k(\sl_2)$ at level $k+2=\frac{p}{q}$ in~\cite{DLM96,feigin1994fusion,creutzig2018braided,Creutzig19}.

\subsection{Rank and degree of the $L_k(\g)$-coinvariant bundles}\label{subsec:rank_trees}

The fusion rules $N_{\la\mu}^\nu$ of the category $\O_{k,\Ord}$ were characterized in Section~\ref{subsec:fusion3pt} for arbitrary $\g$ and computed explicitly for $\g=\sl_2$ in Section~\ref{subsec:sl2fusion}. By independent methods, the fusion rules of admissible-level affine VOAs of type $ADE$ were identified with the fusion rules of the integrable level $k'=p-h^\vee$ in~\cite[Corollary 7.4]{Creutzig19}; see Theorem~\ref{thm:creutzig_fusion} below. We now express the two basic numerical invariants of the bundles produced by Theorem~\ref{thm:mainA} in terms of these fusion rules: the rank, via degeneration to a maximally degenerate curve, and the first Chern class, via the projectively flat connection of~\cite{DGT1}. In Section~\ref{subsubsec:comparison}, we combine the two to compare these bundles with the classical ones attached to integrable levels.

\subsubsection{Rank via trivalent trees}\label{subsubsec:rank}

Throughout, fix $n\geq 3$ and an $n$-tuple of simple ordinary modules $M^i=\hat{L}_k(\mu_i)$, with $\mu_i\in \Pr^k_\Z$. Write $N_{\la\mu}^\nu$ for the fusion rules~\eqref{eq:homlamunu}, and recall from~\eqref{eq:contragredient} that the contragredient of a simple ordinary module is again one, $\hat{L}_k(\la)^\vee\cong \hat{L}_k(-w_0(\la))$. For $\la_1,\la_2,\la_3\in \Pr^k_\Z$ we set
\begin{equation}\label{eq:threepoint}
N_{\la_1\la_2\la_3}:=\dim\,[\hat{L}_k(\la_1)\o\hat{L}_k(\la_2)\o\hat{L}_k(\la_3)]_{(\P^1,(\infty,1,0),(1/z,z-1,z))}=N_{\la_2\la_3}^{-w_0(\la_1)},
\end{equation}
where the second equality follows from~\eqref{eq:cb_io} (with $\hat{L}_k(\la_1)$ at $\infty$), \eqref{eq:homlamunu}, and $\hat{L}_k(\la_1)=\big(\hat{L}_k(-w_0(\la_1))\big)^\vee$~\eqref{eq:contragredient}. Since $\mathrm{PGL}_2(\C)$ acts transitively on ordered triples of distinct points of $\P^1$, Remark~\ref{remark:coordinatefree} shows that $N_{\la_1\la_2\la_3}$ is symmetric in its three arguments; in particular $N_{\la_1\la_2\la_3}=N_{\la_1\la_2}^{-w_0(\la_3)}$, and it is determined by the fusion rules $N_{\la\mu}^\nu$ together with the involution $-w_0$ of $\Pr^k_\Z$.

Let $\Gamma$ be a trivalent tree with $n$ legs, labeled by $1,\ds,n$; such a tree has $n-2$ vertices and $n-3$ internal edges, and is the dual graph of a maximally degenerate stable curve $(C_\Gamma,P_\bullet)\in \overline{\M}_{0,n}$, all of whose components are three-pointed projective lines. Write $E_{\mathrm{int}}(\Gamma)$ for the set of internal edges. A \textbf{labeling} of $\Gamma$ is a map $\nu_\bullet:E_{\mathrm{int}}(\Gamma)\ra \Pr^k_\Z$; together with $\mu_\bullet$ on the legs it assigns an ordinary weight to every half-edge, using $\la\mapsto -w_0(\la)$ to pass between the two half-edges of an internal edge.

\begin{theorem}[Rank formula]\label{thm:rank_formula}
Let $k$ be admissible and let $M^i=\hat{L}_k(\mu_i)$, $\mu_i\in \Pr^k_\Z$, for $1\leq i\leq n$. Then for any trivalent tree $\Gamma$ with $n$ legs,
\begin{equation}\label{eq:verlinde_gluing}
    \rk \VV_{0,n}(L_k(\g),M^\bullet) = \sum_{\nu_\bullet: E_{\mathrm{int}}(\Gamma)\ra \Pr^k_\Z} \ \prod_{v\in \mathrm{Vert}(\Gamma)} N_{\la_{v,1}\la_{v,2}\la_{v,3}},
\end{equation}
where $\la_{v,1},\la_{v,2},\la_{v,3}\in \Pr^k_\Z$ are the weights attached to the three half-edges meeting at $v$ and $N_{\la_1\la_2\la_3}=N_{\la_1\la_2}^{-w_0(\la_3)}$ is the three-point number~\eqref{eq:threepoint}. In particular, the right-hand side does not depend on $\Gamma$.
\end{theorem}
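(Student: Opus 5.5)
By Theorem~\ref{thm:mainA}, $\VV_{0,n}(L_k(\g),M^\bullet)$ is locally free on the connected variety $\overline{\M}_{0,n}$. Its rank therefore equals the fiber dimension at any closed point. We evaluate it at the maximally degenerate curve $(C_\Gamma,P_\bullet)$ whose dual graph is $\Gamma$. Independence of $\Gamma$ is then automatic, since the left-hand side of~\eqref{eq:verlinde_gluing} does not involve $\Gamma$. The plan is to compute $\dim[M^\bullet]_{(C_\Gamma,P_\bullet)}$ by cutting $C_\Gamma$ at its $n-3$ nodes one at a time, using the restricted factorization theorem (Theorem~\ref{thm:restricted_factorization}).

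The key step is an induction on the number of nodes, proving a slightly stronger statement. For any stable genus-zero curve $C$ with dual tree $\Gamma_C$ whose components carry simple ordinary insertions,
\[
\dim[M^\bullet]_{(C,P_\bullet)}=\sum_{\nu_\bullet}\prod_{v}\dim[\,\cdot\,]_{C_v},
\]
where the sum runs over labelings of the internal edges by $\Pr^k_\Z$, and each factor is the coinvariant space of the component $C_v$ with its marked points and node preimages. At the two preimages of an edge labeled $\nu$ we insert $\hat{L}_k(\nu)$ and $\hat{L}_k(\nu)^\vee\cong\hat{L}_k(-w_0(\nu))$ (by~\eqref{eq:contragredient}).

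For the inductive step, pick any node $Q$; it is separating because $\Gamma_C$ is a tree. Apply~\eqref{eq:ordinary_factorization}, which expresses $[M^\bullet]$ as $\bigoplus_{\la\in\Pr^k_\Z}$ of tensor products of coinvariants on $\tilde C_+$ and $\tilde C_-$. Each of these curves is a stable genus-zero curve with fewer nodes, and all of its insertions are simple ordinary modules; in particular $\hat{L}_k(\la)^\vee$ is simple ordinary by Lemma~\ref{lm:w0closed}(2). We therefore apply the inductive hypothesis to each side. Taking dimensions turns the direct sum of tensor products into a sum of products. Distributing the sums over the labelings of the two halves gives the sum over all labelings of $\Gamma_C$.

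When $C=C_\Gamma$, every component is a smooth three-pointed $\P^1$ carrying three simple ordinary modules. By Remark~\ref{remark:coordinatefree} and transitivity of $\mathrm{PGL}_2$ on ordered triples, each factor equals $N_{\la_{v,1}\la_{v,2}\la_{v,3}}$ from~\eqref{eq:threepoint}, with the weights read off the half-edges. We use the symmetry of $N$ noted after~\eqref{eq:threepoint}, so the ordering of the three points does not matter.

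The main point requiring care is orientation bookkeeping at each edge. Which branch receives $\hat{L}_k(\nu)$ and which receives $\hat{L}_k(-w_0(\nu))$ depends on the choice of sides in Theorem~\ref{thm:restricted_factorization}. Reversing the choice amounts to the reindexing $\nu\mapsto-w_0(\nu)$, which is a bijection of $\Pr^k_\Z$ by Lemma~\ref{lm:w0closed}(2). The sum is therefore independent of these choices and agrees with the labeling convention in the statement.

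A second point needs a short justification. The restricted factorization theorem is stated for the marked curve without formal coordinates, while the three-point numbers are defined with specific coordinates. The comparison is supplied by Definition/Proposition~\ref{prop:isocoinvfiber}, since all conformal dimensions involved are rational.
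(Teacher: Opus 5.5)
Your proposal is correct and follows the same route as the paper. You use local freeness from Theorem~\ref{thm:mainA} to evaluate the rank at the maximally degenerate curve $(C_\Gamma,P_\bullet)$, then apply Theorem~\ref{thm:restricted_factorization} successively at all $n-3$ nodes, which is legitimate because $\hat{L}_k(\nu)^\vee$ is again simple ordinary, reducing to the three-point numbers $N_{\la_{v,1}\la_{v,2}\la_{v,3}}$; your explicit induction and the remarks on edge orientation via $\nu\mapsto -w_0(\nu)$ and on formal coordinates just spell out details the paper leaves implicit.
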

\begin{proof}
By Theorem~\ref{thm:mainA} the sheaf is locally free, so its rank equals $\dim [M^\bullet]_{(C,P_\bullet)}$ at any point of $\overline{\M}_{0,n}$; evaluate at $(C_\Gamma,P_\bullet)$. All insertions lie in $\O_{k,\Ord}$, so Theorem~\ref{thm:restricted_factorization} applies at each of the $n-3$ internal edges and, since $\hat{L}_k(\nu)^\vee$ is again simple ordinary, may be applied at all of them in turn. This expresses $[M^\bullet]_{(C_\Gamma,P_\bullet)}$ as the direct sum over labelings $\nu_\bullet$ of tensor products, over the vertices of $\Gamma$, of three-point genus-zero coinvariants; by~\eqref{eq:threepoint} the vertex factors have dimensions $N_{\la_{v,1}\la_{v,2}\la_{v,3}}$. Independence of $\Gamma$ follows, since the left-hand side does not depend on $\Gamma$.
\end{proof}

The fusion rules entering~\eqref{eq:verlinde_gluing} are known in closed form for $\g=\sl_2$ by~\eqref{eq:fusionrules}. For simply-laced $\g$ they are identified with integrable-level fusion rules by Theorem~\ref{thm:creutzig_fusion} below; both that theorem and the comparison of Section~\ref{subsubsec:comparison} refer to the integrable level
\begin{equation}\label{eq:kprime}
k':=p-h^\vee.
\end{equation}
We write $P^{k'}_+=\{\la\in P_+:\<\la,\theta^\vee\>\leq k'\}$ for the set of level-$k'$ integrable weights and $N^{k'}$ for the fusion rules of the integrable-level affine VOA $L_{k'}(\g)$.

\begin{lemma}\label{lm:comparison}
Let $k$ be admissible with $k+h^\vee=p/q$ and $(r^\vee,q)=1$. Then
\begin{enumerate}
\item the label sets coincide: $\Pr^k_\Z=P^{k'}_+$;
\item the conformal dimensions are proportional: $a^k_\mu=q\cdot a^{k'}_\mu$ for every $\mu\in \Pr^k_\Z$, where $a^k_\mu$ and $a^{k'}_\mu$ denote the conformal dimensions of $\hat{L}_k(\mu)$ and $\hat{L}_{k'}(\mu)$, respectively.
\end{enumerate}
\end{lemma}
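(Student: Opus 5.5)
Both parts are direct computations from the explicit formulas already recorded in the paper, so I would simply compare definitions.

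For (1), I would use the principal case of \eqref{eq:PrkZ}. Since $(r^\vee,q)=1$,
\[
\Pr^k_\Z=\{\la\in\h^\ast:\<\la,\al_i^\vee\>\in\Z_{\geq0}\ \forall i,\ \<\la,\theta^\vee\>\leq p-h^\vee\}.
\]
The first condition says exactly $\la\in P_+$, as already noted in Lemma~\ref{lm:w0closed}(1). The second condition is $\<\la,\theta^\vee\>\leq k'$ by \eqref{eq:kprime}. This set is the definition of $P^{k'}_+$. One should also check that $k'=p-h^\vee\geq 0$, so that $L_{k'}(\g)$ is a genuine integrable level; this holds because the admissibility criterion forces $p\geq h^\vee$ in the principal case. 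Nothing more is needed for (1).

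For (2), I would apply the conformal weight formula \eqref{eq:confdim_prelim} at both levels. At level $k$ the denominator is $2(k+h^\vee)=2p/q$, so
\[
a^k_\mu=\frac{q\<\mu,\mu+2\rho\>}{2p}.
\]
At level $k'$ we have $k'+h^\vee=p$, so
\[
a^{k'}_\mu=\frac{\<\mu,\mu+2\rho\>}{2p}.
\]
Comparing the two gives $a^k_\mu=q\,a^{k'}_\mu$.

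The only step that needs any care is justifying the use of \eqref{eq:confdim_prelim} for $\hat{L}_{k'}(\mu)$. This is the standard Sugawara formula, valid at any non-critical level $k'\neq -h^\vee$. Here $k'+h^\vee=p\geq 1$, so the level is non-critical and the formula applies. There is no real obstacle beyond this check.
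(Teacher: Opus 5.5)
Your proposal is correct and matches the paper's proof essentially verbatim: (1) reads off the principal case of \eqref{eq:PrkZ}, using $p\geq h^\vee$ to get $k'\geq 0$, and (2) substitutes $k+h^\vee=p/q$ and $k'+h^\vee=p$ into \eqref{eq:confdim_prelim}. Your extra check that $k'$ is non-critical, because $k'+h^\vee=p\geq 1$, is harmless and is left implicit in the paper.
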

\begin{proof}
For (1), the first case of~\eqref{eq:PrkZ} says that $\la\in\Pr^k_\Z$ if and only if $\la\in P_+$ and $\<\la,\theta^\vee\>\leq p-h^\vee=k'$, which is the defining description of $P^{k'}_+$. Admissibility with $(r^\vee,q)=1$ requires $p\geq h^\vee$, so $k'\geq 0$. For (2), since $k+h^\vee=p/q$ and $k'+h^\vee=p$, \eqref{eq:confdim_prelim} gives
\[
a^k_\mu=\frac{\<\mu,\mu+2\rho\>}{2(k+h^\vee)}=\frac{q\<\mu,\mu+2\rho\>}{2p},\qquad a^{k'}_\mu=\frac{\<\mu,\mu+2\rho\>}{2(k'+h^\vee)}=\frac{\<\mu,\mu+2\rho\>}{2p}. \qedhere
\]
\end{proof}

\begin{theorem}[{\cite[Corollary 7.4]{Creutzig19}}]\label{thm:creutzig_fusion}
Let $\g$ be simply laced and let $k$ be admissible with $k+h^\vee=p/q$. Then for all $\la,\mu\in \Pr^k_\Z$,
\[
\hat{L}_k(\la)\boxtimes \hat{L}_k(\mu)\cong \bigoplus_{\phi\in P^{k'}_+} \left(N^{k'}\right)_{\la\mu}^{\phi}\ \hat{L}_k(\phi).
\]
In particular $N_{\la\mu}^{\nu}=\left(N^{k'}\right)_{\la\mu}^{\nu}$ for all $\la,\mu,\nu\in \Pr^k_\Z$.
\end{theorem}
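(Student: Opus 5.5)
The decomposition of $\hat{L}_k(\la)\boxtimes \hat{L}_k(\mu)$ is imported from \cite[Corollary 7.4]{Creutzig19}. We do not reprove the Creutzig computation. Instead, we check that its hypotheses and conventions match the present setting, and then derive the ``in particular'' clause from the paper's own definition of $N_{\la\mu}^\nu$ in~\eqref{eq:homlamunu}.

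\emph{Matching the setting.} For simply-laced $\g$ we have $r^\vee=1$, so $(r^\vee,q)=1$ holds automatically and every admissible level is principal. Lemma~\ref{lm:comparison}(1) therefore applies and gives $\Pr^k_\Z=P^{k'}_+$ with $k'=p-h^\vee\geq 0$. Thus the index set $P^{k'}_+$ of the sum in the statement is exactly the set of labels of simple objects of $\O_{k,\Ord}$ (Definition~\ref{def:cats}(3)), and each summand $\hat{L}_k(\phi)$ is a genuine simple object there. The tensor product $\boxtimes$ in \cite{Creutzig19} is the Huang--Lepowsky--Zhang product on $\O_{k,\Ord}$ constructed in \cite{creutzig2018braided}, which is the same structure recalled in Section~\ref{subsec:fusion3pt}. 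Granting this, the displayed isomorphism is exactly \cite[Corollary 7.4]{Creutzig19}.

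\emph{The ``in particular'' clause.} By the universal property of the HLZ tensor product, for ordinary $M^3$ we have a natural identification
\[
\Hom\big(\hat{L}_k(\la)\boxtimes\hat{L}_k(\mu),M^3\big)\cong I\fusion{\hat{L}_k(\la)}{\hat{L}_k(\mu)}{M^3}.
\]
Now take $M^3=\hat{L}_k(\nu)$. Since $\O_{k,\Ord}$ is semisimple (Definition~\ref{def:cats}(3)), Schur's lemma gives
\[
\dim\Hom\Big(\bigoplus_{\phi} (N^{k'})_{\la\mu}^{\phi}\,\hat{L}_k(\phi),\ \hat{L}_k(\nu)\Big)=(N^{k'})_{\la\mu}^{\nu}.
\]
By~\eqref{eq:homlamunu} the left-hand side has dimension $N_{\la\mu}^\nu$. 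Equivalently, one can pass through Proposition~\ref{prop:6.2} and compare multiplicities of $L(\nu)$ in $\O^\fin_{k,\Ord}$.

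\emph{Main obstacle.} The substantive content lies entirely in the cited result, so the delicate point is the bookkeeping of conventions. Specifically, one must check that the intertwining operators appearing in~\eqref{eq:cb_io} and Theorem~\ref{thm:odotfusion} are the same logarithmic-free HLZ intertwining operators that represent $\boxtimes$ in \cite{creutzig2018braided,Creutzig19}. This holds because all modules involved are ordinary and semisimple, so no logarithmic terms arise. One must also check that no shift by $-w_0$ enters when identifying $\nu$ with the target rather than the dual.
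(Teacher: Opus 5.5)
Your proposal is correct and essentially matches the paper. The paper likewise does not reprove \cite[Corollary 7.4]{Creutzig19}; it only remarks that the $\boxtimes$-multiplicities agree with the fusion rules $N_{\la\mu}^\nu$, which are defined via intertwining operators and conformal blocks, by~\eqref{eq:cb_io} and Proposition~\ref{prop:6.2}. Your reconciliation, using the HLZ universal property of $\boxtimes$ together with Schur's lemma in the semisimple category $\O_{k,\Ord}$, is the same identification made explicit: it is exactly the multiplicity count used in the proof of Proposition~\ref{prop:6.2}. Your check that $P^{k'}_+=\Pr^k_\Z$ via Lemma~\ref{lm:comparison}(1) is a harmless addition.
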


Theorem~\ref{thm:creutzig_fusion} is stated for the vertex tensor product $\boxtimes$, whereas the fusion rules of Section~\ref{subsec:fusion3pt} were defined through conformal blocks on $\P^1$; the two agree by~\eqref{eq:cb_io} and Proposition~\ref{prop:6.2}.

\begin{corollary}\label{coro:rank_integrable}
Let $\g$ be simply laced and $k$ admissible with $k+h^\vee=p/q$, and let $k'=p-h^\vee$. Then for every $n\geq 3$ and every $\mu_\bullet$ in $\Pr^k_\Z$,
\[
\rk \VV_{0,n}(L_k(\g),M^\bullet)=\rk \VV_{0,n}(L_{k'}(\g),M^{\bullet,k'}),
\]
where $M^{\bullet,k'}=(\hat{L}_{k'}(\mu_1),\ds,\hat{L}_{k'}(\mu_n))$ is the tuple of integrable level-$k'$ modules carrying the same labels, which is well-defined by Lemma~\ref{lm:comparison}(1).
\end{corollary}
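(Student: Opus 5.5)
We compare the two sides through the rank formula of Theorem~\ref{thm:rank_formula}, applied on one and the same trivalent tree $\Gamma$ with $n$ legs. For the admissible level, Theorem~\ref{thm:rank_formula} expresses $\rk \VV_{0,n}(L_k(\g),M^\bullet)$ as a sum over labelings $\nu_\bullet\colon E_{\mathrm{int}}(\Gamma)\ra \Pr^k_\Z$ of products of three-point numbers $N_{\la_{v,1}\la_{v,2}\la_{v,3}}=N_{\la_{v,1}\la_{v,2}}^{-w_0(\la_{v,3})}$. For the integrable level $k'$, the VOA $L_{k'}(\g)$ is strongly rational, so by~\cite{DGT2} its coinvariant sheaf is a vector bundle satisfying factorization. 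Degenerating to $(C_\Gamma,P_\bullet)$ then gives the classical analogous sum over labelings $E_{\mathrm{int}}(\Gamma)\ra P^{k'}_+$ of products of $(N^{k'})_{\la_{v,1}\la_{v,2}}^{-w_0(\la_{v,3})}$. Here the classical three-point number is read off by the same identification~\eqref{eq:cb_io}, with contragredient $\hat{L}_{k'}(\la)^\vee\cong\hat{L}_{k'}(-w_0(\la))$.

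The comparison then proceeds term by term. First, by Lemma~\ref{lm:comparison}(1), applicable since $r^\vee=1$ for simply laced $\g$, the index sets agree: $\Pr^k_\Z=P^{k'}_+$. So the labelings of $E_{\mathrm{int}}(\Gamma)$ are the same on both sides, and the leg labels $\mu_i$ make sense at both levels. Second, the involution $\la\mapsto-w_0(\la)$ preserves this common set (Lemma~\ref{lm:w0closed}(2)). It is used identically on both sides to pass between the two half-edges of an internal edge, so the weights $\la_{v,j}$ attached to each vertex coincide for a given labeling. Third, by Theorem~\ref{thm:creutzig_fusion}, $N_{\la\mu}^\nu=(N^{k'})_{\la\mu}^\nu$ for all $\la,\mu,\nu$ in the common set. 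This holds provided the vertex tensor product fusion rules agree with the conformal-block fusion rules $N^\nu_{\la\mu}$ of~\eqref{eq:homlamunu}, which the paper notes follows from~\eqref{eq:cb_io} and Proposition~\ref{prop:6.2}. Hence each vertex factor agrees, so each summand agrees, and the two sums are equal.

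The main point requiring care is the classical side. We must make sure the integrable rank is given by exactly the same gluing formula, with the same normalization of three-point numbers via $-w_0$ and contragredients; this is standard from the factorization theorem of~\cite{DGT2} together with~\eqref{eq:cb_io} at level $k'$. We also need the fusion-rule identification in Theorem~\ref{thm:creutzig_fusion} to be one of multiplicities $\dim I\fusion{M^1}{M^2}{M^3}$ and not merely of some coarser invariant. Because the category is semisimple with finitely many simples, the multiplicity of $\hat{L}_k(\nu)$ in $\hat{L}_k(\la)\boxtimes\hat{L}_k(\mu)$ equals $\dim\Hom(\hat{L}_k(\la)\boxtimes\hat{L}_k(\mu),\hat{L}_k(\nu))$. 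By the universal property of $\boxtimes$ this is the dimension of the space of intertwining operators, and that settles the point.
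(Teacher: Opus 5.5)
Your proposal is correct and follows essentially the same route as the paper. The paper likewise matches the label sets via Lemma~\ref{lm:comparison}(1), the involution $-w_0$, and the fusion rules via Theorem~\ref{thm:creutzig_fusion}, and then observes that \eqref{eq:verlinde_gluing} is a universal expression that also computes the integrable-level rank by factorization. Your extra step uses the universal property of $\boxtimes$ to identify the $\boxtimes$-multiplicities with dimensions of spaces of intertwining operators; this is a valid substitute for the paper's appeal to \eqref{eq:cb_io} and Proposition~\ref{prop:6.2}.
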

\begin{proof}
By Lemma~\ref{lm:comparison}(1) the two theories have the same labeling set $\Pr^k_\Z=P^{k'}_+$, by Theorem~\ref{thm:creutzig_fusion} the same fusion rules, and hence, the involution $-w_0$ being the same on both sides, the same three-point numbers~\eqref{eq:threepoint}; \eqref{eq:verlinde_gluing} is a universal expression in these pieces of data, and the same formula computes the rank in the integrable case~\cite{TUY}.
\end{proof}

\begin{remark}\label{rmk:hypF}
For $\g=\sl_2$, the conclusion of Theorem~\ref{thm:creutzig_fusion} also follows from~\eqref{eq:fusionrules}, an independent verification in the simplest case. In types $B$, $C$, $F_4$, $G_2$, it is open, and both available routes fail: the proof of Theorem~\ref{thm:creutzig_fusion} passes through the coset realization of~\cite{ACL19}, established in types $ADE$ only, and the singular vectors used in Section~\ref{subsec:sl2fusion} are not known explicitly beyond $\sl_2$.
\end{remark}

\subsubsection{Degree via the projectively flat connection}\label{subsubsec:degree}

Write $E=\VV_{0,n}(L_k(\g),M^\bullet)$ with $M^i=\hat{L}_k(\mu_i)$, $\mu_i\in\Pr^k_\Z$; it is locally free by Theorem~\ref{thm:mainA}. Let $c$ be the central charge of $L_k(\g)$ and $a_{\mu_i}$ the conformal dimensions~\eqref{eq:confdim_prelim}, all rational since $k+h^\vee=p/q$. Let $\eta_{\mathrm{Hdg}}=c_1(\Lambda)$ be the Hodge class, $\psi_i=c_1(\Psi_i)$ be the psi classes, and let $\delta_{0:I}$, for $I\subset\{1,\ds,n\}$ with $2\leq|I|\leq n-2$ taken modulo $I\mapsto I^c$, be the boundary divisors of $\overline{\M}_{0,n}$.

The $\hat{L}_k(\mu_i)$ are simple ordinary modules over the CFT-type VOA $L_k(\g)$, so~\cite[Theorem 7.1]{DGT1} applies: the Atiyah algebra $\tfrac c2\mathcal{A}_\Lambda+\sum_i a_{\mu_i}\mathcal{A}_{\Psi_i}$ acts on $E$, that is, the twist
\begin{equation}\label{eq:twist}
E'=E\o \Lambda^{-c/2}\o\bigotimes\nolimits_{i=1}^n\Psi_i^{-a_{\mu_i}}
\end{equation}
carries a flat logarithmic connection $\nabla$ along the normal crossings boundary. For each boundary divisor, $\Res_{\delta_{0:I}}\nabla\in\End(E'|_{\delta_{0:I}})=\End(E|_{\delta_{0:I}})$ denotes the residue, and $\tr(\Res_{\delta_{0:I}}\nabla)$ its trace, a locally constant function on $\delta_{0:I}$.

\begin{proposition}\label{prop:c1_shape}
In $\mathrm{Pic}(\overline{\M}_{0,n})\o\Q$,
\begin{equation}\label{eq:c1_shape}
c_1(E)=\rk E\cdot \sum_{i=1}^n a_{\mu_i}\psi_i-\sum_{I} \tr\left(\Res_{\delta_{0:I}}\nabla\right)\delta_{0:I}.
\end{equation}
\end{proposition}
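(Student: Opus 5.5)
We compute $c_1$ of the twisted bundle $E'$ of \eqref{eq:twist} first, and then untwist. The twist satisfies
\[
c_1(E')=c_1(E)-\rk E\cdot\Big(\tfrac c2\,\eta_{\mathrm{Hdg}}+\sum_{i=1}^n a_{\mu_i}\psi_i\Big).
\]
This identity is a formal computation with rational powers of line bundles. It makes sense because $c$ and the $a_{\mu_i}$ are rational, so every term can be read in $\mathrm{Pic}(\overline{\M}_{0,n})\o\Q$, after passing to a suitable tensor power if needed. On $\overline{\M}_{0,n}$ the Hodge bundle is trivial in genus zero, so $\eta_{\mathrm{Hdg}}=0$ and the $c$-term drops. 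It therefore suffices to show
\[
c_1(E')=-\sum_I\tr\big(\Res_{\delta_{0:I}}\nabla\big)\,\delta_{0:I}.
\]

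For this we use the standard fact for a vector bundle with a logarithmic connection that is flat on the complement $U=\M_{0,n}$ of a simple normal crossings divisor $D=\bigcup_I\delta_{0:I}$. Taking determinants, $\det E'$ is a line bundle carrying the logarithmic connection $\tr\nabla$, and $\tr\nabla$ is flat on $U$. Its residue along each $\delta_{0:I}$ is the constant $\tr(\Res_{\delta_{0:I}}\nabla)$; it is constant because $\delta_{0:I}$ is irreducible, hence connected, and the residue of a flat log connection has locally constant trace. The Ohtsuki/Esnault--Viehweg residue theorem for line bundles, on the smooth projective variety $\overline{\M}_{0,n}$, then gives
\[
c_1(\det E')=-\sum_I \tr\big(\Res_{\delta_{0:I}}\nabla\big)\,[\delta_{0:I}]
\]
in $H^2(\overline{\M}_{0,n},\Q)$. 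We also need $\mathrm{Pic}(\overline{\M}_{0,n})\o\Q\cong H^2(\overline{\M}_{0,n},\Q)$. This holds because $\overline{\M}_{0,n}$ has Chow groups isomorphic to cohomology (Keel) and $H^1(\overline{\M}_{0,n},\Q)=0$. Together these upgrade the cohomological identity to one in $\mathrm{Pic}\o\Q$.

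The main obstacle is making the twisted object $E'$ honest. With rational exponents, $E'$ is only a twisted or projective object, so either the argument runs on a finite cover or tensor power, or one uses the Atiyah-algebra formalism of \cite{DGT1} directly. The cleanest route is to apply the residue theorem to an actual line bundle. Choose $N$ with $Nc/2\in\Z$ and all $Na_{\mu_i}\in\Z$, and consider
\[
(\det E)^{\o N}\o\Lambda^{-Nc\rk E/2}\o\bigotimes_i\Psi_i^{-Na_{\mu_i}\rk E}.
\]
On this line bundle, the action of $N\rk E$ times the Atiyah algebra $\tfrac c2\mathcal A_\Lambda+\sum_i a_{\mu_i}\mathcal A_{\Psi_i}$ induced by trace yields a genuine logarithmic flat connection. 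Its residues are $N\tr(\Res_{\delta_{0:I}}\nabla)$. Applying the residue theorem to it and dividing by $N$ gives \eqref{eq:c1_shape}.

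A secondary point is that the log-connection of \cite{DGT1} genuinely has at most logarithmic poles along each $\delta_{0:I}$ on the now locally free $E$. This follows from the construction in \cite[Theorem 7.1]{DGT1} together with Theorem~\ref{thm:mainA}.
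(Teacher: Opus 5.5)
Your proposal is correct and follows essentially the same route as the paper. Like the paper, you untwist via \eqref{eq:twist}, use $\Lambda\cong\O_{\overline{\M}_{0,n}}$ in genus zero to drop the $c$-term, and apply Ohtsuki's residue formula to the flat logarithmic connection on $E'$. Your extra steps make explicit two points the paper's one-line proof leaves implicit: passing to an honest integral twist of $(\det E)^{\o N}$, and using Keel's isomorphism $\mathrm{Pic}(\overline{\M}_{0,n})\o\Q\cong H^2(\overline{\M}_{0,n},\Q)$ to transport the cohomological identity into $\mathrm{Pic}\o\Q$.
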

\begin{proof}
By the residue formula for logarithmic connections~\cite[Theorem 3]{Ohtsuki82}, $c_1(E')=-\sum_I\tr(\Res_{\delta_{0:I}}\nabla)\,\delta_{0:I}$, while~\eqref{eq:twist} gives $c_1(E)=c_1(E')+\rk E\cdot\big(\tfrac c2\eta_{\mathrm{Hdg}}+\sum_i a_{\mu_i}\psi_i\big)$. In genus zero $\Lambda\cong\O_{\overline{\M}_{0,n}}$, so $\eta_{\mathrm{Hdg}}=0$ and $c$ drops out. 
\end{proof}

It remains to compute the residues. For strongly rational $V$ they are known~\cite[(2.4)]{DG} (see also~\cite[Theorem 2.13]{Chakravarty25}): the Chern characters form a semisimple cohomological field theory~\cite{DGT_CohFT} and one applies the reconstruction of~\cite[Lemma 2.2]{MOPPZ}. That route needs factorization in every genus and at non-separating nodes, which Theorem~\ref{thm:restricted_factorization} does not provide (Section~\ref{subsec:genusone}). In genus zero we instead read the residues off the smoothing construction of Section~\ref{sec:smoothing}.

Fix $x=(C_0,P_\bullet)\in\delta_{0:I}$ and a smoothing family $(\CCC,P_\bullet,t_\bullet)$ over $\S=\operatorname{Spec}\C\lb\varepsilon\rb$ as in Setting~\ref{setting}, with central fiber $C_0$ and local equation $xy=\varepsilon$ at the node $Q$; the classifying map $\S\ra\overline{\M}_{0,n}$ is transversal to $\delta_{0:I}$ at $x$, so $\Res_{\delta_{0:I}}\nabla$ at $x$ is the residue at $\varepsilon=0$ of $\nabla|_\S$. By Theorem~\ref{thm:restricted_factorization} the fiber decomposes as
\begin{equation}\label{eq:boundary_restriction}
E|_x\cong \bigoplus_{\nu\in \Pr^k_\Z} \VV_{0,|I|+1}\left(L_k(\g),M^I\o \hat{L}_k(\nu)\right)\big|_{x_+}\o \VV_{0,|I^c|+1}\left(L_k(\g),M^{I^c}\o\hat{L}_k(\nu)^\vee\right)\big|_{x_-},
\end{equation}
where $x_\pm$ are the two components of the partial normalization $\widetilde{C}_0$, pointed by $P_\bullet\cup Q_\pm$. Write $F=[M^\bullet\o\fA']_{(\widetilde{C}_0,P_\bullet\cup Q_\pm,t_\bullet\cup s_\pm)}=\bigoplus_\nu F_\nu$ for the right-hand side.

\begin{lemma}\label{lm:residue}
$\Res_{\delta_{0:I}}\nabla$ preserves the decomposition~\eqref{eq:boundary_restriction} and acts on the $\nu$-summand by the scalar $a_\nu$.
\end{lemma}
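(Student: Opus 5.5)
We transport the connection through the smoothing isomorphism of Theorem~\ref{thm:main_smoothing}. Over $\S=\operatorname{Spec}\C\lb\varepsilon\rb$ that theorem identifies $\mathcal F=[M^\bullet\lb\varepsilon\rb]_{(\CCC,P_\bullet,t_\bullet)}$ with $F\o_\C\C\lb\varepsilon\rb$ through $w\mapsto w\o\I$, where $\I=\sum_d\I'_d\varepsilon^d$. The plan is to compute the restriction $\nabla|_\S$ of the (twisted) connection in the trivialization this provides. We will show that $\varepsilon\partial_\varepsilon$ corresponds, up to a term vanishing at $\varepsilon=0$ and a scalar absorbed by the twist, to the operator $L(0)$ acting at one branch of the node, say $Q_+$.

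\textbf{Step 1: local form of $\varepsilon\partial_\varepsilon$.} Following the Sugawara/Virasoro uniformization of~\cite{DGT1} (as used in~\cite[Section 5]{DGK2} and in the classical computation of~\cite{TUY}), vary the sewing parameter in $xy=\varepsilon$. The derivation $\varepsilon\partial_\varepsilon$ lifts to the Virasoro element $L_0$ acting at the node, since rescaling $x\mapsto\lambda x$ changes $\varepsilon$ by $\lambda$. On the sewn family this means
\[
\nabla_{\varepsilon\partial_\varepsilon}[w\o\I]=[w\o\varepsilon\partial_\varepsilon\I]-[w\o L(0)_{Q_+}\I]+(\text{scalar from the Atiyah twist}).
\]

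\textbf{Step 2: evaluate on the partial identity.} By construction $\I'_d$ is the identity of $\End\Phi^\L(L(\nu))(d)$ on the $\nu$-block. Hence $\varepsilon\partial_\varepsilon\I$ contributes $d\,\I'_d\varepsilon^d$. Meanwhile $L(0)$ acting on $\hat L_k(\nu)(d)$ at $Q_+$ is $(a_\nu+d)$. The degree terms cancel, which is exactly the strong identity property: $L(0)\I'_d=\I'_d L(0)$ in $\fA'$. What survives is $a_\nu$ on the $\nu$-block, uniformly in $\varepsilon$.

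\textbf{Step 3: read off the residue.} The scalar $a_\nu$ respects the decomposition $F=\bigoplus_\nu F_\nu$, since $\fA'$ splits block by block over $\Pr^k_\Z$. Under the central-fiber isomorphism of Theorem~\ref{thm:restricted_factorization}, this gives~\eqref{eq:boundary_restriction}. The residue is the value at $\varepsilon=0$, which yields the claim.

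The main obstacle is the sign and normalization in Step 1. One must decide which branch carries $L(0)$. One must also check that $a_\nu=a_{-w_0(\nu)}$, so that the choice of branch is irrelevant; this holds because $\<-w_0\nu,-w_0\nu+2\rho\>=\<\nu,\nu+2\rho\>$. Finally, one must pin down that the constant coming from $\Lambda^{-c/2}$ vanishes in genus zero, or is cancelled by the twist by $\Psi_i$, which is supported away from $Q$. I would first redo the argument of~\cite[Section 5]{DGK2} / \cite{DGT2} for the sewn family with $\I$ in place of the full strong identity. The key point is that only the strong identity equations~\eqref{eq:5.18} were used there.
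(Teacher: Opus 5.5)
Your strategy is the paper's: transport $\nabla|_\S$ through the smoothing isomorphism $[\alpha]$ of Theorem~\ref{thm:main_smoothing}, realize $\varepsilon\partial_\varepsilon$ as an $L(0)$-insertion at the node, let $\varepsilon^d$ absorb the degree of $\I'_d$, and use $a_\nu=a_{-w_0(\nu)}$ for branch independence. The paper runs this on the dual side: it shows that $\varepsilon^{a_\nu}\Psi\circ\alpha_\nu$ is horizontal for every $\Psi\in F_\nu^\ast$ and reads the residue off the dual frame. It also uses the symmetric lift $c_+x\partial_x+c_-y\partial_y$ with $c_++c_-=1$, where you take $c_+=1$.

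The gap is the sign. The step that determines the answer is left unresolved, and as written it gives the wrong sign. With your Step~1 formula, Step~2 gives $d-(a_\nu+d)=-a_\nu$ on the $\nu$-block. So you obtain residue $-a_\nu$ while asserting $+a_\nu$. This cannot be deferred as a convention: relative to the twist~\eqref{eq:twist} the sign is forced. With $-a_\nu$, formula~\eqref{eq:deg_M04} would give the rank-one bundle of Example~\ref{ex:sl2deg} degree $21/4$, and its level-$k'=2$ counterpart degree $7/4$. Neither is possible for a line bundle on $\P^1$.

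The slip comes from mixing the connection on $E$ with its dual. Paired with a block, your Step~1 expression is exactly the paper's formula for the dual connection: $\langle\nabla^\ast_{\varepsilon\partial_\varepsilon}(\Psi\circ\alpha),[w]\rangle=\Psi\bigl(w\otimes(\varepsilon\partial_\varepsilon-L(0)_{Q_+})\I\bigr)$. On the $\nu$-block this says $\nabla^\ast_{\varepsilon\partial_\varepsilon}(\Psi\circ\alpha_\nu)=-a_\nu\,\Psi\circ\alpha_\nu$, i.e.\ $\varepsilon^{a_\nu}\Psi\circ\alpha_\nu$ is horizontal, and dualizing gives $+a_\nu$ on $E$.

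On coinvariant classes the insertion carries the opposite sign, and no $\varepsilon\partial_\varepsilon\I$ term appears. For constant $w\in M^\bullet$ one has $[\alpha](\nabla_{\varepsilon\partial_\varepsilon}[w])=[w\otimes L(0)_{Q_+}\I]$. Writing the transported connection as $\varepsilon\partial_\varepsilon+B$ on $F\otimes\C\lb\varepsilon\rb$ then gives $B[w\otimes\I]=[w\otimes(L(0)_{Q_+}-\varepsilon\partial_\varepsilon)\I]=a_\nu[w\otimes\I]$ on the $\nu$-block. Hence the residue is $a_\nu$.

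Two smaller corrections:
\begin{itemize}
\item The cancellation of the $d$'s is just homogeneity: $\I'_d\in\fA'_{d,-d}$ and $L(0)=a_\nu+d$ on $\hat L_k(\nu)(d)$. It is not the strong identity property. The relation $L(0)\cdot\I'_d=\I'_d\cdot L(0)$ is what makes the choice of branch irrelevant. The full strong identity equations enter only through Theorem~\ref{thm:main_smoothing}, to make $[\alpha]$ exist.
\item The twist needs no cancellation. Over $\S$, $\Lambda$ is trivial in genus zero and the constant coordinates $t_\bullet$ trivialize the $\Psi_i$, so $E'|_\S=E|_\S$.
\end{itemize}
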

\begin{proof}
\emph{Trivialization over $\S$.} By Theorem~\ref{thm:main_smoothing} the smoothing morphism $\alpha:w\mapsto w\o\I$, $\I=\sum_{d\geq0}\I'_d\varepsilon^d$, induces an isomorphism $[\alpha]:E|_\S\xrightarrow{\ \sim\ }F\o_\C\C\lb\varepsilon\rb$ restricting to~\eqref{eq:boundary_restriction} at $\varepsilon=0$. By Proposition~\ref{prop:partial_strong_identity} and~\eqref{eq:PrkZclasses}, $\fA'_d\cong\prod_{\nu}\End_\C(\hat{L}_k(\nu)(d))$ with $\I'_d=\prod_\nu\Id_{\hat{L}_k(\nu)(d)}$; choosing bases $\{u_{d,j}\}_j$ of $\hat{L}_k(\nu)(d)$ (finite-dimensional, $\hat{L}_k(\nu)$ being ordinary) with dual bases $\{u^{d,j}\}_j$ of $\hat{L}_k(\nu)^\vee(d)$, cf.~\eqref{eq:contragredient}, the $\nu$-component of $\alpha$ is
\begin{equation}\label{eq:alpha_nu}
\alpha_\nu(w)=\sum_{d\geq0}\varepsilon^{d}\sum_j w\o u_{d,j}\o u^{d,j},\qquad w\in M^\bullet.
\end{equation}
Over $\S$ the twist~\eqref{eq:twist} is trivial: $\Lambda$ is trivial in genus zero and the coordinates $t_\bullet$ are constant along the family, trivializing the $\Psi_i$. Hence $E'|_\S=E|_\S$, and $\nabla|_\S$ is a connection on $E|_\S$ with a logarithmic pole at $\varepsilon=0$.

\emph{Horizontal blocks.} Let $\Psi\in F_\nu^\ast$, a conformal block on $\widetilde{C}_0$ with insertions $M^\bullet\o\hat{L}_k(\nu)\o\hat{L}_k(\nu)^\vee$. Since $\alpha$ is a homomorphism of $\LL_{\CCC\setminus P_\bullet}(V)$-modules (Theorem~\ref{thm:main_smoothing}), $\Psi\circ\alpha_\nu$ is a conformal block for $(\CCC,P_\bullet,t_\bullet)$ over $\S$, i.e.\ a section of $(E|_\S)^\ast$. Set
\begin{equation}\label{eq:sewnsection}
\Psi_\varepsilon:=\varepsilon^{a_\nu}\,\Psi\circ\alpha_\nu=\sum_{d\geq 0}\varepsilon^{\,a_\nu+d}\sum_{j}\Psi\!\left(\,\cdot\,\o u_{d,j}\o u^{d,j}\right).
\end{equation}
We claim $\nabla_{\varepsilon\partial_\varepsilon}\Psi_\varepsilon=0$. Away from the node the family is a product, $\CCC\setminus\{Q\}\cong(\widetilde{C}_0\setminus Q_\pm)\times\S$, so $\varepsilon\partial_\varepsilon$ has a canonical lift there; near the node, in the model $xy=\varepsilon$, its lifts are $c_+\,x\partial_x+c_-\,y\partial_y$ with $c_++c_-=1$~\cite[Lemma 33]{Loo13}. In the construction of~\cite[Theorem 7.1]{DGT1}, $\nabla_{\varepsilon\partial_\varepsilon}$ acts on blocks, with respect to the product trivialization away from the node, as $\varepsilon\partial_\varepsilon$ corrected by the Virasoro insertion of the vertical discrepancy between the two lifts, namely $c_+\,L^{Q_+}(0)+c_-\,L^{Q_-}(0)$ on the insertions at $Q_\pm$. Both $\hat{L}_k(\nu)$ and its contragredient are simple ordinary modules of conformal dimension $a_\nu$, so $L^{Q_\pm}(0)$ act on the $d$-th term of~\eqref{eq:sewnsection} by $a_\nu+d$; the insertion thus acts by $(c_++c_-)(a_\nu+d)=a_\nu+d$, exactly as $\varepsilon\partial_\varepsilon$ acts on $\varepsilon^{a_\nu+d}$. The two contributions cancel, proving the claim. 

\emph{Residue.} Let $\{f_j\}$ be a basis of $F_\nu$ with dual basis $\{\Psi^j\}$, and set $s_j:=[\alpha]^{-1}(f_j\o1)$, a frame of the $\nu$-summand of $E|_\S$ extending across $\varepsilon=0$. By~\eqref{eq:sewnsection}, $\Psi^j_\varepsilon(s_i)=\varepsilon^{a_\nu}\Psi^j(f_i)=\varepsilon^{a_\nu}\delta_{ij}$. The $\Psi^j_\varepsilon$ form a horizontal frame of the $\nu$-summand of $(E|_\S)^\ast$, so the dual frame $h_j:=\varepsilon^{-a_\nu}s_j$ of $E|_\S$ is horizontal, and
\[
\nabla s_j=\nabla(\varepsilon^{a_\nu}h_j)=a_\nu\,\frac{d\varepsilon}{\varepsilon}\,s_j .
\]
Thus the connection form on the $\nu$-summand, in the frame $\{s_j\}$, is $a_\nu\,d\varepsilon/\varepsilon$, and its residue is $a_\nu$. Finally, the scalar does not depend on the branch: $\hat{L}_k(\nu)^\vee\cong\hat{L}_k(-w_0(\nu))$ by~\eqref{eq:contragredient}, and $a_{-w_0(\nu)}=a_\nu$ by~\eqref{eq:confdim_prelim}, as $-w_0$ is an isometry of $\h^\ast$ fixing $\rho$.
\end{proof}

\begin{remark}
The identity $\nabla_{\varepsilon\partial_\varepsilon}\Psi_\varepsilon=0$ for the sewn block is classical: for integrable levels it is~\cite[Lemma 34]{Loo13}, in the form used here~\cite[Proposition 2.10]{Fak12}; for rational $C_2$-cofinite $V$ it is~\cite[Remark 8.5.3]{DGT2arXiv} (see also~\cite{Gui24} for the analytic setting), and the sewing element $\sum_d\I'_d\varepsilon^d$ is that of~\cite[Section 8.5]{DGT2}, going back to~\cite[Lemma 8.7.1]{NT}. In the argument above, the only finiteness input is that the module inserted at the node is ordinary, and the well-definedness of the sewn block is supplied by Theorem~\ref{thm:main_smoothing} rather than by the sewing theorem of~\cite{DGT2}, whose hypotheses include rationality and $C_2$-cofiniteness.
\end{remark}

Taking the trace in Lemma~\ref{lm:residue} against~\eqref{eq:boundary_restriction}, with the ranks constant by Theorem~\ref{thm:mainA}, gives the following.

\begin{proposition}\label{prop:degree_formula}
Let $M^i=\hat{L}_k(\mu_i)$, $\mu_i\in \Pr^k_\Z$. Then~\eqref{eq:c1_shape} holds with
\begin{equation}\label{eq:bI}
\begin{split}
\tr\left(\Res_{\delta_{0:I}}\nabla\right)=b_I:=\sum_{\nu\in \Pr^k_\Z} a_\nu\cdot{}& \rk \VV_{0,|I|+1}\left(L_k(\g),M^I\o \hat{L}_k(\nu)\right)\\
{}\cdot{}& \rk \VV_{0,|I^c|+1}\left(L_k(\g),M^{I^c}\o \hat{L}_k(\nu)^\vee\right),
\end{split}
\end{equation}
the ranks being given by Theorem~\ref{thm:rank_formula} and the conformal dimensions by~\eqref{eq:confdim_prelim}.
\end{proposition}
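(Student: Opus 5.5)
The statement has two ingredients. The first is the shape formula~\eqref{eq:c1_shape} of Proposition~\ref{prop:c1_shape}. The second is the identification of $\tr(\Res_{\delta_{0:I}}\nabla)$ with $b_I$. Only the second needs proof, and the plan is to take the trace of Lemma~\ref{lm:residue} summand by summand.

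Fix a boundary divisor $\delta_{0:I}$ and a point $x=(C_0,P_\bullet)$ in it. Choose the transversal smoothing family of Setting~\ref{setting} at $x$, so that $\Res_{\delta_{0:I}}\nabla$ at $x$ is the residue at $\varepsilon=0$ of $\nabla|_\S$. By Lemma~\ref{lm:residue}, this residue preserves the decomposition~\eqref{eq:boundary_restriction} of $E|_x$ and acts on the $\nu$-summand by the scalar $a_\nu$. Hence
\[
\tr(\Res_{\delta_{0:I}}\nabla)|_x=\sum_{\nu\in\Pr^k_\Z} a_\nu\,\dim\Big(\VV_{0,|I|+1}(\cdots)\big|_{x_+}\o\VV_{0,|I^c|+1}(\cdots)\big|_{x_-}\Big).
\]
Each factor on the right is a space of coinvariants on a stable genus-zero pointed curve with insertions in $\O_{k,\Ord}$. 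Here one uses that $\hat{L}_k(\nu)^\vee\cong\hat{L}_k(-w_0(\nu))$ is simple ordinary, by~\eqref{eq:contragredient} and Lemma~\ref{lm:w0closed}(2). The numbers of points satisfy $|I|+1\geq3$ and $|I^c|+1\geq 3$.

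Theorem~\ref{thm:mainA} therefore applies to each factor, so each sheaf is locally free. The dimension of each fiber equals the rank of the corresponding bundle, whatever the point $x_\pm$, and that rank is computed by Theorem~\ref{thm:rank_formula}. This gives $\tr(\Res\nabla)|_x=b_I$ for every $x\in\delta_{0:I}$; in particular the trace is constant on the divisor. Inserting this into~\eqref{eq:c1_shape} yields the claim.

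The main point to check is that the residue computed along the formal smoothing family agrees with the residue of the global logarithmic connection on $E'$. This holds because the classifying map $\S\to\overline{\M}_{0,n}$ is transversal to $\delta_{0:I}$, together with the base-change identification~\eqref{eq:basechange}; the paper records this transversality before Lemma~\ref{lm:residue}. A second point is the well-definedness of the trace, which does not depend on the branch: $a_{-w_0(\nu)}=a_\nu$ and $\hat{L}_k(\nu)^{\vee\vee}\cong\hat{L}_k(\nu)$, so interchanging $I$ and $I^c$ relabels the sum via $\nu\mapsto-w_0(\nu)$ and leaves $b_I$ unchanged.
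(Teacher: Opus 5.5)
Your proposal is correct and is essentially the paper's argument: the paper obtains Proposition~\ref{prop:degree_formula} by taking the trace of Lemma~\ref{lm:residue} against the decomposition~\eqref{eq:boundary_restriction}, with Theorem~\ref{thm:mainA} converting the fiber dimensions of the two factors into ranks. Your additional checks make explicit what the paper leaves implicit, and they are sound: that $|I|+1,\,|I^c|+1\geq 3$, that $\hat{L}_k(\nu)^\vee$ is simple ordinary, and that $b_I$ is symmetric under $I\leftrightarrow I^c$ via $\nu\mapsto -w_0(\nu)$.
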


The coefficients~\eqref{eq:bI} agree termwise with~\cite[(2.4)]{DG} after replacing $\mathrm{Irr}(V)$ by $\Pr^k_\Z$; here they arise from the partial strong identity elements of Section~\ref{sec:strong_identity}. On $\overline{\M}_{0,4}\cong\P^1$ each $\psi_i$ has degree $1$ and the boundary consists of three points, so
\begin{equation}\label{eq:deg_M04}
\deg \VV_{0,4}(L_k(\g),M^\bullet)=\rk \VV_{0,4}\cdot \sum_{i=1}^4 a_{\mu_i}-\left(b_{\{1,2\}}+b_{\{1,3\}}+b_{\{1,4\}}\right).
\end{equation}

\begin{example}\label{ex:sl2deg}
Let $\g=\sl_2$ and $k+2=4/3$, so $p=4$, $q=3$, $k=-2/3$; this is the admissible level whose quantum Drinfeld--Sokolov reduction is the Ising model $\mathrm{Vir}_{3,4}$, and $L_{-2/3}(\sl_2)$ embeds in $\mathrm{Vir}_{3,4}\o\Pi(0)$~\cite{Adamovic19}. By~\eqref{eq:6.9}, $\Pr^k_\Z=\{0,\om,2\om\}$, with $a_{m\om}=3m(m+2)/16$ by~\eqref{eq:confdim_prelim}, and~\eqref{eq:fusionrules} reads $N_{2,2}^{\,r}=\delta_{r,0}$, $N_{2,1}^{\,r}=\delta_{r,1}$, $N_{1,1}^{\,r}=\delta_{r,0}+\delta_{r,2}$. For $\mu_\bullet=(2\om,2\om,\om,\om)$, Theorem~\ref{thm:rank_formula} gives $\rk\VV_{0,4}=1$, and~\eqref{eq:bI} gives $b_{\{1,2\}}=a_0=0$, $b_{\{1,3\}}=b_{\{1,4\}}=a_\om=9/16$. Hence by~\eqref{eq:deg_M04}
\[
\deg \VV_{0,4}(L_{-2/3}(\sl_2),M^\bullet)=\left(2\cdot\tfrac{3}{2}+2\cdot\tfrac{9}{16}\right)-2\cdot\tfrac{9}{16}=3.
\]
At the integrable level $k'=p-h^\vee=2$ the labels and fusion rules are the same and $a^{k'}_{m\om}=m(m+2)/16$, giving $\deg=1$; the ratio is $q=3$, as Proposition~\ref{prop:qscaling} predicts.
\end{example}

\subsubsection{Comparison with integrable levels}\label{subsubsec:comparison}

We can now compare the bundles of Theorem~\ref{thm:mainA} with the classical ones attached to integrable levels. Throughout, $(r^\vee,q)=1$ and $k'=p-h^\vee$ as in~\eqref{eq:kprime}; write $D(M^\bullet)=c_1(\VV_{0,n}(L_k(\g),M^\bullet))$ and $D^{k'}(M^\bullet)=c_1(\VV_{0,n}(L_{k'}(\g),M^\bullet))$.

\begin{proposition}\label{prop:qscaling}
Let $\g$ be simply laced, $k$ admissible with $k+h^\vee=p/q$, and $k'=p-h^\vee$. Then for every $n\geq 3$ and every tuple $\mu_\bullet$ in $\Pr^k_\Z$,
\begin{equation}\label{eq:qscaling}
D(M^\bullet)=q\cdot D^{k'}(M^\bullet)\ \in \mathrm{Pic}(\overline{\M}_{0,n})\o \Q.
\end{equation}
The same holds for any $\g$ with $(r^\vee,q)=1$ for which the conclusion of Theorem~\ref{thm:creutzig_fusion} is known.
\end{proposition}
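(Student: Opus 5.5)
The plan is to compare the two sides of~\eqref{eq:qscaling} term by term in the formula~\eqref{eq:c1_shape} of Proposition~\ref{prop:c1_shape}, with the boundary coefficients given by Proposition~\ref{prop:degree_formula}. For $\overline{\M}_{0,n}$ the classes $\psi_i$ and $\delta_{0:I}$ are the same on both sides. So it suffices to check two things: the rank factors agree, and every conformal dimension that enters is multiplied by exactly $q$.

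On the integrable side, the corresponding classical formula is $D^{k'}(M^\bullet)=\rk\cdot\sum_i a^{k'}_{\mu_i}\psi_i-\sum_I b^{k'}_I\delta_{0:I}$. Here $b^{k'}_I=\sum_{\nu\in P^{k'}_+}a^{k'}_\nu\,\rk\VV_{0,|I|+1}\cdot\rk\VV_{0,|I^c|+1}$. This is~\cite[(2.4)]{DG}, valid because $L_{k'}(\g)$ is strongly rational. Equivalently, the same argument as in Lemma~\ref{lm:residue} applies verbatim there, using the genuine strong identity elements, and the Hodge term drops out in genus zero exactly as in Proposition~\ref{prop:c1_shape}.

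Next, compare the data entering the two formulas. By Lemma~\ref{lm:comparison}(1) the index sets agree: $\Pr^k_\Z=P^{k'}_+$. The same lemma gives that the involution $-w_0$ acts on them identically, so the contragredient labels at the two branches of a node match. By Corollary~\ref{coro:rank_integrable}, all ranks agree. This covers $\rk\VV_{0,n}$ as well as the ranks of the $(|I|+1)$- and $(|I^c|+1)$-point bundles appearing in~\eqref{eq:bI}, since those insertions also lie in $\Pr^k_\Z$. Corollary~\ref{coro:rank_integrable} in turn rests on Theorem~\ref{thm:creutzig_fusion}, or on the assumed analogue of that theorem in the non-simply-laced case. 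Finally, Lemma~\ref{lm:comparison}(2) gives $a^k_\mu=q\,a^{k'}_\mu$ for all labels, both for the $\mu_i$ and for the node labels $\nu$.

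Substituting these into the two formulas, the $\psi$-coefficients scale by $q$, and $b_I=q\,b^{k'}_I$ termwise. Hence $D(M^\bullet)=q\,D^{k'}(M^\bullet)$. The last sentence of the statement follows by the same argument, since only Lemma~\ref{lm:comparison} (which needs $(r^\vee,q)=1$) and the fusion-rule identification are used.

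The main obstacle is making sure the integrable-side formula has exactly the same shape. The normalization of the residue (the sign and $a_\nu$ rather than, say, $a_\nu-c/24$), together with the absence of a $\lambda$-class contribution, must be identical on both sides. I would handle this by rederiving the integrable case through the same Lemma~\ref{lm:residue} mechanism rather than citing an external normalization. Example~\ref{ex:sl2deg} provides a consistency check.
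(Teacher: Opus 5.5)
Your proposal is correct and follows essentially the same route as the paper: a term-by-term comparison in \eqref{eq:c1_shape} and \eqref{eq:bI}, using Corollary~\ref{coro:rank_integrable} for every rank (including those of the partial tuples $M^I\o\hat{L}_k(\nu)$) and Lemma~\ref{lm:comparison}(2) for the uniform rescaling $a^k_\nu=q\,a^{k'}_\nu$. Your extra check that the integrable-level formula has the same shape (via \cite[(2.4)]{DG}, with the Hodge term vanishing in genus zero) makes explicit what the paper uses implicitly when it notes that \eqref{eq:bI} agrees termwise with \cite[(2.4)]{DG}.
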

\begin{proof}
Simply-laced $\g$ has $r^\vee=1$, so $(r^\vee,q)=1$ and Lemma~\ref{lm:comparison} applies. By Corollary~\ref{coro:rank_integrable}, every rank occurring in~\eqref{eq:c1_shape} and~\eqref{eq:bI}---for $\mu_\bullet$ itself and for each partial tuple $M^I\o \hat{L}_k(\nu)$---agrees with the corresponding rank at level $k'$. By Lemma~\ref{lm:comparison}(2), each conformal dimension scales by $q$. Hence the interior term of~\eqref{eq:c1_shape} scales by $q$, and in~\eqref{eq:bI} each $a_\nu$ scales by $q$ while the accompanying ranks are unchanged, so $b_I=q\,b^{k'}_I$ for every $I$.
\end{proof}

\begin{remark}\label{rmk:proportional}
Proposition~\ref{prop:qscaling} is an instance of a general mechanism. Following~\cite[Definition 3.7]{Chakravarty25}, subrings $\mathcal{R}_1,\mathcal{R}_2$ of the fusion rings of two VOAs form a \emph{proportional pairing} when an injective ring homomorphism $\mathcal{R}_1\ra\mathcal{R}_2$ matches their simple objects and rescales every conformal weight by one and the same constant $\eta\in\Q_{>0}$; the associated coinvariant divisors on $\overline{\M}_{0,n}$ then differ by the factor $\eta$, so that nefness transfers between them~\cite[Theorem 3.8]{Chakravarty25}. Lemma~\ref{lm:comparison} together with Theorem~\ref{thm:creutzig_fusion} says exactly that the fusion rings of $\O_{k,\Ord}$ and of $L_{k'}(\g)$ form such a pairing, with $\eta=q$. That criterion presupposes that $V$ is strongly rational~\cite[Assumption 3.1]{Chakravarty25}, so that sheaves of coinvariants are vector bundles and satisfy factorization; Theorems~\ref{thm:mainA} and~\ref{thm:restricted_factorization} supply precisely these two inputs for $L_k(\g)$, which is neither rational nor $C_2$-cofinite.
\end{remark}

\begin{remark}\label{rmk:qscaling_scope}
Recall from Section~\ref{subsec:affineVOA} that the coprincipal case $(r^\vee,q)=r^\vee$ occurs only for $\g$ non-simply-laced. It is not obvious how to generalize Lemma~\ref{lm:comparison}(1) in this case.
\end{remark}

\section{Discussion}\label{sec:discussion}

Theorem~\ref{thm:mainA} shows that local freeness of the sheaf of coinvariants survives the failure of both $C_2$-cofiniteness and rationality, provided one restricts to the ordinary part $\O_{k,\Ord}$ of the representation category. In this final section we isolate the properties of $L_k(\g)$ that the proof actually uses, describe the geometric consequences of global generation, and discuss the obstacles to extending the present results to higher genus.

\subsection{Beyond affine VOAs}\label{subsec:axioms}
Although stated for admissible affine VOAs, the argument of Sections~\ref{sec:fusion}--\ref{sec:smoothing} uses only five properties of the pair $(V,\mathcal{D})$, where $V$ is a VOA and $\mathcal{D}\subset \Adm(V)$ is the distinguished subcategory of modules being inserted. Write $\A=A(V)$ for the Zhu algebra and $L(\lambda)$ for a simple $\A$-module, indexed by $\lambda\in S$ for some set $S$.

\begin{enumerate}
\item[(1)] \textbf{Coherence.} For every $n\geq 3$ and all insertions from $\mathcal{D}$, the sheaf $\VV_{0,n}(V,M^\bullet)$ is coherent over $\overline{\M}_{0,n}$; equivalently, the spaces of coinvariants at all stable $n$-pointed curves of genus zero are finite-dimensional.

\item[(2)] \textbf{Splitting.} The Zhu algebra decomposes as a product of ideals $\A\cong \A'\times \A''$ in which $\A'\cong \prod_{\la\in S}\End_\C(L(\la))$ is finite-dimensional semisimple, $S$ is finite, and $\mathcal{D}$ is the full subcategory of modules $M\in\Adm(V)$ with $\Om(M)\in\Mod(\A')$.

\item[(3)] \textbf{Fusion-closedness.} Genus-zero coinvariants on a smooth curve vanish whenever one insertion is a module whose bottom degree is an $\A''$-module and all remaining insertions lie in $\mathcal{D}$.

\item[(4)] \textbf{Self-duality of $S$.} The set $S$ is closed under contragredient duality, so that $L(\la)^\ast$ is again the simple module of a block of $\A'$ for every $\la\in S$.

\item[(5)] \textbf{Induction equivalence.} The adjunction $\Phi^\L\dashv\Om$ restricts to an equivalence between $\Mod(\A')$ and $\mathcal{D}$, and every module in $\mathcal{D}$ is ordinary. In particular $\Phi^\L$ carries the simple $\A'$-modules $L(\la)$, $\la\in S$, to simple modules with finite-dimensional graded pieces, and ${}^\theta\Phi^\R(L(\la)^\ast)\cong \Phi^\L(L(\la))^\vee$.
\end{enumerate}

For $V=L_k(\g)$ and $\mathcal{D}=\O_{k,\Ord}$ these hold with $S=\Pr^k_\Z$: (1) because $L_k(\g)$ is strongly generated by its degree-one subspace $\g$, hence $C_1$-cofinite, so that~\cite{DG} applies; (2) by the central character decomposition of $\A$~\cite[Theorem 3.4]{AvE} and Corollary~\ref{coro:equiofcats}; (3) by Theorem~\ref{thm:fusion_closedness}, the vanishing being forced by the incompatibility of the central characters $\gamma_\mu$ and $\gamma_\nu$ for $\mu\in \Pr^k\bs P$ and $\nu\in \Pr_\Z^k$; (4) because $\Pr^k_\Z$ is stable under $\la\mapsto -w_0(\la)$; and~(5) by Theorem~\ref{thm:equiofcats}, Corollary~\ref{coro:equiofcats}, and~\eqref{eq:contragredient}. Given~(1)--(5), the construction of partial strong identity elements in $\fA'$ (Proposition~\ref{prop:partial_strong_identity}) and the restricted factorization theorem (Theorem~\ref{thm:restricted_factorization}) go through with~(5) supplying the identifications furnished here by Corollary~\ref{coro:equiofcats}, and hence so does the genus-zero vector-bundle property.

The surjection~\eqref{eq:gsurj}, also a consequence of degree-one strong generation, enters the argument of Sections~\ref{sec:fusion}--\ref{sec:smoothing} only through the vanishing theorem, that is, through~(3). Global generation, the third conclusion of Theorem~\ref{thm:mainA}, does not follow from~(1)--(5); it uses degree-one strong generation through~\cite[Main theorem]{DG}. It would be interesting to exhibit further VOAs satisfying~(1)--(5); admissible-level $W$-algebras obtained by quantum Drinfeld--Sokolov reduction~\cite{A15,arakawa2015rationality} are natural candidates, though for these strong generation in degree one fails, and~(1) would have to be established by a $C_1$-cofiniteness argument.

\subsection{Global generation and positivity on $\overline{\M}_{0,n}$}\label{subsec:positivity}
Because $L_k(\g)$ is strongly generated in degree one, the global sections of the chiral Lie algebra act with enough density to reduce the VOA coinvariants, fiberwise over the interior, to a quotient of a finite-dimensional tensor product of $\g$-modules: this is the surjection~\eqref{eq:gsurj} from $M^0(0)\o_{U(\g)}M^\bullet(0)$. Globally, and including the boundary, \cite[Main theorem]{DG} upgrades this to the statement that $\VV_{0,n}(L_k(\g),M^\bullet)$ is globally generated, hence a quotient of a trivial bundle built entirely from finite-dimensional Lie-theoretic data.

Global generation has immediate positivity consequences. The determinant of a globally generated vector bundle is globally generated, hence base-point free, so the divisor class $D(M^\bullet)=c_1(\VV_{0,n}(L_k(\g),M^\bullet))\in \mathrm{Pic}(\overline{\M}_{0,n})\o \Q$ is nef, and indeed semi-ample. Theorem~\ref{thm:mainA} therefore produces a supply of nef divisor classes on $\overline{\M}_{0,n}$ indexed by tuples of ordinary admissible weights. Positivity of coinvariant divisors has so far been studied only in settings where the sheaves of coinvariants are known to be vector bundles---affine algebras at integrable levels, strongly rational VOAs strongly generated in degree one~\cite{DG}, and most recently the parafermion algebras, for which~\cite{Chakravarty25} gives criteria detecting nefness inside the fusion ring---so these appear to be the first such classes produced by a VOA that is neither rational nor $C_2$-cofinite. Together with the degree computation of Section~\ref{subsubsec:degree}, this has two consequences.

First, nefness turns~\eqref{eq:c1_shape} into an inequality. Recall that a partition $\{1,\ds,n\}=I_1\sqcup I_2\sqcup I_3\sqcup I_4$ into non-empty parts determines an \textbf{$F$-curve} $F_{I_1I_2I_3I_4}\cong\overline{\M}_{0,4}$, whose points are obtained by attaching four fixed tails, carrying the marked points indexed by $I_1,\ds,I_4$, to a four-pointed spine $(\P^1,x_1,\ds,x_4)$ of varying cross-ratio; the $F$-curves span the group of $1$-cycles on $\overline{\M}_{0,n}$ modulo numerical equivalence~\cite{GKM02}, so a divisor class is determined by its degrees on them. Since the tails are constant along $F_{I_1I_2I_3I_4}$, Theorem~\ref{thm:restricted_factorization} applied at the tail nodes gives
\[
\VV_{0,n}(L_k(\g),M^\bullet)\big|_{F_{I_1I_2I_3I_4}}
\cong\bigoplus_{\nu_\bullet\in(\Pr^k_\Z)^4}
\VV_{0,4}\big(L_k(\g),\hat{L}_k(\nu_1)\o\cdots\o\hat{L}_k(\nu_4)\big)\o_\C W_{\nu_\bullet},
\]
where $W_{\nu_\bullet}$ is the tensor product of the coinvariant spaces of the four tails, with insertions $M^{I_j}\o\hat{L}_k(\nu_j)^\vee$; these are constant vector spaces, of dimension given by Theorem~\ref{thm:rank_formula}. Hence $D(M^\bullet)\cdot F_{I_1I_2I_3I_4}$ is a non-negative combination of the degrees~\eqref{eq:deg_M04} of four-point bundles, and the basic constraint is the case $n=4$ of nefness itself: for every $\nu_\bullet\in(\Pr^k_\Z)^4$,
\[
b_{\{1,2\}}(\nu_\bullet)+b_{\{1,3\}}(\nu_\bullet)+b_{\{1,4\}}(\nu_\bullet)\ \leq\ \rk\VV_{0,4}\big(L_k(\g),\hat{L}_k(\nu_1)\o\cdots\o\hat{L}_k(\nu_4)\big)\cdot\sum_{j=1}^4 a_{\nu_j},
\]
with $b_{\{1,j\}}(\nu_\bullet)$ the boundary coefficients~\eqref{eq:bI} of that bundle. Since the ranks are computed by Theorem~\ref{thm:rank_formula} from the fusion rules, these are explicit numerical inequalities relating the fusion rules of $\O_{k,\Ord}$ to the conformal dimensions~\eqref{eq:confdim_prelim}. It would be interesting to know whether they are ever sharp.

Second, the comparison of Section~\ref{subsubsec:comparison} settles, for simply-laced $\g$, a question raised in~\cite[Remark 10.0.3]{DG}: whether coinvariants from more general VOAs produce nef classes beyond those coming from affine Lie algebras at integrable levels. There it is observed that the ranks agree with the classical ones while the conformal dimensions ``can be very different'', leaving open whether the resulting cones are larger. Proposition~\ref{prop:qscaling} shows that for simply-laced $\g$ the conformal dimensions are not merely different but uniformly rescaled by the denominator $q$, and that the ranks agree exactly; the classes are therefore proportional and span the \emph{same} rays. The ordinary admissible-level bundles of $L_k(\g)$ contribute no new faces to the nef cone of $\overline{\M}_{0,n}$ in types $A$, $D$, $E$, and in these types Theorem~\ref{thm:mainA} produces a known family of divisor classes by new methods.

Outside the simply-laced case, the comparison reduces to a single open question. For $\g$ non-simply-laced with $(r^\vee,q)=1$, Lemma~\ref{lm:comparison} shows that the labeling set of $\O_{k,\Ord}$ coincides with that of the integrable level $k'$ and that the conformal dimensions are uniformly rescaled by $q$. By~\eqref{eq:c1_shape} and~\eqref{eq:bI}, the class $D(M^\bullet)$ is determined by these two data together with the ranks of Theorem~\ref{thm:rank_formula}, which depend only on the fusion rules. Whether admissible levels contribute new nef classes therefore reduces to whether the fusion rings of $\O_{k,\Ord}$ and of $L_{k'}(\g)$ agree, which is open in types $B$, $C$, $F_4$, $G_2$ (Remark~\ref{rmk:hypF}). If they agree, Proposition~\ref{prop:qscaling} applies and admissible levels produce no new nef classes in genus zero. If they do not, \eqref{eq:c1_shape} and~\eqref{eq:bI} show that the difference $D(M^\bullet)-q\,D^{k'}(M^\bullet)$ is accounted for entirely by the difference between the ranks computed from the two fusion rings through~\eqref{eq:verlinde_gluing}: any new class is detected by, and only by, that discrepancy. At coprincipal levels no comparison is available: $\Pr^k_\Z$ is then not the labeling set of any integrable level of $\g$ (Remark~\ref{rmk:qscaling_scope}).

\subsection{Towards genus one}\label{subsec:genusone}
Although coherence is invoked here through~\cite{DG}, which is a statement about $\overline{\M}_{0,n}$, we expect coherence to hold in genus one as well, via an alternative mechanism.

The restricted factorization theorem is more fundamentally tied to genus zero: its proof uses the fact that $p_a(C)=0$, hence the dual graph $\Gamma_C$ is a tree and admits a leaf component. For a stable curve $C$ of arithmetic genus one, the first Betti number of the dual graph satisfies $b_1(\Gamma_C)\leq 1$, and the two cases obstruct the argument in different ways. If $b_1(\Gamma_C)=1$, then $\Gamma_C$ has a cycle, so the normalization produces a \emph{connected} curve carrying both $\Phi^\L(\A_\la)$ and ${}^\theta\Phi^\R(\A_\la)$. The vanishing theorem (Theorem~\ref{thm:fusion_closedness}) requires all but one insertion to be ordinary and so does not apply. If $b_1(\Gamma_C)=0$, then some component of $C$ has genus one; degenerating that component produces a non-separating node, and the same problem as in the previous case arises. Thus, restricting the factorization sum to $\Pr_\Z^k$ is not available in genus one by the present method, and a different mechanism is needed.

We expect quasi-lisseness to supply a substitute. The associated variety of $L_k(\g)$ at an admissible level is the closure of a single nilpotent orbit~\cite[Theorems 5.3.1 and 5.7.1]{A15}, so $L_k(\g)$ is quasi-lisse~\cite[Lemma 6.1]{AK18}, \cite{AvEM22}; this is a different source of finiteness from the one used here, and it is insensitive to the genus. We take up the genus-one case in a sequel.

\subsection{Modular functors}\label{subsec:modfunctor}
The category $\O_{k,\Ord}$ is a semisimple braided tensor category with finitely many simple objects~\cite{creutzig2018braided,A16}, rigid at least in types $ADE$~\cite{Creutzig19}, and for rational VOAs the sheaves of coinvariants assemble into a modular functor~\cite{damiolini2025modular}. Theorem~\ref{thm:mainA} together with the fusion rules of Section~\ref{sec:fusion_rules} supplies the vector bundles and factorization isomorphisms underlying the genus-zero part of such a structure for $L_k(\g)$: a system of vector bundles over $\overline{\M}_{0,n}$ compatible with the boundary via Theorem~\ref{thm:restricted_factorization}. Promoting this to a modular functor in the full sense requires, beyond the higher-genus statements discussed in Section~\ref{subsec:genusone}, the verification of the compatibilities of the gluing isomorphisms---associativity, symmetry, and propagation of vacua---demanded by the definition; the genus-one case is the essential first step.

\bibliographystyle{alpha}
\bibliography{bib}
\end{document}